\let\mathcal\mathscr
\numberwithin{equation}{section}
\newtheorem{theorem}{Theorem}[section]
\newtheorem{lemma}[theorem]{Lemma}
\theoremstyle{definition}
\renewcommand{\d}{\mathrm{d}}
\renewcommand{\phi}{\varphi}
\renewcommand{\rho}{\varrho}
\newcommand{\sumstar}{\sideset{}{^*}\sum}
\newcommand{\card}{\#}
\newcommand{\0}{\mathbf{0}}
\newcommand{\PP}{\mathbb{P}}
\newcommand{\ZZ}{\mathbb{Z}}
\newcommand{\ZZp}{\mathbb{Z}_{\mathrm{prim}}}
\newcommand{\NN}{\mathbb{N}}
\newcommand{\QQ}{\mathbb{Q}}
\newcommand{\RR}{\mathbb{R}}
\renewcommand{\leq}{\leqslant}
\renewcommand{\le}{\leqslant}
\renewcommand{\geq}{\geqslant}
\renewcommand{\ge}{\geqslant}
\renewcommand{\bar}{\overline}
\newcommand{\ma}{\mathbf}
\newcommand{\x}{\mathbf{x}}
\newcommand{\y}{\mathbf{y}}
\renewcommand{\c}{\mathbf{c}}
\renewcommand{\v}{\mathbf{v}}
\renewcommand{\u}{\mathbf{u}}
\newcommand{\z}{\mathbf{z}}
\newcommand{\w}{\mathbf{w}}
\renewcommand{\b}{\mathbf{b}}
\renewcommand{\a}{\mathbf{a}}
\newcommand{\ve}{\varepsilon}
\newcommand{\ep}{\varepsilon}
\DeclareMathOperator{\Pic}{Pic}
\DeclareMathOperator{\meas}{meas}
\DeclareMathOperator{\supp}{supp}
\DeclareMathOperator{\Hilb}{Hilb}
\renewcommand{\t}{\mathbf{t}}
\renewcommand{\hat}{\widehat}
\newcommand{\Dbad}{\Delta_{\mathrm{bad}}}
\newcommand{\beql}[1]{\begin{equation}\label{#1}}
\newcommand{\eeq}{\end{equation}}
\begin{document}

\date{\today}

\title[Density of rational points on a quadric bundle]{Density of
  rational points on a quadric bundle in $\PP^3\times \PP^3$} 
\author{T.D.\ Browning}
\author{D.R.\ Heath-Brown}

\address{IST Austria\\
Am Campus 1\\
3400 Klosterneuburg\\
Austria}
\email{tdb@ist.ac.at}

\address{Mathematical Institute\\
Radcliffe Observatory Quarter\\ Woodstock Road\\ Oxford\\ OX2 6GG\\ United Kingdom}
\email{rhb@maths.ox.ac.uk}

\subjclass[2010]{11D45 (11G35, 11G50,  11P55,  14G05, 14G25)}

\begin{abstract}
An asymptotic formula is established for the number 
of rational points of bounded anticanonical height which lie on 
a certain Zariski dense subset of  the biprojective hypersurface
$$
x_1y_1^2+\dots+x_4y_4^2=0
$$
in $\PP^3\times\PP^3$.
This confirms the modified 
Manin conjecture for this variety, 
in which the removal of  a ``thin'' set of rational points is allowed.
\end{abstract}

\date{\today}

\maketitle

\thispagestyle{empty}

\setcounter{tocdepth}{1}
\tableofcontents

\section{Introduction}

The main goal of this paper is to study the density of rational points on the 
biprojective 
hypersurface $X\subset \PP^3\times\PP^3$ cut out by the equation 
$F(\x;\y)=0$, where
$$
F(\x;\y)=x_1y_1^2+\dots+x_4y_4^2.
$$
Let $H: X(\QQ)\to \RR_{\geq 0}$ be 
an anticanonical height function 
and let 
\beql{NXTB}
N(\Omega,B)=
\#\left\{ (x,y)\in \Omega:   H(x,y)\leq B \right\},
\eeq
for any subset $\Omega\subset X(\QQ)$.
The variety $X$ defines a smooth hypersurface of bidegree $(1,2)$ and
has  Picard group  $\Pic(X)\cong \ZZ^2$. If a point $(x,y)\in X(\QQ)$
is represented by a vector $(\x,\y)\in\ZZp^4\times\ZZp^4$, then
we shall take 
$H(x,y)=|\x|^3|\y|^2$, where 
$|\cdot|: \RR^4\to \RR_{\geq 0}$ is the sup-norm. 
In view of the Manin Conjecture~\cite{FMT89}, one might  expect that
there is a Zariski open subset $U\subset X$ such that 
$$N(U(\QQ),B)\sim cB\log B,$$ 
as $B\to\infty$, where $c$ is the constant
predicted by Peyre \cite{Pey95}.  
We certainly require $U$ to exclude all subvarieties of the form
$x_i=x_j=x_k=y_l=0$ for  $\{i,j,k,l\}=\{1,2,3,4\}$,
since the rational points on $X$  which satisfy these constraints  
are easily seen to contribute $\gg B^{3/2}$ to $N(X(\QQ),B)$.
Similarly, we get a contribution of the same order of magnitude from
rational points for which
$\x=(0,0,1,-1)$ and $\y=(a,b,c,c)$, for example. 

More interestingly, any fibre $X_x=\pi_1^{-1}(x)$ over a point $x\in
\PP^3(\QQ)$ such that $X_x \cong \PP^1\times \PP^1$ 
will  contribute $\sim c_x B\log B$, as $B\to \infty$, for an
appropriate constant $c_x>0$ depending on $x$.   
 It is expected that the total contribution from these rational points
 will be $\sim a B\log B$, where 
$$
a=\sum_{\substack{x\in \PP^3(\QQ)\\ X_x \cong \PP^1\times \PP^1}} c_x
$$
is a  convergent series. 
Note that if $x=
[x_1,\dots,x_4]
\in \PP^3(\QQ)$ then 
the isomorphism
$X_x \cong \PP^1\times \PP^1$ holds if and only if  $X_x(\QQ)\neq
\emptyset$ and   $x_1\dots x_4$ 
is  a  square in $\QQ^*$.
In view of this we are led to study 
\eqref{NXTB} when $\Omega$ is
obtained from $X(\QQ)$ by deleting  the set 
\begin{equation}\label{eq:TT}
T=\left\{(x,y)\in X(\QQ): x_1\dots x_4=\square\right\}.
\end{equation}
 Our main result is then the following.

\begin{theorem}\label{t:main}
Let $\Omega=X(\QQ)\setminus T$. Then 
  \[N(\Omega,B)\sim cB\log B\]
  as $B\to\infty$, where
  \begin{equation}\label{PC}
  c=\frac{\tau_\infty}{4 \zeta(3)\zeta(4)} 
\end{equation}
  is the Peyre constant for the variety $X$, with 
\begin{equation}\label{eq:tau_infty}
  \tau_\infty=\int_{-\infty}^\infty \int_{[-1,1]^8} 
  e(-\theta F(\x;\y))\d\x\d\y\d\theta.  
\end{equation}
 \end{theorem}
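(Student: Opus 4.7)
The natural strategy is to fibre $X$ over $\PP^3$ via the first projection. For each primitive $\x\in\ZZp^4$ with $|\x|\leq B^{1/3}$ the fibre becomes the diagonal quaternary quadric $Q_\x(\y):=x_1y_1^2+\cdots+x_4y_4^2=0$, while the height condition becomes $|\y|\leq Y(\x):=(B/|\x|^3)^{1/2}$. After accounting for the four sign ambiguities in the choice of primitive representative, it suffices to find an asymptotic for
$$
\tfrac14 \sum_{\substack{\x\in\ZZp^4\\|\x|\leq B^{1/3}}} \cN_\x(Y(\x)),\qquad \cN_\x(Y)=\#\bigl\{\y\in\ZZp^4:Q_\x(\y)=0,\ |\y|\leq Y\bigr\},
$$
with the outer sum restricted to exclude the thin set $T$ defined in \eqref{eq:TT}.

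The first step is to stratify the sum over $\x$ and peel off degenerate families. Tuples $\x$ with several vanishing coordinates, or with some $|x_i|$ very small compared with $|\x|$, correspond to fibres where $Q_\x$ is close to singular and need to be controlled separately by elementary estimates or by divisor-style upper bounds for points on quadrics. Most importantly, those $\x$ for which $x_1x_2x_3x_4$ is a rational square give split fibres $X_\x\cong\PP^1\times\PP^1$; the rational points on these fibres are exactly the thin set $T$, so their removal is what avoids the unwanted extra logarithmic factor coming from the rank-jump of the fibre Picard group.

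For each generic $\x$ in the remaining stratum, I would apply a uniform asymptotic formula of the form
$$
\cN_\x(Y)=\sigma_\infty(\x)\,\mathfrak{S}(\x)\,Y^2+E(\x,Y),
$$
where $\sigma_\infty(\x)$ is the real density, $\mathfrak{S}(\x)=\prod_p\sigma_p(\x)$ is the singular series, and $E$ is an error term. The natural delivery mechanism is a Heath-Brown $\delta$-symbol identity combined with Kloosterman-type averaging; the diagonal structure of $Q_\x$ allows each local factor $\sigma_p(\x)$ to be evaluated in closed form via Jacobi symbols and Gauss sums. The principal obstacle of the entire proof will lie here: one must establish an error term uniform in $\x$ and sharp enough to give
$$
\sum_{|\x|\leq B^{1/3}} E\bigl(\x,Y(\x)\bigr)=o(B\log B).
$$
This demands subtle exploitation of cancellation in the attendant exponential sums as $\x$ varies, together with extra care in the sub-range where some $|x_i|$ are small and $Q_\x$ is nearly singular.

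Having extracted the leading behaviour, the last step is to evaluate
$$
\tfrac14 \sum_{\substack{\x\in\ZZp^4,\ |\x|\leq B^{1/3}\\ x_1x_2x_3x_4\neq\square}} \sigma_\infty(\x)\,\mathfrak{S}(\x)\,\frac{B}{|\x|^3}.
$$
Detecting the non-square condition via orthogonality of the quadratic characters $\bigl(\tfrac{\cdot}{d}\bigr)$ allows the sum to factorise into Euler products over $p$, after which the $\log B$ falls out of the simple pole at $s=1$ of the associated Dirichlet series. Matching the resulting constant with \eqref{PC} is then a verification: $\sigma_\infty(\x)$ integrated against $|\x|^{-3}\,\d\x$ reassembles precisely into $\tau_\infty$ of \eqref{eq:tau_infty}, while the averaged local factors $\sigma_p(\x)$ contribute the $(\zeta(3)\zeta(4))^{-1}$ normalisation built into the Peyre constant.
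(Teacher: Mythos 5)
Your strategy --- fibre over $\x$, apply a circle-method asymptotic to the quadric $Q_\x(\y)=0$ with $|\y|\le Y(\x)=(B/|\x|^3)^{1/2}$, and sum the main terms over all $\x$ with $|\x|\le B^{1/3}$ --- has a structural gap in the range $B^{1/6}\lesssim|\x|\le B^{1/3}$, and this range cannot be swept into the error term.

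The issue is quantitative. Any $\delta$-symbol/Kloosterman analysis for a quaternary quadric of height $\|F\|=|\x|$ and box size $P=Y(\x)$ produces (as in Theorem~\ref{newth} of the paper) a main term of size roughly $P^2/\|F\|$ together with error terms that are bounded below by something of the order $\|F\|^{1/2}P$. Requiring $\|F\|^{1/2}P\ll P^2/\|F\|$ forces $\|F\|\lesssim P^{2/3}$, which with $P=(B/|\x|^3)^{1/2}$ is precisely the constraint $|\x|\lesssim B^{1/6}$. Beyond that threshold the number of lattice points $\y$ on the quadric becomes too small (near $|\x|\sim B^{1/3}$ it is $O(1)$ per fibre) for any uniform circle-method asymptotic to hold; there is no ``subtle exploitation of cancellation'' that rescues the method in this range, because the target count is below the resolution of the analytic error. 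Moreover, that range is not negligible: in the paper the two halves of the region $|\x|^3|\y|^2\le B$ each contribute roughly \emph{half} of the main term $cB\log B$ (compare Lemmas~\ref{M1B} and~\ref{M2P}), so the points with $|\x|>B^{1/6}$ carry a positive proportion of the asymptotic.

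The missing idea is a second, complementary fibration. For $|\y|\le B^{1/4}$ (equivalently, for large $|\x|$), one should \emph{not} view the equation as a quadric in $\y$ but as the \emph{linear} condition $\sum x_iy_i^2=0$ on $\x$: the solutions $\x$ with $|\x|\le(B/|\y|^2)^{1/3}$ are lattice points in a dilated box intersected with a rank-$3$ sublattice of $\ZZ^4$ determined by $\y$. This is a much more elementary counting problem, handled in the paper by Schmidt-type lattice-point estimates (Section~\ref{s:lattice}, Theorem~\ref{t:asymplatt}), and it is accurate exactly where the circle method fails. The paper then splices the two asymptotics together in Section~\ref{s:combine}, using auxiliary upper bounds (Section~\ref{s:upper}) to cover the thin uncovered sliver near $|\x|\sim B^{1/6}$. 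Your back-end computation --- detecting the non-square condition, evaluating local densities via Gauss sums and Euler products, and matching the constant with $\tau_\infty/(4\zeta(3)\zeta(4))$ --- is along the right lines and would deliver the correct constant if the pointwise asymptotic you invoke were available over the whole range, but it is not, and restoring it requires the dual lattice-point argument.
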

 
 We shall see in Lemma \ref{tinf} that the integral
\[\int_{-\infty}^\infty \left|\int_{[-1,1]^8} 
  e(-\theta F(\x;\y))\d\x\d\y\right|\d\theta.  \]
is convergent.

     Theorem \ref{t:main} answers a question that was originally   
 raised by Colliot-Th\'el\`ene, and mentioned in work of  
Batyrev and Tschinkel    \cite[Ex.~3.5.3]{BT98}  
over 20 years ago. 
  The set $T$ in \eqref{eq:TT} is 
an example of a ``thin'' set of rational points, as introduced to the
subject by  
Serre  \cite[\S 3.1]{Ser08}. Theorem~\ref{t:main} therefore  confirms
the  refined Manin  
conjecture for $X$, in which one is allowed to remove a finite number
of thin sets.  
Lehmann,  Sengupta and Tanimoto \cite{geometry} have 
developed a geometric framework for identifying the relevant thin sets 
for any Fano variety.  Follow-up work of Lehmann and Tanimoto \cite[Thm.~12.6]{LT} confirms that 
our  set $T$ is compatible with their predictions for the quadric bundle $X$.

Our result adds to the small store of examples in which thin sets have
been shown to exert a demonstrable influence on the distribution of
rational points on  
Fano varieties. One of the first examples in this  vein was 
discovered by
Batyrev and Tschinkel \cite{BT96}, who showed that the split cubic
surfaces in the biprojective hypersurface 
$\{x_1y_1^3+\dots+x_4y_4^3=0\}\subset  \PP^3\times\PP^3$  
contribute significantly more
than the Manin conjecture would predict for the 
number of rational points of bounded anticanonical height.
More recently,  Le Rudulier \cite{LeR13} has investigated 
Manin's conjecture for the Hilbert 
schemes $\Hilb^2(\PP^1\times\PP^1)$ and  
$\Hilb^2(\PP^2)$, with the outcome that 
a thin set of rational points needs to be removed in order for the
associated counting functions to behave as they should.

The basic line of attack in the proof of
 Theorem \ref{t:main}  involves counting points on $X$ as a union of
planes when $\y$ is small, and as a union of quadric surfaces when
$\x$ is small. In the first case $\x$ lies in a lattice determined by
$\y$, and we will use  counting arguments
that come from the
geometry of numbers.
In
the second case we can count vectors $\y$ using the circle method,
taking care to control the dependence of the error terms on $\x$. 
It
turns out that we can handle the case $|\y|\le B^{1/4}$, giving an
asymptotic formula,  using
lattices.  Moreover we can deal with the range
$B^{\delta}\le|\x|\le B^{1/6-\delta}$ via the circle
method, for any fixed $\delta>0$.  
In terms of the inequality $|\x|^3|\y|^2\leq B$, 
this leaves two small ranges
uncovered, and here it will suffice to use an upper bound of the
correct order of magnitude.  Indeed such an upper bound is also 
indispensable as an auxiliary tool in the treatment of the lattice point
counting problem. The range $B^{1/6-\delta}\le|\x|\le B^{1/6}$
contributes $O(\delta B\log B)$ to $N(\Omega,B)$, and this is
$o(B\log B)$ when we allow $\delta$ to tend to 0.  
However, we do not 
obtain an explicit error term, though it would be possible in
principle to do so, by examining more closely the dependence on
$\delta$ in our other estimates. 
One might speculate that one could prove
a version of Lemma \ref{together}
 in which the second error term had an explicit dependence on 
$\eta$, perhaps in the form $O(\eta^{-K}B^{1-\eta^2/20}\log B)$ for some 
numerical constant $K$. If that were indeed possible, then one could prove Theorem \ref{t:main} with an error term saving a positive power of $\log B$.

This  paper 
is naturally arranged in three main parts.  We begin by
discussing upper bounds in \S  \ref{s:upper}.  We go on to use these
in proving our asymptotic formula for the range $|\y|\le B^{1/4}$,
using lattice 
point counting in \S \ref{s:lattice}.  Thirdly, we develop our circle
method argument in \S \ref{CPOQ}, to 
deal with values $|\x|\le B^{1/6-\delta}$.  Once all this is in place, it remains
in \S \ref{s:combine}
and \S \ref{s:reckon} to 
combine the various results 
and consider the overall leading constant
that arises.

\subsection*{Acknowledgements}
During the preparation of this  paper the authors were
supported by the NSF under Grant No.\ DMS-1440140,  while  in residence at 
the {\em  Mathematical Sciences Research Institute} in Berkeley, California,
during the Spring 2017 semester.
The authors are very grateful to the referee for numerous helpful comments.  
While working on this paper the first author was supported   
by EPSRC grant \texttt{EP/P026710/1}.

\section{Upper bounds}\label{s:upper}

We begin by introducing some notation.  
Throughout our work we shall write $(a_1,\dots,a_k)$ for the greatest common divisor of the integers
$a_1,\dots,a_k$. We trust that any confusion with vector notation will be obviated by context.
Let $\Delta(\x)=x_1x_2x_3x_4$.
For $Y\ge 1$ and any fixed
$\x\in \ZZ^4$ we let 
$$
M_1(\x;Y)=\#\left\{ \y\in \ZZp^4: |\y|\leq Y, ~ F(\x;\y)=0  \right\}.
$$
We then set
\[M_2(X,Y)=\sum_{\substack{\x\in \ZZ^4\\ \Delta(\x)\not=\square,\,|\x|\leq X}}
M_1(\x;Y),\]
so that $M_2(X,Y)$ counts solutions $(\x,\y)\in\ZZ^4\times\ZZp^4$ of
$F(\x;\y)=0$ in the region $|\x|\le X$, $|\y|\le Y$, such that
$\Delta(\x)\not=\square$. 
Similarly, we write
\[M_3(X,Y)=\sum_{\substack{\x\in (\ZZ_{\not=0})^4\\ |\x|\leq X}} M_1(\x;Y).\]
The primary result in this section is the following collection of
upper bounds. 

\begin{lemma}\label{lem:stage1}
We have
\[M_2(X,Y)\ll X^3Y^2 +X^5Y^{2/3} \]
and
\[M_3(X,Y)\ll_{\ve} X^3Y^2+X^5Y^{2/3}+X^{2+\ve}Y^{2+\ve}, \]
for any $\ve>0$. Moreover, if $1\le X_1\le X$ then
\[\sum_{\substack{\x\in (\ZZ_{\not=0})^4\\ |\x|\leq X,\, |x_1|\le X_1}}
M_1(\x;Y)
\ll_{\ve} (XY)^\ve X_1^{3/4}X^{-3/4}\{X^3Y^2+X^4Y\}, \] 
for any $\ve>0$.
\end{lemma}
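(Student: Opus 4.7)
The plan is to count solutions of $F(\x;\y)=0$ dually: for each fixed $\y\in\ZZ^4\setminus\{\0\}$ with $|\y|\le Y$, the equation reduces to the linear condition $\ell_\y(\x):=y_1^2 x_1+\dots+y_4^2 x_4=0$, so the admissible $\x$ form a rank-$3$ sublattice $L_\y\subset\ZZ^4$ of covolume $\asymp\|(y_1^2,\dots,y_4^2)\|_2$. By Minkowski's theorems its successive minima satisfy $\la_1\la_2\la_3\asymp|\y|^2$ and $\la_1\ll|\y|^{2/3}$, and the standard lattice-point count yields
\[
\#\{\x\in L_\y:|\x|\le X\}\ll 1+\frac{X}{\la_1}+\frac{X^2}{\la_1\la_2}+\frac{X^3}{\la_1\la_2\la_3}.
\]
Each of the three bounds will be obtained by summing this estimate over $\y$ with appropriate refinements.

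For $M_2$, the restriction $\Delta(\x)\ne\square$ forces $\x\ne\0$, so the constant term drops; the leading term $X^3/(\la_1\la_2\la_3)\asymp X^3/|\y|^2$ sums to $X^3 Y^2$ over $\y$. The lower-order terms are controlled by splitting the $\y$-sum according to the size of $\la_1(\y)$: when $\la_1$ is small, we exchange the order of summation so that $\x$ runs over short vectors and $\y$ is constrained to the quadric $\ell_\y(\x)=0$; when $\la_1$ is large, Minkowski's bound $\la_1\ll|\y|^{2/3}$ tames $X/\la_1$ and $X^2/(\la_1\la_2)$. Balancing these contributions produces the $X^5 Y^{2/3}$ term. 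For $M_3$, we additionally account for $\x$ with $\Delta(\x)$ a nonzero square, where $F(\x;\cdot)=0$ is the split smooth quadric $\cong\PP^1\times\PP^1$ over $\QQ$; the bilinear parametrisation together with standard divisor bounds gives $M_1(\x;Y)\ll Y^{2+\ve}$, while the number of such $\x$ with $|\x|\le X$ is $O(X^{2+\ve})$ by parametrising $\Delta(\x)=m^2$ with $m\le X^2$ and invoking multidimensional divisor estimates. This produces the extra $X^{2+\ve}Y^{2+\ve}$ term.

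For the third bound, the plan is to run the same dual argument with the anisotropic constraint $|x_1|\le X_1$. The lattice-point count in the box $[-X_1,X_1]\times[-X,X]^3$ depends on the orientation of $L_\y$ relative to the $x_1$-axis: in the worst case $L_\y$ lies essentially in a hyperplane parallel to the $x_1$-axis, and the restriction saves nothing. An interpolation between this worst case and the generic orientation, implemented by H\"older's inequality with exponent $4$ or by a direct geometry-of-numbers bound in the anisotropic box, yields the factor $(X_1/X)^{3/4}$. The $X^4 Y$ term inside the braces then arises from secondary lattice-point contributions that survive the anisotropic restriction, and the $(XY)^\ve$ factor absorbs divisor-function losses.

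The main obstacle is the control of the secondary lattice-point terms: crude bounds on $\sum_\y X/\la_1$ and $\sum_\y X^2/(\la_1\la_2)$ are much larger than $X^5 Y^{2/3}$, so the proof must exploit the fact that exceptionally small $\la_1,\la_2$ occur only for sparse $\y$, and trade this sparsity off against Minkowski's upper bound on $\la_1$. The analogous balancing in the anisotropic box, responsible for the $3/4$ exponent in the third bound, will require the most care.
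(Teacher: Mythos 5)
Your plan fibers in the opposite direction from the paper. The paper fixes $\x$ and counts $\y$ on the quadric surface $F(\x;\y)=0$, using the uniform quadric-surface estimate of Lemma~\ref{t:upper} (quoted from \cite{conics}), with Lemma~\ref{lem:bad_coeffs} absorbing the $\x$ for which $\Dbad(\x)$ is large; Lemma~\ref{ann} then handles $\Delta(\x)=\square$ for the $M_3$ bound, and the third bound is proved by writing the count as $\int_0^1 S_1(\alpha;X)^3S_1(\alpha;X_1)\,\d\alpha$ with $S_k$ as in \eqref{eq:Salpha}, applying H\"older, and bounding the fourth moment via \eqref{eq:orkney} and the divisor-style count in Lemma~\ref{lem:Lk}. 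Your proposal instead fixes $\y$ and counts $\x$ in the rank-$3$ lattice $L_\y$, which is precisely the mechanism the paper deploys \emph{later}, in \S\ref{s:lattice}, for Theorem~\ref{t:asymplatt} --- and there Lemma~\ref{lem:E(Y)} explicitly \emph{uses} the $M_3$ and $T(\cdot)$ estimates from Lemma~\ref{lem:stage1} as an input. So the direction you choose runs headlong into a circularity.

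More concretely, the secondary lattice-point terms do not close. Schmidt's bound counts every lattice vector in the box, including those with a vanishing coordinate, and cannot see the hypothesis $\Delta(\x)\not=\square$. By Minkowski, $\lambda_1\lambda_2\lambda_3\asymp|\y|^2$ with $\lambda_3\geq(\lambda_1\lambda_2\lambda_3)^{1/3}\gg|\y|^{2/3}$, so $\lambda_1\lambda_2\ll|\y|^{4/3}$ and hence
\[
\sum_{|\y|\leq Y}\frac{X^2}{\lambda_1\lambda_2}\gg X^2\sum_{|\y|\leq Y}|\y|^{-4/3}\gg X^2Y^{8/3},
\]
which exceeds every term of $X^3Y^2+X^5Y^{2/3}+X^{2+\ve}Y^{2+\ve}$ once $Y\gg X^{3/2}$ --- and that regime is exactly where Lemma~\ref{lem:stage1} is invoked (e.g.\ $M_3(L,2Y)$ with $L\ll Y^{2/3}$ inside Lemma~\ref{lem:E(Y)}, or $N_5(4\eta^2B;X)$ for small $X$ inside Lemma~\ref{N56}). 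Thus the crude Schmidt sum is dominated by degenerate shortest vectors (abundant when $\prod y_i=0$, or when two $y_i$ nearly cancel) that the hypothesis excludes but the lattice count does not, and your ``small $\lambda_1$'' exchange-of-summation rescue needs a bound on $M_1(\x;Y)$ sharper than $O_\ve(Y^{2+\ve})$ for short $\x$, which is exactly the content of Lemma~\ref{t:upper}. Finally, your sketch for the third estimate --- an anisotropic geometry-of-numbers count with H\"older on the box shape --- is not a recognizable form of the paper's exponential-sum argument and offers no route to the $X^4Y$ term inside the braces, which in the paper comes from the $x(y-y')(y+y')=0$ degeneracies in Lemma~\ref{lem:Lk}.
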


The principal tool that we will use is the authors'
result \cite[Thm.~1.1]{conics}. To state this we
introduce the arithmetic function
\begin{equation}\label{eq:phid}
\varpi(m)=\prod_{p\mid m}(1+p^{-1}),
\end{equation}
along with the notation
\begin{equation}\label{eq:Delta}
\Dbad(\x) = \prod_{\substack{p^e\| x_1\dots x_4 \\ e\geq 2}} p^e
\end{equation}
for $\x\in(\ZZ_{\not=0})^4$. We then have the following.

\begin{lemma}\label{t:upper}
Let $\x\in (\ZZ_{\not=0})^4$ and $\Dbad(\x)\le Y^{1/20}$. Then 
\[M_1(\x;Y)\ll \varpi(\Delta(\x))\Dbad(\x)^{1/3}
\left(\frac{|\x|^4}{|\Delta(\x)|}\right)^{5/8} 
L(\sigma_Y,\chi)\left(Y^{4/3}+\frac{Y^2}{|\Delta(\x)|^{1/4}}\right),\]
where $\sigma_Y=1+\frac{1}{\log Y}$ 
and $\chi(n)$ is the Dirichlet character defined by taking $\chi(2)=0$ and
\[\chi(p)=\left(\frac{\Delta(\x)}{p}\right)\]
for odd primes $p$. The implied constant in this estimate is absolute.
\end{lemma}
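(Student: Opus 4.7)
The statement is presented as a direct application of the authors' earlier result [conics, Thm~1.1] on counting integer zeros of diagonal quaternary quadratic forms, so my plan is to verify that $M_1(\x;Y)$ fits exactly into that framework and to match the hypothesis $\Dbad(\x)\le Y^{1/20}$ with whatever auxiliary condition is required there. The object is the quadric surface $x_1 y_1^2+\dots+x_4 y_4^2=0$ in $\PP^3$, and we are counting primitive integer zeros $\y\in\ZZp^4$ of sup-norm at most $Y$; the condition $\Dbad(\x)\le Y^{1/20}$ should ensure that the non-squarefree part of the discriminant is subpolynomial in $Y$, which is the natural regime in which determinant-method estimates for integral points on quadrics remain effective.

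The approach underlying [conics] is to slice the quadric by fixing two of the coordinates, say $(y_3,y_4)$, reducing the count to a family of conics $x_1 y_1^2+x_2 y_2^2=-x_3 y_3^2-x_4 y_4^2$ in the remaining variables. For each such conic the number of integer solutions of bounded height is controlled by a divisor-type function on the right-hand side, whose average over $(y_3,y_4)$ is governed by the truncated Dirichlet $L$-value $L(\sigma_Y,\chi)$ attached to the Kronecker symbol $\left(\frac{\Delta(\x)}{\cdot}\right)$; this is the source of the $L(\sigma_Y,\chi)$ factor. The multiplicative factor $\varpi(\Delta(\x))$ accounts for the local densities at primes dividing the coefficients, and $\Dbad(\x)^{1/3}$ absorbs the loss at primes where $\Delta(\x)$ is non-squarefree. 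The split $Y^{4/3}+Y^2/|\Delta(\x)|^{1/4}$ separates a ``trivial'' contribution, coming from degenerate slices (low-rank conics and small solutions), from the expected main term of size $Y^2$ divided by the square root of the discriminant; the extra factor $(|\x|^4/|\Delta(\x)|)^{5/8}$ encodes the imbalance of $\x$ and arises when optimising the determinant-method trade-off against the elementary slicing bound.

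The main obstacle in proving such a bound from scratch is obtaining the exponent $5/8$ on the geometric imbalance factor. A naive slicing combined with a standard divisor-function estimate yields only a weaker exponent; the sharper bound requires Heath-Brown's determinant method applied to the conic slices, balanced against the elementary count when the conic discriminant is small. The hypothesis $\Dbad(\x)\le Y^{1/20}$ is precisely what allows this optimisation to close, since without such control the wildly ramified primes in the coefficient structure would dominate. Granting [conics, Thm~1.1] as a black box, however, the proof should reduce to checking the hypotheses and reading off the bound in the normalisation used here, so I would expect the write-up to be essentially a short translation argument.
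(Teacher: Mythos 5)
Your proposal matches the paper's treatment: Lemma~\ref{t:upper} is simply quoted from \cite[Thm.~1.1]{conics}, treated as a black box, and the paper's entire justification is the one-line remark that the factor $\prod_{p\le Y}(1+\chi(p)/p)$ appearing there may be replaced by $L(\sigma_Y,\chi)$ up to an absolute constant, so the ``short translation argument'' you anticipate is exactly what is done. Your sketch of the internal mechanism of \cite{conics} (conic slicing, determinant method, the source of the exponent $5/8$) is neither needed nor reproduced in the paper.
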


In fact \cite[Thm.~1.1]{conics} records an upper bound with $\prod_{p\leq Y} (1+\chi(p)/p)$ in 
place of $L(\sigma_Y,\chi)$. However, the argument given in \cite[page 3]{conics} ensures that the 
product  is at most an absolute constant multiple of $L(\sigma_Y,\chi)$.

In order to apply 
Lemma \ref{t:upper} 
to Lemma \ref{lem:stage1} we will need to
handle vectors with $\Dbad(\x)> Y^{1/20}$ via a separate auxiliary
bound.  Indeed various auxilliary bounds will be used elsewhere in the
proof of Theorem \ref{t:main}, and it is therefore natural to begin this
section by dealing with these.

\subsection{Auxiliary upper bounds}

We begin by recording a uniform upper bound for the counting function
for rational points on quadric surfaces.
The following result is due to  Heath-Brown \cite[Thm.~2]{annal}.

\begin{lemma}\label{ann}
For any irreducible quadratic form $Q(\y)\in\ZZ[y_1,\ldots,y_4]$ and
any $\ve>0$ we have
\[\#\left\{ \y\in\ZZ^4: |\y|\leq Y, ~ Q(\y)=0  \right\}\ll_{\ve}Y^{2+\ve}.\]
\end{lemma}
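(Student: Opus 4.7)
The plan is to apply Heath-Brown's $p$-adic determinant method to obtain a uniform-in-$Q$ bound of the form $O_\ve(Y^{d+\ve})$ on integer points on the quadric surface $V = \{Q = 0\} \subset \PP^3$, which has dimension $d = 2$; summing the affine lifts over primitive projective solutions then recovers the claimed bound on integer points in $\ZZ^4$ of size $\leq Y$.

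At the heart of the argument, one partitions the integer zeros of $Q$ with $|\y| \leq Y$ into $p$-adic residue classes modulo a suitably chosen prime power $p^k$. Within each class one constructs an auxiliary integer polynomial $G$ of bounded degree $D$ vanishing on all solutions in that class, via a pigeonhole argument on the space of monomials of degree $\leq D$ (a space of dimension $\binom{D+4}{4} = O(D^4)$). The key determinant estimate bounds the $p$-adic valuation of every suitable minor of the matrix of monomial evaluations from below, forcing a non-trivial integer null vector whose coefficients one reads off as those of $G$.

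Once $G$ is in hand, the integer solutions in the residue class lie on the intersection $V \cap \{G = 0\}$, a curve of degree $\leq 2D$ by Bezout; standard bounds for integer points on irreducible curves of bounded degree in $\PP^3$ then yield $O_{D, \ve}(Y^{1+\ve})$ such solutions. Summing over the $O(p^{2k})$ residue classes that support at least one solution, and optimising $D$, $p$, $k$ in terms of $Y$, produces the overall count $O_\ve(Y^{2+\ve})$ with an absolute implied constant.

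The principal technical obstacle, and the reason the result requires genuine work rather than a direct geometric argument, is ensuring that the auxiliary polynomial $G$ is coprime to $Q$: otherwise $V \subseteq \{G = 0\}$ and the Bezout step collapses. This is arranged by a delicate analysis of the $p$-adic geometry of the stratification, and extra care is required in the degenerate cases where $Q$ has rank $2$ or $3$ (so that $V$ is reducible over $\bar\QQ$, being a pair of conjugate planes or a cone over a plane conic). The detailed execution of all of this is carried out in \cite[Thm.~2]{annal}, to which I would ultimately appeal for the precise argument.
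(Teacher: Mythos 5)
Your proposal appeals to exactly the same result as the paper, namely Heath-Brown's \cite[Thm.~2]{annal}, and the paper offers no proof beyond that citation, so the two are identical in substance. Two small slips in your sketch: a rank-3 quadric in $\PP^3$ is a cone over a smooth plane conic and hence irreducible over $\bar\QQ$ (the reducibility over $\bar\QQ$ you describe applies only in the rank-2 case), and the cited theorem is in fact stated under a rank $\ge 3$ hypothesis rather than irreducibility over $\QQ$, so the rank-2 irreducible case requires the separate but trivial observation that all rational zeros of $Q$ then lie in its $2$-dimensional radical; in the paper's applications the rank is always $4$, so this never actually arises.
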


We next examine two counting problems involving fewer
than 4 terms.

\begin{lemma}
\label{lem:2}
Let $X,Y \geq 1$. Then 
\begin{align*}
\#\{(x_1,y_1,x_2,y_2)\in (\ZZ_{\neq 0})^4: |x_i|\leq X, ~&|y_i|\leq
Y, ~x_1y_1^2 =x_2y_2^2\} =O(XY).
\end{align*}
\end{lemma}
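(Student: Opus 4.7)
The plan is to parametrize solutions of $x_1y_1^2 = x_2y_2^2$ by pulling out the greatest common divisor of $y_1$ and $y_2$. Set $d = \gcd(|y_1|,|y_2|)$ (a positive integer) and write $y_1 = \epsilon_1 d a$, $y_2 = \epsilon_2 d b$ with $a,b \ge 1$, $\gcd(a,b) = 1$, and $\epsilon_i \in \{\pm 1\}$. Substituting into $x_1y_1^2 = x_2y_2^2$ gives $x_1 a^2 = x_2 b^2$, and the coprimality of $a$ and $b$ forces $b^2 \mid x_1$ and $a^2 \mid x_2$. Writing $x_1 = b^2 u$ and $x_2 = a^2 u'$ and comparing, one finds $u = u'$, so each solution corresponds uniquely to a tuple $(a,b,d,u,\epsilon_1,\epsilon_2)$ with $a,b,d \ge 1$, $\gcd(a,b)=1$, $u \in \ZZ_{\ne 0}$, and $\epsilon_i = \pm 1$.

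The constraints $|y_i| \le Y$ and $|x_i| \le X$ translate into $d\,\max(a,b) \le Y$ and $|u|\max(a,b)^2 \le X$. Summing over the parameters (the four sign choices contributing a harmless constant factor), the count is bounded above by
\[
\ll \sum_{\substack{a,b \ge 1 \\ \gcd(a,b)=1}}
\frac{Y}{\max(a,b)} \cdot \frac{X}{\max(a,b)^2},
\]
where the contribution vanishes unless $\max(a,b) \le \min(Y,\sqrt{X})$. Grouping pairs by $m = \max(a,b)$, there are $O(m)$ coprime pairs with that maximum, yielding
\[
\ll XY \sum_{m \ge 1} \frac{m}{m^3} \ll XY,
\]
as required.

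The whole argument is essentially mechanical once the parametrization is in hand; the only conceptual step is recognizing that the gcd reduction on the $y_i$ automatically forces a matching factorization of the $x_i$ via coprimality. I do not anticipate any genuine obstacle, though some care is needed to ensure that signs and the hypothesis that all four variables are nonzero are correctly accounted for so that the map from tuples to solutions is bijective rather than merely surjective onto a superset.
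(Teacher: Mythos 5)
Your proof is correct and uses the same core parametrization as the paper: extract the gcd $d$ of $y_1,y_2$, use coprimality of the reduced pair $(a,b)$ to force $x_1=ub^2$, $x_2=ua^2$, and sum over the remaining parameters. The only difference is in the final bookkeeping — the paper sums over both gcds $h=(x_1,x_2)$ and $k=(y_1,y_2)$ and invokes the inequality $\min(a,b)^2\le a^{4/3}b^{2/3}$ to make those sums converge, whereas you sum the product $\frac{Y}{\max(a,b)}\cdot\frac{X}{\max(a,b)^2}$ directly over coprime $(a,b)$ grouped by $\max(a,b)$, which is a slightly cleaner organization of the same count.
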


\begin{proof}
We extract common factors $h=(x_1,x_2)$ and $k=(y_1,y_2)$. 
For each $h$ and $k$ we are left with counting non-zero integers
$x_i',y_i'$ with  
$|x_i'|\leq X/h$ and $|y_i'|\leq Y/k$, such that
$x_1'=\pm{y_2'}^2$ and $x_2'=\pm{y_1'}^2$.  
The number of such integers is 
\[\ll \left(\min\left\{\sqrt{X/h}, Y/k\right\}\right)^2\le
(\sqrt{X/h})^{4/3}(Y/k)^{2/3}.\]
Summing over $h\leq X$ and $k\leq Y$ gives $O(XY)$ as required.
\end{proof} 

\begin{lemma}\label{l3}
Let $X,Y,U,V\ge 1$ with $XY^2=UV^2$.  Define
$T(X,Y,U,V)$ to be the number of solutions 
$\x,\y\in(\ZZ_{\not=0})^3$ of the equation
\[x_1y_1^2+x_2y_2^2+x_3y_3^2=0,\]
with 
\[|x_1|,|x_2|\le X,\;|x_3|\le U,\;|y_1|,|y_2|\le Y,\; |y_3|\le V. \]
Then $T(X,Y,U,V)\ll_{\ep}XUVY^{\ep}$ for any  $\ep>0$.
\end{lemma}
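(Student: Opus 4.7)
My plan is to count $T(X,Y,U,V)$ by fixing $(y_1,y_2,y_3)$ and summing the number of $(x_1,x_2,x_3)$ on the resulting rank-$2$ sublattice of $\ZZ^3$. For each non-zero triple with $|y_1|,|y_2|\le Y$ and $|y_3|\le V$, the equation $x_1 y_1^2+x_2 y_2^2+x_3 y_3^2=0$ cuts out a $2$-dimensional sublattice $L=L(y_1,y_2,y_3)\subset \ZZ^3$ of covolume $\|(y_1^2,y_2^2,y_3^2)\|/\gcd(y_1,y_2,y_3)^2$, and the task reduces to bounding $\sum_{y}\#(L\cap B^*)$, where $B^*=[-X,X]^2\times[-U,U]$ minus the coordinate axes. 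By the symmetry of the equation in the three pairs $(x_i,y_i)$, I may (and will) assume $V\le Y$.

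To bound $\#(L\cap B^*)$ I would slice by fixed $x_3\ne 0$ with $|x_3|\le U$. For each such $x_3$, the pair $(x_1,x_2)$ lies on the affine line $x_1 y_1^2+x_2 y_2^2=-x_3 y_3^2$ in $\RR^2$. Writing $g=\gcd(y_1,y_2)$, this line has integer points iff $g^2\mid x_3 y_3^2$, and in that case standard lattice counting gives at most $1+2X g^2/\max(y_1^2,y_2^2)$ points in $[-X,X]^2$. Moreover, setting $c=\gcd(y_1,y_2,y_3)$ one has $g^2/c^2\mid x_3$, so the number of admissible $x_3$ is $\ll 1+Uc^2/g^2$. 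Multiplying these two bounds yields four summands in a bound for $\#(L\cap B^*)$.

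The main contribution comes from the term $4XU c^2/\max(y_1^2,y_2^2)$; summing over $(y_1,y_2,y_3)$ using the coprime parameterization $y_i=c u_i$ together with $\sum_{(u_1,u_2)\text{ coprime}} 1/\max(u_1^2,u_2^2)\ll \log(Y/c)$ and the divisor bound $\sum_{c\mid y_3}1\ll y_3^\epsilon$, this produces the expected term $\ll XU V\cdot V^{\epsilon}\le XUV Y^\epsilon$ after invoking the balance $XY^2=UV^2$. The other three summands give lower-order contributions that need to be bounded against $XUVY^\epsilon$.

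The main obstacle is controlling the cross-term coming from the two "$+1$" factors, which naively sums to $O(Y^2V)$ and exceeds the target when $Y^2>X$. The key observation is that a "$+1$" in the line-count occurs only when the line actually meets the box $[-X,X]^2$, which forces the extra size restriction $|x_3|\le 2X\max(y_1^2,y_2^2)/y_3^2$. Incorporating this geometric constraint replaces the $2U$ factor by $2\min\bigl(U,\,X\max(y_1^2,y_2^2)/y_3^2\bigr)$, and using $\min(a,b)\le\sqrt{ab}$ together with the $1/y_3$-type sums and elementary $\gcd$ averages produces a final saving that reduces this cross-term to $\ll XUVY^\epsilon$ as well. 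Carrying out these divisor-function estimates carefully, and collecting all four summands, completes the proof.
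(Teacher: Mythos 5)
Your plan --- fix $\y$ and count $\x$ on the rank-two sublattice $L(\y)\subset\ZZ^3$ --- is essentially the route the paper takes to obtain its estimate \eqref{b1}, namely $T^*(X,Y,U,V)\ll Y^2V+XUV$ for primitive pairs. That bound is $\ll XUV$ only in the regime $Y^2\le X$, and for $Y^2>X$ the paper switches to an entirely different argument: fix $\x$ and count the primitive $\y$ on the resulting conic via \cite[Cor.~2]{BHB}. This is a non-elementary input which lattice slicing does not reproduce, and your proposal has no substitute for it.

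The patch you sketch for the cross-term does not close the gap. The restriction $|x_3|\le 2X\max(y_1^2,y_2^2)/y_3^2$ is indeed necessary for the affine line to meet $[-X,X]^2$, but it is very far from sufficient, and the resulting term
\[
\frac{c^2}{g^2}\,\min\!\Bigl(U,\ \frac{X\max(y_1^2,y_2^2)}{y_3^2}\Bigr)
\]
can have a $\y$-sum much larger than $XUV$. Take, say, $X=P$, $Y=P^{3/2}$, $U=P^4$, $V=1$, so $XY^2=UV^2=P^4$ and in fact $T\asymp P^5\asymp XUV$. Here $y_3=\pm1$, $c=1$, and $U\ge X\max(y_1^2,y_2^2)$, so the displayed term equals $X\max(y_1^2,y_2^2)/g^2$; writing $y_i=gw_i$ with $(w_1,w_2)=1$ this is $X\max(w_1^2,w_2^2)$, and summing over $|y_1|,|y_2|\le P^{3/2}$ gives $\asymp XY^4=P^7$, larger than the target by $P^2$. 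The geometric-mean device $\min(a,b)\le\sqrt{ab}$ produces the same order $\sqrt{UX}\,Y^3\asymp P^7$ here, so it cannot rescue the argument. One should also note that the opening reduction ``by symmetry assume $V\le Y$'' is not available as stated, since the lemma ties the bounds for the first two pairs together; to permute the roles of the pairs you would first need to prove a version with three independent pairs of box sizes.
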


\begin{proof}
For the proof we first estimate the quantity $T^*(X,Y,U,V)$ which
counts pairs of vectors $\x,\y$ as for $T(X,Y,U,V)$, but with the added
restrictions that $\x$ and $\y$ should be primitive.
According to Heath-Brown \cite[Lemma 3]{HB} there are
\begin{align*}
&\ll 1+\frac{X^2U}{\max(Xy_1^2,Xy_2^2,Uy_3^2)} \\
  &\ll 1+X^2U(Xy_1^2)^{-1/3}(Xy_2^2)^{-1/3}(Uy_3^2)^{-1/3} 
\end{align*}
primitive solutions $\x$ for each primitive $\y$.  Summing over $\y$
and using the relation $XY^2=UV^2$ then yields
\beql{b1}
T^*(X,Y,U,V)\ll Y^2V+XUV.
\eeq
In particular $T^*(X,Y,U,V)\ll XUV$ if $Y^2\le X$.

Alternatively we may
use Corollary 2 of Browning and Heath-Brown \cite{BHB}. This shows that for any
given $\x\in(\ZZ_{\not=0})^3$ there are
\[\ll_{\ep} \left\{1+
\left(\frac{Y^2V(x_1x_2,x_1x_3,x_2x_3)^{3/2}}{|x_1x_2x_3|}\right)^{1/3}\right\}
(XY)^{\ep}\]
corresponding primitive $\y$. Let $n=|x_1x_2x_3|$ and
$d=(x_1x_2,x_1x_3,x_2x_3)$, so that $d^3\mid n^2$.
Summing over the $\x$ then gives an estimate
\begin{align*}
  T^*(X,Y,U,V)&\ll_{\ep}X^2U
  (XY)^{\ep}
+(Y^2V)^{1/3}(XY)^{2\ep}\sum_{d\le X^2U}
 d^{1/2} \sum_{\substack {n\le X^2U\\ d^3\mid n^2}}n^{-1/3}\\
&\ll_{\ep}X^2U
  (XY)^{\ep} 
+(Y^2V)^{1/3}(XY)^{2\ep}\sum_{d\le X^2U}
   d^{1/2}(X^2U)^{2/3}d^{-3/2}\\
&\ll_{\ep} \{X^2U+(Y^2V)^{1/3}(X^2U)^{2/3}\}(XY)^{3\ep}\\
&=\{X^2U+XUV\}(XY)^{3\ep}.
\end{align*}
In particular
$T^*(X,Y,U,V)\ll \{X^2U+XUV\}Y^{9\ep}$
if $Y^2\ge X$, and by our remark above the same is true in the
alternative case $Y^2\le X$ as well.

Comparing this bound with (\ref{b1}) shows that
\[T^*(X,Y,U,V)\ll_{\ep}\{\min(X^2U,Y^2V)+XUV\}Y^{9\ep}\]
whether $Y^2\ge X$ or not.
However
\[\min(X^2U,Y^2V)\le (X^2U)^{2/3}(Y^2V)^{1/3}=XUV,\]
whence
\[T^*(X,Y,U,V)\ll_{\ep}XUVY^{9\ep}.\]
We then find that
\begin{align*}
  T(X,Y,U,V)&\le \sum_{d\le X}\sum_{e\le Y}T^*(X/d,Y/e,U/d,V/e)\\
  &\ll_{\ve}\sum_{d\le X}\sum_{e\le Y}XUVY^{9\ep}d^{-2}e^{-1-9\ve}\\
  &\ll_{\ve} XUVY^{9\ep}.
\end{align*}
The lemma now follows on redefining $\ve$.
\end{proof}

Lemmas \ref{lem:2} and \ref{l3} have the following corollary.
\begin{lemma}\label{lem:xz}
  Let $X,Y\ge 1$.  Then the number of solutions $\x\in\ZZ^4$,
  $\y\in\ZZp^4$ of the equation $F(\x;\y)=0$, lying in the region
  $|\x|\le X$, $|\y|\le Y$, and satisfying the constraint
  that $\prod_{i=1}^4 x_iy_i=0$, is $O_{\ve}(X^3Y^{1+\ve}+Y^4)$, for any
  $\ve>0$.
  \end{lemma}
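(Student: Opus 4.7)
The strategy is to split the count according to which coordinate of $(\x,\y)$ vanishes. By the symmetry of $F$ under simultaneously permuting the indices of $\x$ and $\y$, it suffices (up to a constant factor) to bound two cases: (I) the contribution from $y_4=0$, and (II) the contribution from $x_4=0$ together with $y_j\neq 0$ for all $j$. In both cases the equation $F=0$ reduces to the ternary relation $x_1 y_1^2+x_2 y_2^2+x_3 y_3^2=0$, with either $x_4\in[-X,X]$ (Case I) or $y_4\in[-Y,Y]\setminus\{0\}$ (Case II) acting as a free parameter contributing a factor $O(X)$ or $O(Y)$ respectively.

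Case I is further subdivided by the number of nonzero entries of $\y'=(y_1,y_2,y_3)$, which is constrained to be a primitive triple since $y_4=0$. If $\y'=\pm e_i$, the equation forces $x_i=0$, leaving the three remaining components of $\x$ free and giving $O(X^3)$. If $\y'$ has exactly two nonzero entries $(a,b,0)$ with $\gcd(a,b)=1$ and $ab\neq 0$, the binary equation $a^2 x_1+b^2 x_2=0$ is parametrised by $(x_1,x_2)=t(b^2,-a^2)$ with $|t|\le X/\max(a,b)^2$; summing over $(a,b)$ and incorporating the free $x_3,x_4$ yields $O(X^3\log Y)$. When all three entries of $\y'$ are nonzero and $x_1 x_2 x_3\neq 0$, I apply Lemma \ref{l3} with $U=X$ and $V=Y$ (so that $XY^2=UV^2$) to bound the six-variable count by $O(X^2 Y^{1+\ve})$; multiplying by $O(X)$ for $x_4$ then gives $O(X^3 Y^{1+\ve})$. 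The degenerate subcases in which $\y'$ is full but some $x_i=0$ reduce to the binary equation and give $O(X^2 Y^2+XY^3)$ via Lemma \ref{lem:2}; both are $\ll X^3 Y+Y^4$ by the elementary AM-GM bounds $X^2 Y^2\le X^3 Y+Y^4$ and $X Y^3\le X^3 Y+Y^4$.

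Case II proceeds by the same outline, with $y_4$ replacing $x_4$ as the free parameter. The dominant subcase (all entries of $(\x',\y')$ nonzero) yields $O(X^2 Y^{1+\ve})\cdot O(Y)=O(X^2 Y^{2+\ve})$ via Lemma \ref{l3}. The trivial subcase $\x'=\0$ (whence $\x=\0$) contributes $O(Y^4)$ from the count of primitive $\y$, and the intermediate subcases with some $x_i=0$ contribute $O(X^2 Y^2+XY^3)$ via Lemma \ref{lem:2}. The main obstacle is to absorb the $O(X^2 Y^{2+\ve})$ bound into the target $X^3 Y^{1+\ve}+Y^4$: by weighted AM-GM one has $X^2 Y^2=(X^3 Y)^{2/3}(Y^4)^{1/3}\le X^3 Y+Y^4$, whence $X^2 Y^{2+\ve}\ll Y^\ve(X^3 Y+Y^4)$; after combining with the other subcases and redistributing the residual $Y^\ve$ factor into the $X^3 Y^{1+\ve}$ term (shrinking the $\ve$ used in Lemma \ref{l3} to compensate), one arrives at the claimed bound $O_\ve(X^3 Y^{1+\ve}+Y^4)$. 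Summing over the four symmetric choices of which coordinate vanishes completes the proof.
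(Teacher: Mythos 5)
Your decomposition into cases by which coordinate of $(\x,\y)$ vanishes, followed by applications of Lemmas~\ref{lem:2} and~\ref{l3}, is essentially the same strategy as the paper's, which instead organizes the argument by how many of the products $x_iy_i$ vanish (exactly one, exactly two, or all four; three is impossible). The key lemmas coincide and the two organizations are equivalent. However, two details in your write-up are not quite right, though both are reparable.

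First, in the Case~I subcase where $\y'=(a,b,0)$ has exactly two nonzero entries, the equation $a^2x_1+b^2x_2=0$ always admits $t=0$, so the number of admissible $t$ is $O(1+X/\max(a,b)^2)$, not merely $O(X/\max(a,b)^2)$. Summing the extra $O(1)$ over the $O(Y^2)$ pairs $(a,b)$ and multiplying by the $O(X^2)$ free choices of $(x_3,x_4)$ contributes $O(X^2Y^2)$, which your stated $O(X^3\log Y)$ misses; this is harmless since $X^2Y^2\le X^3Y+Y^4$, but it needs to be acknowledged. Second, and more substantively, the absorption of $X^2Y^{2+\ve}$ cannot be achieved by ``redistributing the residual $Y^\ve$ factor'': from $X^2Y^{2+\ve}\ll X^3Y^{1+\ve}+Y^{4+\ve}$, the term $Y^{4+\ve}$ is not $O(Y^4)$, nor is it $O(X^3Y^{1+\ve'})$ when $X$ is small compared with $Y$. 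The correct argument is a case split: if $X\ge Y^{1-\ve/2}$ then $X^2Y^{2+\ve}\le X^3Y^{1+3\ve/2}$, while if $X\le Y^{1-\ve/2}$ then $X^2Y^{2+\ve}\le Y^{2-\ve}\cdot Y^{2+\ve}=Y^4$; hence $X^2Y^{2+\ve}\le\max(X^3Y^{1+3\ve/2},Y^4)$, which is the inequality the paper invokes (in the form $X^2Y^{2+\ve}\le\max(X^3Y^{1+2\ve},Y^4)$) before redefining $\ve$. With these two corrections your argument is complete.
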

\begin{proof}
Suppose firstly that exactly one of the products $x_iy_i$ vanishes,
  $x_4y_4=0$ say. Then there are $O(X+Y)$ choices for $x_4$ and
$y_4$, and $O_{\ve}(X^2Y^{1+\ve})$ choices for the remaining
  variables, by Lemma \ref{l3}. Thus the total contribution is
  $O_{\ve}(X^3Y^{1+\ve}+X^2Y^{2+\ve})$. This is satisfactory
  for the lemma after redefining $\ve$, since
  $X^2Y^{2+\ve}\le\max(X^3Y^{1+2\ve},Y^4)$.

  Suppose next that exactly two of the terms $x_iy_i$ vanish,
  $x_3y_3=x_4y_4=0$, say. There are then $O(X^2+Y^2)$ choices for
  $x_3,y_3,x_4$ and $y_4$. Moreover there are $O(XY)$ choices for the
  remaining variables, by Lemma \ref{lem:2}. We therefore have a total
  $O(X^3Y+XY^3)$, which is satisfactory since $XY^3\le\max(X^3Y,Y^4)$.

  It is not possible for exactly three terms $x_iy_i$ to vanish, when
  $F(\x;\y)=0$, so the only remaining case is that in which $x_iy_i=0$
  for every index $i$. Since $\y$ is primitive it cannot vanish, and
  hence there are $O(X^3Y+Y^4)$ possibilities in this situation.  This
  completes the proof of the lemma. 
\end{proof}

The final case to consider is that in which the product
$x_1\dots x_4$ is non-zero but has a large square divisor. Let
\[\Dbad(\x)=\prod_{\substack{p^e\| x_1\dots x_4\\ e\geq 2}} p^e,\]
as in \eqref{eq:Delta}. The  following result shows that vectors $\x$
with a large value of $\Dbad(\x)$ make a small 
contribution to $M_3(X,Y)$.

\begin{lemma}\label{lem:bad_coeffs}
Let $D\geq 1$ and let $\ve>0.$ Then 
\[\sum_{\substack{\x\in (\ZZ_{\neq 0})^4 \\ |\x|\leq X\\ \Dbad(\x)\geq D}}
M_1(\x;Y)\ll_\ve (XY)^\ve\left\{ \frac{X^{3}Y^{2}}{D^{1/24}}+X^4Y\right\}.\]
\end{lemma}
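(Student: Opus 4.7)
The plan is to partition the sum over $\x$ by the size of $\Dbad(\x)$ relative to the cut-off $Y^{1/20}$ imposed by Lemma~\ref{t:upper}: for $\Dbad(\x)\leq Y^{1/20}$ I would apply Lemma~\ref{t:upper} directly, and for $\Dbad(\x)>Y^{1/20}$ I would rely on the uniform bound of Lemma~\ref{ann}. In the first range, I would perform a joint dyadic decomposition $\Dbad(\x)\sim D'$ (with $D\leq D'\leq Y^{1/20}$) and $|\Delta(\x)|\sim\delta$ (with $D'\leq\delta\leq X^4$, using $\Dbad(\x)\mid|\Delta(\x)|$), hinging on the counting estimate
\[
\#\{\x\in\ZZ^4:|\x|\leq X,\ \Dbad(\x)\sim D',\ |\Delta(\x)|\sim\delta\}\ll_\ve \frac{\delta}{\sqrt{D'}}(XY)^\ve,
\]
obtained by writing $\Delta(\x)=\pm dm$ with $d\sim D'$ powerful ($O(\sqrt{D'})$ such) and $m\sim\delta/D'$ squarefree coprime to $d$ ($O(\delta/D')$ such), combined with the divisor bound $\#\{\x:\Delta(\x)=\pm dm\}\ll(dm)^\ve$. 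Inserting Lemma~\ref{t:upper}'s bound (with $\varpi(\Delta(\x))$ and $L(\sigma_Y,\chi)\ll(XY)^\ve$) and performing the geometric summations in $\delta$ and $D'$ yields a contribution $\ll(XY)^\ve(X^3Y^2+X^4Y^{4/3})/D^{1/6}$.

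In the second range, Lemma~\ref{ann} gives $M_1(\x;Y)\ll_\ve Y^{2+\ve}$, and the count $\#\{\x:|\x|\leq X,\ \Dbad(\x)\geq D''\}\ll_\ve X^{4}(XY)^\ve/\sqrt{D''}$ (obtained by summing $X^4\tau(d)^C/d$ over powerful $d\geq D''$); taking $D''=\max(D,Y^{1/20})$ produces a contribution $\ll(XY)^\ve X^4 Y^2/\sqrt{D''}$. The $X^4 Y$ term in the target is naturally explained by the contribution of $\y\in\ZZp^4$ having some coordinate zero: fixing the vanishing index reduces the problem to counting primitive zeros of a ternary quadratic form, bounded by $O_\ve(Y^{1+\ve})$ via Lemma~\ref{l3}, and summing trivially over the $\ll X^4$ choices of $\x$ produces $X^4Y^{1+\ve}$.

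The main obstacle will be the delicate interpolation needed so that these contributions combine into $(XY)^\ve(X^3Y^2/D^{1/24}+X^4Y)$. The weakening from the exponent $1/6$ (obtained in the first regime alone) to the claimed $1/24$ arises precisely from the need to absorb both the second-regime remainder $X^4 Y^2/\sqrt{D''}$ and the extra $X^4Y^{4/3}/D^{1/6}$ from the first. Carrying this out requires careful bookkeeping throughout the dyadic decomposition---in particular tracking the $\Dbad(\x)^{1/3}$ factor in Lemma~\ref{t:upper} against the sparsity of $\x$ with large powerful part, as well as the $(|\x|^4/|\Delta(\x)|)^{5/8}$ and $Y^2/|\Delta(\x)|^{1/4}$ factors---and is the technical heart of the argument.
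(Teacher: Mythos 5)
Your approach differs fundamentally from the paper's, and unfortunately it cannot be made to work. The paper does \emph{not} split on whether $\Dbad(\x)$ exceeds $Y^{1/20}$. Instead it observes that $\Dbad(\x)\ge D$ forces either $(x_i,x_j)\ge D^{1/24}$ for some pair $i\neq j$, or $d^2\mid x_i$ for some $i$ and some $d\ge D^{1/24}$, and it then bounds the resulting count via the Fourier-analytic integrals $I_1(d)=\int_0^1 S_d(\alpha)^2S_1(\alpha)^2\d\alpha$ and $I_2(d)=\int_0^1 S_{d^2}(\alpha)S_1(\alpha)^3\d\alpha$, applying Cauchy--Schwarz and the ad hoc counting Lemma~\ref{lem:Lk}. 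The key point is that the divisibility constraint $d\mid x$ inside the exponential sum saves a full factor of $d$, which is what produces the clean $(X^3Y^2+X^4Y)/D^{1/24}$ shape. By contrast, your two ingredients --- the pointwise bound from Lemma~\ref{t:upper} and the crude $M_1(\x;Y)\ll Y^{2+\ve}$ from Lemma~\ref{ann} --- are strictly weaker in aggregate than what $\int_0^1|S_1|^4\d\alpha\ll(XY)^\ve(X^3Y^2+X^4Y)$ encodes.

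Concretely, neither of your two regimes is bounded by the target. In the regime $\Dbad(\x)\le Y^{1/20}$ your dyadic computation (which as far as it goes is carried out correctly) yields $(XY)^\ve\,D^{-1/6}(X^3Y^2+X^4Y^{4/3})$, and the $X^4Y^{4/3}$ piece dominates $X^4Y$ by a factor $Y^{1/3}$. In the regime $\Dbad(\x)>Y^{1/20}$ you obtain $(XY)^\ve X^4Y^2/\sqrt{D''}$ with $D''=\max(D,Y^{1/20})$, which for small $D$ is $X^4Y^{2-1/40+\ve}$. Taking for instance $X=Y^3$ and $D=1$, your two regimes give $\gg Y^{40/3}$ and $\gg Y^{14-1/40}$ respectively, whereas the claimed right-hand side is $\asymp Y^{13}$. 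So the ``delicate interpolation'' you anticipate in the final paragraph is not a matter of careful bookkeeping: the partial bounds genuinely exceed what is being proved, by a power of $Y$ when $X$ is large relative to $Y$. The missing ingredient is precisely the exponential-sum mechanism (the factor $X/d$ or $X/d^2$ extracted in the Cauchy--Schwarz step~\eqref{eq:cs}, together with Lemma~\ref{lem:Lk}), which converts the divisibility condition on $\x$ into a genuine saving in the count rather than merely a sparsity estimate on the set of $\x$.
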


In general, if we write $s(n)$ for the largest square-full divisor of
$n$, then $s(uv)\mid s(u)s(v)(u,v)^2$, as one sees by considering the case
in which $u$ and $v$ are powers of the same prime. Thus
\[s(x_1x_2x_3x_4)\mid
s(x_1)s(x_2)s(x_3)s(x_4)(x_1x_2,x_3x_4)^2(x_1,x_2)^2(x_3,x_4)^2,\]
and since
\[(x_1x_2,x_3x_4)\,\mid\, (x_1,x_3)(x_1,x_4)(x_2,x_3)(x_2,x_4)\]
we see that if $\Dbad(\x)\ge D$ then either $(x_i,x_j)\ge D^{1/24}$
for some pair of indices $i\not= j$, or $s(x_i)\ge D^{1/8}$ for some
$i$. In the latter case $d^2\mid  x_i$ for some $d\ge D^{1/24}$. Hence
\[\sum_{\substack{\x\in (\ZZ_{\neq 0})^4 \\ |\x|\leq X\\ \Dbad(\x)\geq D}} M_1(\x;Y)\leq
\sum_{d\geq D^{1/24}}\sum_{\substack{\x}}M_1(\x;Y),\]
where the 
$\x$-summation is over $\x\in (\ZZ_{\neq 0})^4$ such that $|\x|\leq X$, with 
$d^2\mid x_i$ or $d\mid (x_i,x_j)$, for some choice of distinct
indices $i,j\in \{1,2,3,4\}$.

For any $k\in \NN$ we write 
\begin{equation}\label{eq:Salpha}
  S_k(\alpha)=S_k(\alpha;X)
  =\sum_{\substack{0<|x|\le X\\ k\mid x}}\sum_{|y|\le Y}e(\alpha xy^2).
\end{equation} 
Then 
\[\sum_{\substack{\x}}M_1(\x;Y)\ll  I_1(d)+ I_2(d),\]
where
\[ I_1(d)=\int_0^1 S_d(\alpha)^2S_1(\alpha)^2\d\alpha
\quad \text{ and }\quad
 I_2(d)=\int_0^1 S_{d^2}(\alpha)S_1(\alpha)^3\d\alpha.\]
We note that $ I_j(d)=0$ unless $d^j\leq X$.  In estimating these
integrals we are led to an auxiliary counting problem, treated in the
following result. 

\begin{lemma}\label{lem:Lk}
Let $k\in \NN$ and let $X,Y\geq 1$.
  Let $L_k(X,Y)$ denote  the number of 
$(x,y,y',x_1,y_1,x_2,y_2)\in \ZZ^7$ such that $k\mid x$, with 
\[0<|x|,|x_1|,|x_2|\leq X, \quad |y|,|y'|,|y_1|,|y_2|\leq Y\]
and
\[x_1y_1^2-x_2y_2^2=x(y-y')(y+y').\]
Then for any $\ve>0$ we have 
\[L_k(X,Y)\ll_\ve (XY)^\ve\left\{ \frac{X^{2}Y^{2}+X^3Y}{k}+kX^2\right\}.\]
\end{lemma}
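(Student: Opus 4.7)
My plan is to fix the quadruple $(x_1,y_1,x_2,y_2)$ first and count the triples $(x,y,y')$ uniformly in $N:=x_1y_1^2-x_2y_2^2$. Since $L_k=0$ when $k>X$, we may assume $k\le X$ throughout.

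Let $\nu(N)$ denote the number of such triples $(x,y,y')$. When $N=0$ the constraint $x\ne 0$ forces $y=\pm y'$, so $\nu(0)\ll XY/k$. When $N\ne 0$, any admissible triple requires $x\mid N$ with $k\mid x$, together with a factorisation $(y-y')(y+y')=N/x$; in particular $\nu(N)=0$ unless $k\mid N$, and in that case the standard divisor bound yields
\[
\nu(N)\ll\sum_{\substack{k\mid x,\, x\mid N\\ 0<|x|\le X}}d(|N/x|)\ll d_3(|N|/k)\ll_\ve (XY)^\ve.
\]

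The $N=0$ contribution to $L_k$ equals $\nu(0)$ times the number of quadruples with $x_1y_1^2=x_2y_2^2$. The subcase $y_1=y_2=0$ contributes $O(X^2)$ such quadruples (the mixed cases $y_1=0$, $y_2\ne 0$ are ruled out by $x_i\ne 0$), whereas the subcase $y_1y_2\ne 0$ contributes $O(XY)$ by Lemma \ref{lem:2}. Multiplying by $\nu(0)\ll XY/k$ yields $O((X^3Y+X^2Y^2)/k)$, matching the first two terms of the target bound.

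For the $N\ne 0$ contribution it suffices to estimate $Q:=\#\{(x_1,y_1,x_2,y_2):|x_i|\le X,\,|y_i|\le Y,\,k\mid N\}$. For each fixed $(y_1,y_2)$, a short local computation shows that the number of $(x_1,x_2)\pmod k$ with $k\mid x_1y_1^2-x_2y_2^2$ equals $k\Delta$, where $\Delta=\gcd(y_1^2,y_2^2,k)$; hence
\[
\#\{(x_1,x_2)\in[-X,X]^2:k\mid N\}\ll X^2\Delta/k+X\Delta+k\Delta.
\]
Summing in $(y_1,y_2)$ reduces matters to $\sum_{|y_1|,|y_2|\le Y}\Delta$. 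Writing this as $\sum_{d\mid k}d\cdot\#\{|y|\le Y:d\mid y^2\}^2$ and using the elementary bound $\#\{|y|\le Y:d\mid y^2\}\ll Y/\sqrt{d}+1$ (since $d\mid y^2$ forces $y$ to lie in an arithmetic progression of common difference at least $\sqrt d$) yields $\sum_{y_1,y_2}\Delta\ll k^\ve(Y^2+k)$. Substituting and using $k\le X$ to absorb the cross terms produces $Q\ll k^\ve(X^2Y^2/k+kX^2)$, so $L_k^{N\ne 0}\ll(XY)^{O(\ve)}(X^2Y^2/k+kX^2)$, and combining with the $N=0$ piece proves the lemma after redefining $\ve$. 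The main technical obstacle is precisely this estimate for $Q$: one must identify the correct local density $\Delta/k$ with the $\gcd$ taken at the level of $y_i^2$ (not $y_i$), control $\sum_{y_1,y_2}\Delta$ cleanly despite the cases $y_i=0$, and then verify that every cross term generated by the modular density computation is absorbed into $X^2Y^2/k$ or $kX^2$ under $k\le X$.
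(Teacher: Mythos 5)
Your argument is correct and reaches the required bound, but for the key local-counting step it takes a genuinely different route from the paper. Both proofs isolate the $N=0$ contribution in the same way (giving $O\bigl((X^2Y^2+X^3Y)/k\bigr)$ via Lemma~\ref{lem:2}) and both use the divisor bound $\tau_3(|N|/k)\ll_\ve(XY)^\ve$ for the triples $(x,y,y')$ once $N\ne 0$. The divergence is in the treatment of the condition $x_1y_1^2\equiv x_2y_2^2\bmod k$: the paper decomposes all four variables into residue classes modulo $k$, reducing matters to the arithmetic function $\rho(k)=\#\{(x_1,x_2,y_1,y_2)\in(\ZZ/k\ZZ)^4:x_1y_1^2\equiv x_2y_2^2\}$, which it then bounds by $O_\ve(k^{3+\ve})$ via multiplicativity and an induction on $\rho(p^e)$; the resulting estimate is $(XY)^\ve\frac{X^2}{k^2}(1+Y/k)^2\rho(k)$. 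You instead keep $(y_1,y_2)\in[-Y,Y]^2$ as genuine integers, compute the exact conditional density of $(x_1,x_2)\bmod k$ as $k\gcd(y_1^2,y_2^2,k)$ (a CRT/prime-power check confirms this), and then bound $\sum_{|y_1|,|y_2|\le Y}\gcd(y_1^2,y_2^2,k)\ll k^\ve(Y^2+k)$ by a direct divisor manipulation using $\#\{|y|\le Y:d\mid y^2\}\ll Y/\sqrt d+1$. Your route avoids the residue-box factor $(1+Y/k)^2$ and sidesteps the $\rho(p^e)$ induction entirely, making the gcd structure behind $\rho$ explicit; it is arguably cleaner and slightly more transparent, though it yields the same final estimate after absorbing cross terms under $k\le X$. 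I verified the absorption of all cross terms ($XY^2$, $kY^2$, $X^2$, $Xk$, $k^2$ into $X^2Y^2/k$ or $kX^2$) and the identity for the local density; the proof is complete.
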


We will establish this in a moment, but first we show how it may be used
to complete the proof of Lemma \ref{lem:bad_coeffs}.  In general we have
\begin{equation}\label{eq:cs}
\begin{split}
|S_d(\alpha)|^2\le ~&
\#\{x\in \ZZ_{\not=0}\cap [-X,X]: d\mid x\} \\
&\hspace{3cm}\times
\sum_{\substack{0<|x|\le X\\ d\mid x}}\sum_{|y|,|y'|\le Y}e(\alpha x(y^2-y'^2)),
\end{split}
\end{equation}
by Cauchy--Schwarz. We therefore deduce from Lemma \ref{lem:Lk} that 
\begin{align*}
\sum_{d\geq D^{1/24}}
 I_1(d)&\leq \sum_{D^{1/24}\leq d\leq X}\frac{2X}{d} L_d(X,Y)\\
&\ll_\ve \sum_{D^{1/24}\leq d\leq X}
(XY)^\ve\left\{ \frac{X^{3}Y^{2}+X^4Y}{d^2}+X^3\right\}\\
&\ll_\ve
(XY)^{\ve}\left\{ \frac{X^{3}Y^{2}+X^4Y}{D^{1/24}}+X^4\right\}.
\end{align*}
This is satisfactory for Lemma \ref{lem:bad_coeffs}.

To handle $ I_2(d)$ we apply the Cauchy--Schwartz inequality, yielding
\beql{tf}
| I_2(d)|^2\leq  
\left( \int_0^1 |S_{d^2}(\alpha)|^2|S_1(\alpha)|^2\d\alpha
\right)\left( \int_0^1 |S_1(\alpha)|^4\d\alpha \right).
\eeq
We proceed as before,  
using \eqref{eq:cs} and Lemma \ref{lem:Lk} to deduce that 
\[\int_0^1 |S_{d^2}(\alpha)|^2|S_1(\alpha)|^2\d\alpha \leq
\frac{2X}{d^2}L_{d^2}(X,Y)\ll_\ve 
(XY)^\ve\left\{ \frac{X^{3}Y^{2}+X^4Y}{d^4}+X^3\right\}.\]
Taking $d=1$ we see that the second factor in (\ref{tf}) is
\begin{equation}\label{eq:orkney}
\int_0^1 |S_1(\alpha)|^4\d\alpha 
\ll_{\ve}(XY)^{\ve}(X^3Y^2+X^4Y). 
\end{equation} 
Thus 
\[ I_2(d)\ll_{\ve}
(XY)^\ve\left\{\frac{X^3Y^2+X^4Y}{d^2}+X^3Y+X^{7/2}Y^{1/2}\right\},\]
whence
\begin{align*}
\sum_{d\geq D^{1/24}} I_2(d) 
&\ll_\ve \sum_{D^{1/24}\leq d\leq \sqrt{X}} 
(XY)^{\ve}\left\{ \frac{X^3Y^2+X^4Y}{d^2}+X^3Y+X^{7/2}Y^{1/2}\right\}\\
&\ll_\ve 
(XY)^{\ve}\left\{ \frac{X^3Y^2+X^4Y}{D^{1/24}}
+X^{7/2}Y+X^{4}Y^{1/2}
\right\}.
\end{align*}
This too is satisfactory for Lemma \ref{lem:bad_coeffs}.

\begin{proof}[Proof of Lemma \ref{lem:Lk}]
Clearly 
$L_k(X,Y)=0$ unless $k\leq X$, which we now assume.
There are $O(k^{-1}XY)$ triples with $x(y-y')(y+y')=0$, and for each there
are $O(XY)$ corresponding quadruples $x_1,y_1,x_2,y_2$ with
$y_1y_2\not=0$, by Lemma \ref{lem:2}, and there are $O(X^2)$
quadruples with $y_1y_2=0$.  This case therefore
contributes a total $O(k^{-1}(X^2Y^2+X^3Y))$ to $L_k(X,Y)$.  

When $x(y-y')(y+y')\not=0$, we get
\[\ll \tau_3(|x_1y_1^2-x_2y_2^2|)\ll_{\ve} (XY)^{\ve}\]
solutions $x,y,y'$.
It therefore remains to count the number of 
$x_1,y_1,x_2,y_2$ for which $x_1y_1^2\equiv x_2y_2^2\bmod{k}$.
Breaking into residue classes modulo $k$ we deduce that  
\begin{equation}\label{eq:Lk-waypoint}
L_k(X,Y)\ll_\ve 
\frac{X^2Y^2+X^3Y}{k} +
(XY)^\ve \frac{X^2}{k^2} \left(1+\frac{Y}{k}\right)^2 \rho(k),
\end{equation}
where 
$$
\rho(k)=\card\{(x_1,x_2,y_1,y_2)\in (\ZZ/k\ZZ)^4:
x_1y_1^2\equiv x_2y_2^2\bmod{k}\}.
$$
Since $\rho(k)$ is a multiplicative arithmetic  function it suffices to estimate
$\rho(p^e)$. When $p\nmid y_1$ the values of $x_2,y_1,y_2$ determine
$x_1$, so that there are at most $p^{2e}\phi(p^e)$ such solutions. The
same argument applies when $p\nmid y_2$ so that there are at most
$2p^{2e}\phi(p^e)$ solutions with $p\nmid(y_1,y_2)$. If $e\ge 2$ there
are $p^6\rho(p^{e-2})$ solutions with $p\mid (y_1,y_2)$,
while if $e=1$ there are $p^2$ solutions. Hence
$\rho(p)\le 2p^2(p-1)+p^2\leq 2p^3 $ and
\[\rho(p^e)\le 2p^{3e}(1-p^{-1})+p^6\rho(p^{e-2})\] 
for $e\ge 2$. One can now show that $\rho(p^e)\le (e+1)p^{3e}$ for all 
$e\ge 1$, by induction. We then have $\rho(k)\ll_\ve k^{3+\ve}$
for any $\ve>0$. We therefore complete the proof of the lemma
by inserting this into \eqref{eq:Lk-waypoint} and redefining $\ve$.
\end{proof}

\subsection{Proof of Lemma \ref{lem:stage1}}

Using Lemma~\ref{t:upper} we will establish
the following result. 

\begin{lemma}\label{lem:roger}
We have  
\[\sum_{\substack{\x\in (\ZZ_{\neq 0})^4\\ |\x|\leq X, ~ 
\Delta(\x)\neq \square\\  
\Dbad(\x)\leq Y^{1/20}}} 
M_1(\x;Y) \ll  
X^{3}Y^2+X^{4}Y^{4/3}\ll X^{3}Y^2+X^{5}Y^{2/3}.\] 
\end{lemma}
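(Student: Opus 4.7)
The plan is to apply Lemma~\ref{t:upper} pointwise to each $\x$ in the sum and then carry out the summation by elementary means. Under the hypothesis $\Dbad(\x)\le Y^{1/20}$ we have $\Dbad(\x)^{1/3}\le Y^{1/60}$, and writing $(|\x|^4/|\Delta(\x)|)^{5/8}=|\x|^{5/2}/|\Delta(\x)|^{5/8}$, the upper bound for $M_1(\x;Y)$ from Lemma~\ref{t:upper} splits into a $Y^{4/3}$-piece and a $Y^{2}/|\Delta|^{1/4}$-piece, which are responsible for the $X^{4}Y^{4/3}$ and $X^{3}Y^{2}$ terms of the target bound respectively.

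The elementary part is straightforward: by symmetry I would condition on which coordinate attains the sup norm, say $|x_4|=|\x|$, so that $|x_i|\le|x_4|$ for $i=1,2,3$. Using the submultiplicativity $\varpi(\Delta(\x))\le\prod_i\varpi(x_i)$ together with the standard estimate
\[\sum_{1\le|x|\le Z}\varpi(|x|)|x|^{-s}\ll Z^{1-s}\qquad(0<s<1),\]
which follows from $\sum_{n\le N}\varpi(n)\ll N$ by partial summation, one computes
\[\sum_{\x}\frac{\varpi(\Delta)\,|\x|^{5/2}}{|\Delta|^{5/8}}\ll\sum_{|x_4|\le X}\varpi(x_4)|x_4|^{15/8}\cdot|x_4|^{9/8}\ll X^{4}\]
and similarly $\sum_{\x}\varpi(\Delta)|\x|^{5/2}/|\Delta|^{7/8}\ll X^{3}$. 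This yields the shape $X^{4}Y^{4/3}+X^{3}Y^{2}$ on the right.

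The main obstacle is to control the remaining factor $\Dbad(\x)^{1/3}L(\sigma_Y,\chi)$ without a logarithmic loss. A naive pointwise bound $L(\sigma_Y,\chi)\ll\log(|\Delta|Y)$ combined with $\Dbad^{1/3}\le Y^{1/60}$ would introduce a spurious factor $Y^{1/60}\log(XY)$ that cannot be absorbed into $X^{3}Y^{2}+X^{5}Y^{2/3}$ uniformly, notably in the transition regime $X\asymp Y^{2/3}$. To remove this loss I would expand $L(\sigma_Y,\chi)=\sum_{m}\chi(m)m^{-\sigma_Y}$, interchange the order of summation, and use the factorisation $\chi(m)=(\Delta/m)=\prod_{i=1}^{4}(x_i/m_0)$, where $m_0$ denotes the squarefree part of $m$. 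The contributions from $m=1$ and from square $m$ reproduce the main term computed above (multiplied by the bounded constant $\zeta(2\sigma_Y)$), while for non-square $m$ the one-variable character sums $\sum_{|x_i|\le X}(x_i/m_0)|x_i|^{-s}$ are bounded by Polya--Vinogradov, providing enough saving in $m$ to make the remaining $m$-sum absolutely convergent and of size $O(1)$.
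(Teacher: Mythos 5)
Your elementary computation (conditioning on the largest $|x_i|$ and using $\sum_{n\le Z}\varpi(n)n^{-s}\ll Z^{1-s}$) correctly reproduces the shape $X^4Y^{4/3}+X^3Y^2$ for the analytic part, and you correctly identify that a pointwise bound on $\Dbad(\x)^{1/3}L(\sigma_Y,\chi)$ would cost an inadmissible factor $Y^{1/60}\log(XY)$. However, two key ingredients are missing, and without them the argument does not close.

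First, the factor $\Dbad(\x)^{1/3}$ is dropped from your elementary sums and is never reinstated: your proposed fix (expanding $L$) addresses only the $L$-factor. The paper absorbs the joint weight $\varpi(\Delta(\x))\Dbad(\x)^{1/3}$ via the combinatorial inequality
\[
\varpi(n)\prod_{\substack{p^e\|n\\ e\ge 2}}p^{e/3}\le\sum_{\substack{s,t\mid n}}\frac{s^{1/3}}{t},
\]
with $s$ square-full and $t$ square-free, which converts this product into a divisor sum that can be moved outside the $\x$-summation; this is what eventually makes the outer sums converge, and there is nothing equivalent in your proposal. Second, Polya--Vinogradov is genuinely too weak. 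The one-variable character sums $\sum_{|x_i|\le X}(x_i/m_0)|x_i|^{-s}$ beat the trivial bound only when $m_0$ is quite small, and after comparing the PV bound $m_0^{1/2+\ve}$ with the trivial bound $X^{1-s}$ and summing $\sum_m m^{-\sigma_Y}$ (which is $\asymp\log Y$), one is still left with a residual $\log Y$ loss --- precisely the loss you set out to avoid. The paper instead needs the Burgess estimate (exponent $\theta=1/5<1/2$), applied twice and in a form with explicit range-dependence: once to truncate $L(\sigma_Y,\chi)$ to moduli $n\le\hat X^{2/5}$ at cost $O(1)$, and once to bound the short sum $\sum_q(q/n)$ by $(X_1/d_1)^{1/2}n^{1/5}$ after fixing the other three coordinates and fixing $n$. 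The resulting estimate $\sum_{\x\in S}L(\sigma,\chi)\ll\hat X\hat d^{-7/8}$, with the exponent $-7/8$, is exactly what is needed for the divisor sums over $s,t$ to converge, and it cannot be reproduced with Polya--Vinogradov alone.
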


\begin{proof}
The second inequality follows since
$X^4Y^{4/3}\leq \max(X^3Y^2, X^5Y^{2/3})$. To prove the first inequality,
we begin by considering dyadic ranges
\begin{equation}\label{reg}
X_i/2<|x_i|\le X_i, \quad \text{for $1\le i\le 4$}
\end{equation}
and denote the corresponding contribution 
$M(X_1,\dots,X_4;Y)$. Then, on writing $\hat{X}=X_1X_2X_3X_4$ and
\[c(X_1,\dots,X_4;Y)=\left(\frac{\max X_i^4}{\hat{X}}\right)^{5/8}
\left(Y^{4/3}+\frac{Y^2}{\hat{X}^{1/4}}\right),\]
it follows from Lemma \ref{t:upper}
that 
\[M(X_1,\dots,X_4;Y)\ll c(X_1,\dots,X_4;Y)
\sum_{\x}\varpi(\Delta(\x))
\Dbad(\x)^{1/3}L(\sigma_Y,\chi),\]
where 
$\varpi$ is given by \eqref{eq:phid} and the sum is for
$\x\in\ZZ^4$ in the region (\ref{reg})
such that $\Delta(\x)\neq \square$. Note that we are free to include
vectors with $\Dbad(\x)>Y^{1/20}$ on the right, since $L(\sigma_Y,\chi)>0$.

By checking the inequality at prime powers, one easily sees that 
\[\varpi(n)
\prod_{\substack{p^e\| n\\ e\geq 2}}p^{e/3}
\le \sum_{s,t|n} \frac{s^{1/3}}{t},\]
where $s$ and $t$ run over square-full and square-free integers
respectively. It follows that
\begin{equation}\label{quant}
 \sum_{\x}\varpi(\Delta(\x))
\Dbad(\x)^{1/3}L(\sigma_Y,\chi)\leq
\sum_{s,t}\frac{s^{1/3}}{t}
\sum_{\substack{d_1,d_2,d_3,d_4\in \NN\\ d_1d_2d_3d_4=[s,t]}}\;
  \sum_{\x\in S}L(\sigma_Y,\chi),
  \end{equation}
  where $s$ and $t$ run over square-full and square-free integers
  respectively, and
  $S=S(X_1,\dots,X_4;d_1,\dots,d_4)$ is the set of
  $\x\in\ZZ^4$ in the region (\ref{reg}) such that
  $\Delta(\x)\not=\square$ and $d_i\mid x_i$ for $1\leq i\leq 4$.

Let $\hat{d}=d_1d_2d_3d_4$.
We claim that 
\begin{equation}\label{lem:L}
\sum_{\x\in S}L(\sigma,\chi)\ll \hat{X}\hat{d}^{-7/8},
\end{equation}
uniformly for $\sigma\ge 1$.
We will prove this later, but first we observe that we can now
estimate (\ref{quant}) as
\[\ll \hat{X}\sum_{s,t}\frac{s^{1/3}}{t}\tau_4([s,t])[s,t]^{-7/8}.\]
The infinite sum has an Euler product, with factors of the shape
\[1+4p^{-1-7/8}+\sum_{e\ge 2}\sum_{f=0,1}p^{e/3-f}\tau_4(p^e)p^{-7e/8}
=1+O(p^{-13/12}).\]
The resulting product therefore converges, so that (\ref{quant}) is
$O(\hat{X})$. We then see that
\begin{align*}
M(X_1,\dots,X_4;Y)&\ll  c(X_1,\dots,X_4;Y)\hat{X}\\
&=
\left(\max X_i\right)^{5/2}\hat{X}^{3/8}
\left(Y^{4/3}+\frac{Y^2}{\hat{X}^{1/4}}\right).
\end{align*}
On summing over dyadic values for the $X_i$ we obtain Lemma \ref{lem:roger}.

It remains to prove  \eqref{lem:L}. Our key tool for the proof is a
form of Burgess' estimate \cite{burgess2}.  If 
$\theta>3/16$ we have
\[\sum_{n\le N}\psi(n)\ll_{\theta}N^{1/2}q^{\theta},\]
where $\psi$ is any non-principal character of modulus $q$. 
We  obtain the same bound for the corresponding sum over all 
integers $n$ such that $|n|\leq N$.
For our purposes it will be enough to take $\theta=1/5$ in these estimates.

The character $\chi$ is non-principal, with modulus $O(\hat{X})$. By the
Burgess bound coupled with  partial summation, we see that
\[\sum_{n>N}\frac{\chi(n)}{n^{\sigma}}\ll N^{1/2-\sigma}\hat{X}^{1/5}
\ll N^{-1/2}\hat{X}^{1/5}.\]
It follows that terms with $n> \hat{X}^{2/5}$ contribute $O(1)$ to
$L(\sigma,\chi)$, which is satisfactory 
since $\hat d\leq \hat X$. 

We proceed to consider the terms with $n\le\hat{X}^{2/5}$. Suppose that
$X_i/d_i$ is largest for $i=1$, say.  If we write $x_1=d_1q$ then
\[\sum_{\x\in S}\sum_{n\le\hat{X}^{2/5}}\frac{\chi(n)}{n^{\sigma}}=
\sum_{x_2,x_3,x_4}
\sum_{\substack{n\le \hat{X}^{2/5}\\ \text{$n$ odd}}} 
\frac{1}{n^{\sigma}}
\left(\frac{d_1x_2x_3x_4}{n}\right)\sum_{q}
\left(\frac{q}{n}\right),\]
where the sum over $q$ is for integers with 
$X_1/2d_1<|q|\le X_1/d_1$, for
which $qd_1x_2x_3x_4$ is a non-square. In general, for any integer
$k$, there are  $O(Q^{1/2})$  integers $q\in [-Q, Q]$ for which
$kq$ is a square. Thus if we adjust the sum over $q$ to include all
integers with $X_1/2d_1<|q|\le X_1/d_1$, and then apply the 
Burgess bound, we find that
\[\sum_{q}\left(\frac{q}{n}\right)\ll (X_1/d_1)^{1/2}n^{1/5},\]
provided that $n$ is not a square. On the other hand, if $n$ is a
square, we have a trivial bound $O(X_1/d_1)$.  Thus
\begin{align*}
\sum_{\substack{n\le \hat{X}^{2/5}\\ \text{$n$ odd}}}  
\frac{1}{n^{\sigma}}  
\left|\sum_{q}
\left(\frac{q}{n}\right)\right|&\ll 
\sum_{n\le \hat{X}^{2/5}}n^{1/5-\sigma}(X_1/d_1)^{1/2}+
\sum_{m\le \hat{X}^{1/5}}m^{-2\sigma}(X_1/d_1)\\
&\ll  \hat{X}^{2/25}(X_1/d_1)^{1/2}+X_1/d_1.
\end{align*}
Since $X_1/d_1\ge(\hat{X}/\hat{d})^{1/4}$ we will have
\[\hat{X}^{2/25}(X_1/d_1)^{1/2}\le
\hat{X}^{2/25}(X_1/d_1)(\hat{X}/\hat{d})^{-1/8}\le
(X_1/d_1)\hat{d}^{1/8}.\]
When we sum over $x_2,x_3,x_4$ we now find that
\[\sum_{\x\in S}\sum_{n\le\hat{X}^{2/5}}\frac{\chi(n)}{n^{\sigma}}\ll
\frac{\hat{X}}{\hat{d}}\; \hat{d}^{1/8}=\hat X \hat d^{-7/8}.\]
This completes the proof of 
\eqref{lem:L} and so the proof of 
the lemma.
\end{proof}

To finish the proof of Lemma \ref{lem:stage1} we proceed to
show how to remove the condition $\Dbad(\x)\le Y^{1/20}$ 
from Lemma \ref{lem:roger}. 
It follows from Lemma  \ref{lem:bad_coeffs} that 
\[\sum_{\substack{\x\in (\ZZ_{\neq 0})^4\\ |\x|\leq
    X\\ \Dbad(\x)>Y^{1/20}}} M_1(\x;Y) \ll_\ve  
(XY)^{\ve}\left\{X^3 Y^{2-1/480} +X^4Y \right\}.\]
When $\ve=1/800$ we have  
\[(XY)^{\ve}X^3Y^{2-1/480}=(X^3Y^2)^{1-1/1600}(X^5Y^{2/3})^{1/1600}\le 
X^3Y^2+X^5Y^{2/3}\]
  and 
  \[(XY)^{\ve}X^4Y\le X^{13/3}Y^{10/9}=(X^3Y^2)^{1/3}(X^5Y^{2/3})^{2/3}
    \le  X^3Y^2+X^5Y^{2/3}.\] 
This completes the proof of the first part of
Lemma \ref{lem:stage1}.

Next, if $|\x|\le X$ with $\x\in(\ZZ_{\neq 0})^4$ 
and $\Delta(\x)=k^2$ say, then $0<|k|\le X^2$,
and each such $k$ corresponds to at most $8\tau_4(k^2)\ll X^{\ve}$
vectors $\x$. For each such $\x$
we use the bound $M_1(\x;Y)=O_\ve (Y^{2+\ve})$, which follows from 
Lemma~\ref{ann}. Hence 
$$
\sum_{\substack{\x\in (\ZZ_{\neq 0})^4\\ |\x|\leq
    X\\ \Delta(\x)=\square}} M_1(\x;Y) \ll_\ve  X^2Y^2 (XY)^\ve, 
$$
for any $\ve>0$, giving us the second part of Lemma \ref{lem:stage1}.

Finally, with $S_k(\alpha;X)$ as in (\ref{eq:Salpha}), we have
\[\sum_{\substack{\x\in (\ZZ_{\not=0})^4\\ |\x|\leq X,\, |x_1|\le X_1}}
M_1(\x;Y)\le\int_0^1S_1(\alpha;X)^3S_1(\alpha;X_1)\d\alpha,\]
whence H\"{o}lder's inequality yields
\[\sum_{\substack{\x\in (\ZZ_{\not=0})^4\\ |\x|\leq X,\, |x_1|\le X_1}}
M_1(\x;Y)\le\left\{\int_0^1|S_1(\alpha;X)|^4\d\alpha\right\}^{3/4}  
\left\{\int_0^1|S_1(\alpha;X_1)|^4\d\alpha\right\}^{1/4}.\]
Appealing to \eqref{eq:orkney}, this is 
\begin{align*}
  &\ll_\ve (XY)^\ve \left(X^3Y^2+X^4Y\right)^{3/4}
  \left(X_1^3Y^2+X_1^4Y\right)^{1/4}\\
&\ll_\ve (XY)^\ve X_1^{3/4}X^{-3/4}\left(X^3Y^2+X^4Y\right),
\end{align*}
and the third part of Lemma \ref{lem:stage1} follows.

\section{An asymptotic formula using lattice point counting}
\label{s:lattice}

In this section we write
\begin{equation}\label{eq:deg1}
M_4(\y;R)=\#\left\{ \x\in \ZZ^4: |\x|\leq R, ~ F(\x;\y)=0  \right\}
\end{equation}
and prove an asymptotic formula for
\beql{sm}
\sum_{\substack{\y\in \ZZp^4\\ Y<|\y|\leq 2Y}}
M_4\left(\y;(B/|\y|^2)^{1/3}\right)=N_1(B;Y),
\eeq
say.

\begin{theorem}\label{t:asymplatt}
  Let $Y\ge \tfrac12$. Then 
\[N_1(B;Y)=B\sum_{\substack{\y\in \ZZp^4\\ Y<|\y|\leq 2Y}}
\frac{\rho_\infty(\y)}{|\y|^2}+O(B^{2/3}Y^{4/3})+O(BY^{-1/3})+O(Y^4),\]
where
\begin{equation}\label{eq:def-rho}
  \rho_\infty(\y)=
  \int_{-\infty}^\infty \int_{[-1,1]^4} e(-\theta F(\x;\y)) \d \x \d \theta.
 \end{equation}
\end{theorem}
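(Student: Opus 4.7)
For each primitive $\y\in\ZZp^4$, the condition $F(\x;\y)=y_1^2x_1+\cdots+y_4^2x_4=0$ is a single \emph{linear} equation in $\x$, so its integer zeros form a rank-$3$ sublattice $\Lambda(\y)\subset\ZZ^4$ lying in the hyperplane $H_\y=\{y_1^2x_1+\cdots+y_4^2x_4=0\}\subset\RR^4$. Since $\y$ is primitive, so is the coefficient vector $(y_1^2,\dots,y_4^2)$, whence $\Lambda(\y)$ has covolume $(y_1^4+\cdots+y_4^4)^{1/2}$ inside $H_\y$. My plan is to count $M_4(\y;R_\y)$ for $R_\y=(B/|\y|^2)^{1/3}$ by geometry of numbers applied to the slice $\Lambda(\y)\cap[-R_\y,R_\y]^4$, and then to sum over $\y$ in the dyadic shell $Y<|\y|\le 2Y$.

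A Davenport-type lattice-point estimate gives, with $\lambda_1\le\lambda_2\le\lambda_3$ the successive minima of $\Lambda(\y)$,
\[
M_4(\y;R_\y)=\frac{\mathrm{vol}_3\bigl(H_\y\cap[-R_\y,R_\y]^4\bigr)}{\det\Lambda(\y)}+O\!\left(\frac{R_\y^2}{\lambda_1\lambda_2}+\frac{R_\y}{\lambda_1}+1\right).
\]
Applying the coarea formula---equivalently, writing $\delta(F(\x;\y))=\int e(-\theta F(\x;\y))\,\d\theta$ and rescaling $\x=R_\y\u$, $\theta=\phi/R_\y$---one obtains $\mathrm{vol}_3\bigl(H_\y\cap[-R_\y,R_\y]^4\bigr)=(y_1^4+\cdots+y_4^4)^{1/2}\cdot R_\y^3\rho_\infty(\y)$, so the quotient by $\det\Lambda(\y)$ collapses to $R_\y^3\rho_\infty(\y)=(B/|\y|^2)\rho_\infty(\y)$. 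Summed over primitive $\y$ in the shell this is exactly the leading term claimed.

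It remains to bound the error. The uniform $O(1)$ contributes $O(Y^4)$ after summation, and the real content is to show
\[
\sum_{Y<|\y|\le 2Y}\frac{R_\y}{\lambda_1(\y)}\ll BY^{-1/3}\qquad\text{and}\qquad\sum_{Y<|\y|\le 2Y}\frac{R_\y^2}{\lambda_1(\y)\lambda_2(\y)}\ll B^{2/3}Y^{4/3}.
\]
I would stratify $\y$ dyadically according to the size of its smallest minima: a vector $\v\in\Lambda(\y)$ of norm at most $\lambda$ is a nontrivial integer solution of $y_1^2v_1+\cdots+y_4^2v_4=0$ with $|\v|\le\lambda$, so fixing $\v$ confines $\y$ to a diagonal quadric that one can count using Lemmas~\ref{ann}, \ref{lem:2} and \ref{l3}. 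The main obstacle is precisely this step: $\lambda_1$ and $\lambda_2$ can be small for many $\y$ whose coordinates satisfy special multiplicative relations, so there is no useful pointwise bound on $1/(\lambda_1\lambda_2)$, and to avoid spoiling the clean, $\ve$-free error terms with the $Y^\ve$ loss inherent in a generic quadric-point count one must exploit the diagonal structure of the auxiliary quadric $\sum y_i^2v_i=0$ more delicately---in all likelihood by invoking the averaged upper bounds of Lemma~\ref{lem:stage1} rather than a pointwise estimate on each quadric.
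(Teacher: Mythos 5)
Your outline is the one the paper uses---Schmidt--Davenport lattice counting inside each fibre $\Lambda(\y)$, then averaging the resulting error over the dyadic shell---and you correctly diagnose that the averaged bound on the error is where the work lies. But you stop short of two ideas which are what actually make the argument close, and you explicitly flag that you do not see how to finish. First, the paper does not try to bound $\sum_\y R^2/(\lambda_1\lambda_2)$ and $\sum_\y R/\lambda_1$ separately. It observes that
\[
\frac{R^2}{\lambda_1\lambda_2}+\frac{R}{\lambda_1}+1 \ll \frac{R^2}{\lambda_1^2}+1 = \frac{R^2}{|\x_1(\y)|^2}+1,
\]
where $\x_1(\y)$ is a shortest nonzero vector of $\Lambda(\y)$, so that \emph{only} $\lambda_1$ needs to be controlled, and the error sum collapses to $\sum_\y|\x_1(\y)|^{-2}$, which is dominated by the two-variable count $\sum_\y\sum_{0<|\x|\ll|\y|^{2/3},\,F(\x;\y)=0}|\x|^{-2}$. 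This is where Lemma~\ref{lem:stage1} (and Lemma~\ref{l3} for $\x$ with one zero coordinate) enters, precisely as you anticipate. Second, this averaged bound is only applied to a ``good'' set $\mathcal{G}$ of $\y$; the paper first peels off a bad set $\mathcal{B}$ of $\y$ (those with a zero coordinate, or with two coordinates $y_i,y_j$ satisfying $|y_i|,|y_j|\le c^{1/2}(y_i,y_j)|\y|^{1/3}$). The point of this dichotomy is exactly the obstruction you identify: for $\y\in\mathcal{G}$ every short vector in $\Lambda(\y)$ has at most one zero component, so the relevant counting lemmas apply without the $Y^\ve$ losses you worry about; for the $O(Y^{11/3})$ vectors in $\mathcal{B}$ the paper does not estimate the lattice-count error at all, but bounds the full contribution $\sum_{\y\in\mathcal{B}}M_4(\y;\cdot)$ directly using Lemmas~\ref{lem:xz}, \ref{lem:2}, \ref{l3}, which is what produces the $O(BY^{-1/3})$ term. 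Finally, you assert the identity $\mathrm{vol}_3(H_\y\cap[-R,R]^4)=\mathsf{d}(\y)R^3\rho_\infty(\y)$ via the coarea formula, but the $\theta$-integral defining $\rho_\infty$ is only conditionally convergent when $\y$ has exactly one nonzero component, so the interchange needs the Fej\'er-kernel regularisation that the paper supplies at the end of the proof. In short: correct strategy, but the reduction from $\lambda_1\lambda_2$ to $\lambda_1^2$ and the bad/good split are the missing devices that resolve exactly the difficulty you flag.
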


We note that if $\y$ has at least two
non-zero components, then
\[\int_{[-1,1]^4} e(-\theta F(\x;\y))\d\x  
=\prod_{j=1}^4\frac{\sin(2\pi\theta y_j^2)}{\pi\theta y_j^2} 
\ll_{\y}(1+|\theta|)^{-2},\] 
so that the outer integral in (\ref{eq:def-rho}) is absolutely
convergent. On the other hand, if $\y=(1,0,0,0)$, for example, then
\[\int_{[-1,1]^4} e(-\theta F(\x;\y))\d\x  
=8\frac{\sin(2\pi\theta)}{\pi\theta},\]
and the outer integral is conditionally convergent, with value 8.

We begin the proof by estimating $M_4(\y;R)$ for an individual vector
$\y$, as follows.
\begin{lemma}\label{lem:numbers}
Let $\y\in \ZZp^4$ and put 
$\mathsf{d}(\y)=\sqrt{y_1^4+\dots+y_4^4}.$
Let $V(\y)$ be the volume of the intersection of the cube $[-1,1]^4$
  with the hyperplane
$$
\{\x\in\RR^4:~F(\x;\y)=0\}.
$$
Then there exists a vector $\x_1=\x_1(\y)\in \ZZp^4$ satisfying
\beql{c}
0<|\x_1|\ll |\y|^{2/3}\;\;\;\mbox{and}\;\;\; F(\x_1;\y)=0,
\eeq
such that 
\[M_4(\y;R)=\frac{ V(\y)}{\mathsf{d}(\y)}R^3
+O\left(\frac{R^2}{|\x_1|^2}\right)+O(1). \]
\end{lemma}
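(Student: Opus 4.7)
The equation $F(\x;\y)=0$ is linear in $\x$ (once $\y$ is fixed), cutting out a hyperplane $H\subset\RR^4$. Since $\y\in\ZZp^4$ is primitive, $\gcd(y_1^2,\ldots,y_4^2)=1$, so $\Lambda(\y):=\ZZ^4\cap H$ is a \emph{saturated} rank-$3$ sublattice of $\ZZ^4$ whose covolume inside $H$ (with the inherited Euclidean measure) equals the length of the integer normal $(y_1^2,y_2^2,y_3^2,y_4^2)$, namely $\mathsf{d}(\y)$. Setting $K=H\cap[-1,1]^4$, a symmetric convex body in $H$ of $3$-volume $V(\y)$, we have $M_4(\y;R)=\#(\Lambda(\y)\cap RK)$.

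To construct $\x_1$, I parameterise $K$ via the three coordinates complementary to the index where $|y_i|$ is maximal; this shows that $V(\y)\asymp \mathsf{d}(\y)/|\y|^2$ with absolute implied constants (a lower bound on the non-trivial volume factor follows, for example, from Vaaler's theorem on central hyperplane sections of the cube). Minkowski's first theorem applied to $\Lambda(\y)$ and $K$ then yields a non-zero $\x_1\in\Lambda(\y)$ with $|\x_1|^3\ll \mathsf{d}(\y)/V(\y)\ll |\y|^2$, i.e.\ $|\x_1|\ll |\y|^{2/3}$. Since $\Lambda(\y)$ is saturated in $\ZZ^4$, dividing by the gcd of the coordinates we may take $\x_1\in\ZZp^4$ without increasing its norm, giving \eqref{c}.

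For the asymptotic, let $\mu_1\le\mu_2\le\mu_3$ denote the successive minima of $\Lambda(\y)$ with respect to $K$, so that $\mu_1=|\x_1|$. A standard lattice-point counting estimate from the geometry of numbers (as in the work of Davenport, or Widmer) gives
\[
M_4(\y;R)=\frac{R^3\,V(\y)}{\mathsf{d}(\y)}+O\!\left(1+\frac{R}{\mu_1}+\frac{R^2}{\mu_1\mu_2}\right).
\]
Since $\mu_2\ge\mu_1=|\x_1|$ one has $R^2/(\mu_1\mu_2)\le R^2/|\x_1|^2$, while splitting the range according to whether $R\le|\x_1|$ or not yields $R/\mu_1\ll 1+R^2/|\x_1|^2$. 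Hence the error collapses to $O(1)+O(R^2/|\x_1|^2)$, as required. The only slightly non-routine points are (i) pinning down $V(\y)\asymp \mathsf{d}(\y)/|\y|^2$ with an absolute constant so that $|\x_1|\ll|\y|^{2/3}$, and (ii) invoking the correct form of the counting lemma with error expressed through the partial products $\mu_1\cdots\mu_j$; both must be stated with care but neither is genuinely difficult.
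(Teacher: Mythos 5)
Your argument is essentially the paper's own proof: identify $\Lambda(\y)=\ZZ^4\cap H$ as a rank-$3$ lattice of covolume $\mathsf{d}(\y)$, apply a Schmidt/Davenport-type lattice counting estimate with error controlled by the successive minima of the body $K=H\cap[-1,1]^4$, collapse the error via $\mu_2\ge\mu_1$ and a case-split on $R\lessgtr\mu_1$, and bound $\mu_1\ll|\y|^{2/3}$ by Minkowski. The only genuine wrinkle is the assertion ``so that $\mu_1=|\x_1|$'': the $\x_1$ you produce via Minkowski's first theorem (and then a gcd reduction) is merely \emph{some} short lattice vector, with $\mu_1\le|\x_1|$; since the error bound goes the wrong way ($R^2/(\mu_1\mu_2)\le R^2/\mu_1^2\ge R^2/|\x_1|^2$), you actually need equality. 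The fix is to choose $\x_1$ to be a vector \emph{achieving} the first successive minimum $\mu_1$ in the sup-norm on $H$: such a vector is automatically primitive (else dividing by the gcd would yield a shorter nonzero lattice vector), and it still satisfies $|\x_1|=\mu_1\le\bigl(8\,\mathsf{d}(\y)/V(\y)\bigr)^{1/3}\ll|\y|^{2/3}$ by the same Minkowski bound. The paper does exactly this, taking $\x_1$ to realise $\lambda_1$, and deduces $\lambda_1\ll\mathsf{d}(\y)^{1/3}$ via Minkowski's second theorem $\lambda_1\lambda_2\lambda_3\ll\mathsf{d}(\y)$ (which hides the same fact $V(\y)\gg 1$ that your Vaaler/parametrisation step establishes explicitly). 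With the choice of $\x_1$ corrected, your proof is complete and matches the paper's argument in all essentials.
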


If $(y_1^2,\ldots,y_4^2)=(z_1,\ldots,z_4)$ we may apply a rotation 
$\mathbf{R}\in\mathrm{SO}_4(\RR)$ to $\x$ and $\z$ so as to move $\z$ to
$\mathbf{R} \z=(0,0,0,z_4')$, say. Then $V(\y)$ will be the 3-dimensional volume of the region
$\{\mathbf{t}\in \mathbf{R}[-1,1]^4: t_4=0\}$.

\begin{proof}[Proof of Lemma \ref{lem:numbers}]
When $\y$ is primitive the
function $M_4(\y;R)$ counts vectors $\x\in\ZZ^4$ from a
3-dimensional lattice $\mathsf{\Lambda}$ of determinant $\mathsf{d}(\y)$, 
as in \cite[Lemma 1(i)]{annal}, for example. We now claim that
$$
M_4(\y;R)=
\frac{V(\y)}{\mathsf{d}(\y)}R^3
+O\left(\frac{R^2}{\lambda_1\lambda_2}+
\frac{R}{\lambda_1}+1\right),
$$
where the implied constant is absolute
and $\lambda_1\leq \lambda_2\leq \lambda_3$ are the successive minima of
$\mathsf{\Lambda}$.  If we had been using the $L^2$-norm in place of
the $L^{\infty}$-norm this would have followed directly from Schmidt
\cite[Lemma~2]{schmidt}. One should note here that, in Schmidt's notation,
$\Lambda^{k(|-i)}$ contains the vectors
$\mathfrak{g}_1,\ldots,\mathfrak{g}_{k-i}$, and has successive minima
$\lambda_1,\ldots,\lambda_{k-i}$, so that
$d(\Lambda^{k(|-i)})\gg_k \lambda_1\ldots\lambda_{k-i}$. To handle the
$L^{\infty}$-norm one needs only trivial modifications to Schmidt's
argument, which we leave to the reader.
To complete the proof of Lemma \ref{lem:numbers}
we note that $R^2/(\lambda_1\lambda_2)\leq (R/\lambda_1)^2$
and $R/\lambda_1\leq \max((R/\lambda_1)^2, 1)$.
Moreover, by the definition of the successive minima 
the lattice $\mathsf{\Lambda}$ contains a vector of length $\lambda_1$.
Writing $\x_1$ for
this vector we see that $\x_1$ will be primitive, and 
the lemma follows, since
$\lambda_1\le(\lambda_1\lambda_2\lambda_3)^{1/3}\ll
\mathsf{d}(\y)^{1/3}
\ll|\y|^{2/3}.$
\end{proof}

We turn now to the proof of Theorem \ref{t:asymplatt}. In our argument
certain ``bad'' vectors $\y$
will have to be dealt with separately. We denote the set of these by
$\mathcal{B}$, and write $\mathcal{G}$ for the
remaining set of good vectors.  The
definition of the set $\mathcal{B}$ will be given below in \eqref{eq:B-bad}, 
since it is
hard to motivate at this stage.

For the bad vectors we note that
\[
B\frac{V(\y)}{|\y|^2\mathsf{d}(\y)}
\ll
BY^{-4},\]
whence
\[M_4\left(\y;(B/|\y|^2)^{1/3}\right)=B\frac{V(\y)}{|\y|^2\mathsf{d}(\y)}
+O(BY^{-4})+O\left(M_4\left(\y;(B/|\y|^2)^{1/3}\right)\right)\]
when $\y\in\mathcal{B}$.  
It therefore follows from (\ref{sm}) along with Lemma \ref{lem:numbers} that
\beql{n1b}
\begin{split}
N_1(B;Y)= B\sum_{\substack{\y\in \ZZp^4\\ Y<|\y|\leq 2Y}}
\frac{V(\y)}{|\y|^2\mathsf{d}(\y)}&+O(BY^{-4}\card \mathcal{B})+O(\Sigma_1)\\
&
+O(B^{2/3}Y^{-4/3}\Sigma_2)+O(Y^4),
\end{split}
\eeq
where
\[\Sigma_1=\sum_{\y\in\mathcal{B}}M_4\left(\y;(B/|\y|^2)^{1/3}\right),\]
and
\[\Sigma_2=\sum_{\y\in\mathcal{G}}|\x_1(\y)|^{-2}.\]

We begin by discussing $\Sigma_2$, since this motivates our choice of
the sets $\mathcal{G}$ and $\mathcal{B}$.  We define
\[E(Y)= \sum_{\y\in \mathcal{G}}
\sum_{\substack{\x\in \ZZp^4\\ 0<|\x|\le c|\y|^{2/3}\\ F(\x;\y)=0}}
\frac{1}{|\x|^2},\]
where $c$ is the implied constant in (\ref{c}).
We shall prove the following bound for this sum,
which shows that 
the term $\Sigma_2$ in (\ref{n1b}) makes a satisfactory
contribution in Theorem~\ref{t:asymplatt}.

\begin{lemma}\label{lem:E(Y)}
We have $E(Y)\ll  Y^{8/3}$
for any $Y\ge 1$.
\end{lemma}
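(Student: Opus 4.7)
The plan is to interchange the order of summation in $E(Y)$ and bound the resulting $\x$-sum via the estimates of Lemma~\ref{lem:stage1}. After replacing the membership $\y\in\mathcal{G}$ with the weaker condition that $\y\in\ZZp^4$ with $Y<|\y|\le 2Y$ (an upper bound), and enlarging the inner range $|\x|\le c|\y|^{2/3}$ to $|\x|\le c'Y^{2/3}$ for a suitable constant $c'$, this yields the majorant
\[
E(Y)\ \le\ \sum_{\substack{\x\in\ZZp^4\\ 0<|\x|\le c'Y^{2/3}}}\frac{M_1(\x;2Y)}{|\x|^2}.
\]

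I would then partition the outer sum into dyadic shells $X/2<|\x|\le X$ with $X\le c'Y^{2/3}$. For $\x$ having all non-zero coordinates, the bound on $M_3(X,2Y)$ in Lemma~\ref{lem:stage1} yields
\[
\sum_{\substack{X/2<|\x|\le X\\ \x\in(\ZZ_{\ne 0})^4}}M_1(\x;2Y)\ \ll_\ve\ X^3Y^2+X^5Y^{2/3}+X^{2+\ve}Y^{2+\ve},
\]
so each dyadic shell contributes $XY^2+X^3Y^{2/3}+X^{\ve}Y^{2+\ve}$ to $E(Y)$. The first two terms both saturate at $X\asymp Y^{2/3}$ and together contribute $O(Y^{8/3})$ when summed dyadically over $X\le Y^{2/3}$, while the third is $O(Y^{2+O(\ve)})$, comfortably smaller.

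For the residual $\x$ with one or more vanishing coordinates I would descend to three- or two-term versions of $F$ and invoke Lemmas~\ref{lem:2} and~\ref{l3}, exploiting the restriction $\y\in\mathcal{G}$. This restriction is essential: for an $\x$ such as $(\pm 1,0,0,0)$ the set of primitive $\y$ with $F(\x;\y)=0$ is an entire coordinate hyperplane containing $\sim Y^3$ points, which on its own would overwhelm the target bound. The set $\mathcal{G}$ is defined precisely so as to excise the degenerate $\y$ for which an $\x$ with zero coordinates admits an over-large family of solutions. The main obstacle is therefore to pin down exactly which $\y$ must be thrown into $\mathcal{B}$ so that the contributions from $\x$ with one, two, or three zero coordinates all return $O(Y^{8/3})$; this accounting is precisely what the bound $E(Y)\ll Y^{8/3}$ forces, and it is what motivates the eventual definition of $\mathcal{B}$ alluded to by the authors.
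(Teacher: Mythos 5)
Your treatment of the all-nonzero $\x$ case is correct and is essentially the paper's argument: dyadic shells in $|\x|$, the $M_3$ bound from Lemma~\ref{lem:stage1}, and a geometric sum dominated at $X\asymp Y^{2/3}$. But the remainder of the proposal is a statement of intent rather than a proof. You announce that the zero-coordinate $\x$ must be handled ``exploiting the restriction $\y\in\mathcal{G}$'' and that the ``main obstacle'' is to determine $\mathcal{B}$; you then stop. Since $\mathcal{B}$ is defined inside the very proof you are reconstructing, leaving its identification open means the argument is not closed: without it one cannot even state which $\x$ survive the restriction to $\y\in\mathcal{G}$, let alone bound their contribution.

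The missing step is concrete and short. If $\y\in\mathcal{G}$, any contributing $\x$ has at most one zero coordinate: three zeros would force some $y_i=0$; two zeros, say $x_3=x_4=0$ with $\prod y_i\neq 0$, combine with primitivity of $\x$ and $x_1y_1^2+x_2y_2^2=0$ to give $x_2=\pm(y_1/h)^2$, $x_1=\mp(y_2/h)^2$ with $h=(y_1,y_2)$, and the bound $|\x|\le c|\y|^{2/3}$ then yields $|y_1|/h,|y_2|/h\le c^{1/2}|\y|^{1/3}$. These two conditions are exactly what $\mathcal{B}$ in \eqref{eq:B-bad} is built to excise. With at most one zero coordinate remaining, say $x_4=0$, the free $y_4\in[-2Y,2Y]$ contributes a factor $O(Y)$ and the three-variable count $T(L,2Y,L,2Y)\ll_\ve L^2Y^{1+\ve}$ of Lemma~\ref{l3} handles the rest, giving $O(L^2Y^{2+\ve})$ per shell, hence $O(Y^{2+\ve}\log Y)$ after dividing by $L^2$ and summing, which is harmless against $Y^{8/3}$. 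Until you supply this derivation (or an equivalent one), the lemma is not proved.
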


\begin{proof}
Our strategy for estimating $E(Y)$ will be to sort the inner sum into
dyadic intervals for $|\x|$.   
When all the components of $\x$ are non-zero we shall be able to
invoke the second part of Lemma \ref{lem:stage1}, and when exactly three
of the components of $\x$ are non-zero we will use
Lemma \ref{l3}. Thus the remaining
vectors $\x$ are those with at most two non-zero components, and these
will correspond to $\y$ being in the bad set $\mathcal{B}$, which we now
proceed to describe.

If $\x$ has exactly one
non-zero component the equation $F(\x;\y)=0$ forces
the corresponding component of $\y$ to vanish. We therefore include
vectors $\y$ with $\prod y_i=0$ in the bad set $\mathcal{B}$.

Suppose on the other hand that exactly two components of $\x$ vanish,
say $x_3=x_4=0$, and that $\prod y_i\not=0$. 
Then $(x_1,x_2)=1$, since $\x$ is
primitive.  Moreover $x_1y_1^2+x_2y_2^2=0$. If we write $h=(y_1,y_2)$ then
we must have $x_2=\pm(y_1/h)^2$ and $x_1=\mp(y_2/h)^2$.  It follows that
$|y_1|/h\le c^{1/2}|\y|^{1/3}$,
and similarly $|y_2|/h\le c^{1/2}|\y|^{1/3}$.  We
then say that a vector $\y$ is ``bad'' if either $\prod y_i=0$ or if
there are two components, $y_1$
and $y_2$ say, such that
$|y_1|, |y_2|\le c^{1/2}(y_1,y_2)|\y|^{1/3}$. Here $c$ is the implied
constant in (\ref{c}).  We
now define 
\begin{equation}\label{eq:B-bad}
\mathcal{B}=\left\{\y\in \ZZp^4: 
\begin{array}{l}
Y<|\y|\leq 2Y, \text{ either $ \prod y_i=0$ or else } \\
\text{
$|y_i|, |y_j|\le c^{1/2}(y_i,y_j)|\y|^{1/3}$ for some $i\neq j$}
\end{array}
\right\}.
\end{equation}
Similarly we write $\mathcal{G}$ for the complement of 
$\mathcal{B}$ in the set of $\y\in \ZZp^4$
with $Y<|\y|\leq 2Y$.   Thus if $\y$ is in the
complementary set $\mathcal{G}$, any corresponding vector $\x$ has 
at most one zero entry.

We are now ready to estimate $E(Y)$.
Let $S(L,\mathcal{G})$ be the number of pairs $\x,\y$ that arise for which
  $L/2<|\x|\le L$.  Then
\[E(Y) \ll  \sum_L L^{-2}S(L,\mathcal{G}),\]
the sum over $L$ being for powers of 2 only, with
$L\ll Y^{2/3}$. Our definitions ensure that 
$$
S(L,\mathcal{G})\ll M_3(L,2Y)+YT(L,2Y,L,2Y), 
$$
in the notation of Lemmas \ref{lem:stage1} and \ref{l3}, which then yield
\[S(L,\mathcal{G})\ll_\ve L^3Y^2 +L^5Y^{2/3}+L^{2+\ve}Y^{2+\ve},\]
for any $\ve>0$.  For $L\ll Y^{2/3}$ and $\ve=3/10$ this is 
$\ll L^3Y^2+L^2Y^{5/2}$, whence
\[E(Y)\ll  \sum_{2^i\ll Y^{2/3}}\left(2^i Y^2 +Y^{5/2}\right)  
\ll Y^{8/3}+Y^{5/2}\log Y,\]
which is satisfactory for Lemma \ref{lem:E(Y)}. 
\end{proof}

We next estimate $\card\mathcal{B}$. There are $O(Y^3)$ vectors $\y$ with
$\prod y_i=0$.  For the
remaining bad vectors, if we have $y_1=hz_1$, for example, then
$y_2=hz_2$ with $z_1,z_2$ coprime and
\[|z_1|,|z_2|\le c^{1/2}|\y|^{1/3}\ll Y^{1/3}.\]
There are $O(Y)$ choices for $h$ and $O(Y^{1/3})$ choices for
each of $z_1$ and $z_2$, and since there are $O(Y^2)$ possible values for
$y_3$ and $y_4$ we see that there are $O(Y^{11/3})$ options for
$\y$. Thus $\card\mathcal{B}\ll Y^{11/3}$, so that the corresponding
term in (\ref{n1b}) is satisfactory for Theorem \ref{t:asymplatt}.

It remains to consider $\Sigma_1$. We begin by disposing of the
contribution from solutions $\x,\y$ with $\prod x_iy_i=0$.  We apply Lemma
\ref{lem:xz} with $X\ll B^{1/3}Y^{-2/3}$ obtaining a bound 
$O_{\ve}(BY^{\ve-1}+Y^4)$.  The corresponding contribution to (\ref{n1b})
will turn out to be satisfactory for our purposes, as we shall see shortly. 
In what follows we may assume $\prod x_iy_i\neq 0$.

We now focus on terms for which 
$y_1=hz_1$ and $y_2=hz_2$ where $z_1$ and $z_2$ are coprime and
$0<|z_1|,|z_2|\le c^{1/2}|\y|^{1/3}$, so that
\beql{eg}
(x_1z_1^2+x_2z_2^2)h^2+x_3y_3^2+x_4y_4^2=0
\eeq
with non-zero integer variables.  We set
\[X=(BY^{-2})^{1/3}\;\;\;\mbox{and}\;\;\;Z=\max\{|z_1|,|z_2|\},\]
whence $1\le h\le 2Y/Z$.  When $x_1z_1^2+x_2z_2^2=0$ we have
$x_3y_3^2+x_4y_4^2=0$ as well. There are then $O(Y/Z)$ choices for
$h$, while Lemma \ref{lem:2} shows that there are $O(XZ)$ values for
$x_1,x_2,z_1,z_2$ and $O(XY)$ possibilities for $x_3,x_4,y_3,y_4$.
The case $x_1z_1^2+x_2z_2^2=0$ therefore contributes
$O(X^2Y^2)=O(B^{2/3}Y^{2/3})$ to
$\Sigma_1$. We shall see in a moment that this makes a suitably 
small contribution to (\ref{n1b}). 

We count the remaining solutions according
to the values taken by $z_1$ and $z_2$.  It will be convenient in what
follows to write $N(Y;z_1,z_2)$ for the number of solutions
to the equation (\ref{eg})
in non-zero integers $x_1,\ldots,x_4,h,y_3,y_4$ with
$x_1z_1^2+x_2z_2^2\not=0$ and
\[|\x|\le X, ~1\le h\le 2Y/Z \quad \text{ and }
\quad 
|y_3|,|y_4|\le 2Y.\] 
It follows from our analysis thus far that 
\beql{NZs}
\Sigma_1\ll_{\ve}BY^{\ve-1}+Y^4+B^{2/3}Y^{2/3}+ 
\sum_{0<|z_1|,|z_2|\ll Y^{1/3}}N(Y;z_1,z_2).
\eeq

We put  $t=x_1z_1^2+x_2z_2^2$, which is assumed to be
non-zero.  For a given non-zero $t$ (and fixed $z_1,z_2$)
the number of $x_1,x_2$
such that $x_1z_1^2+x_2z_2^2=t$ is $O(1+X/Z^2)$. Moreover the equation
$th^2+x_3y_3^2+x_4y_4^2=0$ has at most $T(X,2Y,2XZ^2,2Y/Z)$ solutions,
in the notation of Lemma \ref{l3}, which then shows that
\[N(Y;z_1,z_2) \ll_{\ve} (1+XZ^{-2})X^2Y^{1+\ve}Z\]
for any fixed $\ve>0$.
We insert this estimate into (\ref{NZs}) and find that
\begin{align*}
  \Sigma_1 
&\ll_{\ve}BY^{\ve-1}+Y^4+B^{2/3}Y^{2/3}+(Y^{2/3}+X)X^2Y^{4/3+\ve}\\
&\ll_{\ve}Y^4+B^{2/3}Y^{2/3+\ve}+BY^{\ve-2/3}.
\end{align*}
Taking $\ve=1/3$ now gives us a suitable contribution to (\ref{n1b}).

Returning to \eqref{n1b}, 
in order to  complete the proof of Theorem \ref{t:asymplatt} it
remains to show that 
\begin{equation}\label{eq:2constants}
\frac{V(\y)}{\mathsf{d}(\y)}=\rho_\infty(\y),
\end{equation}
for any non-zero vector  $\y\in \RR^4$, 
where $\rho_\infty(\y)$ is
defined in  \eqref{eq:def-rho}. It will be convenient to put
$y_i^2=\mathsf{d}(\y)w_i$ for $1\leq i\leq 4$. Then if
$\|\cdot\|_2$ denotes 
the 
$L^2$-norm,  it follows that $\|\w\|_2=1$.  Moreover, 
in this notation we have
\[\rho_{\infty}(\y)\mathsf{d}(\y)=
\int_{-\infty}^\infty \int_{[-1,1]^4} e(-\theta \w.\x) \d \x \d \theta.\]
As already noted following the statement of Theorem \ref{t:asymplatt},
the repeated integral is $8$ if $\w$ has a single non-zero component.
This suffices for (\ref{eq:2constants}), since one easily sees that
$V(\w)=8$ in this case.

On the other hand, if $\w$ has at least two non-zero
components then, as remarked earlier, the inner integral is
$O((1+|\theta|^2)^{-1})$, so
that the double integral is
\begin{equation}\label{R1}
\begin{split}
\lim_{\delta\downarrow 0}
\int_{-\infty}^\infty
&\left(\frac{\sin(\pi\delta\theta)}{\pi\delta\theta}\right)^2
\int_{[-1,1]^4} e(-\theta \w.\x) \d \x \d \theta \\
  &=\lim_{\delta\downarrow 0}\int_{[-1,1]^4}\int_{-\infty}^\infty
\left(\frac{\sin(\pi\delta\theta)}{\pi\delta\theta}\right)^2
e(-\theta \w.\x)\d\theta\d\x. 
\end{split}
\end{equation}
In general if
\begin{equation}\label{eq:Ku}
K(u;\delta)=\begin{cases}
\delta^{-2}(\delta-|u|), &\text{ if $|u|\leq \delta$,}\\
0, &\text{ if $|u|\geq \delta$,}
\end{cases}
\end{equation}
then
\beql{KFT}
K(u;\delta)=\int_{-\infty}^{\infty}e(\theta u)
\left(\frac{\sin(\pi\delta\theta)}{\pi\delta\theta}\right)^2\d\theta,
\eeq
so that the inner integral on the right of (\ref{R1}) is $K(\w.\x;\delta)$.

Since $\|\w\|_2=1$ there exists  
a $4\times 4$ orthogonal matrix 
$\mathbf{M}\in O_4(\RR)$ 
such that $\mathbf{M}\w=(1,0,0,0)$. Then
$\w.\x=(\mathbf{M}\w)^T \mathbf{M}\x$. Changing variables from $\x$ to 
$\z=\mathbf{M}\x$, so that $\z$ runs over the set
$Z=\mathbf{M}[-1,1]^4$, we see that
\[\rho_{\infty}(\y)\mathsf{d}(\y)=
\lim_{\delta\downarrow 0} \int_{Z} K(z_1;\delta)\d\z
=\meas\{\z\in Z: z_1=0\}=V(\w)=V(\y),\]
as required.
This concludes the proof of Theorem \ref{t:asymplatt}.

\section{Counting points on quadrics}\label{CPOQ}

In this section we will establish an asymptotic formula for the smoothly
weighted counting function 
\[N_w(P)=\sum_{\substack{\x\in \ZZ^4\\ F(\x)=0}} w(P^{-1}\x),\]
as $P\to \infty$, where $F(\x)$ is the non-singular integral
diagonal quadratic form
\[F(\x)=A_1x_1^2+A_2x_2^2+A_3x_3^2+A_4x_4^2. \]
Here $w:\RR^4\to \RR_{\geq 0}$ is a fixed infinitely
differentiable weight function of compact support, which vanishes in some
neighbourhood of the origin. Our goal is to establish an asymptotic
formula even when the coefficients $A_i$ are of size a small power of
$P$, and it will be crucial to our success that the size we are able
to handle is sufficiently large.

Our asymptotic formula for $N_w(P)$ 
is only valid for suitable weights $w$ and ``generic'' choices of the
coefficients $A_i$. To specify the necessary conditions we define
\[\|F\|=\max_{1\le i\le 4}|A_i|,  \quad 
\Delta_F=A_1A_2A_3A_4 \big(\neq 0\big),\]
and
\beql{eq:bad-Delta}
\Dbad=\prod_{\substack{p^e\| \Delta_F\\ e\ge 2}}p^e.
\eeq
We then require that
\beql{eq:assume1-w}
w(\x)=0\;\;\;\mbox{for}\;\;\; |\x|\leq\eta,
\eeq
that
\begin{equation}\label{eq:assume1-x}
\|F\|^{1-\eta}\leq |A_i|\big(\leq \|F\|\big), \quad \text{for  $1\leq i\leq 4$},
\end{equation}
and  that
\begin{equation}\label{eq:assume2-x}
\Dbad \leq \|F\|^{\eta},
\end{equation}
for a small parameter
$\eta\in(0,\tfrac{1}{100})$ at our disposal. 
The first two conditions imply that
\beql{NL}
|\nabla F(\x)|\gg_{\eta}\|F\|^{1-\eta}\;\;\mbox{for}\;\; w(\x)\not=0,
\eeq
while the last
condition implies in particular that $\Delta_F\not=\square$ when $\|F\|>1$.

Our asymptotic formula
involves the ``singular integral'', defined to be  
\begin{equation}\label{eq:sigma-infty-omega}
  \sigma_\infty(w;F)=\int_{-\infty}^{\infty}\int_{\RR^4}w(\x)
  e(-\theta F(\x))\d\x\d\theta,
\end{equation}
and the ``singular series''
\begin{equation}\label{eq:S}
\mathfrak{S}(F)=\prod_p \lim_{r\to\infty} p^{-3r}\#\{\x\in
(\ZZ/p^r\ZZ)^4: F(\x)\equiv 0\bmod{p^r}\}. 
\end{equation}
We will see in Lemma \ref{SSB} that this is convergent whenever 
$\Delta_F\neq \square$. With this notation
our principal result in this section is the following.

\begin{theorem}\label{newth}
  When \eqref{eq:assume1-w}, \eqref{eq:assume1-x} and
  \eqref{eq:assume2-x} hold we have
  \[N_w(P)=\sigma_\infty(w;F)\mathfrak{S}(F)P^2
  +O_{w,\eta}(\|F\|^{-1/2+5\eta}P^{3/2}+\|F\|^{1/2+2\eta}P),\] 
  provided that $\|F\|\ge P^{\eta}$.
\end{theorem}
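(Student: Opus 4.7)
The plan is to prove Theorem \ref{newth} by a circle-method argument of Heath-Brown delta-symbol type, carried out with careful uniform tracking of the coefficients $A_i$ throughout. I would start from Heath-Brown's identity
\[
\delta_{n=0} = c_Q Q^{-2} \sum_{q\ge 1} \sum_{a \bmod q}^{*} e_q(an)\, h(q/Q, n/Q^2),
\]
applied to $n = F(\x)$ with a parameter $Q$ of order $P$. Since $F$ is diagonal, Poisson summation applied separately in each variable $x_i$ factorises the resulting inner sum and yields
\[
N_w(P) = \frac{c_Q P^4}{Q^2} \sum_{q\ge 1} q^{-5} \sum_{a\bmod q}^{*} \sum_{\c \in \ZZ^4} S_{q,a}(\c)\, I_q(\c),
\]
where $S_{q,a}(\c) = \prod_{i=1}^4 \sum_{b\bmod q} e_q(a A_i b^2 + c_i b)$ is a product of one-variable quadratic Gauss sums, and $I_q(\c)$ is an oscillatory integral built out of $\hat w$ and $h$.

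The main term comes from the $\c=\0$ contribution: summing $q^{-3}\sum_a^{*} S_{q,a}(\0)$ over $q$ reassembles, by multiplicativity, into the singular series $\mathfrak{S}(F)$ (which converges by Lemma~\ref{SSB}, using hypothesis \eqref{eq:assume2-x}), while integrating the $h$-factor against $\hat w$ recovers $\sigma_\infty(w;F)P^2$ after recognising the inner integral as the relevant Fourier transform.

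The technical heart of the proof is bounding the $\c\ne\0$ contribution uniformly in $F$, and this is where both error terms in the theorem originate. For the Gauss sums I would use the classical square-root bound $|\sum_{b\bmod q} e_q(\alpha b^2 + cb)| \ll q^{1/2}(q, 2\alpha)^{1/2}$, giving $|S_{q,a}(\c)| \ll q^{2}\prod_i (q, 2aA_i)^{1/2}$; the average over $a$ then exhibits square-root cancellation, with the constraint $\Dbad \le \|F\|^\eta$ from \eqref{eq:assume2-x} preventing exceptional inflation at bad primes. For the integral $I_q(\c)$ I would integrate by parts repeatedly, using the uniform lower bound $|\nabla F(\x)| \gg \|F\|^{1-\eta}$ from \eqref{NL}, which forces $I_q(\c)$ to be negligible once $|\c|$ exceeds roughly $q\|F\|^{1-\eta}/P$. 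The main obstacle will be the trade-off between two regimes: a \emph{generic} $\c$ regime, where full square-root cancellation combined with the Gauss-sum/$I_q$ balance yields the $\|F\|^{-1/2+5\eta}P^{3/2}$ term; and an \emph{exceptional} $\c$ regime, where $(q, aA_i)$ is large and the cancellation is weaker, producing the secondary error $\|F\|^{1/2+2\eta}P$. The hypothesis $\|F\|\ge P^\eta$ is precisely what ensures that the oscillatory integral bound beats the trivial one, so that a genuine saving over the trivial estimate can be extracted.
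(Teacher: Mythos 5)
Your proposal correctly identifies the overall framework (Heath-Brown's delta-method with the coefficients $A_i$ tracked uniformly, $\c=\0$ yielding the main term, Gauss-sum square-root bounds, repeated integration by parts against $\nabla F$), but it misses the two observations on which the stated error terms actually hinge, and it misattributes where those error terms come from.

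First, the parameter is taken to be $Q=\sqrt{\|F\|P^2}$, not $Q\asymp P$; this choice is what makes the $q$-range and the effective truncation $|\c|\ll\|F\|^{1/2}Q^{\ve}$ interact correctly with the sizes of the $A_i$. Second, and more importantly, the crux of the treatment of the $\c\neq\0$ sum is that under \eqref{eq:assume1-x}, \eqref{eq:assume2-x} and $\|F\|\geq P^{\eta}$ \emph{every} non-zero $\c$ in the truncated range automatically satisfies $F^{*}(\c)\neq 0$: if $F^{*}(\c)=0$ with $c_1\neq 0$, say, then the "primitive part" $A_1^{\flat}$ of $A_1$ divides $c_1$, while $A_1^{\flat}\geq\|F\|^{1-2\eta}$ is too large for the truncation $|\c|\ll\|F\|^{1/2}Q^{\ve}$. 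Once $F^{*}(\c)\neq 0$, Lemma \ref{lem:good_primes} forces the good part of $q$ to be square-free and the sum $\sum_{q}q^{-3}|S_q(\c)|$ converges without needing any cancellation over $q$; the paper then feeds in the second bound of Lemma \ref{lem:I-estimate} and gets the whole of the $\|F\|^{1/2+2\eta}P$ term. This is very different from your picture of a "trade-off between a generic $\c$ regime and an exceptional $\c$ regime": there is no exceptional $\c$ regime at all, and this is precisely the content of the remark preceding \S\ref{ssEB}.

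Correspondingly, the first error term $\|F\|^{-1/2+5\eta}P^{3/2}$ does not come from "generic $\c$" but from the $\c=\0$ analysis, specifically from truncating the singular series: the tail $\sum_{q>Q\|F\|^{-5\eta}}q^{-4}S_q(\0)I_q(\0)$ is controlled by summation by parts together with the Burgess bound for the real character $(\tfrac{\Delta_F}{\cdot})$ (Lemma \ref{lem:average2}), paired with the derivative bound on $I_q(\0)$ from Lemma \ref{lem:I0}. Your sketch makes no mention of Burgess at all, but without it the singular-series tail would not be small enough. So, while the scaffolding you propose is the right one, without the two observations above --- the restriction to $F^{*}(\c)\neq 0$ for the off-diagonal terms, and Burgess' bound plus the specific choice $Q=P\|F\|^{1/2}$ for the diagonal term --- the argument would stall well short of the error terms claimed in the theorem.
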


The main term here is typically of size around $P^2\|F\|^{-1}$, so
that we get an asymptotic formula when $P^{\eta}\le\|F\|\le P^{2/3+O(\eta)}$.
For comparison, Browning \cite[Prop.~2]{qupper} shows that
$$
  N_w(P) = \sigma_\infty(w;F)\mathfrak{S}(F)P^{2}+
  O_{w,\eta}\left(\|F\|^{3+9\eta}P^{3/2+\eta} \right),
$$
for a special choice of  weight function $w$, under the assumptions
that  $\Delta_F\neq \square$ and that
(\ref{eq:assume1-x}) and (\ref{eq:assume2-x}) hold.
Theorem \ref{newth} refines this result 
considerably  for forms  
whose discriminant is close to being square-free, provided that the
coefficients of $F$ are not too small compared with $P$. We should
emphasize that in both Theorem \ref{newth} and the result of Browning
\cite{qupper} the coefficients of $\eta$ are relatively unimportant.
In particular they have no significance for the current application.

The condition that $\|F\|\geq P^\eta$ is somewhat unnatural and
deserves further comment. Under this assumption together with
the hypotheses (\ref{eq:assume1-w}), (\ref{eq:assume1-x}) and
(\ref{eq:assume2-x})
we are able to eliminate certain
awkward terms that arise when we apply Poisson summation.  This is
explained further in \S \ref{ssEB}.
At this point it is crucial that the
quadratic form $F(\x)$ is diagonal.
We could remove the condition $\|F\|\ge P^\eta$ and handle
non-diagonal forms, but this would be at the expense
of a worse dependence on $\|F\|$. 

\subsection{Preliminaries}

Our proof of Theorem \ref{newth} uses 
the smooth $\delta$-function variant of the circle method introduced by  
Duke, Friedlander and Iwaniec \cite{DFI}, and later  developed by
Heath-Brown \cite[Thm.~1]{HB'}.  We proceed to review the technical
apparatus required.

For any $q \in \NN$, any $\c \in \ZZ^4$ and any $Q\geq 1$, we define
the complete exponential sum 
\begin{equation}\label{eq:SUM}
S_q(\c) = \sumstar_{\substack{a\bmod{q}}}~
\sum_{\b\bmod{q}}  e_q \left(aF(\ma{b})+\b.\c\right),
\end{equation}
and the oscillatory integral 
$$
I_q(\c)=
\int_{\RR^4}w(P^{-1}\x)h\left(\frac{q}{Q},\frac{F(\x)}{Q^2}\right)
e_q(-\c.\x)\d\x,  
$$
for a certain function 
 $h:(0,\infty)\times \RR \rightarrow \RR$ 
described in  \cite[\S 3]{HB'}.
We note here that $h(x,y)$ is independent of $F$ and $Q$ and is infinitely 
differentiable for $(x,y) \in (0,\infty)\times \RR$. 
Moreover $h(x,y)$ is non-zero only for $x \leq \max\{1,2|y|\}$. 
When $\tfrac12\le x\le 1$ and $|y|\le\tfrac14$, for example, 
$h(x,y)$ will be of exact order 1. 
It then follows from   \cite[Thm.~2]{HB'} that 
\begin{equation}\label{asym1} 
N_w(P) = \frac{c_Q  }{Q^{2}}\sum_{\c\in \ZZ^4} \sum_{q
=1}^\infty q^{-4}S_q(\c)I_q(\c).  
\end{equation}
where the constant $c_Q$ satisfies
$c_Q=1+O_N(Q^{-N})$, for any $N>0$. 

We shall take $Q=\sqrt{\|F\|P^2}$ in our work.  
In our proof of Theorem \ref{newth} we shall often encounter a small
positive parameter $\ve$, and for  the sake of convenience 
we shall allow it to take different values at different stages
of the argument, so
that $x^\ve\log x\ll x^\ve $, for example. 
All of our implied constants  are allowed to depend on the weight function
$w$ and on $\eta$ and $\ve$, but on nothing else unless specified. 
Ultimately we will
take $\ve$ to be fixed in terms of $\eta$ but much smaller than it,
so that the dependence on
$\ve$ will disappear.  As above we assume that $w$,
besides being infinitely differentiable and of compact support,
satisfies the condition \eqref{eq:assume1-w}.

We now wish to apply the bounds for the exponential integral $I_q(\c)$
that were derived in  \cite[\S\S 7--8]{HB'}. Unfortunately, the
implied constants in each  of these estimates is allowed to depend
implicitly  on the coefficients of $F$, a deficiency that we shall
need to remedy here.   

\begin{lemma}\label{lem:I-estimate}
Let $\c\in \ZZ^4$ be non-zero.
Then the following hold:
\begin{enumerate}
\item[(i)] For any $N\geq 0$ we have
\[I_q(\c)\ll_{N}  \frac{P^{5}}{q} \frac{\|F\|^{(N+1)/2}}{|\c|^{N}}.\]
\item[(ii)]
We have 
\[I_q(\c) \ll \frac{q\|F\|^{2}P^{3}}{|\Delta_F|^{1/2}|\c|}.\]
\end{enumerate}

\end{lemma}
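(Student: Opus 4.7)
Parts (i) and (ii) are proved by different techniques. Part (i) is a standard $N$-fold integration-by-parts argument in a coordinate direction of $\x$; part (ii) requires Fourier-inverting $h$ in its second argument and combining non-stationary IBP in $\x$ with stationary phase, exploiting the diagonal structure of $F$ to produce the factor $|\Delta_F|^{-1/2}$. Throughout, the needed properties of $h$ come from \cite[\S\S 3--4]{HB'}: $h(q/Q, \cdot)$ vanishes unless $q \ll Q = \sqrt{\|F\|}P$, has $L^1$-norm $O(1)$, and its $y$-derivatives obey $|h^{(0, k)}(q/Q, y)| \ll_k (Q/q)^{k+1}$ on its support. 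For (i), pick $j$ with $|c_j| = |\c|$ and integrate by parts $N$ times in $x_j$, producing the prefactor $(q/(2\pi i c_j))^N$. By Fa\`a di Bruno, each derivative of $w(P^{-1}\x)$ contributes $P^{-1}$, each derivative of $F(\x)/Q^2$ in $x_j$ contributes $O(1/P)$, and the $N$-th $y$-derivative of $h$ contributes $(Q/q)^{N+1}$; combining gives
\[|\partial_{x_j}^N\bigl(w(P^{-1}\x) h(q/Q, F(\x)/Q^2)\bigr)| \ll_N P^{-N}(Q/q)^{N+1}.\]
Multiplying by the volume $P^4$ of the support and the IBP factor $(q/|\c|)^N$ yields $|I_q(\c)| \ll_N Q^{N+1}P^{4-N}/(q|\c|^N) = P^5\|F\|^{(N+1)/2}/(q|\c|^N)$, which is (i).

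\textbf{Part (ii).} Fourier-invert $h$ in its second variable, $h(q/Q, y) = \int_{\RR} k(q/Q, \xi) e(\xi y)\d\xi$, with $\|k(q/Q, \cdot)\|_\infty \le \|h(q/Q, \cdot)\|_{L^1} \ll 1$. This rewrites
\[I_q(\c) = \int_{\RR} k(q/Q, \xi) K(\c; \xi)\d\xi,\quad K(\c; \xi) = \int_{\RR^4} w(P^{-1}\x) e\bigl(\xi F(\x)/Q^2 - \c\cdot\x/q\bigr)\d\x.\]
Set $\Xi = |\c|P/(2q)$ and split the $\xi$-integral at $\pm\Xi$. For $|\xi| \le \Xi$, on the support of $w$ one has $|\partial_{x_j}(\xi F/Q^2 - \c\cdot\x/q)| \ge |c_j|/q - 2|\xi|/P \ge |c_j|/(2q)$, so two IBPs in $x_j$ give $|K(\c;\xi)| \ll q^2P^2/|\c|^2$, and the resulting contribution to $|I_q(\c)|$ is at most $\|k\|_\infty \cdot q^2P^2/|\c|^2 \cdot 2\Xi \ll qP^3/|\c|$. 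For $|\xi| > \Xi$, the hypothesis (\ref{eq:assume1-x}) ensures that the stationary point $x_i^\ast = c_iQ^2/(2q\xi A_i)$ lies inside the support of $w$; since $F$ is diagonal, the phase of $K(\c;\xi)$ separates coordinatewise, and one-dimensional stationary phase in each $x_i$ yields four Hessian factors $|2\xi A_i/Q^2|^{-1/2}$ whose product is $Q^4/(4|\xi|^2|\Delta_F|^{1/2})$, giving $|K(\c; \xi)| \ll \|F\|^2 P^4/(|\xi|^2|\Delta_F|^{1/2})$. Integrating $|\xi|^{-2}$ from $\Xi$ to $\infty$ produces a stationary-phase contribution $\ll qP^3\|F\|^2/(|\Delta_F|^{1/2}|\c|)$, which dominates the non-stationary contribution since $\|F\|^2 \ge |\Delta_F|^{1/2}$, yielding the target bound.

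\textbf{Main obstacle.} The delicate step in part (ii) is the transitional range of $\xi$ in which neither clean non-stationary IBP nor clean 1-D stationary phase is directly valid --- for example, where $|\xi A_i/\|F\|| \not\gg 1$ for some $i$, or where the stationary point lies near the boundary of $\supp w$. In this range one must combine the trivial bound $|K(\c;\xi)| \ll P^4$ with the $\xi$-decay $|k(q/Q, \xi)| \ll (Q/q)^N/|\xi|^N$ obtained by iterated IBP in the Fourier inversion of $h$, using the hypotheses $\|F\| \ge P^{\eta}$ and $|A_i| \ge \|F\|^{1-\eta}$ from (\ref{eq:assume1-x}) and (\ref{NL}) to absorb the resulting contribution into the target. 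All of these ingredients are essentially present in \cite[\S\S 7--8]{HB'}; the substantive new content of Lemma \ref{lem:I-estimate} is purely the explicit bookkeeping of the $\|F\|$- and $|\Delta_F|$-dependence.
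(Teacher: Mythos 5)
Your proof of (i) is correct and is essentially the same calculation the paper performs: rewrite $I_q(\c)$ in rescaled form, integrate by parts $N$ times in the coordinate direction $x_j$ maximizing $|c_j|$, and invoke the derivative bounds on $h$ recorded in \eqref{eq:h-function}. The bookkeeping leading to $P^5 \|F\|^{(N+1)/2}/(q|\c|^N)$ is right.

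\textbf{Part (ii).} The overall strategy is the same as the paper's — Fourier--decompose $h$ in its second variable, then split the resulting $\xi$-integral (the paper's $t$-integral) into a non-stationary range, controlled by integration by parts, and a range where a second-derivative/stationary-phase bound gives the $|\Delta_F|^{-1/2}$ factor. But there is a genuine gap in your stationary phase step. You assert that ``since $F$ is diagonal, the phase of $K(\c;\xi)$ separates coordinatewise, and one-dimensional stationary phase in each $x_i$ yields four Hessian factors.'' The phase does separate, but the amplitude $w(P^{-1}\x)$ is a general smooth compactly supported function of $\x\in\RR^4$, \emph{not} a product $\prod_i w_i(x_i)$. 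Without a product structure in the amplitude, the four-dimensional integral $K(\c;\xi)$ does not factor into one-dimensional integrals, and the claimed product of four one-dimensional Hessian factors does not follow from the computation you describe. (The subsidiary claim that the stationary point $x_i^* = c_iQ^2/(2q\xi A_i)$ lies inside $\supp w$ for all $|\xi| > \Xi$ is also false --- for large $|\xi|$ the critical point drifts to the origin --- though that particular issue is harmless because second-derivative bounds hold without localization.)

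The paper sidesteps this by first applying the argument of \cite[Lemma 17]{HB'}, which produces a function $p(t)\ll r$ together with a smooth weight $w_1$, and then invoking Lemmas 3.1 and 3.2 of Heath--Brown and Pierce \cite{hb-pierce}. Lemma 3.2 there is precisely a smooth-to-box reduction: it replaces the smooth amplitude $w_1$ by the characteristic function of a box, at the cost of introducing an extra linear term $e(\x\cdot\y)$ in the phase. Only after this reduction does the diagonal structure of $F$ make the integral split into a product of four one-dimensional oscillatory integrals, each handled by the second-derivative test; this is exactly the mechanism used in the proof of Lemma \ref{SIB} later in the paper. Equivalently one may Fourier-transform the amplitude and use $\|\hat{w}\|_{L^1}<\infty$. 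Your ``main obstacle'' paragraph correctly flags that some ranges of $\xi$ need care, but the substantive missing step is this smooth-to-box (or amplitude--Fourier-inversion) reduction, not a transitional-range absorption. With that step inserted, your splitting at $\Xi$ (adjusted slightly, e.g.\ $\Xi = |\c|P/(4q)$, so that the lower bound $|\partial_{x_j}\phi| \ge |c_j|/(2q)$ actually holds) recovers the paper's bound.
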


\begin{proof}
To begin with we may write
\[I_q(\c)=P^4 \int_{\RR^4} w(\x) h(r,G(\x)) e\left(-\v.\x\right)\d\x,\]
where $r=q/Q$, $\v=q^{-1}P\c$ and $G=\|F\|^{-1}F$. In particular 
\[\frac{\partial^{k_1+\dots + k_4}}{\partial x_1^{k_1}\dots \partial
  x_4^{k_4}}G(\x)\ll_{k_1,\dots,k_4} 1, \]
for all $\x\in \supp(w)$ and $k_1,\dots,k_4\in \ZZ_{\geq 0}$.
The function $h(x,y)$  is non-zero only for $x\leq \max\{1,2|y|\}$
and satisfies $h(x,y)\ll x^{-1}$. In fact, for any $i,j\geq 0$ and any
$N\geq 0$,  we have  
\begin{equation}\label{eq:h-function}
 \frac{\partial^{i+j}h(x,y)}{\partial^i x\partial y^j} \ll_{i,j,N}
 x^{-1-i-j}\left(x^N+ 
 \min\{1,(x/|y|)^N\}\right).
 \end{equation}
These facts are all explained in \cite[Lemmas 4 and 5]{HB'}.
Repeated integration by parts now establishes part (i).

Turning to part (ii), we see that 
$I_q(\c)=P^4r^{-1}I(\v)$, in the notation of \cite[Lemma 14]{HB'}, with $f=rh$.
The argument in the proof of \cite[Lemma~17]{HB'} shows that there
exists a smooth weight function $w_1:\RR^4\to \RR_{\geq 0}$, such that  
$\supp(w_1)\subseteq \supp(w)$ 
and a function $p(t)\ll r $ such that 
$$
I(\v)=\int_{-\infty}^\infty p(t)\int_{\RR^4} w_1(\x)
e\left(tG(\x)-\v.\x\right) \d\x \d t. 
$$
We may analyse the inner integral $J$, say,  using the smoothly
weighted stationary phase bound worked  
out by Heath-Brown and Pierce in 
Lemmas 3.1 and 3.2 of 
 \cite{hb-pierce}.
Recall that $G(\x)=\|F\|^{-1}F(\x)\ll 1$ for all $\x\in \supp(w_1)$ and 
observe that 
 integration by parts gives
$$
 \int_{\RR^4} |\hat w_1(\y)|\d \y\ll
 \int_{\RR^4}\min\left\{1,|\y|^{-1}\right\}^5 \d \y\ll 1. 
$$
This shows that 
 $J=O_M(|\v|^{-M})$ for any $M\geq 0$ 
if $|\v|\gg |t|$, while we have 
$J=O(\|F\|^{2}|\Delta_F|^{-1/2}t^{-2})$ otherwise. 
Applying these bounds with $M=2$, 
and noting that $|\Delta_F|^{1/2}\leq \|F\|^2$, 
 the statement of part (ii) easily follows.
\end{proof}

The effect of part (i)  is that the sum over $\c$ in 
\eqref{asym1} can be truncated to $|\c|\ll \|F\|^{1/2}Q^\ve$
for any $\ve>0$, with negligible  error. The following estimate allows
us to work with $I_q(\c)$ when $\c=\0$. 

\begin{lemma}\label{lem:I0}
Assume that \eqref{eq:assume1-w} and \eqref{eq:assume1-x} hold. Then 
 we have 
\[q^k \frac{\partial^k I_q(\0)}{\partial q^k}\ll \|F\|^\eta P^4, \quad
\text{for $k\in \{0,1\}$}. \]
\end{lemma}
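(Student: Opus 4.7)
The obstacle here is that with $\c = \0$ one cannot integrate by parts using the $e_q(-\c.\x)$ oscillation, so any saving must come from the structure of $h$ together with the non-vanishing of $\nabla F$ on $\supp w$ recorded in \eqref{NL}. After rescaling $\x = P\u$ and using $Q^2 = \|F\|P^2$, the integral becomes
\begin{equation*}
I_q(\0) = P^4 \int_{\RR^4} w(\u)\,h(r,G(\u))\,\d\u,
\qquad r=q/Q,\ G=F/\|F\|,
\end{equation*}
and \eqref{NL} gives $|\nabla G(\u)| \gg \|F\|^{-\eta}$ throughout $\supp w$. Applying the coarea formula with respect to $G$ converts this to
\begin{equation*}
I_q(\0) = P^4 \int_{-\infty}^{\infty} h(r,t)\,M(t)\,\d t,
\end{equation*}
where the slice density $M(t) = \int_{\{G=t\}\cap \supp w} w(\u)\,|\nabla G(\u)|^{-1}\,\d S(\u)$ is supported in a bounded range of $t$ (because $G$ is bounded on $\supp w$) and satisfies $M(t) \ll \|F\|^\eta$ uniformly.

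Matters thereby reduce to the universal bound $\int_{-\infty}^{\infty}|h(r,t)|\,\d t \ll 1$, valid for all $r>0$. This follows from \eqref{eq:h-function} with $i=j=0$ by splitting the integration at $|t|=r$: the region $|t|\le r$ contributes $O(1)$ via the pointwise estimate $h\ll r^{-1}$, while $|t|\ge r$ contributes $O(1)$ via the decay $h\ll r^{-1}(r/|t|)^N$ with, say, $N=2$. When $r>1$ the support condition $h(x,y)=0$ for $x>\max\{1,2|y|\}$ confines $t$ to $|t|\ge r/2$, and an analogous splitting suffices. This establishes the case $k=0$.

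The case $k=1$ is structurally identical. Differentiating under the integral sign gives
\begin{equation*}
q\,\frac{\partial I_q(\0)}{\partial q} = r P^4 \int_{\RR^4} w(\u)\,\frac{\partial h}{\partial x}(r, G(\u))\,\d\u,
\end{equation*}
and the same coarea reduction leaves us to verify $\int |r\,(\partial h/\partial x)(r,t)|\,\d t \ll 1$. This follows from \eqref{eq:h-function} with $(i,j)=(1,0)$ by exactly the same splitting argument, since $r\,\partial h/\partial x$ satisfies a bound of the same structural shape as $h$. The main technical point is the $L^1$-bound in $t$: the apparent blowup of $h(r,\cdot)$ as $r\to 0$ is exactly cancelled by a proportional shrinking of its effective $t$-support to a set of length $O(r)$. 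All dependence on $\|F\|$ ultimately enters through the coarea formula via the lower bound on $|\nabla G|$, which is itself a consequence of \eqref{eq:assume1-w} and \eqref{eq:assume1-x}.
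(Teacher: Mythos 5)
Your proof is correct and follows essentially the same approach as the paper: rescale to express $q^k\partial^k I_q(\0)/\partial q^k$ as $r^kP^4\int w\,\partial^k h/\partial r^k\,\d\u$, use \eqref{NL} to control the slices $\{G=t\}$ (the paper phrases this as "the measure of the set where $|G(\x)|\le z$ is $O(z\|F\|^\eta)$" rather than invoking the coarea formula explicitly, but this is the same fact), and reduce to the universal bound $\int|r^k\,\partial^kh(r,t)/\partial r^k|\,\d t\ll 1$ coming from \eqref{eq:h-function} with $N=2$.
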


\begin{proof}
Let $k\in\{0,1\}$
and recall the notation $r=q/Q$. Then 
\begin{align*}
q^k \frac{\partial^k I_q(\0)}{\partial q^k}
&=r^kP^4 \int_{\RR^4} w(\x) 
\frac{\partial^k h(r,G(\x))}{\partial r^k}\d\x\\
&\ll r^{-1}P^4 \int_{\x\in \supp(w)} 
\left(r^2+\min\left\{1,\frac{r^2}{G(\x)^2}\right\}\right)
\d\x,
\end{align*}
on  taking $N=2$ in \eqref{eq:h-function}.
We now appeal to \eqref{NL}, which implies that 
$|\nabla G(\x)|\gg \|F\|^{-\eta}$ 
for all $\x\in \supp(w)$. Thus 
the measure of the set where $|G(\x)|\leq z$ is $O(z\|F\|^\eta
)$. The integral is therefore $O(r\|F\|^\eta )$, as in the proof of
\cite[Lemma 15]{HB'}, and the  lemma follows. 
\end{proof}

We conclude this section by considering the integral
\beql{J1d}
J(\theta;w)=\int_{\RR^4}w(\x)e(-\theta G(\x))\d\x.
\eeq
\begin{lemma}\label{J1L}
Under the assumptions \eqref{eq:assume1-w} and \eqref{eq:assume1-x} we
have
\[J(\theta;w)\ll_N |\theta|^{-N}\|F\|^{2N\eta}\]  
for any non-negative integer $N$.
\end{lemma}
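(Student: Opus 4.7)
The plan is to prove this by repeated (non-stationary) integration by parts. First I would observe that \eqref{NL} together with $G = \|F\|^{-1} F$ immediately gives $|\nabla G(\x)| \gg_\eta \|F\|^{-\eta}$ for every $\x \in \supp(w)$, so the phase $G$ has no stationary points on the support of $w$, and in fact its gradient is bounded below uniformly on $\supp(w)$ by a quantity that we control in terms of $\|F\|$.

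Next I would introduce the first-order differential operator
\[L(f) = \nabla \cdot \left( \frac{\nabla G}{|\nabla G|^2} f \right),\]
which is well-defined on $\supp(w)$ by the previous lower bound. Since its formal adjoint takes $e(-\theta G)$ to $-2\pi i \theta\, e(-\theta G)$, and since $w$ has compact support so all boundary terms vanish, $N$ iterated applications of integration by parts yield
\[J(\theta;w) = \frac{1}{(-2\pi i \theta)^N} \int_{\RR^4} L^N(w)(\x)\, e(-\theta G(\x))\, \d\x.\]
Taking absolute values reduces the lemma to the pointwise estimate
\[|L^N(w)(\x)| \ll_{N,w,\eta} \|F\|^{2N\eta} \quad \text{for } \x \in \supp(w).\]

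To establish this, I would expand $L^N(w)$ inductively. Each application of $L$ produces a sum of terms whose numerators are products of partial derivatives of $w$ and partial derivatives of the components of $\nabla G$, and whose denominators are powers of $|\nabla G|^2$. On $\supp(w)$ the components $\partial_i G = 2 A_i x_i / \|F\|$ and all their higher partials are bounded by absolute constants (because $|A_i|/\|F\| \le 1$), while \eqref{NL} gives $|\nabla G|^{-2} \ll_\eta \|F\|^{2\eta}$. A straightforward induction on $N$ shows that $L^N(w)$ is bounded by a finite sum of terms, each of which carries at most $N$ factors of $|\nabla G|^{-2}$, hence is $O_{N,w,\eta}(\|F\|^{2N\eta})$ uniformly on $\supp(w)$.

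The one point that requires care is the book-keeping in the induction: verifying that after $N$ iterations the maximum power of $|\nabla G|^{-1}$ that can appear in any term of $L^N(w)$ is exactly $2N$, rather than growing more quickly than linearly in $N$. Once this is confirmed, combining the prefactor $|\theta|^{-N}$, the uniform pointwise bound on $L^N(w)$, and the finiteness of $\meas(\supp(w))$ gives the stated estimate.
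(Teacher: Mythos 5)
Your integration-by-parts framework is a valid route (and genuinely different from the paper's), but the book-keeping step you single out as ``the one point that requires care'' is in fact where the argument as written breaks down, and the naive induction you describe does not yield the stated exponent.

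To see the difficulty concretely, compute $L(w)$ once:
\[
L(w)=\sum_i\frac{\partial_i w\,\partial_i G + w\,\partial_i^2 G}{|\nabla G|^2}
-\,2w\,\frac{\nabla G^{\mathsf T}\,\mathrm{Hess}(G)\,\nabla G}{|\nabla G|^4}.
\]
The last term already has denominator $|\nabla G|^4$, i.e.\ \emph{two} factors of $|\nabla G|^{-2}$ after a single application of $L$, contradicting your claim of at most $N$ such factors after $N$ applications. The mechanism repeats at every step: each $L$ contributes one $|\nabla G|^{-2}$ from the weight $v_i=\partial_i G/|\nabla G|^2$, and then a derivative falling on an existing power $|\nabla G|^{-2m}$ produces another $|\nabla G|^{-2}$. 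With the componentwise bounds $\partial^\alpha G\ll 1$ that you propose, this gives $L^N(w)\ll|\nabla G|^{-4N}\ll\|F\|^{4N\eta}$, not $\|F\|^{2N\eta}$. The only way to recover the factor of two is to recognise that quantities such as $\nabla G^{\mathsf T}\mathrm{Hess}(G)\nabla G$ are $O(|\nabla G|^2)$ by Cauchy--Schwarz (not merely $O(1)$), and carry that saving through the entire induction; but this is precisely the point you assert rather than verify, and it is far from routine because the terms of $L^N(w)$ do not all present themselves as clean bilinear forms in $\nabla G$.

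By contrast, the paper avoids the imbalance altogether: it first shrinks the support of $w$ to a ball of radius $\delta=\|F\|^{-1}\min_i|A_i|\ge\|F\|^{-\eta}$ via a convolution identity, then rescales to $G_*(\u)=G(\y+\delta\u)$. After rescaling, the phase $\theta G_*$ has both first and second derivatives of the same order $\theta\delta^2$, so Heath--Brown's first-derivative estimate \cite[Lemma~10]{HB'} applies cleanly and directly yields $|\theta|^{-N}\delta^{-2N}\le|\theta|^{-N}\|F\|^{2N\eta}$. The moral is that the loss in your approach comes from doing non-stationary phase with an unbalanced phase (first derivative $\gg\theta\|F\|^{-\eta}$ but second derivative as large as $O(\theta)$), while the paper restores balance before integrating by parts. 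Your argument as written would prove the weaker bound $\|F\|^{4N\eta}$, which would still be adequate downstream after relabelling $\eta$, but does not establish the lemma as stated.
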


\begin{proof}
To prove this we use the first derivative bound for
smooth exponential integrals, see Heath-Brown
\cite[Lemma~10]{HB'}.
First however we must reduce the support of the
weight function $w$ by using Lemma 2 of \cite{HB'}. This shows that if
$0<\delta<1$ then there is a smooth function $w_{\delta}$ of compact
support such that
\[w(\x)=\delta^{-4}\int_{\RR^4} w_{\delta}(\delta^{-1}(\x-\y),\y)\d\y.\]
Thus there is some vector $\y=\y(\delta)$ such that
\[J(\theta;w)\ll
\delta^{-4}\left|\int_{\RR^4}w_{\delta}(\delta^{-1}(\x-\y),\y)
e(-\theta G(\x))\d\x\right|.\]
We will choose $\delta=\|F\|^{-1}\min|A_i|$. Thus
$\delta\geq \|F\|^{-\eta}$, by \eqref{eq:assume1-x}.  We now see that
\beql{J1e}
J(\theta;w)\ll\left|\int_{\RR^4}w_*(\mathbf{u})
e(-\theta G_*(\u))\d\u\right|,
\eeq
where we have set $w_*(\u)=w_{\delta}(\u,\y)$ and
$G_*(\u)=G(\y+\delta\u)$. According to \cite[Lemma 2]{HB'}, if
$w_*(\u)\not=0$ then $w(\y+\delta\u)\not=0$, so that
$|\y+\delta\u|\ge\eta$, by (\ref{eq:assume1-w}).  It follows that
$|\nabla G_*(\u)|\gg_{\eta}\delta^2$ on the support of $w_*$.
Moreover, the second order derivatives of $G_*$ are $O(\delta^2)$ and
the higher derivatives vanish.  We then see from \cite[Lemma 10]{HB'} that
\[\int_{\RR^4}w_*(\mathbf{u})e(-\theta G_*(\u))\d\u\ll_{N,w,\eta}
|\theta|^{-N}\delta^{-2N}\]
for any non-negative integer $N$. The reader should note that the
implied constant is independent of $\y$ and $\delta$ by the technical
properties of $w_{\delta}$ described in \cite[Lemma 2]{HB'}.
The statement now follows from \eqref{J1e}.
\end{proof}

\subsection{The exponential sums}

In this section we summarise what we will need to know about the 
 exponential sums $S_q(\mathbf{c})$ defined in 
\eqref{eq:SUM}.  As proved in 
\cite[Lemma 23]{HB'}, these satisfy the 
multiplicativity property $S_{q_1q_2}(\c)=S_{q_1}(\c)S_{q_2}(\c)$ for
any coprime integers $q_1,q_2$.   
We begin by establishing the following basic estimate.

\begin{lemma}\label{lem:Sq-basic}
For any $q\in \NN$ we have 
$$
S_q(\c)\ll q^3 \prod_{1\leq i\leq 4} (q,A_i,c_i)^{1/2}.
$$
\end{lemma}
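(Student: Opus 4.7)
The plan is to exploit the diagonal structure of $F$ to factor the inner sum over $\b$ into four independent one-variable Gauss sums, bound each, and then sum trivially over $a$.

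First I would observe that since $F(\b)=\sum_i A_i b_i^2$ is diagonal, the inner sum in \eqref{eq:SUM} factors completely:
$$S_q(\c)=\sumstar_{a\bmod q}\prod_{i=1}^{4}G_i(a),\qquad G_i(a)=\sum_{b\bmod q} e_q\bigl(aA_i b^2+c_i b\bigr).$$
The whole problem therefore reduces to estimating the one-variable Gauss-type sum $G_i(a)$ uniformly in $a,A_i,c_i$.

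The key step is the standard Gauss sum bound
$$|G_i(a)|\leq q^{1/2}\,(q,A_i,c_i)^{1/2},$$
which I would prove as follows. Let $d=(q,aA_i)$ and write $q=dq_0$, $aA_i=d\alpha_0$ with $(\alpha_0,q_0)=1$. Decomposing $b=b_1+q_0 b_2$ with $b_1\bmod q_0$ and $b_2\bmod d$, the cross and quadratic terms involving $b_2$ drop out modulo $q$, and one obtains
$$G_i(a)=\Bigl(\sum_{b_2\bmod d}e_d(c_i b_2)\Bigr)\sum_{b_1\bmod q_0} e_{q_0}(\alpha_0 b_1^2+c_i b_1).$$
The $b_2$-sum vanishes unless $d\mid c_i$, in which case it equals $d$; the remaining sum is a Gauss sum with coprime quadratic coefficient, so has modulus at most $q_0^{1/2}$ (by the classical evaluation of quadratic Gauss sums, with the usual care at $p=2$ via a small multiplicative constant absorbed into the $\ll$). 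Hence $|G_i(a)|\leq d\, q_0^{1/2}=q^{1/2}d^{1/2}$ whenever the sum is nonzero. But if nonzero then $d\mid c_i$, so $d=(q,aA_i)=(q,aA_i,c_i)=(q,A_i,c_i)$, using $(a,q)=1$ to drop the factor of $a$. This gives the stated bound in all cases (trivially so when $G_i(a)=0$).

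Finally I would combine these estimates, using that the number of $a$ in the outer $\sumstar$ is $\phi(q)\leq q$:
$$|S_q(\c)|\leq \phi(q)\prod_{i=1}^{4}q^{1/2}(q,A_i,c_i)^{1/2}\leq q^3\prod_{i=1}^{4}(q,A_i,c_i)^{1/2},$$
which is the claim. The only delicate point is the Gauss sum bound at $p=2$, where completing the square requires inverting $2$; this is handled by separating the $2$-part of $q$ and applying the direct residue-class decomposition above, which is valid at all primes and gives the bound with an absolute implicit constant.
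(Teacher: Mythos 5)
Your proof is correct, and it is essentially the same skeleton as the paper's — both start from the factorization of $S_q(\c)$ into a product of four one-variable Gauss sums $G(aA_i,c_i;q)$ — but you diverge at the key step of bounding the Gauss sum. The paper inspects the proof of \cite[Lemma~25]{HB'}, which uses a squaring/completion argument to give $|S_{p^r}(\c)|\le p^{3r}\#\{\y\bmod p^r:p^r\mid 2A_iy_i\}^{1/2}$, and then imports separately the observation that $G(b,c;p^r)$ vanishes unless $p^{\min\{v_p(b),r\}}\mid c$ to sharpen $(q,A_i)$ into $(q,A_i,c_i)$. You instead bound each Gauss sum directly and self-containedly: writing $d=(q,aA_i)$, $q=dq_0$, and splitting $b=b_1+q_0b_2$, the $b_2$-sum vanishes unless $d\mid c_i$ and otherwise equals $d$, while the $b_1$-sum has coprime leading coefficient and is $\ll q_0^{1/2}$. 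This gives $|G_i(a)|\ll q^{1/2}(q,A_i,c_i)^{1/2}$ in a single stroke, with the vanishing condition and the size bound emerging from the same decomposition rather than from two separate sources. Both approaches then sum trivially over the $\phi(q)\le q$ values of $a$. Your route avoids the external reference and is arguably cleaner for a diagonal form; the paper's route via \cite{HB'} is more robust in that the lemma there applies to general (non-diagonal) quadratic forms. One small notational slip: in your displayed formula the $b_1$-sum should read $\sum_{b_1\bmod q_0}e_q(aA_ib_1^2+c_ib_1)$, i.e.\ still with modulus $q$ for the linear term (it becomes a genuine $e_{q_0}$-sum only after using $d\mid c_i$ to write $c_i=dc_i'$); this does not affect the argument.
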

\begin{proof}
To begin with, for any prime power $p^r$ we have 
\begin{equation}\label{eq:busy}
S_{p^r}(\c)=\sumstar_{a\bmod{p^r}}
\prod_{1\leq i\leq 4} G(aA_i,c_i;p^r),
\end{equation}
where
\begin{equation}\label{eq:gauss}
G(b,c;q)=
\sum_{x\bmod{q}}e_{q}\left(b x^2+cx\right)
\end{equation}
denotes the generalised Gauss sum for any $q\in \NN$ and $b,c\in \ZZ$.
For $q=p^r$ we put
 $\beta=v_p(b)$.
Breaking into residue classes modulo $p^{r-\min\{\beta,r\}}$, it is
easy to see that $G(b,c;p^r)=0$ unless $p^{\min\{\beta,r\}}\mid c$. 
We  conclude that if $S_{p^r}(\c)\neq 0$ then we must have 
$\min\{v_p(A_i),r\}\leq v_p(c_i)$, for $1\leq i\leq 4$.

Next, 
on inspecting the proof of \cite[Lemma 25]{HB'}, we find that 
\begin{align*}
S_{p^r}(\mathbf{c}) 
&\leq p^{3r} \#\{\y\bmod{p^r}: p^r\mid 2A_i y_i \text{ for $1\leq
  i\leq 4$}\}^{1/2}\\ 
&\leq p^{3r} 
\prod_{1\leq i\leq 4}
p^{(\min\{v_p(A_i),r\}+v_p(2))/2}\\ 
&= p^{3r+2v_p(2)} 
\prod_{1\leq i\leq 4}
p^{\min\{v_p(A_i),r,v_p(c_i)\}/2}. 
\end{align*}
The statement of the lemma now follows from multiplicativity.
\end{proof}

Our next result relies on an explicit evaluation of the Gauss sum
\eqref{eq:gauss}. 
Let $b,c\in \ZZ$ and let $q$ be an odd integer. Then according to
\cite[p.~66]{Iwaniec}, we have 
\begin{equation}\label{eq:gauss1}
G(b,c;q)
=e_q\left(-\bar{4b}c^2\right) \left(\frac{b}{q}\right) \delta_q \sqrt{q},
\end{equation}
provided that $(b,q)=1$. Here $\bar{4b}$ denotes the multiplicative
inverse of $4b$ modulo $q$,  and  
\[\delta_q=\begin{cases}
1 & \text{if $q\equiv 1\bmod{4}$,}\\
i & \text{if $q\equiv 3\bmod{4}$.}
\end{cases}\]
Define the dual form
\begin{equation}\label{eq:F*}
F^*(\y)=
A_2A_3A_4y_1^2+
A_1A_3A_4y_2^2+
A_1A_2A_4y_3^2+
A_1A_2A_3y_4^2.
\end{equation}
The following result follows on inserting \eqref{eq:gauss1} into 
\eqref{eq:busy}.

\begin{lemma}\label{lem:good_primes}
Let $p\nmid 2\Delta_F$ be a prime. Then 
$$
S_{p^r}(\c)=\left(\frac{\Delta_F}{p^r}\right) p^{2r} c_{p^r}(F^*(\c)),
$$
where $c_{p^r}(N)$ is the Ramanujan sum.
\end{lemma}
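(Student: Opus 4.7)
The plan is to carry out a direct computation starting from the factorization \eqref{eq:busy}. Since $p \nmid 2\Delta_F$, we have $(aA_i, p^r) = 1$ whenever $(a, p^r) = 1$, so the Gauss sum formula \eqref{eq:gauss1} applies to each factor $G(aA_i, c_i; p^r)$. This gives
\[
G(aA_i, c_i; p^r) = e_{p^r}\bigl(-\overline{4aA_i}\,c_i^2\bigr)\left(\frac{aA_i}{p^r}\right)\delta_{p^r}\sqrt{p^r}.
\]
Taking the product over $i = 1, \dots, 4$ collapses the $\delta_{p^r}$'s into $\delta_{p^r}^4 = 1$ and the $\sqrt{p^r}$'s into $p^{2r}$. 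The Jacobi symbols combine to $\bigl(\tfrac{a^4\Delta_F}{p^r}\bigr) = \bigl(\tfrac{\Delta_F}{p^r}\bigr)$, since $a^4$ is a square and $p$ is odd.

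Next I would simplify the phase. Writing $\overline{A_i} \equiv \bigl(\prod_{j \ne i} A_j\bigr)\overline{\Delta_F} \pmod{p^r}$ (because $A_i \prod_{j \ne i} A_j = \Delta_F$), the sum $\sum_i \overline{A_i}\,c_i^2$ becomes $\overline{\Delta_F}\,F^*(\c)$ in view of the definition \eqref{eq:F*}. Hence
\[
\prod_{i=1}^4 G(aA_i, c_i; p^r) = \left(\frac{\Delta_F}{p^r}\right) p^{2r}\, e_{p^r}\bigl(-\overline{4a\Delta_F}\,F^*(\c)\bigr).
\]

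Finally I would sum over $a$. As $a$ ranges over primitive residues modulo $p^r$, the substitution $b \equiv -4a\Delta_F \pmod{p^r}$ is a bijection on $(\ZZ/p^r\ZZ)^*$, and then $b \mapsto \bar{b}$ is another bijection, so
\[
\sumstar_{a \bmod p^r} e_{p^r}\bigl(-\overline{4a\Delta_F}\,F^*(\c)\bigr) = \sumstar_{b \bmod p^r} e_{p^r}\bigl(b\,F^*(\c)\bigr) = c_{p^r}(F^*(\c)).
\]
Combining everything yields the claimed identity. There is no real obstacle here: the proof is a bookkeeping exercise built on the explicit Gauss sum evaluation \eqref{eq:gauss1}, the multiplicativity of the Jacobi symbol, and the identity $A_i \overline{\Delta_F} \equiv \overline{\prod_{j \ne i} A_j^{-1}} \pmod{p^r}$ used to recognize $F^*(\c)$.
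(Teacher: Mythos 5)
Your proof is correct and follows exactly the route the paper intends: the paper's own ``proof'' is just the one-line remark that the lemma ``follows on inserting \eqref{eq:gauss1} into \eqref{eq:busy},'' and you have spelled out precisely that computation (product of Gauss sums, collapsing $\delta_{p^r}^4=1$ and $(\sqrt{p^r})^4=p^{2r}$, recognizing $F^*(\c)$ in the phase, and reindexing the $a$-sum to a Ramanujan sum). The only blemish is the garbled identity in your final sentence, where ``$A_i\overline{\Delta_F}\equiv\overline{\prod_{j\ne i}A_j^{-1}}$'' should read ``$A_i\overline{\Delta_F}\equiv\overline{\prod_{j\ne i}A_j}$''; the correct version is what you actually used in the body of the argument.
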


By standard  properties of the Ramanujan sum,  this result implies that 
$S_{p^r}(\c)=0$ unless $p\mid F^*(\c)$, whenever 
$p\nmid 2\Delta_F$ and $r\geq 2$.

We proceed by using our work so far to study the asymptotic behaviour
of the sum 
\begin{equation}\label{eq:Sigma}
\Sigma(x;\c)=\sum_{q\leq x} q^{-3}S_q(\c),
\end{equation}
for
suitable  vectors  $\c$.

\begin{lemma}\label{lem:average1}
Let $\ve>0$ and 
assume that $F^*(\c)\neq 0$.
Then 
\[|\Sigma(x;\c)|\leq \sum_{q\leq x} q^{-3}|S_q(\c)|
\ll_\ve x^{\ve} |\c|^\ve \|F\|^\ve 
\prod_{1\leq i\leq 4} (A_i,c_i)^{1/2}.\]
\end{lemma}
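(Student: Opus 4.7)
The plan is to exploit the multiplicativity of $S_q(\c)$ to factor the sum according to primes dividing $2\Delta_F$ and primes coprime to it. Set $T=\{p:p\mid 2\Delta_F\}$ and $P=\prod_{p\in T}p$. Writing every $q\leq x$ as $q=q_1q_2$ with $q_1\mid P^\infty$ and $(q_2,2\Delta_F)=1$, and using $|S_q(\c)|=|S_{q_1}(\c)||S_{q_2}(\c)|$ together with non-negativity, the sum is majorized by the product of a ``good'' sum over $q_2$ and a ``bad'' sum over $q_1$.

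For the good sum, Lemma \ref{lem:good_primes} combined with multiplicativity of the Ramanujan sum gives $q_2^{-3}|S_{q_2}(\c)|=q_2^{-1}|c_{q_2}(F^*(\c))|$. Using the standard bound $|c_q(n)|\leq(q,n)$ and swapping the order of summation via $(q,n)=\sum_{d\mid(q,n)}\phi(d)$, one obtains
\[\sum_{\substack{q_2\leq x\\(q_2,2\Delta_F)=1}}q_2^{-3}|S_{q_2}(\c)|\;\leq\;\tau(F^*(\c))(\log x+1)\;\ll_\ve\;x^\ve(|\c|\|F\|)^\ve,\]
where in the last step one uses $|F^*(\c)|\ll\|F\|^3|\c|^2$ together with $\tau(n)\ll_\ve n^\ve$. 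This is precisely where the assumption $F^*(\c)\neq 0$ enters.

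For the bad sum, Lemma \ref{lem:Sq-basic} gives $q_1^{-3}|S_{q_1}(\c)|\leq\prod_i(q_1,A_i,c_i)^{1/2}$; this bound does not decay as $q_1\to\infty$, so one applies Rankin's trick with parameter $s=\ve$:
\[\sum_{\substack{q_1\leq x\\q_1\mid P^\infty}}q_1^{-3}|S_{q_1}(\c)|\;\leq\;x^\ve\prod_{p\in T}\;\sum_{r\geq 0}p^{-rs}\prod_i(p^r,A_i,c_i)^{1/2}.\]
Setting $e_{p,i}=v_p((A_i,c_i))$ and $E_p=\tfrac12\sum_ie_{p,i}$, the exponent $\tfrac12\sum_i\min(r,e_{p,i})-rs$ is $\leq E_p$ for every $r$ and equals $E_p-rs$ once $r\geq\max_ie_{p,i}$. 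Summing the terms with $r\leq\max_ie_{p,i}$ trivially and the geometric tail afterwards yields a local factor at most $(\max_ie_{p,i}+C_\ve)p^{E_p}$, with $C_\ve=(1-2^{-\ve})^{-1}$ uniform in $p$. Since every prime factor of $A_i$ lies in $T$, one has $\prod_{p\in T}p^{E_p}=\prod_i(A_i,c_i)^{1/2}$, so the bad sum is bounded by
\[x^\ve\prod_i(A_i,c_i)^{1/2}\cdot\prod_{p\mid\Delta_F}(v_p(\Delta_F)+C_\ve).\]

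Multiplying the good and bad bounds and redefining $\ve$ will give the claim, provided we control the residual factor $\prod_{p\mid\Delta_F}(v_p(\Delta_F)+C_\ve)\leq\tau(\Delta_F)\cdot(C_\ve+1)^{\omega(\Delta_F)}$. The first factor is $\ll_\ve\|F\|^\ve$ by $\tau(n)\ll_\ve n^\ve$; for the second one uses $\omega(n)=o(\log n)$ to conclude $(C_\ve+1)^{\omega(\Delta_F)}\ll_\ve\|F\|^\ve$. The main obstacle is precisely this bad-prime contribution: Lemma \ref{lem:Sq-basic} is essentially sharp at ramified primes and offers no pointwise decay, so the Rankin trick, combined with the saturation of $\sum_i\min(r,e_{p,i})$ at $2E_p$, is indispensable to prevent the high prime-power contributions from overwhelming the $\prod_i(A_i,c_i)^{1/2}$ target factor.
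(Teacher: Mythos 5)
Your argument is correct, but it takes a genuinely different route from the paper's. The paper factors each $q$ as $q_1q_2$ with $q_2\mid N^\infty$ and $(q_1,N)=1$ for the larger modulus $N=2\Delta_F F^*(\c)$; with that choice the ``good'' moduli $q_1$ are automatically coprime to $F^*(\c)$, so Lemma~\ref{lem:good_primes} forces $c_{q_1}(F^*(\c))=\mu(q_1)$, the inner sum is supported on squarefree $q_1$ with $|S_{q_1}(\c)|\le q_1^2$, and one gets $O(\log x)$ with no further work; the ``bad'' $q_2$-sum is then dispatched by bounding $q_2^{-3}|S_{q_2}(\c)|\ll\prod_i(A_i,c_i)^{1/2}$ and noting that there are only $O_\ve((Nx)^\ve)$ integers $q_2\le x$ dividing $N^\infty$. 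You instead split only at $2\Delta_F$, which keeps the bad set independent of $\c$ but forces you to handle the Ramanujan sum $|c_{q_2}(F^*(\c))|\le(q_2,F^*(\c))$ over the good moduli nontrivially (the Dirichlet swap giving $\tau(F^*(\c))\log x$), and to run an explicit Rankin/Euler-product computation over the bad primes, tracking how $\sum_i\min(r,e_{p,i})$ saturates at $2E_p$ and controlling the residual factor $\prod_p(v_p(\Delta_F)+O_\ve(1))$ via $\tau$ and $\omega$ bounds. Both routes deliver the stated estimate. Minor cosmetics worth fixing: the local factor should read $(\max_ie_{p,i}+1+C_\ve)p^{E_p}$; the residual product runs over $p\mid 2\Delta_F$, not $p\mid\Delta_F$; and Lemma~\ref{lem:Sq-basic} gives $\ll$ with an absolute constant, not $\le$.
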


\begin{proof}
Define the non-zero integer  $N=2\Delta_F F^*(\c)$.
To handle $\Sigma(x;\c)$  we sum trivially over $q$, finding that
$$
\sum_{q\leq x }q^{-3}|S_q(\c)|
=
\sum_{\substack{q_2\leq x\\ q_2\mid N^\infty}}q_2^{-3}|S_{q_2}(\c)|
\sum_{\substack{q_1\leq x/q_2\\ (q_1,N)=1}}q_1^{-3}|S_{q_1}(\c)|. 
$$
It follows from Lemma \ref{lem:good_primes} that the inner sum is
restricted to square-free integers $q_1$ and  that 
$|S_{q_1}(\c)|\leq q_1^2$.  Lemma \ref{lem:Sq-basic}
yields
\[
\sum_{q\leq x}q^{-3}|S_q(\c)|
\ll \log x
\sum_{\substack{q_2\leq x\\ q_2\mid N^\infty}}\frac{|S_{q_2}(\c)|}{q_2^3}
\ll \log x
\sum_{\substack{q_2\leq x\\ q_2\mid N^\infty}}
\prod_{1\leq i\leq 4} (A_i,c_i)^{1/2}.
\]
The statement of the lemma follows on noting that there are
$O_\ve(N^\ve x^\ve)$ values of $q_2$ that contribute to the remaining sum. 
\end{proof}

We shall also need to study $\Sigma(x;\0)$.
First, we recall the definition \eqref{eq:bad-Delta} of $\Dbad$ and
establish the following result.

\begin{lemma}\label{lem:zero-c}
Let $r\geq 1$ and  $p\mid \Delta_F$, with  $p\nmid  2\Dbad$. Then
$S_{p^r}(\0)=0$. 
\end{lemma}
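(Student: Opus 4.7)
The plan is to unpack what the hypothesis says about the coefficients: the condition $p\mid \Delta_F$ with $p\nmid 2\Dbad$ forces $p$ to be odd and to divide $\Delta_F$ to exactly the first power, so precisely one $A_i$, say $A_1=pB_1$ with $p\nmid B_1$, is divisible by $p$, while $p\nmid A_2A_3A_4$. Then, as in \eqref{eq:busy}, I would write
\[ S_{p^r}(\0)=\sumstar_{a\bmod p^r}\prod_{i=1}^{4}G(aA_i,0;p^r), \]
and evaluate the four Gauss factors explicitly.

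For $i\in\{2,3,4\}$, we have $(aA_i,p)=1$, and \eqref{eq:gauss1} gives
\[ G(aA_i,0;p^r)=\delta_{p^r}\,p^{r/2}\Bigl(\tfrac{A_i}{p}\Bigr)^{\!r}\Bigl(\tfrac{a}{p}\Bigr)^{\!r}. \]
For $i=1$, I would use the identity
\[ G(aA_1,0;p^r)=\sum_{x\bmod p^r}e_{p^{r-1}}(aB_1x^2)=p\cdot G(aB_1,0;p^{r-1}), \]
valid for $r\geq 1$ (interpreting $G(\cdot,\cdot;1)=1$). For $r=1$ this just gives $G(aA_1,0;p)=p$; for $r\geq 2$, applying \eqref{eq:gauss1} to the right-hand side (since $p\nmid aB_1$) yields
\[ G(aA_1,0;p^r)=\delta_{p^{r-1}}\,p^{(r+1)/2}\Bigl(\tfrac{B_1}{p}\Bigr)^{\!r-1}\Bigl(\tfrac{a}{p}\Bigr)^{\!r-1}. \]
In either case the factor depends on $a$ only through $\bigl(\tfrac{a}{p}\bigr)^{r-1}$ (with the convention $(\cdot)^{0}=1$).

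Combining the four factors, $S_{p^r}(\0)$ equals a constant $C=C(p,r,A_1,\ldots,A_4)$, independent of $a$, times
\[ \sumstar_{a\bmod p^r}\Bigl(\tfrac{a}{p}\Bigr)^{3r+(r-1)}=\sumstar_{a\bmod p^r}\Bigl(\tfrac{a}{p}\Bigr)^{4r-1}. \]
Since $4r-1$ is odd, the character in the sum is simply $\bigl(\tfrac{a}{p}\bigr)$. Grouping the $a\in(\ZZ/p^r\ZZ)^*$ into their reductions modulo $p$ (each residue having $p^{r-1}$ lifts), we obtain
\[ \sumstar_{a\bmod p^r}\Bigl(\tfrac{a}{p}\Bigr)=p^{r-1}\sumstar_{a\bmod p}\Bigl(\tfrac{a}{p}\Bigr)=0, \]
which gives $S_{p^r}(\0)=0$ as claimed.

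The only subtlety is the unified treatment of the $i=1$ factor when $r=1$ versus $r\geq 2$; everything else is bookkeeping of Gauss-sum constants. The key mechanism is that, because exactly one $A_i$ is divisible by $p$ (to the first power), the Legendre symbols arising from the four Gauss-sum evaluations combine to an odd power of $\bigl(\tfrac{a}{p}\bigr)$, which kills the $a$-sum by orthogonality.
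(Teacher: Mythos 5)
Your proof is correct and takes essentially the same approach as the paper's: both use \eqref{eq:busy} together with the explicit Gauss-sum evaluation \eqref{eq:gauss1} for the three coefficients prime to $p$, treat the unique coefficient with $v_p(A_i)=1$ by peeling off one factor of $p$ in the modulus, and conclude that the $a$-sum reduces to $\sumstar_{a\bmod p^r}\bigl(\tfrac{a}{p}\bigr)=0$. Your write-up is a bit more explicit in tracking the net exponent $4r-1$ and observing that it is odd, but the mechanism is the same.
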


\begin{proof}
We return to \eqref{eq:busy} with $\c=\0$. The assumption $p\nmid
2\Dbad$ implies that  $v_p(\Delta_F)=1$. We suppose without loss of
generality that  
$v_p(A_4)=1$ and $p\nmid A_1A_2A_3$.
We may evaluate 
$G(aA_i,0;p^r)$ using \eqref{eq:gauss1} for $1\leq i\leq 3$. Next, 
on writing $A_4'=A_4/p$,  it is easy to see that 
$$
G(aA_4,0;p^{r})=
\sum_{x\bmod{p^r}}e_{p^{r-1}}\left(aA_4' x^2\right)=
\begin{cases}
p &\text{ if $r=1$,}\\
(\frac{aA_4'}{p^{r-1}}) 
\delta_{p^{r-1}}p^{\frac{r+1}{2}} &\text{ if $r\geq 2$.}
\end{cases}
$$
Hence $S_{p^r}(\0)=0$  for $r\geq 1$, since 
the $a$-sum   is 
$
\sum_{a\bmod{p^r}}^* (\frac{a}{p})=
0.
$
\end{proof}

We now have everything in place to study 
 $\Sigma(x;\0)$.  This time we shall sum non-trivially over
$q$ using the Burgess bound for short character sums. 

\begin{lemma}\label{lem:average2}
Let $\ve>0$ and 
assume that  $\Delta_F\neq \square$.
Then 
$$
 \Sigma(x;\0) \ll_\ve   |\Delta_F| ^{3/16+\ve} 
 \Dbad^{3/8}~x^{1/2+\ve}.
$$
\end{lemma}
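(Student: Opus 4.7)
The plan is to exploit the multiplicativity of $q\mapsto S_q(\0)$, separating the primes dividing $2\Dbad$ from the generic primes coprime to $2\Delta_F$. By Lemma \ref{lem:zero-c}, each prime $p\mid\Delta_F$ with $p\nmid 2\Dbad$ forces $S_{p^r}(\0)=0$, so only $q=q_1q_2$ with $(q_1,2\Delta_F)=1$ and $q_2\mid(2\Dbad)^\infty$ contribute. Multiplicativity gives
\begin{equation*}
\Sigma(x;\0)=\sum_{\substack{q_2\mid(2\Dbad)^\infty\\ q_2\le x}}q_2^{-3}S_{q_2}(\0)\sum_{\substack{q_1\le x/q_2\\ (q_1,2\Delta_F)=1}}q_1^{-3}S_{q_1}(\0).
\end{equation*}

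For the inner sum, Lemma \ref{lem:good_primes} applied with $\c=\0$, combined with $F^*(\0)=0$ and the identity $c_{p^r}(0)=\phi(p^r)$, yields by multiplicativity $q_1^{-3}S_{q_1}(\0)=\chi(q_1)\phi(q_1)/q_1$, where $\chi(n)=\bigl(\tfrac{\Delta_F}{n}\bigr)$ is the Kronecker symbol. Since $\Delta_F\ne\square$, $\chi$ is a non-principal character of modulus $O(|\Delta_F|)$. I would then expand $\phi(q_1)/q_1=\sum_{d\mid q_1}\mu(d)/d$, substitute $q_1=dm$, and apply Burgess's inequality at $r=2$ (the same tool invoked in the proof of Lemma \ref{lem:roger}) to obtain $\sum_{m\le M}\chi(m)\ll_\ve|\Delta_F|^{3/16+\ve}M^{1/2+\ve}$. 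Since $\sum_d d^{-3/2-\ve}$ converges, this yields
\begin{equation*}
\bigg|\sum_{\substack{q_1\le Y\\(q_1,2\Delta_F)=1}}q_1^{-3}S_{q_1}(\0)\bigg|\ll_\ve|\Delta_F|^{3/16+\ve}Y^{1/2+\ve},
\end{equation*}
applied with $Y=x/q_2$.

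For the outer sum, Lemma \ref{lem:Sq-basic} gives $|S_{q_2}(\0)|\le q_2^3\prod_i(q_2,A_i)^{1/2}$, reducing matters to bounding
\begin{equation*}
\sum_{q_2\mid(2\Dbad)^\infty}q_2^{-1/2-\ve}\prod_{i=1}^4(q_2,A_i)^{1/2}=\prod_{p\mid 2\Dbad}\sum_{r\ge 0}p^{-(1/2+\ve)r+\tfrac12\sum_{i=1}^4\min(r,v_p(A_i))}.
\end{equation*}
At each $p\mid\Dbad$, after sorting $e_1\le e_2\le e_3\le e_4$ the exponent is piecewise linear in $r$, with slopes $3/2-\ve,\,1-\ve,\,1/2-\ve,\,-\ve,\,-1/2-\ve$ on the successive intervals; it is maximised at $r=e_3$ with value $(e_1+e_2+e_3)/2-\ve e_3$. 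Since $e_4=\max_i e_i$ forces $e_1+e_2+e_3\le 3e_4$, we deduce $(e_1+e_2+e_3)/2\le 3v_p(\Delta_F)/8$, so the local factor is $\ll_\ve p^{3v_p(\Dbad)/8+\ve}$, while the $p=2$ factor contributes $O(1)$ when $2\nmid\Dbad$. Taking the product over $p\mid\Dbad$ gives $\ll_\ve\Dbad^{3/8+\ve}$.

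Combining the two estimates and redefining $\ve$ produces the claim. The delicate point is the bad-prime analysis: the geometric-series at each $p\mid\Dbad$ is routine, but extracting the sharp exponent $3/8$ on $\Dbad$ (instead of a naive $1/2$) relies essentially on the elementary inequality $e_1+e_2+e_3\le 3e_4$ at every bad prime.
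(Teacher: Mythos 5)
Your proof is correct and follows essentially the same route as the paper: split $q=q_1q_2$ with $(q_1,2\Delta_F)=1$ and $q_2\mid(2\Dbad)^\infty$ using Lemma~\ref{lem:zero-c}, evaluate the coprime part via Lemma~\ref{lem:good_primes} as $\chi(q_1)\phi(q_1)/q_1$, expand $\phi(q_1)/q_1$ by M\"obius and apply Burgess, and then bound the bad part via Lemma~\ref{lem:Sq-basic}. The only difference is cosmetic: you estimate the $q_2$-sum as a genuine Euler product and carry out the piecewise-linear exponent analysis (isolating $e_1+e_2+e_3\le 3e_4$) explicitly, whereas the paper simply counts the $O_\ve((|\Delta_F|x)^\ve)$ admissible $q_2\le x$ and leaves the pointwise bound $q_2^{-1/2}\prod_i(q_2,A_i)^{1/2}\le\Dbad^{3/8}$ implicit; your exponent calculation is precisely what is needed to justify that unstated step.
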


\begin{proof}
Define the non-zero integer  $N=2\Dbad$. 
 We have 
\begin{align*}
\Sigma(x;\0)
&=
\sum_{\substack{q_2\leq x\\ q_2\mid N^\infty}}q_2^{-3}S_{q_2}(\0)
\sum_{\substack{q_1\leq x/q_2\\ (q_1,N)=1}}q_1^{-3}S_{q_1}(\0). 
\end{align*}
It follows from Lemma~\ref{lem:zero-c} that  the inner sum is actually
only over $q_1$ which are coprime to $2\Delta_F$. 
Setting $A=2\Delta_F$ for convenience,
it follows from Lemma~\ref{lem:good_primes} that 
\begin{align*}
\sum_{\substack{q\leq X\\ (q,A)=1}}q^{-3}S_{q}(\0)=
\sum_{\substack{q\leq X\\ (q,A)=1}}\left(\frac{\Delta_F}{q}\right) \phi^*(q),
\end{align*}
where $\phi^*=1*h$, with $h(d)=\mu(d)/d$. Opening up $\phi^*$ and
inverting the order of summation, we conclude that  
$$
\sum_{\substack{q\leq X\\ (q,A)=1}}q^{-3}S_{q}(\0)=
\sum_{\substack{u\leq X\\ (u,A)=1}} \frac{\mu(u)}{u}
\left(\frac{\Delta_F}{u}\right)
\sum_{\substack{v\leq X/u\\ (v,A)=1}}\left(\frac{\Delta_F}{v}\right).
$$
According to Burgess \cite{burgess1,burgess2}, the inner sum is
$O_\ve(|\Delta_F|^{3/16+\ve} (X/u)^{1/2})$, for any $\ve>0$.
But then we have 
$$
\sum_{\substack{q\leq X\\ (q,A)=1}}q^{-3}S_{q}(\0)
\ll_\ve |\Delta_F|^{3/16+\ve}
X^{1/2}
\sum_{\substack{u\leq X}} \frac{1}{u^{3/2}} 
\ll_\ve |\Delta_F|^{3/16+\ve}
X^{1/2}.
$$
Applying this with $X=x/q_2$ and 
returning to the start of the argument, we may now deduce from 
Lemma 
\ref{lem:Sq-basic}
that 
\begin{align*}
\Sigma(x;\0)
&\ll_\ve
 |\Delta_F| ^{3/16+\ve} x^{1/2}
\sum_{\substack{q_2\leq x\\ q_2 \mid \Dbad^\infty}}
\frac{\prod_{1\leq i\leq 4} (q_2,A_i)^{1/2}}{q_2^{1/2}}
\ll_\ve 
 |\Delta_F| ^{3/16+\ve} 
 \Dbad^{3/8}
 x^{1/2+\ve},
\end{align*}
since there are $O_\ve(|\Delta_F|^\ve x^\ve)$ choices for $q_2$ in this sum.
This completes the proof of the lemma.
\end{proof}

We end this section by considering the singular series (\ref{eq:S}),
which we may write as
\[\mathfrak{S}(F)=\prod_p\sum_{r=0}^{\infty}p^{-4r}S_{p^r}(\0).\]
We prove the following upper bound.
\begin{lemma}\label{SSB}
  Whenever $\Delta_F\not=\square$ we have
  \[\mathfrak{S}(F)\ll_{\ve} \Dbad^{1/4+\ve}L(1,\chi_F)\ll_{\ve}
  \Dbad^{1/4}\|F\|^{\ve},\]
  where $\chi_F$ is the quadratic character defined by taking
  $\chi_F(2)=0$ and
\[\chi_F(p)=\left(\frac{\Delta_F}{p}\right)\]
for odd primes $p$.  
  \end{lemma}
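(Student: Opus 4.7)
The strategy is to split $\mathfrak{S}(F)=\prod_p\sigma_p$, where $\sigma_p=\sum_{r\ge 0}p^{-4r}S_{p^r}(\0)$, into three groups of local factors, according to whether $p\nmid 2\Delta_F$, or $p\mid\Delta_F$ but $p\nmid 2\Dbad$, or finally $p\mid 2\Dbad$. The Ramanujan-sum identity $c_{p^r}(0)=\phi(p^r)$ will feed directly into Lemma~\ref{lem:good_primes}, while Lemma~\ref{lem:zero-c} kills the intermediate range entirely, so the entire content of the proof is a careful estimate of $\sigma_p$ at the bad primes via Lemma~\ref{lem:Sq-basic}.

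In the good range, Lemma~\ref{lem:good_primes} gives $p^{-4r}S_{p^r}(\0)=\chi_F(p)^r p^{-r}(1-p^{-1})$ for $r\ge 1$, so summing a geometric series yields
\[
\sigma_p=\frac{1-\chi_F(p)p^{-2}}{1-\chi_F(p)p^{-1}}.
\]
Since $\chi_F(p)=0$ for $p\mid\Delta_F$, the product over the good primes factors as $L(1,\chi_F)\cdot\prod_{p\nmid 2\Delta_F}(1-\chi_F(p)p^{-2})$, the latter being absolutely convergent and hence $O(1)$. In the intermediate range, Lemma~\ref{lem:zero-c} forces $S_{p^r}(\0)=0$ for every $r\ge 1$, so $\sigma_p=1$ there.

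The one nontrivial estimate is therefore the one at bad primes. For $p\mid 2\Dbad$ I set $\alpha_i=v_p(A_i)$ and relabel so that $\alpha_1\le\alpha_2\le\alpha_3\le\alpha_4$; then Lemma~\ref{lem:Sq-basic} gives
\[
|\sigma_p|\le\sum_{r\ge 0}p^{-r+\frac{1}{2}\sum_{i=1}^4\min(r,\alpha_i)}.
\]
I would split the sum according to $r\in(\alpha_{k-1},\alpha_k]$ for $k=1,2,3,4$ (with $\alpha_0=0$) and $r>\alpha_4$, in each of which the exponent is an explicit linear function of $r$. Every range produces a convergent geometric series except the middle range $\alpha_2<r\le\alpha_3$, where the exponent is constant equal to $(\alpha_1+\alpha_2)/2$; dominating the other four ranges by their endpoint values leads to
\[
\sigma_p\ll(1+\alpha_3-\alpha_2)\,p^{(\alpha_1+\alpha_2)/2}\le(1+v_p(\Delta_F))\,p^{v_p(\Delta_F)/4},
\]
the key inequality $(\alpha_1+\alpha_2)/2\le v_p(\Delta_F)/4$ following at once from the ordering $\alpha_1+\alpha_2\le\alpha_3+\alpha_4$. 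For $p\mid\Dbad$ we have $v_p(\Delta_F)=v_p(\Dbad)$, and the remaining case $p=2$ with $v_2(\Delta_F)\le 1$ is handled by the same formula, which then gives $\sigma_2=O(1)$. Taking the product over $p\mid 2\Dbad$ and invoking the divisor bound $\prod_{p\mid\Dbad}(1+v_p(\Dbad))=d(\Dbad)\ll_\ve\Dbad^\ve$ yields $\prod_{p\mid 2\Dbad}\sigma_p\ll_\ve\Dbad^{1/4+\ve}$.

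Combining the three ranges gives the first assertion $\mathfrak{S}(F)\ll_\ve\Dbad^{1/4+\ve}L(1,\chi_F)$. For the second inequality, the assumption $\Delta_F\neq\square$ guarantees that $\chi_F$ is a non-principal real character, of modulus dividing $4|\Delta_F|\ll\|F\|^4$, so the classical upper bound $L(1,\chi)\ll\log q$ gives $L(1,\chi_F)\ll\log\|F\|\ll_\ve\|F\|^\ve$, completing the proof. The only place requiring any care is the book-keeping that produces the exponent $(\alpha_1+\alpha_2)/2$ at bad primes; everything else is substitution.
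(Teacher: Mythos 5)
Your proof is correct and follows essentially the same three-way split as the paper: good primes via Lemma~\ref{lem:good_primes} give the $L(1,\chi_F)$ factor, primes dividing $\Delta_F$ but not $2\Dbad$ contribute $1$ by Lemma~\ref{lem:zero-c}, and the bad primes dividing $2\Dbad$ are controlled by Lemma~\ref{lem:Sq-basic}. The only difference is cosmetic, at the bad primes: the paper first discards the $j=4$ factor by the bound $\min(p^{r/2},p^{f_4/2})\le p^{r/2}$ and then sees a two-piece geometric series peaking at $p^{(f_1+f_2+f_3)/3}$, whereas you keep all four factors and track five linear ranges, getting the slightly sharper peak $p^{(\alpha_1+\alpha_2)/2}$ at the cost of the prefactor $(1+\alpha_3-\alpha_2)$, which you then absorb with the divisor bound; both deliver $O_\ve(\Dbad^{1/4+\ve})$ for the bad-prime product.
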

\begin{proof}
  When $p\nmid 2\Delta_F$ we have
  \[S_{p^r}(\0)=\left(\frac{\Delta_F}{p^r}\right) p^{2r} \phi(p^r),\]
  by Lemma \ref{lem:good_primes}, whence
\[\prod_{p\nmid 2\Delta_F}\sum_{r=0}^{\infty}p^{-4r}S_{p^r}(\0)\ll
L(1,\chi_F).\]
The conductor of $\chi_F$ is $O(\|F\|^4)$, whence $L(1,\chi_F)\ll\log
(2+\|F\|)\ll_{\ve}\|F\|^{\ve}$. 

The factor corresponding to primes for which $p\mid \Delta_F$
and $p\nmid  2\Dbad$ is just 1, by Lemma \ref{lem:zero-c}.  It
therefore remains to consider primes $p\mid 2\Dbad$. Suppose that
 $p^{f_j}\|2A_j$ with $f_1\le f_2\le f_3\le f_4$, say. Then
Lemma \ref{lem:Sq-basic} yields
\[\sum_{r=0}^{\infty}p^{-4r}S_{p^r}(\0)\ll\sum_{r=0}^{\infty}p^{-r}
\prod_{1\le j\le 4}\min(p^{r/2},p^{f_j/2}).\]
If we bound the minimum by $p^{r/2}$ for $j=4$ we see that
\[\sum_{r=0}^{\infty}p^{-4r}S_{p^r}(\0)\ll\sum_{r=0}^{\infty}p^{-r/2}
\min(p^{3r/2},p^{(f_1+f_2+f_3)/2})\ll p^{(f_1+f_2+f_3)/3}.\]
If $p^e\|\Dbad$ this is $O(p^{e/4})$, so that primes which divide
$2\Dbad$ provide a total $O_{\ve}(\Dbad^{1/4+\ve})$ for any $\ve>0$.
The lemma then follows.
\end{proof}

\subsection{The main term}

We now collect our estimates together in order to complete the proof
of Theorem \ref{newth}. 
Let $\ve>0$ and let $C=\|F\|^{1/2}Q^\ve$.
Returning to \eqref{asym1}, it follows from part (i) of Lemma
\ref{lem:I-estimate} 
that 
\[N_w(P) = M(P)+E(P)+O(1),\]
where
\begin{align*}
M(P)
&=
\frac{1 }{Q^{2}} \sum_{q=1}^\infty q^{-4}S_q(\0)I_q(\0)
\quad \text{and} \quad
E(P)=
\frac{1 }{Q^{2}}\sum_{\substack{\c\in \ZZ^4\\ 0<|\c|\ll C}}
\sum_{q=1}^\infty q^{-4}S_q(\c)I_q(\c).
\end{align*}
In both $M(P)$ and $E(P)$ we recall that $I_q(\c)$ vanishes for
$q\gg Q$, by the properties of the function $h$.

To handle $M(P)$ our first task is to relate $I_q(\0)$ to the singular
integral, given by (\ref{eq:sigma-infty-omega}).
To begin with we note that 
\begin{align*}
I_q(\0)
&=P^4 \int_{\RR^4}w(\x)h\left(\frac{q}{Q},G(\x)\right)\d\x,
\end{align*}
where $G(\x)=\|F\|^{-1}F(\x)$. Since $w$ is compactly supported we
will have $|G(\x)|\le c_w$ whenever $w(\x)\not=0$, for some constant
$c_w$ depending only on $w$.  We choose a smooth weight function
$w_0:\RR\to\RR$ supported on $[-1-c_w,1+c_w]$ such that $w_0(t)=1$ for
$t\in[-c_w,c_w]$.  This choice can be made in an explicit way such that
$w_0$ depends only on $w$.  This allows us to write
\[I_q(\0)=
P^4 \int_{\RR^4}w(\x)w_0(G(\x))h\left(\frac{q}{Q},G(\x)\right)\d\x.\]
The function $f(t)=w_0(t)h(q/Q,t)$ is compactly supported with a
continuous second derivative.  Recall the definition \eqref{eq:Ku} of
the function $K(u;\delta)$.
The above condition is enough to ensure that
\[\int_{-\infty}^\infty f(t)K(t-\tau;\delta)\d t\to
f(\tau)\;\;\;\mbox{as } \delta\downarrow 0,\]
uniformly in $\tau$.  As a result one sees that
\[I_q(\0)=P^4\lim_{\delta\downarrow 0}\int_{\RR^4}\int_{-\infty}^{\infty}
w(\x)w_0(t)h\left(\frac{q}{Q},t\right)K(t-G(\x);\delta)\d t\d\x.\]

Using the equation (\ref{KFT}) we are now led
to the expression
\[I_q(\0)=P^4\lim_{\delta\downarrow 0}\int_{-\infty}^{\infty}
\left(\frac{\sin(\pi\delta\theta)}{\pi\delta\theta}\right)^2
J(\theta;w)L(\theta)\d\theta,\]
with $J(\theta;w)$ given by (\ref{J1d}) and
\[L(\theta)=\int_{-\infty}^{\infty}w_0(t)h\left(\frac{q}{Q},t\right)
e(\theta t)\d t.\]
The following result is concerned with estimating this integral.

\begin{lemma}\label{J2}
  Assume that $q\le Q$.  Then 
  \[L(\theta)=1+O_N((q/Q)^N)+O_N((q/Q)^N|\theta|^N)\]
  for any integer $N \ge 1$.
\end{lemma}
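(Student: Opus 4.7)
The plan is to analyse $L(\theta)$ by separating the zero-frequency contribution from its oscillatory correction.

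The starting point for the constant term is the mean-value identity $\int_{-\infty}^\infty h(x, y)\, dy = 1 + O_N(x^N)$ for any $N \ge 0$, which is a standard consequence of the construction of $h$ in \cite{HB'}. Since $w_0 \equiv 1$ on $[-c_w, c_w]$, the discrepancy
\[
L(0) - \int_{-\infty}^{\infty} h(q/Q, t)\, dt = -\int_{-\infty}^{\infty} (1 - w_0(t))\, h(q/Q, t)\, dt
\]
is supported on $|t| \ge c_w$. On this range, applying the $i = j = 0$ case of \eqref{eq:h-function} with the free parameter chosen large gives $|h(q/Q, t)| \ll_N (q/Q)^N$, so that the discrepancy is $O_N((q/Q)^N)$. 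This settles the $\theta = 0$ case.

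For general $\theta$, I would integrate by parts $N$ times in $t$. Writing $f(t) = w_0(t) h(q/Q, t)$, the compact support of $f$ annihilates all boundary contributions and produces
\[
L(\theta) = \frac{(-1)^N}{(2\pi i \theta)^N}\int_{-\infty}^\infty f^{(N)}(t)\, e(\theta t)\, dt.
\]
Using Leibniz's rule together with the full bound \eqref{eq:h-function}, one checks that $\int |f^{(N)}(t)|\, dt \ll_N (q/Q)^{-N}$ (the dominant contribution coming from the term $w_0(t)\, \partial_t^N h(q/Q,t)$), whence $|L(\theta)| \ll_N (q|\theta|/Q)^{-N}$. When $|\theta| \ge Q/q$ this alone suffices, since then $|L(\theta) - 1| \ll 1 \le ((q/Q)|\theta|)^N$. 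In the complementary range $|\theta| \le Q/q$, I would work with $L(\theta) - L(0) = \int f(t)(e(\theta t) - 1)\, dt$, use the evenness of $h(x, \cdot)$ (clear from its construction in \cite{HB'}) together with a symmetric choice of $w_0$ to annihilate the odd-order moments $M_k = \int t^k f(t)\, dt$, and Taylor expand $e(\theta t)$ to order $N$. The surviving even moments obey $|M_k| \ll_k (q/Q)^k$, by splitting the $t$-integral at $|t| = q/Q$ and invoking the two pieces of \eqref{eq:h-function}, so the truncated Taylor polynomial contributes $O_N(((q/Q)|\theta|)^N)$, matched by a remainder of the same order.

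The main obstacle is extracting the full $|\theta|^N$ from the correction term: naively using $|e(\theta t) - 1| \le 2\pi|\theta t|$ combined with $|M_1| \ll q/Q$ only yields the first-order error $O((q/Q)|\theta|)$, so one really has to lean on parity to kill the odd-order terms and iterate to obtain an arbitrary power. Once this cancellation is secured, the two regimes $|\theta| \lessgtr Q/q$ join seamlessly at the threshold and assemble into the announced bound uniformly in $\theta$.
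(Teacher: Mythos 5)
Your large-$\theta$ argument (integration by parts $N$ times, giving $L(\theta)\ll_N ((q/Q)|\theta|)^{-N}$ via $\int|f^{(N)}|\ll_N (q/Q)^{-N}$) is sound, and neatly handles the range $|\theta|\ge Q/q$ where the paper simply invokes a trivial bound. The difficulty lies entirely in the regime $|\theta|\le Q/q$, and there your argument has a genuine gap.

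You correctly show that the moments $M_k=\int t^k w_0(t)\,h(q/Q,t)\,\d t$ satisfy $|M_k|\ll_k (q/Q)^k$, but this estimate is too weak to deliver the claimed error. After Taylor-expanding $e(\theta t)-1$ and discarding the odd terms by parity, the even term of lowest order contributes $\asymp|\theta|^2 M_2\ll((q/Q)|\theta|)^2$, and this exceeds $((q/Q)|\theta|)^N$ throughout the regime $(q/Q)|\theta|\le 1$ when $N>2$. Your conclusion that ``the truncated Taylor polynomial contributes $O_N(((q/Q)|\theta|)^N)$'' does not follow. What you actually need is the much stronger statement that $M_k\ll_{k,N}(q/Q)^N$ for every $k\ge 1$ and every $N$, i.e.\ that the nonzero moments of $w_0(t)h(q/Q,t)$ decay faster than any power of $q/Q$. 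This is true, but it is \emph{not} a consequence of the pointwise bounds in \eqref{eq:h-function} alone -- it requires genuine cancellation coming from the structure of $h$. The paper's proof accesses this cancellation in a different way: after dealing with $|\theta|\le 1$ by a direct appeal to \cite[Lemma~9]{HB'}, it splits the $t$-integral at $X=\sqrt{(q/Q)/|\theta|}$ (not at $q/Q$ as you do), Taylor-expands $w_0(t)e(\theta t)$ rather than just $e(\theta t)-1$, and applies \cite[Lemma~8]{HB'} to the truncated integrals $\int_{|t|\le X}t^m h(q/Q,t)\,\d t$, producing the extra factor $(x/X)^M$ that closes the gap. You would need to either import those lemmas of \cite{HB'} for the moment estimate, or rework your regime split to match theirs. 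On a minor point, your mean-value identity $\int h(x,y)\,\d y=1+O_N(x^N)$ for the $\theta=0$ case is essentially \cite[Lemma~6]{HB'} and should be cited rather than asserted as folklore.
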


We will prove this at the end of this  section, but first we use it to
complete our treatment of $I_q(\0)$. Combining Lemma \ref{J1L} with
the trivial bound $J(\theta;w)\ll 1$ we have
$J(\theta;w)\ll_N(1+|\theta|)^{-2N}\|F\|^{4N\eta}$. 
The error terms in
Lemma \ref{J2} therefore contribute a total
\[\ll_N P^4\int_{-\infty}^{\infty}
\frac{(q/Q)^N\{1+|\theta|\}^N}{\{1+|\theta|\}^{2N}}\|F\|^{4N\eta}\d\theta
\ll_N P^4(q/Q)^N\|F\|^{4N\eta},\]
for any $N\ge 2$.  Moreover 
\[\int_{-\infty}^{\infty}
\left(\frac{\sin(\pi\delta\theta)}{\pi\delta\theta}\right)^2
J(\theta;w)\d\theta\to\int_{-\infty}^{\infty}J(\theta;w)\d\theta\]
as $\delta\downarrow 0$.  On replacing $\theta$ by $\|F\|\theta$ we
then see that
\[I_q(\0)=P^4\left\{\|F\|\sigma_\infty(w;F)+O_N((q/Q)^N\|F\|^{4N\eta})\right\}\]
for any $q\le Q$ and any $N\ge 2$. In particular, if 
$q\le Q\|F\|^{-5\eta}$ then, by taking $N$ suitably large, the error
term can be made smaller than any
given negative power of $P$, by virtue of 
our assumption that $\|F\|\ge P^{\eta}$.

We will need the following upper bound for the singular integral, in
which we do not make
either of the assumptions (\ref{eq:assume1-w}) or (\ref{eq:assume1-x}). 

\begin{lemma}\label{SIB}
Suppose that $\Delta_F\not=0$. Suppose either that $w$ is a smooth 
weight function of compact support, or that $w$ is the characteristic
function of $[-\kappa,\kappa]^4$ for some $\kappa>0$.  Then 
$\sigma_\infty(w;F)\ll_w |\Delta_F|^{-1/4}.$
\end{lemma}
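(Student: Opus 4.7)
The plan is to exploit the diagonal structure of $F(\x)=\sum_{i=1}^{4}A_i x_i^{2}$ to bound the inner integral
\[
G(\theta):=\int_{\RR^{4}}w(\x)e(-\theta F(\x))\,\d\x
\]
pointwise by a product of one-dimensional oscillatory integrals, and then to optimise the trivial and decay bounds when integrating $G$ over $\theta$, splitting at a threshold $T=|\Delta_F|^{-1/4}$.

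In the characteristic function case $w=\mathbf{1}_{[-\kappa,\kappa]^{4}}$ the integrand factorises directly, giving
\[
G(\theta)=\prod_{i=1}^{4}\int_{-\kappa}^{\kappa}e(-\theta A_i x^{2})\,\d x.
\]
The substitution $u=x\sqrt{2|\theta A_i|}$ together with the uniform boundedness of the Fresnel integrals $\int_{0}^{R}e(\pm u^{2}/2)\,\d u$ yields $|G(\theta)|\ll\prod_{i}\min(\kappa,|\theta A_i|^{-1/2})$. In the smooth case the integrand no longer factorises, but the diagonal form of $F$ still lets one integrate one variable at a time: the one-dimensional stationary phase estimate applied to the smooth compactly supported slice $x_1\mapsto w(\x)$ gives a gain of $\min(1,|\theta A_1|^{-1/2})$, and iterating through $x_2,x_3,x_4$ produces the same product bound, with the implied constant controlled by fixed $C^{k}$-norms of $w$ and independent of $\theta$.

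To derive the claim from the pointwise estimate $|G(\theta)|\ll_{w}\prod_{i}\min(1,|\theta A_i|^{-1/2})$, I use the simple interpolation $\min(1,b)\leq b^{3/4}$ for $b\geq 0$ to obtain
\[
|G(\theta)|\ll_{w}\prod_{i=1}^{4}|\theta A_i|^{-3/8}=|\theta|^{-3/2}|\Delta_{F}|^{-3/8}.
\]
Combined with the trivial bound $|G(\theta)|\ll_{w}1$ and split at $T=|\Delta_{F}|^{-1/4}$, this gives
\[
\sigma_\infty(w;F)\ll_{w}T+|\Delta_{F}|^{-3/8}\int_{T}^{\infty}\theta^{-3/2}\,\d\theta\ll_{w}T+|\Delta_{F}|^{-3/8}T^{-1/2}\ll_{w}|\Delta_{F}|^{-1/4},
\]
as required.

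The main obstacle is ensuring that the coefficient dependence enters only through $|\Delta_{F}|$, not through $\min|A_i|$ or $\max|A_i|$. The sharpest pointwise estimate $\prod_{i}|\theta A_i|^{-1/2}=|\theta|^{-2}|\Delta_{F}|^{-1/2}$ only dominates $\prod_{i}\min(1,|\theta A_i|^{-1/2})$ uniformly in the regime $|\theta|\geq(\min|A_i|)^{-1}$, which need not contain the optimal threshold $|\Delta_{F}|^{-1/4}$; the interpolation step above circumvents this regime-splitting issue at the cost of a slightly weaker decay exponent in $|\theta|$, which is nevertheless still integrable and sufficient to yield the claimed bound.
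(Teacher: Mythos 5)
Your argument is correct and follows the same overall strategy as the paper: bound $\int_{\RR^4} w(\x)\,e(-\theta F(\x))\,\d\x$ pointwise by $\prod_{i}\min(1,|\theta A_i|^{-1/2})$ and then integrate over $\theta$. The technical route differs in two respects. For the smooth case the paper invokes Lemma~3.2 of Heath-Brown and Pierce to reduce to a box integral with an extra linear phase, whereas you iterate one-dimensional stationary phase through the four variables while tracking the $C^k$-norms of the intermediate amplitudes; both are valid and yours is somewhat more self-contained. For the $\theta$-integration the paper simply observes $\prod_i\min(1,b_i)\leq\min(1,\prod_i b_i)$ and integrates $\min(1,|\Delta_F|^{-1/2}\theta^{-2})$, whereas you interpolate $\min(1,b)\leq b^{3/4}$ and split at $T=|\Delta_F|^{-1/4}$. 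Your closing remark about the ``main obstacle'' is, however, based on a misconception: the bound $\prod_i|\theta A_i|^{-1/2}$ dominates $\prod_i\min(1,|\theta A_i|^{-1/2})$ for \emph{every} $\theta$, not just for $|\theta|\geq(\min_i|A_i|)^{-1}$, since each factor of the product of minima is trivially at most $|\theta A_i|^{-1/2}$; and the resulting estimate $\int_{\RR}\min(1,|\Delta_F|^{-1/2}\theta^{-2})\,\d\theta\ll|\Delta_F|^{-1/4}$ already carries no dependence on the individual $|A_i|$. Your interpolation trick is an equally valid alternative, but the obstacle it was designed to circumvent does not actually arise.
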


We will prove this in the next section.  Taking the lemma as
proved, we see that (\ref{eq:assume1-x}) implies that
\beql{sib}
\sigma_\infty(w;F)\ll \|F\|^{-1+\eta}.
\eeq
We now have 
$$
M(P)=\frac{\|F\| \sigma_\infty(w;F) P^4 }{Q^{2}} 
\sum_{q\leq Q\|F\|^{-5\eta}}
q^{-4}S_q(\0)+
O\left(\left|T\left(Q\|F\|^{-5\eta}\right)\right|+1\right),
$$
where
\[T(M)=\frac{1}{Q^2}\sum_{q>M} q^{-4}S_q(\0)I_q(\0).\]
Summation by parts yields
\[T(M)=-\frac{I_M(\0)}{Q^2 M}\Sigma(M;\0)-
  \frac{1}{Q^2}\int_M^{\infty}\Sigma(x;\0)
  \frac{\partial}{\partial x} \frac{I_x(\0)}{x} \d x, \]
where $\Sigma(x;\0)$ is given by \eqref{eq:Sigma}.
However $I_x(\0)$ vanishes identically when $x\gg Q$, whence 
it follows from  Lemmas  \ref{lem:I0} and  \ref{lem:average2} that 
\[T(M)\ll \frac{\|F\|^\eta P^4}{Q^2 M}\sup_{M\leq x\ll Q} |\Sigma(x;\0)|
\ll_\ve \frac{\|F\|^\eta P^4}{Q^2 M}|\Delta_F|^{3/16+\ve} 
\Dbad^{3/8}Q^{1/2+\ve}.\]
Taking $M=Q\|F\|^{-5\eta}$ 
with $Q=P\|F\|^{1/2}$, and using the bounds
$|\Delta_F|\le  \|F\|^4$ and $\Dbad\le\|F\|^{\eta}$, this yields
\[T(Q\|F\|^{-5\eta})
\ll_\ve \|F\|^{-1/2+51\eta/8+\ve}P^{3/2+\ve}.\]
Our assumption that $\|F\|\ge P^{\eta}$ enables us to replace
$P^{\ve}$ by $\|F\|^{\ve/\eta}$, whence, on choosing $\ve$ suitably
small, we obtain 
the bound
$$
M(P)=\frac{\|F\| \sigma_\infty(w;F) P^4 }{Q^{2}} 
\sum_{q\leq Q\|F\|^{-5\eta}}
q^{-4}S_q(\0)+O\left(
\|F\|^{-1/2+7\eta}P^{3/2}\right).
$$
A similar analysis shows that
\[\frac{\|F\| P^4 }{Q^{2}} 
\sum_{q\leq Q\|F\|^{-5\eta}}
q^{-4}S_q(\0)=\frac{\|F\| P^4 }{Q^{2}} \sum_{q=1}^{\infty}
q^{-4}S_q(\0)+
O(\|F\|^{1/2+6\eta}P^{3/2}).\]  
The infinite sum is just $\mathfrak{S}(F)$.  Hence, using (\ref{sib}),
we find that
\[M(P)=\sigma_\infty(w;F) \mathfrak{S}(F)P^2+
O\left(\|F\|^{-1/2+7\eta} P^{3/2}\right).\]
This is satisfactory for the statement of Theorem \ref{newth}.

\begin{proof}[Proof of Lemma \ref{J2}]
When $|\theta|\le 1$
the result is an immediate application of Heath-Brown \cite[Lemma~9]{HB'}. 
Moreover, taking $N=2$ in (\ref{eq:h-function}) yields
$h(x,t)\ll x+\min(x^{-1},xt^{-2})$, 
whence one trivially has $L(\theta)\ll 1$ for
$q\le Q$.  We may therefore assume that $qQ^{-1}|\theta|\le 1\le|\theta|$.
In this case we must modify the
proof of \cite[Lemma~9]{HB'}. It will be convenient to write $x=q/Q$
and $X=\sqrt{x/|\theta|}$.
Since $w_0$ has compact support we may
suppose that $t\ll_w 1$.  Thus (\ref{eq:h-function}) implies that 
$h(x,t)\ll_N x^{N-1}|t|^{-N}$ for any $N\ge 1$.  It then follows that
the range $|t|\ge X$ contributes
$O_N(x^{N-1}X^{1-N})=O_N((x|\theta|)^{(N-1)/2})$ to $L(\theta)$.
This is satisfactory for the error terms of Lemma~\ref{J2}, on
redefining $N$.

When $|t|\le X$ we use Taylor's theorem to approximate
$w_0(t)e(\theta t)$ by a polynomial of degree $M$, say, together with
an error $O_M(X^{M+1}|\theta|^{M+1})$. Since we have
$h(x,t)\ll x+\min(x^{-1},xt^{-2})$, as
noted above, this error term contributes
$O_M((x|\theta|)^{(M+1)/2})$ to $L(\theta)$, which again is
satisfactory if $M$ is large enough. The polynomial produced by
Taylor's theorem has terms $c_mt^m$ with $c_m\ll_M 
|\theta|^m$.  When
$1\le m\le M$ we apply \cite[Lemma~8]{HB'}, producing an overall bound
$O_M((X|\theta|)^m(x/X)^M)$ for each value of $m$.  One should note
here firstly that the required condition $x\ll\min(1,X)$ holds, by
virtue of the condition $x|\theta|\le 1$, and secondly that
$Xx^{M-1}\le x^MX^{-M}$ for $M\ge 1$, since $x\le 1\le|\theta|$.
On considering the possible values for $m\in \{1,\dots,M\}$  
we then see that each 
monomial $c_mt^m$ contributes 
$O_M((x|\theta|)^M)+O_M((x|\theta|)^{(M+1)/2})$, 
which is satisfactory when
$M$ is taken large enough.  Finally, the constant term $c_0$ is
handled analogously using \cite[Lemma~6]{HB'}, producing the same
error term together with a main term $c_0$. However the function $w_0$
takes the value 1 at the origin, and the lemma follows.
\end{proof}

\subsection{The singular integral} 
We begin
  by proving Lemma \ref{SIB}. The function
\[J(\theta;w)=\int_{\RR^4}w(\x)e(-\theta G(\x))\d\x\]
is well-defined and continuous. We claim that
$J(\theta;w)\in L^1(\RR)$ for $w$ as in the lemma. If $w$ is smooth
and supported in $[-\kappa,\kappa]^4$ then
\[J(\theta;w)\ll_w\int_{[-\kappa,\kappa]^4}e(-\theta G(\x)+
\x. \y)\d\x\] 
for some $\y\in\RR^4$, by Lemma 3.2 of Heath-Brown and Pierce
\cite{hb-pierce}.  The integral on the right factors into four
1-dimensional integrals, with
\[\int_{-\kappa}^{\kappa}e(-\theta A_j\|F\|^{-1}x^2+xy_j) 
\d x\ll_{\kappa}
\min\left\{1\,,\,\sqrt{\frac{\|F\|}{|\theta A_j|}}\right\}.\]
It then follows that
\[J(\theta;w)\ll\prod_{j=1}^4
\min\left\{1\,,\,\sqrt{\frac{\|F\|}{|\theta A_j|}}\right\}\ll
\min\left\{1\,,\,\frac{\|F\|^2}{\theta^2|\Delta_F|^{1/2}}\right\},\]
whence
\[\int_{-\infty}^{\infty}J(\theta;w)\d\theta  \ll
\int_{-\infty}^{\infty}
\min\left\{1\,,\,\frac{\|F\|^2}{\theta^2|\Delta_F|^{1/2}}\right\}
\d\theta  \ll \frac{\|F\|}{|\Delta_F|^{1/4}},\]
as required.
\bigskip

We now use the machinery developed above to see how to compare
$\sigma_\infty(w;F)$ for different weights.

\begin{lemma}\label{csw}
Let  $w_0(\x)$ be the
  characteristic function of the region $[-1,1]^4$.  
 Suppose that $w_1(\x)$ (respectively, $w_2(\x)$) is supported in the region
  $\eta\le|\x|\le 1$ (respectively, $\eta\le|\x|\le 1+\eta$)
    and takes values in $[0,1]$ there.  
Suppose further that $w_1(\x)=1$ whenever
  $2\eta\le|\x|\le 1-\eta$ 
(respectively,  $w_2(\x)=1$ whenever
  $2\eta\le|\x|\le 1$). Then
    \[\sigma_\infty(w_i;F)=\sigma_\infty(w_0;F)+O(\eta^{1/2}|\Delta_F|^{-1/4}),\]
    for $i=1,2$,    the implied constant being absolute.
\end{lemma}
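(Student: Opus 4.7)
The plan is to bound $\sigma_\infty(w_i - w_0; F)$ by exploiting that the difference is bounded by $1$ in absolute value and supported on a thin region. As in the proof of Lemma~\ref{SIB}, the substitution $\theta\mapsto\theta/\|F\|$ reduces the task to estimating
\[
\sigma_\infty(w_i-w_0;F)=\|F\|^{-1}\int_{-\infty}^{\infty}J(\theta;w_i-w_0)\,\d\theta,
\]
where $J(\theta;w)=\int_{\RR^4}w(\x)e(-\theta G(\x))\,\d\x$ and $G=F/\|F\|$.

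The support of $w_i-w_0$ lies in $U\cup V_i$, where $U=[-2\eta,2\eta]^4$ is a small cube near the origin and $V_i$ is a shell of $L^\infty$-thickness $O(\eta)$ around $\partial[-1,1]^4$ (the precise shell depends on $i$). By inclusion-exclusion I would decompose $V_i$ into $O(1)$ coordinate slabs, each of the form $\prod_j I_j$ with exactly one $|I_j|=O(\eta)$ and the remaining three of size $O(1)$. Together with $U$ (itself a product box with all $|I_j|=O(\eta)$), this expresses $w_i-w_0=\sum_B g_B$, each $g_B$ bounded by $1$ and supported on a product box $B=\prod_j I_j$.

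For each such piece, the diagonal form of $F$ gives
\[
J(\theta;g_B)=\int_B g_B(\x)\prod_{j=1}^{4}e(-\theta A_j x_j^2/\|F\|)\,\d\x,
\]
and I would establish
\[
|J(\theta;g_B)|\ll \prod_{j=1}^{4}\min\!\bigl(|I_j|,\sqrt{\|F\|/|\theta A_j|}\bigr)
\]
by iterating a $1$-dimensional stationary-phase estimate. The key input is that, along each $1$-dimensional slice, $g_B$ is bounded by $1$ and has uniformly bounded total variation (inherited from the regularity of $w_i$ and the product structure of $\mathbf{1}_B$); an integration by parts against an antiderivative of $e(-cx^2)$, which is $O(|c|^{-1/2})$, then gives the desired $1$-dimensional bound $\min(|I_j|,\sqrt{\|F\|/|\theta A_j|})$ in each coordinate direction.

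Finally, integrating over $\theta$ via
\[
\int_{-\infty}^{\infty}\prod_{j=1}^{4}\min(a_j,b_j|\theta|^{-1/2})\,\d\theta\ll \sqrt{\prod_{j=1}^{4}a_jb_j}
\]
(obtained by splitting at the optimal threshold $T=\sqrt{\prod b_j/\prod a_j}$), with $a_j=|I_j|$ and $b_j=\sqrt{\|F\|/|A_j|}$, yields $O(\eta^2|\Delta_F|^{-1/4})$ for $B=U$ and $O(\eta^{1/2}|\Delta_F|^{-1/4})$ for each of the $O(1)$ shell slabs. Summing gives the claimed bound. The main obstacle is the factorized $J$-bound for the non-factorable $g_B$: it cannot be read off from Heath-Brown--Pierce \cite{hb-pierce} directly and instead requires a careful iteration of the $1$-dimensional BV integration-by-parts, using that the total variation of $g_B$ in each coordinate remains uniformly $O(1)$ across all slices.
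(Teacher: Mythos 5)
Your decomposition of $w_i-w_0$ into a small cube near the origin plus $O(1)$ coordinate slabs near $\partial[-1,1]^4$ is exactly the paper's, and your final $\theta$-integration identity and the ensuing $\eta$-, $|\Delta_F|$-exponent bookkeeping are correct. The gap is precisely the one you flag at the end: you need
\[
|J(\theta;g_B)|\ll\prod_{j=1}^4\min\bigl(|I_j|,\sqrt{\|F\|/|\theta A_j|}\bigr)
\]
for pieces $g_B$ that are \emph{not} products of one-variable functions, and the proposed iteration of the one-dimensional BV integration-by-parts does not obviously close. After the $x_1$-integral you obtain a function of $(x_2,x_3,x_4)$ for which you have a size bound but no control of total variation in $x_2$; the natural estimate for $\partial_{x_2}$ of that integral involves $\partial_{x_2}g$, which is of size $\eta^{-1}$, so maintaining the factor $\min(|I_1|,\ldots)$ through the variation estimate requires running the oscillatory argument again on $\partial_{x_2}g$, and this bookkeeping is not supplied. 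Worse, the lemma as stated assumes only that $w_1,w_2$ are supported in the indicated shells and take values in $[0,1]$; there is no smoothness or bounded-variation hypothesis, so the BV route needs assumptions the lemma does not grant.

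The paper avoids the issue entirely by first rewriting the singular integral via the Fej\'er-kernel identity (\ref{fund}): $\int_{\RR}J(\theta;w)\,\d\theta=\lim_{\delta\downarrow 0}\int_{\RR^4}w(\x)K(-G(\x);\delta)\,\d\x$, where $K(u;\delta)\ge 0$ is the triangle kernel from (\ref{eq:Ku}). The \emph{positivity} of $K$ lets one pass the pointwise majorization $|w_i(\x)-w_0(\x)|\le\sum_n f_n(\x)$, with each $f_n$ a characteristic function of a coordinate box, directly through the $\x$-integral. Applying (\ref{fund}) once more to each $f_n$ reduces to $\int J(\theta;f_n)\,\d\theta$, and now $J(\theta;f_n)$ factors \emph{exactly} into four one-dimensional Fresnel integrals, each bounded by $\min(\meas(I_j),\sqrt{\|F\|/|\theta A_j|})$. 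So the estimate on $J(\theta;\cdot)$ is only ever needed for factorable integrands. You have the right geometry and the right one-dimensional oscillatory bounds; the missing idea is the nonnegative $K$-representation, which trades the hard non-factorable $J$-estimate for a trivial pointwise bound on the weight, and does so under the lemma's weak hypotheses.
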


\begin{proof}
  We confine ourselves to the result for $w_1$, the function $w_2$
  being treated analogously.  
We first calculate that 
\begin{align*}
  \int_{-\infty}^{\infty}J(\theta;w)\d\theta
 &=\lim_{\delta\downarrow 0}\int_{-\infty}^{\infty}
\left(\frac{\sin(\pi\delta\theta)}{\pi\delta\theta}\right)^2
J(\theta;w)\d\theta\\
&=\lim_{\delta\downarrow 0}\int_{-\infty}^{\infty}\int_{\RR^4}w(\x)e(-\theta G(\x))
\left(\frac{\sin(\pi\delta\theta)}{\pi\delta\theta}\right)^2
\d\x\d\theta.
\end{align*}
The conditions for Fubini's Theorem are satisfied, allowing us to
switch the two integrations.  The relation (\ref{KFT}) then shows us
that
\beql{fund}
\int_{-\infty}^{\infty}J(\theta;w)\d\theta=
\lim_{\delta\downarrow 0}\int_{\RR^4}w(\x)K(-G(\x);\delta)\d\x.
\eeq
  It follows that 
  \[\|F\|\{\sigma_\infty(w_1;F)-\sigma_\infty(w_0;F)\}=
\lim_{\delta\downarrow 0}\int_{\RR^4}\{w_1(\x)-w_0(\x)\}K(-G(\x);\delta)\d\x.\]
We have $|w_1(\x)-w_0(\x)|\le 1$ for all $\x$, the difference being
non-zero only if either $|\x|\le 2\eta$ or there is some index $i$ for
which $1-\eta\le |x_i|\le 1$. Hence
\[|w_1(\x)-w_0(\x)|\le \sum_{n=0}^{8}f_n(\x),\] 
where each $f_n(\x)$ is the
characteristic function of a certain box $I_1\times\ldots \times I_4$.  For
$n=0$ this is just $[-2\eta,2\eta]^4$, but otherwise 3 of the
intervals $I_j$ have length 2, and the fourth has length $\eta$.
Thus
\[\|F\|.\left|\sigma_\infty(w_1;F)-\sigma_\infty(w_0;F)\right|\le \sum_{n=0}^{8}
\lim_{\delta\downarrow 0}\int_{\RR^4}f_n(\x)K(-G(\x);\delta)\d\x.\]
The expression on the right may be evaluated via a further application
of (\ref{fund}). As in
the proof of Lemma \ref{SIB}, we have
\[\int_{I_j}e(\theta A_j\|F\|^{-1}x^2)\d x\ll
\min\left\{\meas(I_j)\,,\,\sqrt{\frac{\|F\|}{|\theta A_j|}}\right\},\]
leading to a bound
\begin{align*}
\|F\|\{\sigma_\infty(w_1;F)-\sigma_\infty(w_0;F)\}
&\ll  \int_{-\infty}^{\infty}
  \min\left\{\eta\,,\, \frac{\|F\|^2}{\theta^2
    |\Delta_F|^{1/2}}\right\} \d \theta\\
  &\ll \eta^{1/2} \frac{\|F\|}{ |\Delta_F|^{1/4}}
  \end{align*}
  and the lemma follows.
  \end{proof}

It is convenient at this point to record some facts about
$\tau_{\infty}$, as given in Theorem \ref{t:main}.
\begin{lemma}\label{tinf}
  Let $\rho_\infty(\y)$ be given by \eqref{eq:def-rho}, and define
  \beql{siD}
  \sigma_\infty(\x)=\int_{-\infty}^\infty \int_{[-1,1]^4}
  e(-\theta F(\x;\y)) \d \y \d \theta.
  \eeq
  Then 
  \beql{tau2}
  \tau_\infty=\int_{[-1,1]^4}\rho_\infty(\y)\d\y
  =\int_{[-1,1]^4}\sigma_\infty(\x)\d\x,
  \eeq
Moreover, these last two integrals are absolutely convergent. 
\end{lemma}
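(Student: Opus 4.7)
The plan is to regularise the outer $\theta$-integral defining $\tau_\infty$ by inserting the Fej\'er multiplier $\phi_\delta(\theta) := \left(\tfrac{\sin(\pi\delta\theta)}{\pi\delta\theta}\right)^2$, apply Fubini to recognise the $\y$-marginal (and separately the $\x$-marginal), and let $\delta\downarrow 0$. The absolute convergence of $\int_{-\infty}^\infty\bigl|\int_{[-1,1]^8}e(-\theta F)\d\x\d\y\bigr|\d\theta$, stated immediately after Theorem \ref{t:main}, is the input that makes $\tau_\infty$ well defined and permits passage to the limit.

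For each $\delta>0$ I would set
\[
\tau_\infty^{(\delta)} := \int_{-\infty}^{\infty}\phi_\delta(\theta)\int_{[-1,1]^8}e(-\theta F(\x;\y))\d\x\d\y\d\theta.
\]
Since $|\phi_\delta|\le 1$ and the outer integrand is dominated by an integrable function, dominated convergence gives $\tau_\infty^{(\delta)}\to\tau_\infty$. Moreover the full integrand is majorised by $\phi_\delta(\theta)\in L^1(\RR)$ on $\RR\times[-1,1]^8$, so Fubini yields $\tau_\infty^{(\delta)}=\int_{[-1,1]^4}\rho^{(\delta)}(\y)\d\y$, where
\[
\rho^{(\delta)}(\y):=\int_{-\infty}^{\infty}\phi_\delta(\theta)\int_{[-1,1]^4}e(-\theta F(\x;\y))\d\x\d\theta.
\]
For any $\y$ with at least two non-zero entries the inner $\x$-integral factors as $\prod_j\sin(2\pi\theta y_j^2)/(\pi\theta y_j^2)$ (factors with $y_j=0$ contributing $2$) and is $O_\y((1+|\theta|)^{-2})$; a further application of dominated convergence, now in $\theta$, then yields $\rho^{(\delta)}(\y)\to\rho_\infty(\y)$ pointwise almost everywhere.

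The main obstacle is promoting this pointwise convergence to convergence of the $\y$-integrals, for which I need a $\delta$-uniform, integrable majorant for $|\rho^{(\delta)}(\y)|$. I would obtain one by swapping the order of integration in $\rho^{(\delta)}(\y)$ once more (justified by $\phi_\delta\in L^1$) and invoking \eqref{KFT} to rewrite
\[
\rho^{(\delta)}(\y) = \int_{[-1,1]^4} K(F(\x;\y);\delta)\d\x.
\]
After the linear substitution $u_j=x_jy_j^2$ on those coordinates for which $y_j\neq 0$ (a vanishing $y_j$ makes $F$ independent of $x_j$ and merely contributes a factor of $2$), I would integrate out the coordinate $u_k$ corresponding to the largest $|y_k|=:y_{\max}$. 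Using $K\ge 0$ together with $\int_\RR K(\,\cdot\,;\delta)=1$ yields the clean bound $|\rho^{(\delta)}(\y)|\le 8/y_{\max}^2$ for every $\y\neq\0$. A direct splitting of $[-1,1]^4$ according to which coordinate attains the maximum gives $\int_{[-1,1]^4}y_{\max}^{-2}\d\y=32$, so dominated convergence in $\y$ delivers $\tau_\infty^{(\delta)}\to\int_{[-1,1]^4}\rho_\infty(\y)\d\y$. Comparing with $\tau_\infty^{(\delta)}\to\tau_\infty$ yields the first equality of \eqref{tau2} along with the absolute convergence of $\int_{[-1,1]^4}\rho_\infty(\y)\d\y$.

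The second equality follows by the same scheme with the roles of $\x$ and $\y$ swapped, the only change being that the inner $\y$-integral now factors as a product of Fresnel integrals $\int_{-1}^1 e(-\theta x_j y^2)\d y$, each bounded by $\ll\min(1,|\theta x_j|^{-1/2})$. Integrating in $\theta$ gives the pointwise bound $|\sigma^{(\delta)}(\x)|\ll|\Delta(\x)|^{-1/4}$ whenever $\Delta(\x)\neq 0$, and this majorant is integrable on $[-1,1]^4$ since $\int_{-1}^1|x|^{-1/4}\d x<\infty$. A dominated convergence argument identical to the previous one then yields $\tau_\infty=\int_{[-1,1]^4}\sigma_\infty(\x)\d\x$ together with absolute integrability of $\sigma_\infty$ on the cube.
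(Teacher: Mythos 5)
Your route differs genuinely from the paper's. The paper observes directly that $\int_{[-1,1]^4}e(-\theta F(\x;\y))\d\x$ and $\int_{[-1,1]^4}e(-\theta F(\x;\y))\d\y$ lie in $L^1(\RR\times[-1,1]^4)$, via the pointwise bounds $\prod_i\min\{1,|\theta|^{-1}|y_i|^{-2}\}$ and $\prod_i\min\{1,|\theta x_i|^{-1/2}\}$, and then a single application of Fubini delivers both equalities in \eqref{tau2} and the absolute convergence claims at once. You instead insert a Fej\'er regulator $\phi_\delta$, prove the identity at each fixed $\delta$, and pass to the limit by dominated convergence; the $\delta$-uniform majorant $|\rho^{(\delta)}(\y)|\le 8/y_{\max}^2$, extracted via the substitution $u_j=x_jy_j^2$ together with $K\ge 0$ and $\int_\RR K(\cdot;\delta)=1$, is a clean device, and the dual bound $|\sigma^{(\delta)}(\x)|\ll |\Delta(\x)|^{-1/4}$ is correct as well.

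There is, however, a circularity at the very first step. You invoke the absolute convergence of $\int_{-\infty}^\infty\bigl|\int_{[-1,1]^8}e(-\theta F)\d\x\d\y\bigr|\d\theta$, ``stated immediately after Theorem \ref{t:main}'', as the input permitting $\tau_\infty^{(\delta)}\to\tau_\infty$. But that remark explicitly defers its own proof to Lemma \ref{tinf} --- the paper says ``we shall see in Lemma \ref{tinf} that the integral \ldots is convergent.'' You cannot use a fact whose only justification is the present lemma to prove the present lemma. Without it, the step $\tau_\infty^{(\delta)}\to\tau_\infty$ is unjustified: Fej\'er (Ces\`aro) summability of an improper integral need not agree with a principal-value interpretation in general. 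The gap is easy to fill --- integrating over $\x$ first gives $\int_{[-1,1]^8}e(-\theta F)\d\x\d\y = \int_{[-1,1]^4}\prod_i\frac{\sin(2\pi\theta y_i^2)}{\pi\theta y_i^2}\d\y$ (a factor $2$ when $y_i=0$), and since $\int_{-1}^1\min\{1,|\theta|^{-1}y^{-2}\}\d y\ll\min\{1,|\theta|^{-1/2}\}$ one obtains $\ll\min\{1,|\theta|^{-2}\}$, which is in $L^1(\RR)$. But once that computation is done, the regularization is superfluous: the paper's direct Fubini argument already closes the proof.
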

\begin{proof}
In order to verify (\ref{tau2}) it is enough to confirm that the orders
of integration may be suitably changed, and this will be permissible provided
both
\[\int_{[-1,1]^4}e(-\theta F(\x;\y))\d\x\;\;\;\mbox{and}
\;\;\;\int_{[-1,1]^4}e(-\theta F(\x;\y))\d\y\]
are in $L^1(\RR\times [-1,1]^4)$.  However 
these are of order 
\[\prod_{i=1}^4\min\{1\,,\,|\theta|^{-1}|y_i|^{-2}\}\;\;\;\mbox{and} 
\;\;\;\prod_{i=1}^4\min\{1\,,\,|\theta|^{-1/2}|x_i|^{-1/2}\}\] 
respectively, and so are both integrable. 
\end{proof}

\subsection{Analysis of $E(P)$}\label{ssEB}

The key observation in handling $E(P)$ is that the hypothesis
$\|F\|\ge P^{\eta}$ in Theorem \ref{newth}, along with the assumption
(\ref{eq:assume2-x}),  allow us to restrict to $\c\in \ZZ^4$ for which
$F^*(\c)\neq 0$.
Indeed, suppose for a contradiction that $\c\in \ZZ^4$ is such that
$0<|\c|\ll C$ and $F^*(\c)=0$, where $F^*(\c)$ is given by \eqref{eq:F*}. 
Let us assume, for example, that $c_1\not=0$. Now, if we write
\[A_1^\flat=\prod_{\substack{p\| A_1\\ p\nmid A_2A_3A_4 }}p,\]
then the equation $F^*(\c)=0$ implies that
$A_1^\flat \mid c_1$. Since $1\le|c_1|\ll C$ we would then deduce that
$A_1^\flat\ll C$.  However \eqref{eq:assume1-x} and
\eqref{eq:assume2-x} yield
\[A_1^\flat\geq \frac{|A_1|}{\Dbad} \geq \|F\|^{1-2\eta}.\] 
Since $C=\|F\|^{1/2}Q^{\ve}$ with $Q=\|F\|^{1/2}P$, we therefore
obtain a contradiction if $\ve$ is small enough and $P$ is large
enough, since $P^{\eta}\le \|F\|$.

We now have
\[E(P)=\frac{1 }{Q^{2}}\sum_{\substack{\c\in \ZZ^4\\ 
|\c|\ll C\\ F^*(\c)\neq 0}} \sum_{q=1}^\infty q^{-4}S_q(\c)I_q(\c).\]
The analysis of quaternary quadratic forms, in Heath-Brown
\cite[\S 12]{HB'} for example, normally requires one to obtain
some cancellation from the summation over $q$, 
but this is no 
longer necessary  
because we have 
been able to remove vectors $\c$ for which $F^*(\c)=0$.

Noting that  $I_q(\c)$ is only supported on $q\ll Q$, we 
deduce from part (ii) of Lemma \ref{lem:I-estimate}
that 
\[\sum_{q=1}^\infty q^{-4}S_q(\c)I_q(\c) \ll 
\frac{\|F\|^{2}P^{3}}{|\Delta_F|^{1/2}|\c|}
\sum_{q\ll Q} q^{-3}|S_q(\c)|.\]
Lemma \ref{lem:average1} now implies that
$$
E(P)\ll 
\frac{\|F\|^{2+\ve} P^{3+\ve}}{|\Delta_F|^{1/2} Q^2}
\sum_{\substack{\c\in \ZZ^4\\ 0<|\c|\ll C}} \frac{
\prod_{1\leq i\leq 4} (A_i,c_i)^{1/2}}{|\c|} .
$$
It remains to estimate the $\c$-sum, which we temporarily denote by
$K$. We plainly have  
\[K\leq \sum_{d_i\mid A_i} \sqrt{d_1d_2d_3d_4} 
\sum_{\substack{\c\in \ZZ^4\setminus \{\0\}\\ c_i\ll C/d_i } }
\frac{1}{\max_{1\leq i\leq 4} |d_ic_i|}.\]
The inner sum is 
\[\ll \left(\frac{C}{d_i}+1\right)\left(\frac{C}{d_j}+1\right)
\left(\frac{C}{d_k}+1\right)  \frac{\log C}{d_l}\]
for some permutation $\{i,j,k,l\}$ of $\{1,2,3,4\}$.
Multiplying this by $\sqrt{d_1d_2d_3d_4}$ and recalling that
 $C=\|F\|^{1/2}Q^\ve$, this gives
\[K \ll \sum_{d_i\mid A_i} \frac{\log C}{\sqrt{d_l}}
 \left(C+\sqrt{d_i}\right)\left(C+\sqrt{d_j}\right)\left(C+\sqrt{d_k}\right) 
\ll \|F\|^{3/2+\ve}P^{\ve},\] 
on employing the trivial estimate for the  divisor function.

Absorbing $P^\ve$ into $\|F\|^\ve$, it now follows that 
$$
E(P)\ll 
\frac{\|F\|^{5/2+\ve} P}{|\Delta_F|^{1/2}}.
$$
Our hypotheses \eqref{eq:assume1-x} implies that 
$|\Delta_F|\geq \|F\|^{4-3\eta}$, from which one sees that our bound for 
$E(P)$ is satisfactory for the error term given in Theorem
\ref{newth}.

\section{Combining the various ingredients}\label{s:combine}

Theorems \ref{t:asymplatt} and \ref{newth} will be our main
tools for the proof of Theorem \ref{t:main}.
In this section we begin by 
adapting them so as to count only
primitive vectors.  We then
apply Theorem \ref{newth} to the quadrics in $\y$ given by
$F(\x;\y)=0$, and sum the resulting asymptotic formulae with respect
to $\x$.  In doing so we must allow for those vectors $\x$ excluded by
the conditions of the theorem.  The next stage is to remove the weight
$w$ occuring in Theorem \ref{newth}.  We then piece together our two main
estimates, and make suitable adjustments so as to cover all
primitive $\x,\y$ with $|\x|^3|\y|^2\le B$.  In \S \ref{s:reckon}
 we will show
that the main terms combine to give  $cB\log B$,  
as $B\to \infty$,  with the  
constant $c$ given by (\ref{PC}).

\subsection{Primitive solutions with small $\y$}

In analogy to (\ref{eq:deg1}) and (\ref{sm}) we define
\[M_5(\y;R)=\#\left\{ \x\in \ZZ^4: |\x|\leq R, ~
\Delta(\x)\not=\square,~ F(\x;\y)=0 \right\}\]
and
\[N_2(B;Y)=\sum_{\substack{\y\in \ZZp^4\\ Y<|\y|\leq 2Y}}
M_5\left(\y;(B/|\y|^2)^{1/3}\right),\]
so that
\beql{ee2}
N_2(B;Y)\ll B+B^{5/3}Y^{-8/3},
\eeq
by the first part of Lemma \ref{lem:stage1}.

We may also estimate $N_2(B;Y)$ by removing solutions with
$\Delta(\x)=\square$ from the counting function $N_1(B;Y)$ given by
Theorem \ref{t:asymplatt}.  According to Lemma~\ref{lem:xz} there are
$O_{\ve}(BY^{-1+\ve}+Y^4)$ solutions with $\Delta(\x)=0$. Moreover
there are $O(X^2)$ possible square values for $\Delta(\x)\not=0$ when
$|\x|\le X$, each such value corresponding to $O_{\ve}(X^{\ve})$
vectors $\x$.  Thus Lemma \ref{ann} shows that solutions with
$\Delta(\x)=\square\not=0$ contribute $O_{\ve}(B^{2/3+\ve}Y^{2/3})$ to
$N_1(B;Y)$.  Taking $\ve=\frac{2}{15}$ we deduce that 
$$
B^{2/3+\ve}Y^{2/3}=(B^{2/3}Y^{4/3})^{3/5}(BY^{-1/3})^{2/5}
\leq B^{2/3}Y^{4/3}+ BY^{-1/3}.
$$
It then follows from Theorem \ref{t:asymplatt} that
\beql{ee1}
N_2(B;Y)=B\sum_{\substack{\y\in \ZZp^4\\ Y<|\y|\leq 2Y}}
\frac{\rho_\infty(\y)}{|\y|^2}+O(B^{2/3}Y^{4/3})+O(BY^{-1/3})+O(Y^4)
\eeq
for $Y\ge \tfrac12$,  
where $\rho_\infty(\y)$ is given by \eqref{eq:def-rho}.

We now set
\[M_6(\y;R)=\#\left\{ \x\in \ZZp^4: |\x|\leq R,
~\Delta(\x)\not=\square, ~ F(\x;\y)=0
\right\}\]
and
\[N_3(B;Y)=\sum_{\substack{\y\in \ZZp^4\\ Y<|\y|\leq 2Y}}
M_6\left(\y;(B/|\y|^2)^{1/3}\right),\]
whence
\[M_6(\y;R)=\sum_{d\le R}\mu(d)M_5(\y;R/d).\]
Our goal now is the following estimate.

\begin{lemma}\label{N3A}
    If $\tfrac12\le Y\le B^{1/4}$ we have  
\[N_3(B;Y)=\frac{B}{\zeta(3)}\sum_{\substack{\y\in \ZZp^4\\ Y<|\y|\leq 2Y}}
\frac{\rho_\infty(\y)}{|\y|^2}+O(B^{2/3}Y^{4/3})+O(BY^{-1/3}).\]
\end{lemma}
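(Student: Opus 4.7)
The plan is to pass from $M_6$ to $M_5$ by Möbius inversion over the greatest common divisor of the components of $\x$. Writing $\x=d\x'$ with $\x'\in\ZZp^4$, the equation $F(\x;\y)=0$ is equivalent to $F(\x';\y)=0$ (since $F$ is linear in $\x$), and $\Delta(\x)=d^4\Delta(\x')$, so the non-square condition is preserved. Hence
\[M_6(\y;R)=\sum_{d\le R}\mu(d)M_5(\y;R/d),\]
and summing over $\y\in\ZZp^4$ with $Y<|\y|\le 2Y$ yields the identity
\[N_3(B;Y)=\sum_{d\ge 1}\mu(d)\,N_2(B/d^3;Y).\]
The sum is effectively restricted to $d\le(B/Y^2)^{1/3}$ since otherwise $(B/d^3|\y|^2)^{1/3}<1$ for every $\y$ in the range, forcing $M_5=0$.

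Next I would split at a threshold $D$ to be chosen.  For $d\le D$ apply the asymptotic formula \eqref{ee1} to $N_2(B/d^3;Y)$, giving
\[\sum_{d\le D}\mu(d)N_2(B/d^3;Y)=BS\sum_{d\le D}\frac{\mu(d)}{d^3}+O(B^{2/3}Y^{4/3})+O(BY^{-1/3})+O(DY^4),\]
where $S=\sum_\y\rho_\infty(\y)/|\y|^2$.  Completing the Dirichlet series to $1/\zeta(3)$ introduces an error $O(BS/D^2)$; since $V(\y)\le 16$ and $\mathsf{d}(\y)\gg|\y|^2$ for $|\y|\in(Y,2Y]$, we have $\rho_\infty(\y)\ll|\y|^{-2}$, and so $S=O(1)$.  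For the tail $d>D$ I would use the upper bound \eqref{ee2}:
\[\sum_{D<d\le(B/Y^2)^{1/3}}|\mu(d)|N_2(B/d^3;Y)\ll\sum_{d>D}\left(\frac{B}{d^3}+\frac{B^{5/3}}{d^5Y^{8/3}}\right)\ll\frac{B}{D^2}+\frac{B^{5/3}}{D^4Y^{8/3}}.\]

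The key balance is between $DY^4$ (from the $O(Y^4)$ term in \eqref{ee1}) and $B/D^2$ (from the tail), which leads to the choice $D=(B/Y^4)^{1/3}$; this is at least $1$ precisely because $Y\le B^{1/4}$.  Under this choice, each of $DY^4$, $B/D^2$, and $B^{5/3}/(D^4Y^{8/3})$ collapses to $O(B^{1/3}Y^{8/3})$.  The hypothesis $Y\le B^{1/4}$ is equivalent to $Y^{4/3}\le B^{1/3}$, i.e.\ $B^{1/3}Y^{8/3}\le B^{2/3}Y^{4/3}$, so all these auxiliary errors are absorbed into the stated error $O(B^{2/3}Y^{4/3})+O(BY^{-1/3})$.

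The main obstacle is not any individual estimate but the bookkeeping required to ensure that the $Y^4$ error in \eqref{ee1} (which precludes extending \eqref{ee1} over the full $d$-range) and the Möbius tail are handled by a single threshold choice.  The role of the range $Y\le B^{1/4}$ is precisely to make $D\ge 1$ and to guarantee that $B^{1/3}Y^{8/3}$ is dominated by $B^{2/3}Y^{4/3}$; at $Y=B^{1/4}$ the two quantities coincide, which explains why this is the natural cutoff for the lattice-point regime.
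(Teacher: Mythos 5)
Your proposal is correct and takes essentially the same route as the paper: the same Möbius inversion from $M_6$ to $M_5$, the same split at a threshold $D$ using the asymptotic \eqref{ee1} below $D$ and the upper bound \eqref{ee2} above, the same tail completion of the Dirichlet series to $1/\zeta(3)$, and the same choice $D=(B/Y^4)^{1/3}=B^{1/3}Y^{-4/3}$, after which the auxiliary error $O(B^{1/3}Y^{8/3})$ is absorbed using $Y\le B^{1/4}$ exactly as in the paper.
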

  \begin{proof}
We start from the relation
\[N_3(B;Y)=\sum_{d\le (B/Y^2)^{1/3}}\mu(d)N_2(B/d^3;Y).\]
To estimate this sum we choose a parameter $D$ in the range 
$1\le D\le B^{1/3}Y^{-2/3}$ and use (\ref{ee1}) for $d\le D$ and 
(\ref{ee2}) for $d>D$. This yields
\begin{align*}
  N_3(B;Y)&=B\left(\sum_{d\le D}\frac{\mu(d)}{d^3}\right)
\sum_{\substack{\y\in \ZZp^4\\ Y<|\y|\leq 2Y}}
\frac{\rho_\infty(\y)}{|\y|^2}+O(B^{2/3}Y^{4/3})+O(BY^{-1/3})\\
& \hspace{1cm} +O(Y^4D)+O(BD^{-2})+O(B^{5/3}D^{-4}Y^{-8/3}).
\end{align*}
It follows from \eqref{eq:2constants} that 
\[
\sum_{\substack{\y\in \ZZp^4\\ Y<|\y|\leq 2Y}}\frac{\rho_\infty(\y)}{|\y|^2}
=\sum_{\substack{\y\in \ZZp^4\\ Y<|\y|\leq 2Y}}\frac{V(\y)}{|\y|^2\mathsf{d}(\y)}
\ll 1,\]
so  that the leading term is
\[\frac{B}{\zeta(3)}\sum_{\substack{\y\in \ZZp^4\\ Y<|\y|\leq 2Y}}
\frac{\rho_\infty(\y)}{|\y|^2}+O(BD^{-2}).\]
Thus if  we choose $D=B^{1/3}Y^{-4/3}$ we obtain 
\[N_3(B;Y)=\frac{B}{\zeta(3)}\sum_{\substack{\y\in \ZZp^4\\ Y<|\y|\leq 2Y}} 
\frac{\rho_\infty(\y)}{|\y|^2}+O(B^{2/3}Y^{4/3})+O(BY^{-1/3})+O(B^{1/3}Y^{8/3}).\] 
Since $Y\le B^{1/4}$ the final error term is bounded by the first,
as required.
\end{proof}

  \subsection{Primitive solutions for typical small $\x$}
  
We next perform a similar computation for solutions in which $\x$ is
small and $\y$ is large, counted via the fibration into quadrics, 
using Theorem \ref{newth}. We write
\[M_7(\x;P,w)=\sum_{\substack{\y\in \ZZ^4\\ F(\x;\y)=0}} w(P^{-1}\y),\]
where  $w(\y)$ is an infinitely differentiable  weight function 
of compact support that vanishes 
for $|\y|\le\eta$.  
Then  Theorem \ref{newth} shows that
\[M_7(\x;P,w)=\sigma_\infty(\x;w)\mathfrak{S}(\x)P^{2}+
  O_{w,\eta}\left(|\x|^{-1/2+7\eta}P^{3/2}+|\x|^{1/2+2\eta}P \right)\]
  when $|\x|\ge P^{\eta}$, provided that
  \begin{equation}\label{eq:assume3-x}
|\x|^{1-\eta}\leq |x_i|\big(\leq |\x|\big), \quad \text{for  $1\leq i\leq 4$},
\end{equation}
and  that
\begin{equation}\label{eq:assume4-x}
\Dbad(\x) \leq |\x|^{\eta}.
\end{equation}
The singular integral and series are given by
\beql{siD2}
\sigma_\infty(\x;w)=\int_{-\infty}^{\infty}\int_{\RR^4}w(\y) 
e(-\theta F(\x;\y))\d\y\d\theta
\eeq
and
\[\mathfrak{S}(\x)=\prod_p \lim_{r\to\infty} p^{-3r}\#\{\y\in
(\ZZ/p^r\ZZ)^4: F(\x;\y)\equiv 0\bmod{p^r}\}. \]
We now write
\[M_8(\x;P,w)=\sum_{\substack{\y\in \ZZp^4\\ F(\x;\y)=0}}
w(P^{-1}\y)\]
and proceed to derive the following estimate.

\begin{lemma}\label{M8A}
Suppose that $\x$ satisfies the conditions \eqref{eq:assume3-x} and
\eqref{eq:assume4-x}, and that $P^{\eta}\le |\x|\le P^{2/3}$. We then have
\beql{M8B}
M_8(\x;P,w)=\frac{\sigma_\infty(\x;w)\mathfrak{S}(\x)}{\zeta(2)}P^2
+O_{w,\eta}(|\x|^{-1/2}P^{5/3+5\eta}).
+O_{w,\eta}(|\x|^{-1/2}P^{5/3+4\eta}). 
 \eeq
\end{lemma}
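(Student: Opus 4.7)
The plan is to pass from $M_7$ to $M_8$ by Möbius inversion and then apply Theorem~\ref{newth} to each resulting term. Since $w(\mathbf{0})=0$ by (\ref{eq:assume1-w}), every $\y$ contributing to $M_7(\x;P,w)$ decomposes uniquely as $\y=d\y'$ with $d\in\NN$ and $\y'\in\ZZp^4$; because $F(\x;d\y')=d^2F(\x;\y')$ and $w(P^{-1}d\y')=w((P/d)^{-1}\y')$, substitution gives
\[M_7(\x;P,w)=\sum_{d=1}^{\infty}M_8(\x;P/d,w),\]
so Möbius inversion produces
\[M_8(\x;P,w)=\sum_{d=1}^{\infty}\mu(d)M_7(\x;P/d,w).\]

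I would split this sum at $D=P^{2/3}$. For each $d\le D$ I would apply Theorem~\ref{newth} with $P$ replaced by $P/d$, viewing $F(\x;\cdot)$ as a diagonal quadratic form in $\y$ with coefficients $x_i$ and $\|F\|=|\x|$. The hypotheses translate cleanly: (\ref{eq:assume3-x}) and (\ref{eq:assume4-x}) become (\ref{eq:assume1-x}) and (\ref{eq:assume2-x}), and $\|F\|\ge(P/d)^\eta$ follows from $|\x|\ge P^\eta$ for every $d\ge 1$. The theorem supplies main term $\sigma_\infty(\x;w)\mathfrak{S}(\x)(P/d)^2$ and error $O(|\x|^{-1/2+7\eta}(P/d)^{3/2}+|\x|^{1/2+2\eta}(P/d))$. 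Summing $\mu(d)/d^2$ over $d\le D$ yields the desired main term $\sigma_\infty(\x;w)\mathfrak{S}(\x)P^2/\zeta(2)$ plus a tail $O(\sigma_\infty(\x;w)\mathfrak{S}(\x)P^2/D)$, while summation of the errors contributes $O(|\x|^{-1/2+7\eta}P^{3/2})$ and $O(|\x|^{1/2+2\eta}P\log P)$. For $d>D$ I would invoke Lemma~\ref{ann}, which applies because (\ref{eq:assume3-x}) makes $F(\x;\cdot)$ a nondegenerate, hence irreducible, diagonal form in four variables; this supplies the trivial bound $M_7(\x;P/d,w)\ll_\ve(P/d)^{2+\ve}$, summing to $O(P^{2+\ve}D^{-1-\ve})=O(P^{4/3+\ve/3})$.

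To verify that everything fits the target error $|\x|^{-1/2}P^{5/3+5\eta}$, I would use Lemma~\ref{SIB} with $|\Delta(\x)|\ge|\x|^{4-4\eta}$ to bound $\sigma_\infty(\x;w)\ll|\x|^{-1+\eta}$, and Lemma~\ref{SSB} with (\ref{eq:assume4-x}) to bound $\mathfrak{S}(\x)\ll_\ve|\x|^{\eta/4+\ve}$, so that $\sigma_\infty(\x;w)\mathfrak{S}(\x)\ll|\x|^{-1+2\eta}$ after absorbing $\ve$ into $\eta$. The main obstacle is the delicate balancing: all four error contributions—$|\x|^{-1+2\eta}P^{4/3}$ from the main-term tail, $|\x|^{-1/2+7\eta}P^{3/2}$ and $|\x|^{1/2+2\eta}P\log P$ from Theorem~\ref{newth}, and $P^{4/3+\ve/3}$ from Lemma~\ref{ann}—sit close to the boundary of the target. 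The hypothesis $|\x|\le P^{2/3}$ is essential precisely here, for instance in reducing the second Theorem~\ref{newth} error to the inequality $|\x|^{1+2\eta}\log P\le P^{2/3+5\eta}$; and the choice $D=P^{2/3}$ is calibrated so that the main-term tail and the Lemma~\ref{ann} tail balance each other and both fit within $|\x|^{-1/2}P^{5/3+5\eta}$ once $\ve$ is taken small relative to $\eta$.
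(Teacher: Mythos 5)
Your proof is correct and follows essentially the same line as the paper's: Möbius inversion to pass from $M_7$ to $M_8$, followed by term-by-term application of Theorem~\ref{newth}. The one deviation — splitting the $d$-sum at $D=P^{2/3}$ and invoking Lemma~\ref{ann} for the tail — is unnecessary (though harmless): since $\|F\|=|\x|\ge P^{\eta}\ge(P/d)^{\eta}$ for every $d\ge1$, Theorem~\ref{newth} can be applied to $M_7(\x;P/d,w)$ uniformly over all $d\ll P$, and the resulting error sums $\sum_d|\x|^{-1/2+7\eta}(P/d)^{3/2}$ and $\sum_d|\x|^{1/2+2\eta}(P/d)$ converge or grow only logarithmically, which is exactly how the paper obtains $O(|\x|^{-1/2+7\eta}P^{3/2}+|\x|^{1/2+2\eta}P^{1+\eta})$ without any split.
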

\begin{proof}
Our starting point is the relation
\begin{align*}
  M_8(\x;P,w)=~&\sum_{d\ll P}\mu(d)M_7(\x;P/d,w)\\ 
  =~&  \sigma_\infty(\x;w)\mathfrak{S}(\x)P^{2}\{\zeta(2)^{-1}+O(P^{-1})\}\\
  &+
  O_{w,\eta}\left(|\x|^{-1/2+7\eta}P^{3/2}+|\x|^{1/2+2\eta}P^{1+\eta}\right).
  \end{align*}
If we assume that $|\x|\le P^{2/3}$ the final error term is
$O_{w,\eta}(|\x|^{-1/2}P^{5/3+5\eta})$. 
We also observe that
$\sigma_{\infty}(\x;w)\ll |\x|^{-1+\eta}$, as in (\ref{sib}), and that
$\mathfrak{S}(\x)\ll|\x|^{\eta} $ by Lemma \ref{SSB} and
(\ref{eq:assume4-x}). The estimate (\ref{M8B}) then follows.
\end{proof}

We are now ready to consider the average
\[N_4(B;X,w)=\sum_{\substack{\x\in \ZZp^4,\,\Delta(\x)\not=\square\\ X<|\x|\leq 2X}}
M_8\left(\x;(B/|\x|^3)^{1/2},w\right),\]
for which we have the following estimate.

\begin{lemma}\label{N4A}
  Let
  \beql{XB}
B^{2\eta}\le X\le B^{1/6-4\eta}.
B^{2\eta}\le X\le B^{1/6-3\eta}. 
  \eeq
  Then if $\eta$ is small enough we will have
\[N_4(B;X,w)=\frac{B}{\zeta(2)}\sum_{\substack{\x\in
    \ZZp^4,\,\Delta(\x)\not=\square\\ X<|\x|\leq 2X}}
\frac{\sigma_\infty(\x;w)\mathfrak{S}(\x)}{|\x|^3}+O_{w,\eta}(B^{1-\eta^2/20}).\]
\end{lemma}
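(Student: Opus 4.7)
The plan is to apply Lemma \ref{M8A} with $P=(B/|\x|^3)^{1/2}$ to the \emph{good} $\x\in\ZZp^4$ in the range $X<|\x|\le 2X$---namely those satisfying \eqref{eq:assume3-x} and \eqref{eq:assume4-x}---and to treat the remaining \emph{bad} $\x$ via upper bounds. First I would verify the hypothesis $P^\eta\le|\x|\le P^{2/3}$ of that lemma: substituting for $P$ reduces it to $B^{\eta/(2+3\eta)}\le|\x|\le B^{1/6}$, both of which follow from \eqref{XB} for $\eta$ sufficiently small. Lemma \ref{M8A} then gives the per-$\x$ main term $\frac{\sigma_\infty(\x;w)\mathfrak{S}(\x)}{\zeta(2)}\cdot\frac{B}{|\x|^3}$ with an error $O(|\x|^{-1/2}P^{5/3+5\eta})$. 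Summing this error over the $O(X^4)$ candidate vectors and substituting for $P$ gives $O(B^{5/6+5\eta/2}X^{1-15\eta/2})$, which under $X\le B^{1/6-4\eta}$ simplifies to $O(B^{1-11\eta/4+30\eta^2})$, comfortably $\ll B^{1-\eta^2/20}$ for $\eta$ small.

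For the bad $\x$, I would use the trivial upper bound $M_8(\x;P,w)\ll M_1(\x;cP)$, valid because $w$ is bounded and compactly supported. Class (a) vectors, with $\Dbad(\x)>X^\eta$, are handled by Lemma \ref{lem:bad_coeffs} with $D=X^\eta$ and $Y=cP\le c(B/X^3)^{1/2}$, producing a bound $\ll(XP)^\ve\bigl\{BX^{-\eta/24}+X^{5/2}B^{1/2}\bigr\}$; the first term is $\ll B^{1-\eta^2/12+\ve}$ using $X\ge B^{2\eta}$, and the second is $\ll B^{11/12-10\eta}$ using $X\le B^{1/6-4\eta}$. Class (b) vectors, with some $|x_i|<X^{1-\eta}$, are handled by the third bound of Lemma \ref{lem:stage1} with $X_1=X^{1-\eta}$, summed over the four possible indices, yielding a bound of the same shape $(XP)^\ve\bigl\{BX^{-3\eta/4}+X^{5/2}B^{1/2}\bigr\}$ with analogous savings.

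The remaining delicate step is to show the ``phantom'' main-term contribution $\frac{B}{\zeta(2)}\sum_{\mathrm{bad}}\sigma_\infty(\x;w)\mathfrak{S}(\x)/|\x|^3$ is itself $\ll B^{1-\eta^2/20}$, so that the lemma's sum over all $\x$ (rather than only good ones) is legitimate. Using Lemma \ref{SIB} to bound $\sigma_\infty(\x;w)\ll|\Delta(\x)|^{-1/4}$ and Lemma \ref{SSB} to bound $\mathfrak{S}(\x)\ll\Dbad(\x)^{1/4}|\x|^\ve$, the key identity $|\Delta(\x)|^{-1/4}\Dbad(\x)^{1/4}=m(\x)^{-1/4}$, where $m(\x)=|\Delta(\x)|/\Dbad(\x)$ is essentially the square-free part of $\Delta(\x)$, makes the required sums tractable. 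For class (a), writing $|\Delta(\x)|=sm$ with $s=\Dbad(\x)$ square-full and $m$ square-free, the count $\#\{\x:|\Delta(\x)|=N,|\x|\le 2X\}\ll\tau_4(N)$ combined with $\sum_{s>X^\eta,\,s\text{ sf}}\tau_4(s)s^{-3/4+\ve}\ll X^{-\eta/4+\ve}$ yields $\sum_{\mathrm{(a)}}m(\x)^{-1/4}\ll X^{3-\eta/4+\ve}$, giving a total $\ll BX^{-\eta/4+\ve}\ll B^{1-\eta^2/2+\ve}$. For class (b), the factorisation $|\Delta|=\prod|x_i|$ lets $\sum|\Delta|^{-1/4}$ split into a product of one-variable sums over the permitted ranges for each $|x_i|$, giving e.g.\ $\ll X^{3-3\eta/4}$ when exactly one $|x_i|$ is small (with analogous bounds in the other sub-cases), which combined with $\mathfrak{S}\ll X^{\eta/4+\ve}$ yields the required saving. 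The main obstacle is the bookkeeping in this last paragraph: the bad set must be decomposed according to how many $|x_i|$ are small and whether $\Dbad(\x)>X^\eta$, and in each sub-case a saving of $X^{-c\eta}$ for some positive $c$ must be extracted and then combined with $X\ge B^{2\eta}$ to produce the required $B^{-\eta^2/20}$.
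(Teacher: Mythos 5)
Your proposal follows essentially the same route as the paper's proof: apply Lemma \ref{M8A} to $\x$ satisfying \eqref{eq:assume3-x} and \eqref{eq:assume4-x}, use the third part of Lemma \ref{lem:stage1} and Lemma \ref{lem:bad_coeffs} to bound the point count over the two classes of bad $\x$, and then separately estimate the ``phantom'' main-term contribution from bad $\x$ using Lemmas \ref{SIB} and \ref{SSB}. The only minor divergence is in the final bookkeeping you flag as the obstacle: for the class with some $|x_i|<X^{1-\eta}$ you propose factoring $\sum|\Delta(\x)|^{-1/4}$ into one-variable sums after first restricting to $\Dbad(\x)\le X^{\eta}$, whereas the paper instead keeps $\Dbad(\x)^{1/4}/|\Delta(\x)|^{1/4}$ together, exploits $|\Delta(\x)|\le 16X^{4-\eta}$ to introduce the extra saving, and sums unconditionally over square-full $s$ (so no a priori restriction on $\Dbad$ is needed) — a cleaner disposition of the same estimate, producing the bound $X^{-\eta/5}$ of \eqref{AA}; both versions yield the required $O(B^{1-\eta^2/20})$.
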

\begin{proof}
  We would like to apply Lemma \ref{M8A} for those vectors $\x$ which
  satisfy the conditions (\ref{eq:assume3-x}) and
  (\ref{eq:assume4-x}), and for which
  $(B/|\x|^3)^{\eta/2}\le|\x|\le(B/|\x|^3)^{1/3}$. This final
  constraint holds if $X<|\x|\le 2X$ with $X$ satisfying (\ref{XB}).
  Moreover, the error term contributes
a total $O_{w,\eta}(B^{5/ 6+3\eta}X)$, 
which is satisfactory for $X$ 
in the range \eqref{XB}.  
  Thus to complete our treatment
  of $N_4(B;X,w)$ we must consider vectors $\x$ for which either
  (\ref{eq:assume3-x}) or (\ref{eq:assume4-x}) fails.

  We  begin by considering the number of
  solutions $(\x,\y)$ for such $\x$. 
  By the third part of Lemma~\ref{lem:stage1} the number of
  solutions $(\x,\y)$ for 
  which (\ref{eq:assume3-x}) fails will be
  \[\ll_{\ve} B^\ve X^{-3\eta/4}\{B+B^{1/2}X^{5/2}\} 
 \ll_{\ve} B^\ve X^{-3\eta/4}\{B+B^{11/12}\}  
   \ll_{\ve} B^{1+\ve} X^{-3\eta/4}.\]  
This is satisfactory when $B^{2\eta}\le X\le B^{1/6-4\eta}$, 
   provided that we take $\ve\le\eta^2$.  Similarly, by Lemma 
  \ref{lem:bad_coeffs}, the number of
  solutions $(\x,\y)$ for which 
    (\ref{eq:assume4-x}) fails will be
  \[\ll_\ve B^\ve\left\{BX^{-\eta/24}+B^{1/2}X^{5/2} \right\}\]
  for any fixed $\ve>0$. 
    As before, under the assumption (\ref{XB}) this becomes 
  $O(B^{1-\eta^2/20})$ if we choose $\ve$ small enough.  Thus
  vectors $\x$ which fail to satisfy either (\ref{eq:assume3-x}) or
  (\ref{eq:assume4-x}) will make a suitably small contribution to
  $N_4(B;X,w)$. 

  To complete the proof of Lemma \ref{N4A} it remains to prove that 
  \beql{AA}
  \sum_{\substack{\x\in \ZZp^4,\,\Delta(\x)\not=\square\\ X<|\x|\leq 2X \\
  \text{\eqref{eq:assume3-x} or
  \eqref{eq:assume4-x} fails }}}
  \frac{\sigma_\infty(\x;w)\mathfrak{S}(\x)}{|\x|^3}\ll X^{-\eta/5}, 
  \eeq
  since for $X$  in the range
(\ref{XB})  the right hand side will then be
$O(B^{-2\eta^2/5})$, which is satisfactory for Lemma \ref{N4A}.
According to Lemmas \ref{SSB} and \ref{SIB} we have
\[
\frac{\sigma_\infty(\x;w)\mathfrak{S}(\x)}{|\x|^3}\ll_{\ve}
X^{-3+\ve}\frac{\Dbad(\x)^{1/4}}{|x_1x_2x_3x_4|^{1/4}}.\]
Let $s$ run over square-full positive integers, and write
$s=\Dbad(\x)$ and $n=|x_1x_2x_3x_4|$. Then vectors for which
(\ref{eq:assume3-x}) fails will contribute
\begin{align*}
  &\ll_{\ve} X^{-3+\ve}\sum_s s^{1/4}
  \sum_{\substack{n\le 16X^{4-\eta}\\ s\mid n}}\tau_4(n)n^{-1/4}\\
  &\ll_{\ve} X^{-3+2\ve}\sum_s \sum_{m\le 16X^{4-\eta}/s}m^{-1/4}\\
  &\ll_{\ve} X^{-3+2\ve}\sum_s X^{3-3\eta/4}s^{-3/4}.
\end{align*}
However, if $s$ runs over square-full integers the infinite sum
$\sum s^{-3/4}$ converges, so that the above will be $O(X^{-\eta/2})$
if we choose $\ve$ small enough, which is satisfactory for
 (\ref{AA}). Similarly, vectors for which
(\ref{eq:assume4-x}) fails will contribute
\begin{align*}
  &\ll_{\ve} X^{-3+\ve}\sum_{s\ge X^{\eta}} s^{1/4}
  \sum_{\substack{n\le 16X^4\\ s\mid n}}\tau_4(n)n^{-1/4}\\
  &\ll_{\ve} X^{-3+2\ve}\sum_{s\ge X^{\eta}} \sum_{m\le 16X^4/s}m^{-1/4}\\
  &\ll_{\ve} X^{-3+2\ve}\sum_{s\ge X^{\eta}} X^3s^{-3/4}.
\end{align*}
Since
\[\sum_{s\ge S}s^{-3/4}\ll S^{-1/4}\]
for any $S\ge 1$, the above will be 
$O(X^{-\eta/5})$ for small
enough $\ve$, which again produces a satisfactory contribution to
(\ref{AA}).  This completes the proof of the lemma.
\end{proof}

\subsection{Removing the weights}

The counting function $N_4(B;X,w)$ involves a weight function $w$, and
our next task is to remove it so as to produce 
\[N_5(B;X)=\sum_{\substack{\x\in \ZZp^4,\,\Delta(\x)\not=\square\\ X<|\x|\leq 2X}}
\sum_{\substack{\y\in\ZZp^4\\ F(\x;\y)=0}}w_0((B/|\x|^3)^{-1/2}\y),\] 
where $w_0$ is the characteristic function of $[-1,1]^4$,
as in Lemma \ref{csw}. The result is described in the 
following estimate.
\begin{lemma}\label{N56}
  If $X$ is in the range \eqref{XB} we have
  \[N_5(B;X)=\frac{B}{\zeta(2)}\sum_{\substack{\x\in
    \ZZp^4,\,\Delta(\x)\not=\square\\ X<|\x|\leq 2X}}
\frac{\sigma_\infty(\x)\mathfrak{S}(\x)}{|\x|^3}+  
O(\eta^{1/2}B)+O_{\eta}(B^{1-\eta^2/20}).\]
\end{lemma}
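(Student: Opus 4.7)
My strategy is to sandwich the characteristic function $w_0$ between the smooth bumps $w_1, w_2$ furnished by Lemma \ref{csw}, rescaled by $\y\mapsto P_\x^{-1}\y$ with $P_\x=(B/|\x|^3)^{1/2}$, and apply Lemma \ref{N4A} to each of the resulting $N_4(B;X, w_i)$. Since $w_i(P_\x^{-1}\y)$ and $w_0(P_\x^{-1}\y)$ agree on the bulk region $2\eta P_\x\le|\y|\le(1-\eta)P_\x$, the discrepancy $|N_5(B;X)-N_4(B;X,w_i)|$ is controlled by the count of primitive pairs $(\x,\y)$ satisfying $F(\x;\y)=0$, $X<|\x|\le 2X$, $\Delta(\x)\ne\square$, with $\y$ in one of three boundary shells: the core $|\y|\le 2\eta P_\x$, the edge $(1-\eta)P_\x<|\y|\le P_\x$, or the halo $P_\x<|\y|\le(1+\eta)P_\x$.

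Viewing $F(\x;\cdot)$ as a quaternary quadratic form in $\y$ with discriminant $\Delta(\x)$, Lemma \ref{csw} supplies
\[\sigma_\infty(\x;w_i)=\sigma_\infty(\x)+O(\eta^{1/2}|\Delta(\x)|^{-1/4}),\]
with absolute implied constant. Replacing $\sigma_\infty(\x;w_i)$ by $\sigma_\infty(\x)$ in the main term coming from Lemma \ref{N4A} therefore introduces an error
\[\ll \eta^{1/2}B\sum_{X<|\x|\le 2X}\frac{\mathfrak{S}(\x)}{|\Delta(\x)|^{1/4}|\x|^3},\]
and the sum on the right should be bounded by an absolute constant uniformly in $X$. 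On average $\mathfrak{S}(\x)=O(1)$, and combined with the elementary estimate $\sum_{X<|\x|\le 2X}|\Delta(\x)|^{-1/4}\ll X^3$ this compensates the factor $|\x|^{-3}\sim X^{-3}$; any residual $X^\ve$-type contributions coming from the pointwise bounds in Lemma \ref{SSB} (namely $\mathfrak{S}(\x)\ll \Dbad(\x)^{1/4}L(1,\chi_{\Delta(\x)})$) are absorbed by the $O_\eta(B^{1-\eta^2/20})$ error, using $X\le B^{1/6-4\eta}$ from (\ref{XB}) and choosing the auxiliary $\ve$ sufficiently small in terms of $\eta$.

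It remains to estimate the boundary shells. The core region is handled by the first part of Lemma \ref{lem:stage1} with $|\x|\le 2X$ and $|\y|\le 2\eta(B/X^3)^{1/2}$, giving
\[\ll X^3(\eta P_X)^2+X^5(\eta P_X)^{2/3}=\eta^2 B+\eta^{2/3}X^4 B^{1/3}\ll \eta^{1/2}B+B^{1-16\eta},\]
which is acceptable in the range (\ref{XB}). For the edge and halo, which together form a thin annular shell of relative width $\sim\eta$ in the $\y$-direction, I plan to apply Lemma \ref{N4A} once more with an auxiliary smooth weight $w^*\in[0,1]$ supported on this annulus; a direct variant of the proof of Lemma \ref{csw} yields $\sigma_\infty(\x;w^*)\ll\eta^{1/2}|\Delta(\x)|^{-1/4}$, so that the same summation over $\x$ as before gives $O(\eta^{1/2}B)$. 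The main obstacle is proving rigorously that $\sum_{\x}\mathfrak{S}(\x)/(|\Delta(\x)|^{1/4}|\x|^3)$ is bounded by an absolute constant rather than a logarithm, which requires replacing the pointwise worst-case bound for $\mathfrak{S}(\x)$ by an averaged bound and decomposing according to the size of $\Dbad(\x)$ and the conductor of $\chi_{\Delta(\x)}$.
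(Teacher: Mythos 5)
Your overall strategy is the right one and matches the paper's: sandwich the sharp cut-off $w_0$ between the smooth bumps $w_1$, $w_2$ of Lemma~\ref{csw}, apply Lemma~\ref{N4A} to the smooth counts, and then replace $\sigma_\infty(\x;w_i)$ by $\sigma_\infty(\x)$ using Lemma~\ref{csw}. However, there is a genuine gap, which you yourself flag, and one incorrect claim masking it.

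The gap is the estimate $S(X)\ll 1$, where $S(X)$ is the sum
$\sum_{X<|\x|\le 2X}\mathfrak{S}(\x)|\Delta(\x)|^{-1/4}|\x|^{-3}$ as in \eqref{SD*}. This is not a footnote: the whole shape of the error term in the lemma hinges on it. You assert that the residual ``$X^\ve$-type contributions'' from the pointwise bound of Lemma~\ref{SSB} can be ``absorbed by the $O_\eta(B^{1-\eta^2/20})$ error''. That is false. Pointwise, Lemma~\ref{SSB} only gives $\mathfrak{S}(\x)\ll\Dbad(\x)^{1/4}L(1,\chi_F)$ with $L(1,\chi_F)\ll\log B$, so the pointwise route yields $S(X)\ll\log B$, and the resulting error $\eta^{1/2}B\log B$ is already comparable to the main term; after this lemma is summed over dyadic ranges of $X$ in \eqref{N5C}, you would lose an extra $\log B$ and the whole argument collapses. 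The paper gets $S(X)\ll 1$ precisely by \emph{averaging} $L(1,\chi)$ over $\x$, mimicking the proof of Lemma~\ref{lem:roger}: decompose into dyadic boxes \eqref{eq:reg'}, factor $\Dbad(\x)=d_1d_2d_3d_4$, and invoke the Burgess-derived bound \eqref{lem:L}, namely $\sum_{\x\in S}L(1,\chi)\ll\hat X\,\hat d^{-7/8}$, whose decay in $\hat d$ is what makes the square-full sum converge. You correctly diagnose that such an averaged bound is needed, but you have not supplied it, and nothing in your sketch replaces it.

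A smaller structural point: your plan to handle the edge and halo shells by introducing a third weight $w^*$ supported on the annulus and re-running Lemma~\ref{N4A} is workable but redundant. The paper avoids it entirely via the two-sided inequality
\[
N_4(B;X,w_1)\;\le\; N_5(B;X)\;\le\; N_5(4\eta^2B;X)+N_4(B;X,w_2),
\]
which sandwiches $N_5(B;X)$ between two quantities that are both evaluated by Lemma~\ref{N4A} and then both equal the desired main term up to $O(\eta^{1/2}BS(X))$ via Lemma~\ref{csw}; the inner core $N_5(4\eta^2 B;X)\ll\eta^{2/3}B$ is disposed of by the first part of Lemma~\ref{lem:stage1}, as you also do. Your core estimate is correct, and your annulus bound $\sigma_\infty(\x;w^*)\ll\eta^{1/2}|\Delta(\x)|^{-1/4}$ is plausible, but it merely routes you back to the same unresolved sum $S(X)$, so this detour buys nothing.
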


We remark that $\sigma_{\infty}(\x)=\sigma_{\infty}(\x;w_0)$ in view
of (\ref{siD}) and (\ref{siD2}). 

\begin{proof} 
Given $\eta\in(0,\tfrac{1}{100})$ 
we can construct specific weights
$w_1,w_2$ depending on $\eta$ alone, and satisfying the conditions of
Lemma \ref{csw}. Thus for all
$\u$  we have  $0\le w_1(\u), w_2(\u)\le 1$. Both functions vanish
when $|\u|\le \eta$.  The weight $w_1$
takes the value 1 for $2\eta\le|\u|\le 1-\eta$ and vanishes for
$|\u|\ge 1$;  the weight $w_2$ takes the value 1 for
$2\eta\le|\u|\le 1$ and vanishes for $|\u|\ge 1+\eta$.
In
particular $0\le w_1(\u)\le w_0(\u)$ for all $\u$, so that
$N_4(B;X,w_1)\le N_5(B;X)$.  The condition that
$|(B/|\x|^3)^{-1/2}\y|\le 2\eta$ is equivalent to the condition 
$|(B'/|\x|^3)^{-1/2}\y|\le 1$ with $B'=4\eta^2B$, whence 
\[N_5(B;X)-N_5(4\eta^2B;X)\le N_4(B;X,w_2).\]
Since the first part of Lemma \ref{lem:stage1} shows that
\[N_5(4\eta^2B;X)\ll \eta^{2/3}B\]
for $X\le B^{1/6}$, we see that it will suffice to show that
\[N_4(B;X,w_i)=\frac{B}{\zeta(2)}\sum_{\substack{\x\in
    \ZZp^4,\,\Delta(\x)\not=\square\\ X<|\x|\leq 2X}}
\frac{\sigma_\infty(\x)\mathfrak{S}(\x)}{|\x|^3}+
O(\eta^{1/2}B)+O_{\eta}(B^{1-\eta^2/20}).\]
for $i=1,2$.  However according to Lemma \ref{N4A} we have
\[N_4(B;X,w_i)=\frac{B}{\zeta(2)}\sum_{\substack{\x\in
    \ZZp^4,\,\Delta(\x)\not=\square\\ X<|\x|\leq 2X}}
\frac{\sigma_\infty(\x;w_i)\mathfrak{S}(\x)}{|\x|^3}+O_{\eta}(B^{1-\eta^2/20})\]
for $i=1,2$.  It needs to be stressed at this point that the implied
constant for the error term depends only on $\eta$, since the two
weight functions are completely fixed once $\eta$ is chosen. Moreover
our two weight functions do indeed vanish on a neighbourhood of the
origin as was required at the outset in \S\ref{CPOQ}.

We now use Lemma \ref{csw} to replace $\sigma_\infty(\x;w_i)$ by
$\sigma_\infty(\x)$, introducing an error $O(\eta^{1/2}BS(X))$ with
\beql{SD*}
S(X)=\sum_{\substack{\x\in\ZZp^4,\,\Delta(\x)\not=\square\\ X<|\x|\leq 2X}}
|x_1x_2x_3x_4|^{-1/4}\frac{\mathfrak{S}(\x)}{|\x|^3}.
\eeq
We therefore deduce that
\begin{equation}\label{eq:waypoint}
\begin{split}
N_5(B;X)&=\frac{B}{\zeta(2)}\sum_{\substack{\x\in
    \ZZp^4,\,\Delta(\x)\not=\square\\ X<|\x|\leq 2X}}
\frac{\sigma_\infty(\x)\mathfrak{S}(\x)}{|\x|^3}\\
&\hspace{1cm}+O(\eta^{1/2}B)+O(\eta^{1/2}BS(X))+O_{\eta}(B^{1-\eta^2/20}). 
\end{split}
\end{equation}
In order to estimate the sum $S(X)$ we apply Lemma \ref{SSB}
with $\ve=\frac{1}{20}$, which yields
$$
S(X)\ll X^{-3} 
\sum_{\substack{\x\in\ZZp^4,\,\Delta(\x)\not=\square\\ X<|\x|\leq 2X}}
|x_1x_2x_3x_4|^{-1/4} \Dbad(\x)^{3/10}L(1,\chi).
$$
We proceed by mimicking the proof of Lemma \ref{lem:roger}. 
Let $S(X_1,\dots,X_4;X)$ denote the contribution to the right hand side from
the dyadic ranges
\begin{equation}
\label{eq:reg'}
X_i/2<|x_i|\leq X_i, \quad \text{for $1\leq i\leq 4$}.
\end{equation}
It will be convenient to put $\hat X=X_1\dots X_4$.
Writing $s=\Dbad(\x)$, which is a square-full integer, we conclude that 
$$
S(X_1,\dots,X_4;X)
\ll X^{-3}\hat X^{-1/4}\sum_{\text{$s$ square-full}} s^{3/10} 
\sum_{\substack{d_1,\dots,d_4\\ d_1\dots d_4=s}} \sum_{\x\in S} L(1,\chi),
$$
where $S$ is the set of $\x\in \ZZ^4$ in the region \eqref{eq:reg'} for which
with $\Delta(\x)\neq \square$ and $d_i\mid x_i$ for $1\leq i\leq 4$.
Appealing to \eqref{lem:L}, it follows that 
\begin{align*}
S(X_1,\dots,X_4;X)
&\ll X^{-3}\hat X^{3/4} \sum_{\text{$s$ square-full}} s^{3/10} 
\sum_{\substack{d_1,\dots,d_4\\ d_1\dots d_4=s}} (d_1\dots d_4)^{-7/8}\\
&\ll X^{-3}\hat X^{3/4} \sum_{\text{$s$ square-full}} \tau_4(s)s^{-23/40} \\
&\ll X^{-3}\hat X^{3/4}.
\end{align*}
On summing over dyadic values for the $X_i$ subject to
$\max X_i\ll X$, we finally conclude that 
\beql{SB}
S(X)\ll 1. 
\eeq
Once inserted into \eqref{eq:waypoint}, this therefore completes the
proof of Lemma \ref{N56}.
\end{proof} 

\subsection{The counting function $N(\Omega;B)$} 
  
Using Lemma \ref{N3A}, together with a dyadic subdivision of the range
for $|\y|$, we find that
\begin{equation}\label{N4C}
\begin{split}
\card\Bigg\{(\x,\y)\in &\ZZp^4\times\ZZp^4:
\begin{array}{l}
\Delta(\x)\not=\square,    ~F(\x;\y)=0\\
|\x|^3|\y|^2\le B,\,|\y|\le B^{1/4}
\end{array}{}\Bigg\}\\
=~&\frac{B}{\zeta(3)}
\sum_{\substack{\y\in\ZZp^4\\ |\y|\leq B^{1/4}}}\frac{\rho_\infty(\y)}{|\y|^2}+O(B). 
\end{split}\end{equation}

We would like to handle the range $|\x|\le B^{1/6}$ similarly, using
Lemma \ref{N56}.  We claim that
\begin{equation}\label{N5C}
\begin{split}
\card\Bigg\{(\x,\y)\in&\ZZp^4\times\ZZp^4:
\begin{array}{l}
\Delta(\x)\not=\square,
    ~F(\x;\y)=0\\
|\x|^3|\y|^2\le B,\,|\x|\le B^{1/6}
\end{array}{}
\Bigg\}\\
=~&
\frac{B}{\zeta(2)}\sum_{\substack{\x\in \ZZp^4\\ B^{2\eta}\leq |\x|\leq B^{1/6}\\
  \Delta(\x)\not=\square}}
\frac{\sigma_\infty(\x)\mathfrak{S}(\x)}{|\x|^3}\\
&+O(\eta^{1/2}B\log B)+O_{\eta}(B^{1-\eta^2/20}\log B).  
\end{split}
\end{equation}
In order to prove this we must handle the contribution
of the two ranges $|\x|<B^{2\eta}$ and
$B^{1/6-4\eta}<|\x|\le B^{1/6}$, both for the number of
solutions to $F(\x;\y)=0$, and for the second range 
in respect of the sum of leading terms.
Lemma \ref{lem:stage1} shows that $N_5(B;X)\ll B$ when $X\ll B^{1/6}$,
so that the two awkward ranges contribute
$O(\eta B\log B)$ on the left, which is 
dominated by the error term $O(\eta^{1/2}B\log B)$ in (\ref{N5C}). In
view of Lemma \ref{SIB}, a range $X<|\x|\le 2X$ contributes $O(BS(X))$
to the main term on the right in (\ref{N5C}), in the notation of 
\eqref{SD*}.
  Using the bound (\ref{SB}), and summing
over dyadic values of $X$ in the  range
$B^{1/6-4\eta}\ll X\ll B^{1/6}$ 
$B^{1/6-3\eta}\ll X\ll B^{1/6}$ 
we obtain a contribution $O(\eta B\log
B)$, which again is satisfactory.  This establishes the claim in
(\ref{N5C}).

We now combine the estimates (\ref{N4C}) and (\ref{N5C}) so as to
cover the entire range $|\x|^3|\y|^2\le B$ in the definition 
(\ref{NXTB}) of the counting function  $N(\Omega;B)$, with
$\Omega=X(\QQ)\setminus T$ and $T$ being given by \eqref{eq:TT}.
We may remove the points with
$|\x|\le B^{1/6}$ and $|\y|\le B^{1/4}$ at a cost $O(B)$, using Lemma
\ref{lem:stage1}.  
Passing to the affine cone and allowing 
 for multiplication of $\x$ and $\y$ by units, 
we therefore reach the following conclusion.

\begin{lemma}\label{together}
  We have
  \begin{align*}
    N(\Omega;B)=~&
  \frac{B}{4}\left( \frac{1}{\zeta(3)}M_1(B) +
  \frac{1}{\zeta(2)}M_2(B) \right)\\
&  +O(\eta^{1/2}B\log B)+O_{\eta}(B^{1-\eta^2/20}\log B).  
\end{align*}
  where
  \begin{align*}
M_1(B)&=\sum_{\substack{\y\in \ZZp^4\\ |\y|\leq B^{1/4}}}
  \frac{\rho_\infty(\y)}{|\y|^2} \quad \text{ and } \quad
M_2(B)=  
  \sum_{\substack{\x\in \ZZp^4\\ B^{2\eta}\leq |\x|\leq B^{1/6}\\
  \Delta(\x)\not=\square}}
\frac{\sigma_\infty(\x)\mathfrak{S}(\x)}{|\x|^3}.
  \end{align*}
  \end{lemma}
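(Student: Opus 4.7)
The proof is a direct synthesis of the asymptotic formulae \eqref{N4C} and \eqref{N5C} that have already been established, together with a short overlap computation. First, a rational point $(x, y) \in X(\QQ)$ lifts to a pair $(\x, \y) \in \ZZp^4 \times \ZZp^4$ uniquely up to the four independent sign choices $(\pm\x, \pm\y)$, which all project to the same point of $\PP^3 \times \PP^3$. The condition $(x,y) \notin T$ translates to $\Delta(\x) \neq \square$, and the height is $H(x,y) = |\x|^3|\y|^2$. Hence
\[ N(\Omega; B) = \tfrac{1}{4}\card\bigl\{ (\x, \y) \in \ZZp^4 \times \ZZp^4 : F(\x;\y)=0,\ \Delta(\x) \neq \square,\ |\x|^3|\y|^2 \leq B \bigr\}. \]

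Next, I observe that the height constraint $|\x|^3|\y|^2 \leq B$ forces either $|\y| \leq B^{1/4}$ or $|\x| \leq B^{1/6}$: if both failed one would have $|\x|^3|\y|^2 > B^{1/2}\cdot B^{1/2} = B$. By inclusion-exclusion the primitive count splits as
\[ \card\{|\y|\leq B^{1/4}\} + \card\{|\x|\leq B^{1/6}\} - \card\{|\x|\leq B^{1/6},\ |\y|\leq B^{1/4}\}, \]
where in each set one implicitly requires $(\x,\y)\in \ZZp^4\times\ZZp^4$, $F(\x;\y)=0$, and $\Delta(\x)\neq \square$. To the first set I apply \eqref{N4C}, obtaining $(B/\zeta(3))M_1(B)+O(B)$; to the second I apply \eqref{N5C}, obtaining $(B/\zeta(2))M_2(B)+O(\eta^{1/2}B\log B)+O_\eta(B^{1-\eta^2/20}\log B)$. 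The overlap is bounded by $M_2(B^{1/6}, B^{1/4})$, and the first part of Lemma \ref{lem:stage1} yields $M_2(B^{1/6}, B^{1/4}) \ll (B^{1/6})^3(B^{1/4})^2 + (B^{1/6})^5(B^{1/4})^{2/3} = O(B)$.

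Dividing by $4$ and collecting main terms produces the stated leading expression $\tfrac{B}{4}\bigl(M_1(B)/\zeta(3)+M_2(B)/\zeta(2)\bigr)$. All the ancillary $O(B)$ contributions — from \eqref{N4C} and from the overlap — are absorbed into $O(\eta^{1/2}B\log B)$ once $B$ is large enough in terms of $\eta$, and contribute at most $O_\eta(1)$ otherwise. No real obstacle arises at this stage; the substantive work has been discharged by Theorem \ref{t:asymplatt} (packaged through the M\"obius sieve to yield \eqref{N4C}), by Theorem \ref{newth} together with Lemmas \ref{csw} and \ref{N4A} (assembled to give \eqref{N5C}), and by the upper bounds of Lemma \ref{lem:stage1}. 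The one mild point of care is the verification that the $O(B)$ overlap is tolerated by the coarser $O(\eta^{1/2}B\log B)$ error, which is clear since $\eta^{1/2}\log B \to \infty$ with $B$ for any fixed $\eta>0$.
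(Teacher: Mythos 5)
Your proof is correct and follows essentially the same route as the paper: pass to the affine cone (dividing by $4$ for the sign ambiguities), observe that the height constraint forces $|\y|\le B^{1/4}$ or $|\x|\le B^{1/6}$, apply \eqref{N4C} and \eqref{N5C} by inclusion--exclusion, and bound the overlap by $O(B)$ via the first estimate of Lemma~\ref{lem:stage1}. The only point worth making explicit is that the ancillary $O(B)$ terms are absorbed as you indicate -- into $O(\eta^{1/2}B\log B)$ for $B$ large in terms of $\eta$, and into $O_\eta(B^{1-\eta^2/20}\log B)$ for $B\ge 2$ bounded in terms of $\eta$ -- and you have spelled that out adequately.
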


\section{The final reckoning}\label{s:reckon}

In this  section we shall produce asymptotic formulae for $M_1(B)$
and $M_2(B)$, as $B\to \infty$.  We shall begin in \S 
\ref{s:analyseM1} by dealing with $M_1(B)$, which is the easier
to handle, before developing the techniques further in \S
\ref{s:analyseM2} to handle $M_2(B)$. 
Finally, in \S \ref{s:final} we shall
confirm that the two contributions combine in a satisfactory manner
to complete the proof of Theorem \ref{t:main}.

\subsection{Analysis of $M_1(B)$}\label{s:analyseM1}

The goal of the present section is the following result.
\begin{lemma}\label{M1B}
  We have
  \[M_1(B)=\frac{1}{2\zeta(4)}\tau_\infty \log B+O(1),\]
where $\tau_\infty$ is given by \eqref{eq:tau_infty}. 
  \end{lemma}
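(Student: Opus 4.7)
The plan is to remove the primitivity condition by M\"obius inversion, exploit the homogeneity of $\rho_\infty(\y)/|\y|^2$ in $\y$ to reduce the resulting sum to a scale-invariant integral via a dyadic Riemann sum, and then identify the leading constant using polar coordinates together with Lemma~\ref{tinf}.

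The key homogeneity relation $\rho_\infty(t\y)=t^{-2}\rho_\infty(\y)$ for $t>0$ follows from $F(\x;t\y)=t^2 F(\x;\y)$ and the substitution $\theta\mapsto\theta/t^2$ in \eqref{eq:def-rho}, so $f(\y):=\rho_\infty(\y)/|\y|^2$ is homogeneous of degree $-4$. M\"obius inversion then yields
\[
M_1(B)=\sum_{d\le B^{1/4}}\frac{\mu(d)}{d^4}\,T(B^{1/4}/d),\qquad
T(Y):=\sum_{\substack{\y\in\ZZ^4\setminus\{\0\}\\ |\y|\le Y}} f(\y).
\]

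The next step is to establish $T(Y)=C\log Y + O(1)$, where $C=\int_{|\u|=1}\rho_\infty(\u)\,\d\sigma(\u)$ and $\d\sigma$ denotes the surface measure on the sup-norm unit sphere of $\RR^4$. By \eqref{eq:2constants} we have $\rho_\infty(\y)=V(\y)/\mathsf{d}(\y)$, so $\rho_\infty$ extends continuously to $\RR^4\setminus\{\0\}$ with $\rho_\infty(\y)\ll |\y|^{-2}$, whence $f$ is continuous with $|f(\y)|\ll |\y|^{-4}$. Splitting $|\y|$ into dyadic shells, a standard Riemann-sum estimate (treating the near-axis lattice points $|y_i|<\varepsilon Y$ separately via the uniform bound on $f$) gives
\[
\sum_{Y/2<|\y|\le Y} f(\y)=\int_{Y/2<|\y|\le Y} f(\y)\,\d\y+O(Y^{-\kappa})
\]
for some absolute $\kappa>0$. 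Writing $\y=r\u$ with $|\u|=1$, one has $\d\y=r^3\,\d r\,\d\sigma(\u)$ and $f(r\u)=r^{-4}\rho_\infty(\u)$, so the integral equals $C\log 2$, and summing the dyadic contributions (the discretisation errors forming a convergent geometric series) yields $T(Y)=C\log Y + O(1)$.

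Finally, the same polar substitution applied to the formula $\tau_\infty=\int_{[-1,1]^4}\rho_\infty(\y)\,\d\y$ in Lemma~\ref{tinf} gives
\[
\tau_\infty=\int_0^1\!\int_{|\u|=1} r^{-2}\rho_\infty(\u)\,r^3\,\d\sigma(\u)\,\d r=\tfrac12 C,
\]
so $C=2\tau_\infty$. Substituting $T(B^{1/4}/d)=(C/4)\log B-C\log d+O(1)$ into the M\"obius sum, using the absolute convergence of $\sum_d \mu(d)/d^4=1/\zeta(4)$ and of $\sum_d \mu(d)\log d/d^4$, and controlling the truncation by $\sum_{d>B^{1/4}}d^{-4}\ll B^{-3/4}$, produces
\[
M_1(B)=\frac{C}{4\zeta(4)}\log B+O(1)=\frac{\tau_\infty}{2\zeta(4)}\log B+O(1),
\]
as claimed. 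The main technical obstacle is the uniform Riemann-sum estimate in the second step: since $\rho_\infty$ is continuous but not smooth across the coordinate hyperplanes, a careful separation of near-axis lattice points is required, handled via the uniform bound $\rho_\infty(\y)\ll |\y|^{-2}$ and the fact that such points form a lower-dimensional subset of each dyadic shell.
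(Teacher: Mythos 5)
Your overall architecture matches the paper's: M\"obius inversion to peel off the primitivity, homogeneity of $\rho_\infty$ to pass from a sum to a scale-invariant integral, and a polar-type change of variables together with Lemma~\ref{tinf} to identify the constant. The identification $C=2\tau_\infty$ and the bookkeeping at the end are correct.

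The genuine gap is in the sum-to-integral comparison, which you flag as ``the main technical obstacle'' but do not actually resolve. You invoke ``a standard Riemann-sum estimate'' to claim a power-saving error $O(Y^{-\kappa})$ per dyadic shell, with the near-axis lattice points handled separately via the uniform bound $\rho_\infty(\y)\ll|\y|^{-2}$. The near-axis separation is fine, but it is not the issue: for the bulk of the shell one must compare a lattice sum of $f=\rho_\infty/|\y|^2$ with the corresponding integral, and to get any power saving one needs a quantitative modulus of continuity (effectively a gradient bound) for $\rho_\infty$ away from the coordinate hyperplanes. Mere continuity on the unit sphere, which is all that the formula $\rho_\infty(\y)=V(\y)/\mathsf{d}(\y)$ from \eqref{eq:2constants} immediately gives you, yields only $o(1)$ per shell, and since your argument sums over $\asymp\log Y$ dyadic shells this leaves you with $o(\log Y)$ total rather than the required $O(1)$. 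The crucial missing ingredient is the estimate
\[
\nabla\rho_\infty(\u)\ll(\min_i|u_i|)^{-1/3}|\Delta(\u)|^{-2/3},
\]
which the paper derives from the oscillatory-integral representation \eqref{eq:def-rho} by differentiating under the integral sign (this is the content of Lemma~\ref{sumint1}). With that in hand the paper compares the sum over $\y\in\ZZ^4\cap T_0$ directly with the full integral $J_1(B;k)$ in a single step (Lemma~\ref{M1E2}), avoiding the dyadic decomposition altogether, and the resulting error is $O(1)$ because the sum $\sum (\min_i|y_i|)^{-1/3}|\Delta(\y)|^{-2/3}|\y|^{-2}$ over $\ZZ^4$ converges. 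You should supply a bound of this kind (or an equivalent H\"older estimate for $\rho_\infty$ restricted to the sup-norm sphere) before the dyadic Riemann-sum claim can be justified.
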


We begin by using the M\"{o}bius function to detect the primitivity
condition, which shows that
\[M_1(B)=\sum_{k\leq B^{1/4}}\frac{\mu(k)}{k^2}
\sum_{\y\in \ZZ^4\cap T_0}\frac{\rho_\infty(k\y)}{|\y|^2},\]
  where
  \beql{Y0D}
  T_0=T_0(k)=   \{\y\in\RR^4:1\le|\y|\le B^{1/4}/k\}.
  \eeq
One sees from the definition (\ref{eq:def-rho}) that
$\rho_{\infty}(k\y)=k^{-2}\rho_{\infty}(\y)$, whence
\beql{M1BF}
M_1(B)=\sum_{k\leq B^{1/4}}\frac{\mu(k)}{k^4}
\sum_{\y\in \ZZ^4\cap T_0}\frac{\rho_\infty(\y)}{|\y|^2}.
\eeq

We now wish to replace the sum over $\y$ by an integral. The argument
will make repeated use of the bound
\beql{SIB1}
\rho_\infty(\y)\ll|\y|^{-2},
\eeq
which is immediate from (\ref{eq:2constants}). 
We start with the following estimate.

\begin{lemma}\label{sumint1}
  If $\min_i|y_i|\ge 2$  then
\[\frac{\rho_\infty(\y)}{|\y|^2}=
\int_{[0,1]^4}\frac{\rho_\infty(\y+\t)}{|\y+\t|^2}\d\t+
O\big((\min_i|y_i|)^{-1/3}|\Delta(\y)|^{-2/3}|\y|^{-2}\big).\]
\end{lemma}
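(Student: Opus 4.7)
My plan is to apply the mean value theorem to $f(\y):=\rho_\infty(\y)/|\y|^2$. I first split
\[f(\y+\t)-f(\y)=\frac{\rho_\infty(\y+\t)-\rho_\infty(\y)}{|\y+\t|^2}+\rho_\infty(\y)\Bigl(\frac{1}{|\y+\t|^2}-\frac{1}{|\y|^2}\Bigr).\]
Since $\min_i|y_i|\geq 2$ and $\t\in[0,1]^4$, each $|y_i+t_i|$ is comparable to $|y_i|$, so $|\y+\t|\asymp|\y|$ and $|\Delta(\y+\t)|\asymp|\Delta(\y)|$ throughout the region of integration. The bound $|\rho_\infty(\y)|\ll|\y|^{-2}$, which is immediate from \eqref{eq:2constants}, combined with the Lipschitz estimate $\bigl||\y+\t|^{-2}-|\y|^{-2}\bigr|\ll|\y|^{-3}$, shows that the second term contributes $O(|\y|^{-5})$. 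Writing $m:=\min_i|y_i|$ and $\Delta:=\Delta(\y)$, the inequalities $m\le|\y|$ and $|\Delta|\le|\y|^4$ give $m^{1/3}|\Delta|^{2/3}\le|\y|^3$, so $|\y|^{-5}\le m^{-1/3}|\Delta|^{-2/3}|\y|^{-2}$, placing this piece within the claimed error.

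The task thus reduces to bounding the first term by establishing $|\nabla\rho_\infty(\y')|\ll m^{-1/3}|\Delta|^{-2/3}$ uniformly for $\y'\in\y+[0,1]^4$; by the comparability above it suffices to prove the bound at $\y$ itself. Starting from $\rho_\infty(\y)=\int_\RR\prod_{i=1}^4 g_i(\theta,y_i)\,\d\theta$ with $g_i(\theta,y_i)=\int_{-1}^1 e(-\theta x_iy_i^2)\,\d x_i$, differentiating under the integral sign and integrating by parts in $x_j$ yields the identity $\partial_{y_j}g_j=(4\cos(2\pi\theta y_j^2)-2g_j)/y_j$, whence
\[\partial_{y_j}\rho_\infty(\y)=-\frac{2}{y_j}\rho_\infty(\y)+\frac{4}{y_j}H_j(\y),\qquad H_j(\y):=\int_\RR\cos(2\pi\theta y_j^2)\prod_{i\neq j}g_i(\theta,y_i)\,\d\theta.\]
The first summand is $\ll(|y_j||\y|^2)^{-1}$. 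For $H_j$, the direct bound $\int\prod_{i\neq j}|g_i|\,\d\theta$ loses a spurious logarithm, so I instead mimic the delta-function argument behind \eqref{R1}: insert the regulator $(\sin(\pi\delta\theta)/\pi\delta\theta)^2$, expand $\cos(2\pi\theta y_j^2)=\tfrac12(e(\theta y_j^2)+e(-\theta y_j^2))$ and each $g_i$ for $i\neq j$ as an integral over $x_i\in[-1,1]$, apply Fubini (legitimate because the regulator is $L^1$), and use \eqref{KFT} on passing to the limit $\delta\downarrow 0$. This identifies
\[H_j(\y)=\frac{A_+^{(j)}(\y)+A_-^{(j)}(\y)}{2\bigl(\sum_{i\neq j}y_i^4\bigr)^{1/2}},\]
where $A_\pm^{(j)}(\y)$ is the Euclidean $2$-area of the slice $\{(x_i)_{i\neq j}\in[-1,1]^3:\sum_{i\neq j}x_iy_i^2=\pm y_j^2\}$; each slice lies in the unit cube, so $A_\pm^{(j)}=O(1)$ and consequently $|H_j(\y)|\ll(\sum_{i\neq j}y_i^4)^{-1/2}$.

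The main technical obstacle is precisely this delta-function derivation, since the cancellation it captures is essential to remove the logarithmic loss. With it in hand,
\[|\partial_{y_j}\rho_\infty(\y)|\ll\frac{1}{|y_j||\y|^2}+\frac{1}{|y_j|\bigl(\sum_{i\neq j}y_i^4\bigr)^{1/2}},\]
and a short case analysis finishes the proof. Relabelling so that $|y_1|\le|y_2|\le|y_3|\le|y_4|$, so that $m=|y_1|$ and $|\y|=|y_4|$, one has $m^{1/3}|\Delta|^{2/3}=|y_1||y_2|^{2/3}|y_3|^{2/3}|y_4|^{2/3}$ and $\bigl(\sum_{i\neq j}y_i^4\bigr)^{1/2}\ge(\max_{i\neq j}|y_i|)^2$; feeding these into the display and separating the cases $j\in\{1,2,3,4\}$ (only $j=4$ is mildly delicate, as then $\max_{i\neq j}|y_i|$ drops to $|y_3|$) verifies $|\partial_{y_j}\rho_\infty(\y)|\ll m^{-1/3}|\Delta|^{-2/3}$ for every $j$, completing the proof.
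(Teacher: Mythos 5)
Your overall decomposition, the treatment of the term $\rho_\infty(\y)\{|\y+\t|^{-2}-|\y|^{-2}\}$, and the reduction (via the mean value theorem and the comparability of $\y$ and $\y+\t$) to the gradient bound $|\nabla\rho_\infty(\y)|\ll(\min_i|y_i|)^{-1/3}|\Delta(\y)|^{-2/3}$ all coincide with the paper's argument. Where you diverge is in the proof of that gradient bound. The paper obtains it directly: differentiating under the $\theta$-integral and integrating by parts in the inner $x_j$-variable gives the uniform estimate $|\partial_{y_j}I(-\theta y_j^2)|\ll|y_j|^{-1}$ for the one-variable factor, and the remaining product of factors obeys $\prod_{i\neq j}\min\{1,|\theta y_i^2|^{-1}\}\le\min\{1,|\theta|^{-3}\prod_{i\neq j}|y_i|^{-2}\}$, whose $\theta$-integral is $\ll\prod_{i\neq j}|y_i|^{-2/3}$ with \emph{no} logarithm. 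The log you feared is killed because the product of the individual $\min$'s is dominated by the single $\min$ of the products, and $|\theta|^{-3}$ is integrable at infinity; so you engineered a workaround for a problem the paper's estimate does not have. That said, the workaround is valid: the identity $\partial_{y_j}g_j=(4\cos(2\pi\theta y_j^2)-2g_j)/y_j$ followed by the delta-function regularization identifying $H_j$ as a ratio of slice areas is exactly parallel to the paper's own derivation of \eqref{eq:2constants}, so the Fubini and limit steps you flag as the main obstacle do go through; and the resulting bound $|H_j|\ll\bigl(\sum_{i\neq j}y_i^4\bigr)^{-1/2}$ is in fact slightly sharper than the paper's $\prod_{i\neq j}|y_i|^{-2/3}$, though the extra strength is not needed. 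Your closing case analysis on $j$ is correct. In short: a correct proof, but considerably more elaborate than necessary for the stated estimate.
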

\begin{proof}
  We begin by showing that
  \beql{ccc1}
  \nabla\rho_\infty(\u)\ll
  (\min_i|u_i|)^{-1/3}|\Delta(\u)|^{-2/3}.
  \eeq
Without loss of generality we may just examine the partial derivative with
respect to $y_1$. Our definition (\ref{eq:def-rho}) shows that
\[\rho_\infty(\u)=\int_{-\infty}^{\infty}\prod_{i=1}^4
I(-\theta u_i^2) \d \theta,\]
where we write temporarily
\[I(\psi)=\int_{-1}^1 e(\psi x) \d x.\]
Then $I(\psi)\ll\min\{1,|\psi|^{-1}\}$ and
\[\frac{\partial }{\partial u_1}I(-\theta u_1^2)=-4\pi i\theta u_1
\int_{-1}^1 xe(-\theta xu_1^2)\d x\ll |u_1|^{-1}.\]
Thus
\begin{align*}
\frac{\partial }{\partial u_1} 
\rho_\infty(\u) 
&\ll  |u_1|^{-1}
\int_{-\infty}^\infty \min\{1,|\theta|^{-3} |u_2u_3u_4|^{-2}\}
\d \theta\\
&\ll  |u_1|^{-1}|u_2u_3u_4|^{-2/3}\\
&\ll  (\min_i|u_i|)^{-1/3}|\Delta(\u)|^{-2/3},
\end{align*}
as required.

We now use the decomposition
\[\frac{\rho_\infty(\y+\t)}{|\y+\t|^2}-
\frac{\rho_\infty(\y)}{|\y|^2}=
\frac{\rho_\infty(\y+\t)-\rho_{\infty}(\y)}{|\y+\t|^2}+
\rho_\infty(\y)\{|\y+\t|^{-2}-|\y|^{-2}\}.\]
If $|\t|\le 1$ and $\min_i|y_i|\ge 2$ then 
\[|y_i+t_i|\ge|y_i|-|t_i|\ge\tfrac12 |y_i|,\] 
so that $|\y+\t|^{-2}\ll|\y|^{-2}$.  Moreover the Mean Value Theorem
shows that
\[|\rho_\infty(\y+\t)-\rho_{\infty}(\y)|\le
\sup_{0\le\xi\le 1}\left|\frac{\partial}{\partial\xi}
\rho_\infty(\y+\xi\t)\right|.\]
It then follows from (\ref{ccc1}) that
\[\frac{\rho_\infty(\y+\t)-\rho_{\infty}(\y)}{|\y+\t|^2}\ll
(\min_i|y_i|)^{-1/3}|\Delta(\y)|^{-2/3}|\y|^{-2}.\]
We also have
\[|\y+\t|^{-2}-|\y|^{-2}=
|\y+\t|^{-2}|\y|^{-2}\{|\y+\t|+|\y|\}\{|\y|-|\y+\t|\}.\]
Assuming as above that $|\t|\le 1$ and $|\y|\ge 2$ we see that
$|\y|\ll|\y+\t|\ll|\y|$ and $|\y+\t|-|\y|\ll 1$, so that
\[\rho_\infty(\y)\{|\y+\t|^{-2}-|\y|^{-2}\}\ll
|\y|^{-3}\rho_\infty(\y)\ll
(\min_i|y_i|)^{-1/3}|\Delta(\y)|^{-2/3}|\y|^{-2},\]
by (\ref{SIB1}).  We therefore have
\beql{extra}
\frac{\rho_\infty(\y+\t)}{|\y+\t|^2}=
\frac{\rho_\infty(\y)}{|\y|^2}+
O\left((\min_i|y_i|)^{-1/3}|\Delta(\y)|^{-2/3}|\y|^{-2}\right),
\eeq
and the lemma follows.
\end{proof}

Our next result converts the summation over $\y$ in (\ref{M1BF})
into an integral.
 \begin{lemma}\label{M1E2}
  We have
\[  
\sum_{\y\in \ZZ^4\cap T_0}
\frac{\rho_\infty(\y)}{|\y|^2}=J_1(B;k)+O(1),\]
where
\[J_1(B;k)=\int_{T_0(k)}\frac{\rho_\infty(\y)}{|\y|^2}\d\y.\]
\end{lemma}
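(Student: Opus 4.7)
The plan is to use Lemma \ref{sumint1} as a pointwise comparison between each summand and its integral over a unit cube, handling separately those $\y$ that are too close to the coordinate hyperplanes. First I would split the sum according to whether $\min_i|y_i|\ge 2$. For the complementary vectors with some $|y_i|\le 1$, I would use only the crude estimate $\rho_\infty(\y)/|\y|^2\ll|\y|^{-4}$ coming from \eqref{SIB1}: fix the ``small'' coordinates (there are finitely many choices each), and sum the remaining $3$ or fewer coordinates against $\max_j|y_j|^{-4}$ on $|\y|\ge 1$, which yields a convergent contribution of size $O(1)$.

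For the remaining $\y$ with $\min_i|y_i|\ge 2$ and $1\le|\y|\le B^{1/4}/k$, I would apply Lemma \ref{sumint1} to replace $\rho_\infty(\y)/|\y|^2$ by $\int_{[0,1]^4}\rho_\infty(\y+\t)/|\y+\t|^2\,d\t$ plus the stated error. The total error is bounded, using permutation and sign-change symmetry of the coordinates, by a constant times
\[\sum_{2\le y_1\le y_2\le y_3\le y_4}y_1^{-1}y_2^{-2/3}y_3^{-2/3}y_4^{-8/3},\]
and direct iterated integration shows this is $O(1)$: integrating in $y_4$ yields $y_3^{-5/3}$, which combined with $y_3^{-2/3}$ and integrated gives $y_2^{-4/3}$; combining with $y_2^{-2/3}$ and integrating leaves $\int_2^\infty y_1^{-2}\,dy_1<\infty$.

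The surviving main terms assemble into $\int_{R}\rho_\infty(\u)/|\u|^2\,d\u$, where $R=\bigcup_{\y}(\y+[0,1]^4)$ is the union of unit boxes over the admissible $\y$. To reach $J_1(B;k)$ one must bound the symmetric difference $R\triangle T_0(k)$, which lies in the union of two $O(1)$-thick boundary shells (near $|\u|=1$ and near $|\u|=B^{1/4}/k$) together with the four coordinate tubes $\{|u_i|\le 2\}$. Each piece contributes $O(1)$ against the kernel $|\u|^{-4}$; for a coordinate tube one has
\[\int_{|u_1|\le 2,\;1\le|\u|\le B^{1/4}/k}|\u|^{-4}\,d\u\ll 1\]
by passing to spherical coordinates in $(u_2,u_3,u_4)$. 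The main obstacle is the bookkeeping of the symmetric difference: the tube contribution, where a single coordinate of $\u$ is small but $|\u|$ can still be large, requires the $|\u|^{-4}$ decay to dominate the volume growth in the remaining three coordinates.
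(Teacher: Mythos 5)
Your proposal is correct and follows essentially the same route as the paper: decompose the sum according to whether $\min_i|y_i|\ge 2$, handle the exceptional vectors with the crude bound $\rho_\infty(\y)/|\y|^2\ll|\y|^{-4}$, apply Lemma~\ref{sumint1} to the rest, and absorb both the pointwise error and the discrepancy between the union of unit boxes and $T_0(k)$ (boundary shells plus coordinate tubes) into $O(1)$. Your explicit iterated summation of the Lemma~\ref{sumint1} error terms is a correct verification of the paper's brief ``we readily find that $E_i\ll1$'', and your tube estimate $\int_{|u_1|\le 2,\,|\u|\ge 1}|\u|^{-4}\,\d\u\ll1$ is the right way to handle the coordinate-tube region.
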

\begin{proof}
We define
\[X=\{\y\in \ZZ^4: |\y|\leq B^{1/4}/k-2,\,\min|y_i|\ge 2\}\]
and
\[Y=\bigcup_{\y\in X}(\y+(0,1]^4).\] 
The reader should note that these could be empty if $k$ is
large enough.
The sets $\y+(0,1]^4$ 
forming $Y$ are disjoint, and $Y$ 
  lies inside the set $T_0$ defined in (\ref{Y0D}).  
Moreover $T_0\setminus Y$ is a subset of
$T_1\cup T_2$, where
 \[T_1 = \{\t\in T_0: B^{1/4}/k-3\leq |\t|\leq B^{1/4}/k\},\]
and
\[T_2=\{\t\in T_0: \min|t_i|\leq 3\}.\]
It then follows from Lemma \ref{sumint1} and (\ref{SIB1}) that
$$
\sum_{\y\in \ZZ^4\cap T_0} \frac{\rho_\infty(\y)}{|\y|^2}=
J_1(B;k)+O\left(\sum_{i=0}^2 E_i\right), 
$$
where
\[E_0=\sum_{\substack{\y\in \ZZ^4\cap T_0\\ \min |y_i|\geq 2}}  
(\min_i|y_i|)^{-1/3}|\Delta(\y)|^{-2/3}|\y|^{-2},\]
and
\[E_i=\sum_{\y\in \ZZ^4\cap T_i} |\y|^{-4}+\int_{T_i}|\y|^{-4}\d\y\]
for $i=1,2$. We readily find that $E_i\ll 1$ for $i=0,1,2$, and the
lemma follows.
\end{proof}

In order to complete our argument we will need the following
evaluation of $J_1(B;k)$.
\begin{lemma}\label{JE}
  If $k\le B^{1/4}$ we have
  \[J_1(B;k)=2\tau_{\infty}\log (B^{1/4}/k),\]
  with $\tau_{\infty}$ given by \eqref{eq:tau_infty}.
\end{lemma}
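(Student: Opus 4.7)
The plan is to exploit the homogeneity $\rho_\infty(\lambda \y) = \lambda^{-2}\rho_\infty(\y)$, valid for $\lambda > 0$, together with a polar decomposition of Lebesgue measure adapted to the sup-norm. The homogeneity follows immediately from the definition \eqref{eq:def-rho} by the substitution $\theta \mapsto \theta/\lambda^2$, since $F(\x; \lambda \y) = \lambda^2 F(\x; \y)$. In particular, the integrand $\rho_\infty(\y)/|\y|^2$ is homogeneous of degree $-4$ in $\y$.

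Every nonzero $\y \in \RR^4$ admits a unique representation $\y = s\u$ with $s = |\y| > 0$ and $\u \in \partial [-1,1]^4$. A face-by-face Jacobian computation shows that $\d\y = s^3 \, \d s \, \d\sigma(\u)$, where $\d\sigma$ denotes the three-dimensional Lebesgue measure on the cube's boundary faces (lower-dimensional faces have measure zero and present no difficulty). Applying this decomposition to the integrand, which equals $s^{-4} \rho_\infty(\u)$, one finds
\[
J_1(B;k) \;=\; \int_1^{B^{1/4}/k} \frac{\d s}{s} \cdot I \;=\; \log\!\left(\frac{B^{1/4}}{k}\right)\cdot I,
\qquad I := \int_{\partial [-1,1]^4} \rho_\infty(\u) \, \d\sigma(\u).
\]

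To finish, I would show that $I = 2 \tau_\infty$. Applying the identical polar decomposition to the absolutely convergent integral $\tau_\infty = \int_{[-1,1]^4} \rho_\infty(\y) \, \d\y$ provided by Lemma \ref{tinf}, and again invoking the homogeneity, one obtains
\[
\tau_\infty \;=\; \int_0^1 s^3 \, \d s \int_{\partial [-1,1]^4} \rho_\infty(s \u) \, \d \sigma(\u) \;=\; \left(\int_0^1 s \, \d s\right) I \;=\; \frac{I}{2}.
\]
The only point requiring real care is the Fubini-type interchange in the last display: it needs absolute integrability of $|\rho_\infty(s\u)|$ with respect to $s^3\,\d s\,\d \sigma(\u)$ on $(0,1]\times \partial[-1,1]^4$, which by the polar decomposition is equivalent to $\int_{[-1,1]^4}|\rho_\infty(\y)|\,\d\y < \infty$. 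This last fact is exactly what Lemma \ref{tinf} furnishes (via the $L^1$ bound $\prod_i\min\{1,|\theta|^{-1}|y_i|^{-2}\}$ on the inner integral), and so the argument is complete.
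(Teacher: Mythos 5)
Your argument is correct and is essentially the paper's proof: both exploit the degree $-2$ homogeneity $\rho_\infty(\lambda\y)=\lambda^{-2}\rho_\infty(\y)$ together with the sup-norm polar decomposition $\d\y = s^3\,\d s\,\d\sigma(\u)$, the paper merely writing the boundary integral $I$ explicitly as $8\int_{[-1,1]^3}\rho_\infty(t_1,t_2,t_3,1)\,\d\t$ by parametrizing the eight cube faces and invoking the symmetry and evenness of $\rho_\infty$ in its arguments. The identity $I=2\tau_\infty$ is obtained in the paper by the identical parallel computation applied to $\int_{[-1,1]^4}\rho_\infty(\y)\,\d\y$, exactly as you propose.
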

\begin{proof}
We divide $T_0(k)$ into four (overlapping) pieces according to the
index $i$ for which $|\y|=|y_i|$.  We observe from (\ref{eq:def-rho})
that $\rho_{\infty}(\y)$ is unchanged when we permute the coordinates,
and that $\rho_{\infty}(\y)=|\y|^{-2}\rho_{\infty}(t_1,t_2,t_3,1)$
if $|\y|=|y_4|$ and
$t_i=y_i/|\y|$ for $i=1,2,3$.  It therefore follows that
\[J_1(B;k)=8\int_1^{B^{1/4}/k}\frac{\d y_4}{y_4}
\int_{[-1,1]^3}\rho_\infty(t_1,t_2,t_3,1)\d\t.\]
In a precisely similar way Lemma \ref{tinf} yields
\[\tau_{\infty}=\int_{[-1,1]^4}\rho_{\infty}(\y)\d\y=
8\int_0^{1}y_4\d y_4
\int_{[-1,1]^3}\rho_\infty(t_1,t_2,t_3,1)\d\t,\]
and the lemma follows.
\end{proof}

We can now complete the proof of Lemma~\ref{M1B}. Combining
(\ref{M1BF}) with Lemma~\ref{M1E2}
we obtain
\[M_1(B)=\sum_{k\leq B^{1/4}}\frac{\mu(k)}{k^4}J_1(B;k)+O(1).\]
We have
\[\sum_{k\leq B^{1/4}}\frac{\mu(k)}{k^4}\log k\ll 1\]
and
\[\sum_{k\leq B^{1/4}}\frac{\mu(k)}{k^4}=\zeta(4)^{-1}+O(B^{-3/4}),\] 
so that Lemma \ref{JE} yields
\[M_1(B)=\frac{2\tau_{\infty}}{\zeta(4)}\log B^{1/4}+O(1),\]
and the required estimate follows.

\subsection{Analysis of $M_2(B)$}\label{s:analyseM2}

We remind the reader that
\[M_2(B)=  
  \sum_{\substack{\x\in \ZZp^4\\ B^{2\eta}\leq |\x|\leq B^{1/6}\\
  \Delta(\x)\not=\square}}
  \frac{\sigma_\infty(\x)\mathfrak{S}(\x)}{|\x|^3}\]
  where
\[  \sigma_\infty(\x)=\int_{-\infty}^{\infty}\int_{[-1,1]^4}
e(-\theta F(\x;\y))\d\y\d\theta\]
and
\[\mathfrak{S}(\x)=\sum_{q=1}^\infty q^{-4}S_q, \quad \text{with }
S_q=S_q(\x)= \sumstar_{\substack{a\bmod{q}}}~
\sum_{\b\bmod{q}}  e_q \left(aF(\ma{x};\ma{b})\right).\]
The goal of the present section is the following result.
\begin{lemma}\label{M2P}
  We have
  \[M_2(B)= 
\frac{1}{2}\cdot \frac{\zeta(2)}{\zeta(3)\zeta(4)} \cdot \tau_\infty \log B
+ O\left(\eta \log B\right) +O_\eta(1),\]
where $\tau_\infty$ is given by \eqref{eq:tau_infty}.
\end{lemma}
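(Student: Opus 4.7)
The argument parallels \S\ref{s:analyseM1}, with the substantial additional ingredient of an arithmetic average of $\mathfrak{S}(\x)$. First I would remove the primitivity constraint via M\"obius inversion. Two scaling identities are crucial: $\sigma_\infty(k\x)=k^{-1}\sigma_\infty(\x)$, which follows from \eqref{siD} by the change of variable $\theta\mapsto\theta/k$; and $\mathfrak{S}(k\x)=k\mathfrak{S}(\x)$, which emerges from an elementary inspection of the local densities (the congruence $kF(\x;\y)\equiv 0\pmod{p^r}$ is equivalent to $F(\x;\y)\equiv 0\pmod{p^{r-v_p(k)}}$, contributing an extra factor $p^{v_p(k)}$ at each prime). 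Since $|k\x|^3=k^3|\x|^3$ and $\Delta(k\x)=k^4\Delta(\x)$ preserves the squareness condition, these combine to give
\[\frac{\sigma_\infty(k\x)\mathfrak{S}(k\x)}{|k\x|^3}=\frac{1}{k^3}\cdot\frac{\sigma_\infty(\x)\mathfrak{S}(\x)}{|\x|^3},\]
so that M\"obius summation produces $\sum_k\mu(k)/k^3=1/\zeta(3)$ with an $O_\eta(1)$ truncation error. The contribution from $\Delta(\x)$ a non-zero square is parametrised by $\Delta(\x)=m^2$ with $m\ll|\x|^2$, each $m$ giving $O_\varepsilon(|\x|^\varepsilon)$ vectors, and is bounded by $O_\eta(1)$ using Lemma \ref{SIB}.

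Next, in each dyadic shell $X<|\x|\leq 2X$, I would replace the arithmetic sum by an integral, with the key step being the averaging of $\mathfrak{S}$. Expanding $\mathfrak{S}(\x)=\sum_{q\geq 1}q^{-4}S_q(\0;\x)$, truncating at $q\leq Q=X^\delta$ for some small $\delta>0$, and partitioning $\x$ by residue class mod $q$ (since $S_q(\0;\x)$ depends on $\x$ only modulo $q$), one reduces to local averages. A direct Gauss-sum computation, separating $S_{p^r}(\0;\x)=\sumstar_a\prod_i G(ax_i,0;p^r)$ according to $v_p(a)$, shows for odd $p$ that
\[p^{-4r}\sum_{\x\bmod p^r}S_{p^r}(\0;\x)=\phi(p^r)\,p^{4\lfloor r/2\rfloor},\]
and the weighted sum $\sum_{r\geq 0}p^{-4r}\phi(p^r)p^{4\lfloor r/2\rfloor}$ collapses to $1+p^{-2}$ after splitting by the parity of $r$. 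By multiplicativity of $S_q$ combined with CRT, one obtains the Euler product
\[\prod_p\bigl(1+p^{-2}\bigr)=\frac{\zeta(2)}{\zeta(4)}\]
as the arithmetic average of $\mathfrak{S}(\x)$. The tail $q>Q$ is controlled by a Burgess-type bound extending Lemma \ref{lem:average2}, primes dividing $2\Dbad(\x)$ are managed via Lemmas \ref{lem:Sq-basic}, \ref{lem:good_primes} and \ref{lem:zero-c}, and the discretisation error is bounded using gradient bounds on $\sigma_\infty(\x)/|\x|^3$ analogous to \eqref{ccc1} and the proof of Lemma \ref{sumint1}.

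Finally I would evaluate the geometric integral. The scaling $\sigma_\infty(t\x)=t^{-1}\sigma_\infty(\x)$ combined with Lemma \ref{tinf} yields $\int_{|\x|\leq R}\sigma_\infty(\x)\,\d\x=R^3\tau_\infty$ (substitute $\x=R\x'$), so differentiation in $R$ shows that the sup-norm surface integral of $\sigma_\infty$ at radius $R$ equals $3R^2\tau_\infty$, giving
\[\int_{R_1\leq|\x|\leq R_2}\frac{\sigma_\infty(\x)}{|\x|^3}\,\d\x=\int_{R_1}^{R_2}\frac{3\tau_\infty}{R}\,\d R=3\tau_\infty\log(R_2/R_1).\]
Taking $(R_1,R_2)=(B^{2\eta}/d,\,B^{1/6}/d)$ in each M\"obius term and summing over $d\leq B^{2\eta}$, the log-range is $(\tfrac{1}{6}-2\eta)\log B$, so combining the constants $1/\zeta(3)$ and $\zeta(2)/\zeta(4)$ produces the claimed main term $\tfrac12\zeta(2)\tau_\infty(\zeta(3)\zeta(4))^{-1}\log B$ with an error $O(\eta\log B)$; the terms with $d>B^{2\eta}$ contribute only $O_\eta(1)$. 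The principal obstacle will be the quantitative arithmetic averaging of $\mathfrak{S}$: one must control the tail of the $q$-expansion uniformly in $\x$ across the full range $B^{2\eta}\leq|\x|\leq B^{1/6}$, and show that vectors with anomalously large $\Dbad(\x)$ contribute negligibly, so that all errors collapse into $O(\eta\log B)+O_\eta(1)$.
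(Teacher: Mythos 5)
Your proposal is essentially correct in outline but takes a genuinely different route from the paper, and it contains one concrete ordering error worth flagging.

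\textbf{What you do differently.} You apply M\"obius inversion for $\x$-primitivity \emph{before} touching the singular series, exploiting the scaling $\mathfrak{S}(k\x)=k\mathfrak{S}(\x)$ alongside $\sigma_\infty(k\x)=k^{-1}\sigma_\infty(\x)$, so that $\sum_k\mu(k)/k^3=\zeta(3)^{-1}$ appears directly. You then average $S_q$ over \emph{all} residue classes $\a\bmod q$, obtaining the Euler product $\prod_p(1+p^{-2})=\zeta(2)/\zeta(4)$; indeed $p^{-4r}\sum_{\x\bmod p^r}S_{p^r}(\x)=\phi(p^r)p^{4\lfloor r/2\rfloor}$ and $\sum_r p^{-4r}\phi(p^r)p^{4\lfloor r/2\rfloor}=1+p^{-2}$ are both correct. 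The paper instead truncates $\mathfrak{S}(\x)$ \emph{first} (Lemma \ref{61}, via Burgess and the large sieve for real characters), opens it, and only then M\"obius-inverts, with $\x$ still primitive; this gives $\sum\mu(k)/k^4\cdot\prod_{p\mid q}(1-p^{-4})^{-1}$ and a $\zeta(2)/\zeta(3)$ Euler product from the average of $S_q$ over primitive $\a$ (Lemma \ref{pJ}). The product of constants agrees, and your use of the $\mathfrak{S}$-scaling is a clean observation the paper does not exploit. Your evaluation of the geometric integral via $\int_{|\x|\le R}\sigma_\infty(\x)\d\x=R^3\tau_\infty$ matches Lemma \ref{JE2}.

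\textbf{The gap.} You propose to remove the constraint $\Delta(\x)\neq\square$ immediately after M\"obius and \emph{before} truncating $\mathfrak{S}$, estimating the square contribution via Lemma \ref{SIB}. But for $\Delta(\x)$ a nonzero square the character $\chi_F$ is principal, the Burgess cancellation fails, and $\mathfrak{S}(\x)$ diverges; the terms you propose to bound are not even well-defined. The paper first truncates $\mathfrak{S}$ to $\mathfrak{S}(\x;B^{\eta/8})$, a finite sum bounded trivially by $B^{\eta/4}$, and only then drops the square constraint. Your order must be swapped. Relatedly, after M\"obius-inverting first, the shifted vectors $\x'$ satisfy only $|\x'|\ge B^{2\eta}/k$; for $k$ near $B^{2\eta}$ this lower bound degenerates to $1$, which undercuts the Burgess/large-sieve truncation of $\mathfrak{S}(\x')$ (the argument in Lemma \ref{61} uses $|\Delta(\x)|\ge B^{2\eta}$ crucially). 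One can repair this by cutting the $k$-sum at, say, $k\le B^{\eta}$ so that $|\x'|\ge B^{\eta}$, with the tail absorbed into $O_\eta(1)$, but this needs to be stated and the truncation estimate re-run with the reduced lower bound. You acknowledge this as the ``principal obstacle,'' and it is; as written the outline does not yet close it.
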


In order to estimate
$M_2(B)$ our plan will begin by showing that the 
singular series $\mathfrak{S}(\x)$ can be  replaced by a truncated sum
$$
\mathfrak{S}(\x;R)=\sum_{q\leq R}  q^{-4} S_q,
$$
for suitable $R$. Using Heath-Brown's large sieve for real
characters \cite{HB95},  
we shall ultimately succeed in showing that $R$ can be taken an
arbitrarily small power of $B$, with acceptable error.
The  constraint $\Delta(\x)\neq \square$ can now be replaced by
$\Delta(\x)\not=0$, again with an acceptable error. 
We then interchange the $q$ and $\x$ summations
in $M_2(B)$ and approximate the $\x$-sum
by a $4$-fold integral.
Lastly, the remaining $q$-sum will be extended to infinity to
get our final asymptotic formula for $M_2(B)$. Throughout this
analysis we will use repeatedly the estimate
\[\sigma_{\infty}(\x)\ll|\Delta(\x)|^{-1/4}\] 
given by Lemma \ref{SIB}.

In order to carry out this plan we first make a crude first analysis of 
the tail of the singular series 
$\mathfrak{S}(\x)$.  
Since  $\Delta(\x)\neq \square$, 
it follows from  Lemma \ref{lem:average2} and partial summation that 
$$
\sum_{q> B}q^{-4}S_q\ll_\ve    
|\Delta(\x)| ^{3/16} 
 \Dbad(\x)^{3/8} B^{-1/2+\ve},
 $$
 for any $\ve>0$.
Since $|\x|^3\geq |\Delta(\x)|^{3/4}$, 
an application of Lemma \ref{SIB} now shows that 
the tail of the singular series contributes
\[\ll_\ve B^{-1/2+\ve} \sum_{\substack{\x\in \ZZp^4\\  |\x|\leq B^{1/6}}}
\frac{\Dbad(\x)^{3/8}}{|\Delta(\x)|^{13/16}}\]
to $M_2(B)$. Writing $s=\Dbad(\x)$ and $n=|\Delta(\x)|$, we see that this is
\begin{align*}
\ll_\ve B^{-1/2+\ve} 
\sum_{\substack{s\leq B^{2/3}\\ \text{$s$ square-full}}} s^{3/8}
\sum_{\substack{ n\leq B^{2/3}\\ s\mid n}} \frac{\tau_4(n)}{n^{13/16}}
&\ll_\ve B^{-3/8+2\ve} 
\sum_{\substack{\text{$s$ square-full}}}\frac{1}{s^{5/8}}.
\end{align*}
Taking $\ve=\frac{1}{8}$ and noting that 
the $s$-sum is convergent, this shows that 
\[M_2(B)= \sum_{\substack{\x\in \ZZp^4\\ B^{2\eta}\leq |\x|\leq
    B^{1/6}\\ \Delta(\x)\not=\square}} 
\frac{\sigma_\infty(\x) \mathfrak{S}(\x;B)}{|\x|^3}
+ O\left(B^{-1/8}\right).\] 
Building on this, we now show that the singular series can be truncated
to a much smaller power of $B$, with acceptable error.

\begin{lemma}\label{61}
We have 
$$
M_2(B)= \sum_{\substack{\x\in \ZZp^4\\ B^{2\eta}\leq |\x|\leq
    B^{1/6}\\ \Delta(\x)\not=\square}} 
\frac{\sigma_\infty(\x) \mathfrak{S}(\x;B^{\eta/8})}{|\x|^3}
+ O_\eta(1).
$$
\end{lemma}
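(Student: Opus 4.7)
The plan is to set $R = B^{\eta/8}$ and show that
\[
E := \sum_{\substack{\x\in \ZZp^4\\ B^{2\eta}\leq |\x|\leq B^{1/6}\\ \Delta(\x)\not=\square}} \frac{\sigma_\infty(\x)}{|\x|^3} \sum_{R < q\leq B} q^{-4} S_q(\x) = O_\eta(1).
\]
A direct application of the pointwise tail bound one obtains from Lemma \ref{lem:average2} by partial summation, namely
\[
\sum_{q > R} q^{-4} S_q(\x) \ll_\ve |\Delta(\x)|^{3/16+\ve}\Dbad(\x)^{3/8} R^{-1/2+\ve},
\]
together with $\sigma_\infty(\x)\ll|\Delta(\x)|^{-1/4}$ from Lemma \ref{SIB}, falls short of $O_\eta(1)$ once one sums trivially over $\x$, because Burgess only provides the subconvex saving for a single quadratic character. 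The crucial saving will instead come from averaging over the family $\{\chi_{\Delta(\x)}\}_{\x\in\ZZp^4}$ through Heath-Brown's quadratic large sieve \cite{HB95}.

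First I would expand $q^{-4}S_q(\x)$ multiplicatively as in the proof of Lemma \ref{lem:average2}, splitting off the contribution of primes dividing $2\Dbad(\x)$ (which is controlled pointwise by Lemma \ref{lem:Sq-basic} combined with the square-full integer bookkeeping used in Lemma \ref{SSB}). On the coprime factor, Lemma \ref{lem:good_primes} gives $q^{-4}S_q(\x) = \chi_{\Delta(\x)}(q)\varphi(q)/q^2$ for squarefree $q$, and writing this as a convolution $\varphi(q)/q = \sum_{d\mid q}\mu(d)/d$ followed by dyadic decomposition of the inner variable reduces everything to controlling
\[
\sum_{|\x|\sim X} W(\x) T_V(\x), \qquad T_V(\x) := \sum_{\substack{v\sim V\\ (v,2\Delta(\x))=1}} \chi_{\Delta(\x)}(v),
\]
for dyadic scales $B^{2\eta}\leq X\leq B^{1/6}$ and $R\leq V\leq B$, with weights $W(\x)$ built from $\sigma_\infty(\x)/|\x|^3$ and the local bad-prime factor. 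Applying Cauchy--Schwarz in $\x$ reduces the matter to estimating $\sum_{|\x|\sim X}|T_V(\x)|^2$.

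Writing $\Delta(\x) = k(\x)\ell(\x)^2$ with $k(\x)$ squarefree and nontrivial (since $\Delta(\x)\neq\square$), the character $\chi_{\Delta(\x)}$ is the primitive Kronecker symbol of $k(\x)$. A divisor-function estimate shows that each value of $k(\x)$ is attained $O_\ve(X^\ve)$ times by $\x$ with $|\x|\sim X$, so the Heath-Brown quadratic large sieve delivers
\[
\sum_{|\x|\sim X}|T_V(\x)|^2 \ll_\ve (XV)^\ve(X^4+V)V.
\]
Combined with $\sum_{|\x|\sim X}|W(\x)|^2\ll_\ve X^{\ve-4}$ (using $\sigma_\infty(\x)\ll |\Delta(\x)|^{-1/4}$ and the convergence of the square-full sum as in the argument for \eqref{SB}), Cauchy--Schwarz yields $|\sum_\x W(\x)T_V(\x)| \ll_\ve (XV)^\ve (V^{1/2} + VX^{-2})$. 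Incorporating the remaining $1/V$ from $q^{-4}$ and summing over the $O((\log B)^2)$ dyadic pairs $(X,V)$ with $V\geq R=B^{\eta/8}$ and $X\geq B^{2\eta}$ produces $E\ll_\eta B^{-c\eta}$ for some absolute $c>0$, which is far stronger than needed.

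The main obstacle will be the regime $V \leq X^4$, in which the $X^4 V$ term of the large sieve dominates over $V^2$: here the gain is only $V^{-1/2}$, and it must be verified to absorb both the pointwise factor $\Dbad(\x)^{3/8}/|\Delta(\x)|^{1/16}$ coming from Lemma \ref{lem:average2} and the $O_\ve(B^\ve)$ loss from summing over admissible bad-prime divisors. Closing these auxiliary sums requires the same square-full bookkeeping that was used to establish $S(X)\ll 1$ in the proof of Lemma \ref{N56}, so that the power of $B^{\eta/16}$ saved by the large sieve is not eroded.
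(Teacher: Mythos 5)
Your overall strategy — split off bad primes, reduce to a real-character sum, and gain via Heath-Brown's quadratic large sieve — is the right one and is indeed what the paper does. However, there is a genuine gap in the Cauchy--Schwarz step. You assert that each squarefree part $k(\x)$ of $\Delta(\x)$ is attained by $O_\ve(X^\ve)$ vectors $\x$ with $|\x|\sim X$, and use this to pass from $\sum_{|\x|\sim X}|T_V(\x)|^2$ to the large sieve bound $(XV)^\ve(X^4+V)V$. That multiplicity claim is false: the divisor bound controls the fibre over $n=\Delta(\x)$ (there are $O_\ve(|n|^\ve)$ vectors $\x$ with a \emph{given} product), not the fibre over the squarefree part $k$. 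For example the family $\x=(2,a,a,1)$ with $1\le a\le X$ gives $\gg X$ vectors all having $k(\x)=2$, and more generally for small $k$ the multiplicity is of size up to $X^{2+\ve}$, because $\ell$ in $\Delta(\x)=k\ell^2$ can range up to $\asymp X^2$. Once this multiplicity is inserted correctly, the bound $\sum_{|\x|\sim X}|T_V(\x)|^2$ acquires (in the worst case) an extra factor $X^2$, and then your Cauchy--Schwarz output becomes $(XV)^\ve\bigl(XV^{-1/2}+X^{-1}\bigr)$ after dividing by $V$; the term $XV^{-1/2}$ with $X$ as large as $B^{1/6}$ and $V$ only as large as $B^{\eta/8}$ is not $O_\eta(1)$, so the argument does not close.

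The paper sidesteps this precisely by \emph{not} performing Cauchy--Schwarz over $\x$. Instead it first collapses the sum over $\x$ to a sum over $n=\Delta(\x)$, which genuinely has fibre size $O_\ve(B^\ve)$ by the divisor bound, with weight $|n|^{-1}$ coming from $\sigma_\infty(\x)/|\x|^3\ll|\Delta(\x)|^{-1}$. Writing $n=sm$ with $s=\Dbad(\x)$ square-full and $m$ squarefree, the remaining sum is a genuine bilinear form $\sum_m\sum_u(\tfrac{m}{u})\alpha_{u,s}\beta_m$ with bounded coefficients, to which the bilinear form of Heath-Brown's large sieve (Corollary~4 of \cite{HB95}) applies directly, yielding $MU^{1/2}+M^{1/2}U$ without any multiplicity loss. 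If you want to salvage the Cauchy--Schwarz framing, you would need to average over the distinct squarefree parts $m$ (one per value, weighted by its fibre size), not over the $X^4$ vectors $\x$; but at that point the bilinear form approach is both simpler and sharper.
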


\begin{proof}
  Since $|\x|^3\geq |\Delta(\x)|^{3/4}$ and
  $\sigma_\infty(\x)\ll |\Delta(\x)|^{-1/4}$ it will be enough to
  show that
  \[E(B)
  =  \sum_{\substack{\x\in \ZZp^4\\ B^{2\eta}\leq |\x|\leq B^{1/6}
  \\ \Delta(\x)\not=0}} \frac{1}{|\Delta(\x)|}
\left| \sum_{B^{\eta/8} <q\leq B} q^{-4}S_q\right|\ll_{\eta} 1.\]
Note that we have relaxed the condition $\Delta(\x)\not=\square$ to
require only that $\Delta(\x)$ is non-zero.  We shall write $s=\Dbad(\x)$
and $n=\Delta(\x)$, so that $ns^{-1}$ is square-free. Since 
$|\Delta(\x)|\geq |\x|\geq B^{2\eta}$,  we are only interested in
integers $n$ in the range $B^{2\eta}\leq |n|\leq B^{2/3}$.

It follows from the multiplicativity of $S_q$ that 
\[\left|\sum_{\substack{B^{\eta/8}<q\leq {B}}} q^{-4}S_{q}\right|
\leq 
\sum_{\substack{v\leq {B}\\ v\mid (2s)^\infty}}v^{-4}|S_{v}|\left|
\sum_{\substack{B^{\eta/8}/v<u\leq B/v\\ (u,2s)=1}}u^{-4}S_{u}\right|. \]
Lemma \ref{lem:Sq-basic} shows that 
\begin{align*}
  S_v&\ll v^3 \prod_{1\leq i\leq 4} (v,x_i)^{1/2}\\
  &\leq v^3(v^4,x_1\dots x_4)^{1/2}\\
  &\ll v^3(v^4,s)^{1/2}\\  
&\leq v^3\min(v^4,s)^{1/2}\\
&\leq v^{7/2}s^{3/8} 
\end{align*}
for $v\mid (2s)^\infty$.
Moreover, 
Lemmas \ref{lem:good_primes} and \ref{lem:zero-c} imply that 
\begin{align*}
S_{u}=
\left(\frac{n}{u}\right) \phi^*(u) u^3,
\end{align*}
when $(u,2s)=1$, where $\phi^*=1*h$, with $h(d)=\mu(d)/d$.
Hence
\[E(B)\ll_\ve B^\ve \hspace{-0.2cm}
\sum_{\substack{s\leq B^{2/3}\\\text{$s$ square-full}}}
\sum_{\substack{v\leq B\\ v\mid (2s)^\infty}} v^{-1/2}s^{3/8} 
\sum_{\substack{B^{2\eta}\leq |n|\leq B^{2/3}\\ s\mid n}} \frac{1}{|n|}
\left|\sum_{\substack{B^{\eta/8}/v<u\leq {B}/v\\ (u,2s)=1}} 
\left(\frac{n}{u}\right) \frac{\phi^*(u)}{u}\right|,\]
since the number of $\x$ associated to $n$ is at most
$\tau_4(n)=O_\ve(B^\ve)$.  

We now write $n=sm$ and split the ranges for $m$ and $u$ into dyadic
intervals.  This gives us values 
$M$ and $U\le U_1\le 2U$, with
$$ \max\left(1\,,\,\frac{B^{2\eta}}{s}\right)\ll M\ll B^{2/3}\;\;\;
\mbox{and}\;\;\;\frac{B^{\eta/8}}{v}\ll U\ll\frac{B}{v}
$$
such that
\[E(B)\ll_\ve \frac{B^{2\ve}}{MU}
\sum_{\substack{s\leq B^{2/3}\\\text{$s$ square-full}}}
\sum_{\substack{v\leq B\\ v\mid (2s)^\infty}}v^{-1/2}s^{-5/8} 
\sum_{M<m\le 2M}
\left|\sum_{U<u\le U_1}\left(\frac{m}{u}\right)\alpha_{u,s}\right|,\]
with 
\[\alpha_{u,s}=
\begin{cases}
U\frac{\phi^*(u)}{u}\left(\frac{4s}{u}\right) &\text{ if $u$ is odd,}\\
0 &\text{ if $u$ is even.}
\end{cases}\]
In particular $\alpha_{u,s}\ll 1$.
We now write
\[\sum_{M<m\le 2M}
\left|\sum_{U<u\le U_1}\left(\frac{m}{u}\right)\alpha_{u,s}\right|=
\sum_{M<m\le 2M}\sum_{U<u\le U_1}\left(\frac{m}{u}\right)\alpha_{u,s}\beta_m,\]
with $\beta_m=\pm 1$, and apply the large sieve for real characters in
the form given by Heath-Brown \cite[Cor.~4]{HB95}.  This shows that
\[\sum_{M<m\le 2M}\sum_{U<u\le U_1}\left(\frac{m}{u}\right)\alpha_{u,s}\beta_m
\ll_{\ve} (MU)^{\ve}\{MU^{1/2}+M^{1/2}U\},\]
whence
\[  E(B)\ll_\ve B^{4\ve} \sum_{\substack{s\leq B^{2/3}\\\text{$s$ square-full}}}
\sum_{\substack{v\leq B\\ v\mid (2s)^\infty}}v^{-1/2}s^{-5/8} 
\{U^{-1/2}+M^{-1/2}\}.\]
However
\begin{align*}
  U^{-1/2}+M^{-1/2}&\ll v^{1/2}B^{-\eta/16}+\min\{1\,,\, 
s^{1/2}B^{-\eta}\}\\
  &\ll v^{1/2}B^{-\eta/16}+1^{15/16}(s^{1/2}B^{-\eta})^{1/16}\\
  &\ll B^{-\eta/16}v^{1/2}s^{1/32}.
  \end{align*}
We therefore deduce that
\[  E(B)\ll_\ve B^{-\eta/16+4\ve}\sum_{\substack{s\leq B^{2/3}\\\text{$s$ square-full}}}
s^{-19/32} \sum_{\substack{v\leq B\\ v\mid (2s)^\infty}}1 \ll_\ve B^{-\eta/16+5\ve},\]
since there are $O_{\ve}((sB)^{\ve})$ possible values for $v$, and
the sum over square-full $s$ is convergent.
Taking $\ve=\eta/80$,
we therefore conclude the proof of the lemma. 
\end{proof}

Next we wish to show that the condition $\Delta(\x)\not=\square$
can be replaced by $\Delta(\x)\not=0$ with an acceptable error. We
trivially have
\[|\mathfrak{S}(\x;B^{\eta/8})|\le\sum_{q\le B^{\eta/8}}q\ll B^{\eta/4}.\]
Moreover Lemma \ref{SIB} shows that
$\sigma_\infty(\x)\ll|\Delta(\x)|^{-1/4}$.  We now write
$\Delta(\x)=n^2$ so that $|\x|\le n^2\le|\x|^4$. In particular we
will have $B^{\eta}\leq n\leq B^{1/3}$ and $|\x|^{-3}\le n^{-3/2}$.
Moreover each value
of $n$ corresponds to $O_{\ve}(B^{\ve})$ vectors $\x$.  Thus
\[\sum_{\substack{\x\in \ZZp^4\\ B^{2\eta}\leq |\x|\leq B^{1/6}\\
  \Delta(\x)=\square\not=0}}
\frac{\sigma_\infty(\x) \mathfrak{S}(\x;B^{\eta/8})}{|\x|^3}
\ll_{\ve} B^{\eta/4+\ve}\sum_{B^{\eta}\leq n\leq B^{1/3}}n^{-2}
\ll_{\ve} B^{-3\eta/4+\ve}.\]
This may be absorbed into the error term of Lemma \ref{61} on choosing
$\ve=3\eta/4$.

Now that we have truncated the singular series satisfactorily, we may
open up the expression for 
$\mathfrak{S}(\x;B^{\eta/8})$ and interchange the $q$-sum with
the $\x$-sum, before breaking the latter
into congruence classes modulo $q$.  This leads to the expression
$$
M_2(B)=\sum_{q\leq B^{\eta/8}}q^{-4}
\sum_{\substack{\a,\b \bmod {q}\\ (q,\a)=1}} c_q(F(\a;\b))
U(q;\a)+O_\eta(1),
$$
where $c_q(\cdot)$ is the Ramanujan sum and
\[U(q;\a)=
\sum_{\substack{\x\in \ZZp^4\\ B^{2\eta}\leq |\x|\leq B^{1/6}\\
\Delta(\x)\not=0 \\     \x\equiv\a \bmod{q}}} 
\frac{\sigma_\infty(\x) }{|\x|^3}.\]

Any $\x$ counted by $U(q;\a)$ is automatically coprime to $q$.
Using the M\"obius function to detect the residual primitivity of $\x$ 
we may now write
\[U(q;\a)=\sum_{\substack{k\leq B^{1/6}\\ (k,q)=1}} \frac{\mu(k)}{k^3}
\sum_{\substack{\x\in \ZZ^4\cap T_0\\
  \x\equiv\bar k\a \bmod{q}}}\frac{\sigma_\infty(k\x) }{|\x|^3},\]
where $\bar k$ is the multiplicative inverse of $k$ modulo $q$ and
\beql{T0D}
T_0=T_0(k)
=\{\t\in (\RR_{\not=0})^4: B^{2\eta}/k\leq |\t|\leq B^{1/6}/k\}. 
\eeq
(The reader should note that this is not the same set that is defined in
(\ref{Y0D});  we recycle our notation for this and other similar sets.)
Since $\sigma_\infty(k\x)=k^{-1}\sigma_\infty(\x)$, this
simplifies to give
$$
U(q;\a)=\sum_{\substack{k\leq B^{1/6}\\ (k,q)=1}} \frac{\mu(k)}{k^4}
\sum_{\substack{\x\in \ZZ^4\cap T_0\\
\x\equiv\bar k\a \bmod{q}}}\frac{\sigma_\infty(\x)}{|\x|^3}.
$$
We therefore obtain the following formula.
\begin{lemma}\label{M2E1}
  We have
\[M_2(B)=\sum_{q\leq B^{\eta/8}}q^{-4}
\sum_{\substack{k\leq B^{1/6}\\ (k,q)=1}} \frac{\mu(k)}{k^4}
\sum_{\substack{\a,\b \bmod {q}\\ (q,\a)=1}} c_q(F(\a;\b))
\sum_{\substack{\x\in \ZZ^4\cap T_0\\
\x\equiv\bar k\a \bmod{q}}}\frac{\sigma_\infty(\x)}{|\x|^3}+O_{\eta}(1).\]
\end{lemma}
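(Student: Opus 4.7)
The plan is to deduce the identity from Lemma~\ref{61} by a chain of bookkeeping manoeuvres: (i) relax the constraint $\Delta(\x)\ne\square$ to $\Delta(\x)\ne 0$; (ii) expand the truncated singular series and interchange the $q$- and $\x$-summations; (iii) sort the $\x$-sum into residues modulo $q$ to expose a Ramanujan sum; and (iv) detect primitivity of $\x$ by M\"obius inversion and rescale using the homogeneity of $\sigma_\infty$.

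For step (i), starting from Lemma~\ref{61}, I would write $\Delta(\x)=n^2\ne 0$ and use the trivial bound $|\mathfrak{S}(\x;B^{\eta/8})|\ll B^{\eta/4}$, obtained by summing $|S_q|\le q^4$ from Lemma~\ref{lem:Sq-basic} against $q^{-4}$ up to $B^{\eta/8}$, together with $\sigma_\infty(\x)\ll |\Delta(\x)|^{-1/4}$ from Lemma~\ref{SIB} and the inequality $|\x|^{-3}\le n^{-3/2}$ implied by $|\x|\le n^2\le|\x|^4$. Since each value of $n$ in the range $B^\eta\le n\le B^{1/3}$ corresponds to at most $O_\ve(B^\ve)$ vectors $\x$, the total contribution is $\ll B^{\eta/4+\ve}\sum_{n\ge B^\eta}n^{-2}\ll B^{-3\eta/4+\ve}$, which is absorbed into $O_\eta(1)$ for $\ve$ small.

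For step (ii), I would open up $\mathfrak{S}(\x;B^{\eta/8})=\sum_{q\le B^{\eta/8}}q^{-4}S_q(\x)$ and exchange orders of summation. Sorting the $\x$-sum by residue $\a\bmod q$, one has $F(\x;\b)\equiv F(\a;\b)\bmod q$ when $\x\equiv\a\bmod q$, so
\[S_q(\x)=\sum_{\b\bmod q}\sumstar_{a\bmod q}e_q(aF(\a;\b))=\sum_{\b\bmod q}c_q(F(\a;\b)),\]
which depends on $\x$ only through $\a$. Primitivity of $\x\in\ZZp^4$ forces $\gcd(\x,q)=1$ and hence $(\a,q)=1$, producing precisely the restriction in the outer sum. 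The inner $\x$-sum over residue classes is then the weight $U(q;\a)$. For step (iv), I would detect the primitivity of $\x$ via $\mathbf{1}[\gcd(\x)=1]=\sum_{k\mid\gcd(\x)}\mu(k)$ and substitute $\x=k\y$. Because $(\a,q)=1$, the divisor $k$ must be coprime to $q$, and $\x\equiv\a\bmod q$ becomes $\y\equiv\bar k\a\bmod q$. The homogeneity $F(k\y;\z)=kF(\y;\z)$ together with the substitution $\theta\mapsto\theta/k$ in the definition \eqref{siD} of $\sigma_\infty$ gives $\sigma_\infty(k\y)=k^{-1}\sigma_\infty(\y)$; combined with $|k\y|^{-3}=k^{-3}|\y|^{-3}$ this yields the overall weight $k^{-4}\sigma_\infty(\y)/|\y|^3$. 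The range $B^{2\eta}\le|\x|\le B^{1/6}$ transforms to $\y\in T_0(k)$ as in \eqref{T0D}, and the truncation $k\le B^{1/6}$ is automatic.

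The main obstacle is the identification in step (ii): collapsing the inner exponential sum to the Ramanujan factor $c_q(F(\a;\b))$ while ensuring the coprimality condition $(\a,q)=1$ is extracted cleanly from the primitivity of $\x$ (and then propagated correctly into $(k,q)=1$ in step (iv)). Once this is in place, the homogeneity rescaling and M\"obius inversion are routine.
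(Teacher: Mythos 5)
Your proposal is correct and follows essentially the same chain of steps as the paper: relax $\Delta(\x)\neq\square$ to $\Delta(\x)\neq 0$ using the trivial bound on the truncated singular series together with Lemma~\ref{SIB}, open $\mathfrak{S}(\x;B^{\eta/8})$ and sort by residue $\a\bmod q$ to extract $c_q(F(\a;\b))$ with $(\a,q)=1$ forced by primitivity, then M\"obius-detect primitivity with $\x=k\y$, noting $(k,q)=1$ and using $\sigma_\infty(k\y)=k^{-1}\sigma_\infty(\y)$ to produce the $k^{-4}$. The only small slip is attributing $|S_q|\le q^4$ to Lemma~\ref{lem:Sq-basic} (which does not yield that bound in general); the paper simply uses the trivial count $|S_q|\le q^5$, giving $\sum_{q\le B^{\eta/8}}q\ll B^{\eta/4}$, but this discrepancy is harmless to the argument.
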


The next stage is to compare the $\x$-sum to an integral.
We start with the following estimate, which is an analogue of Lemma
\ref{sumint1}. 
\begin{lemma}\label{sumint}
  If $\min_i|x_i|\ge 2q$  then
\[\frac{\sigma_\infty(\x)}{|\x|^3}=
q^{-4}\int_{[0,q]^4}\frac{\sigma_\infty(\x+\t)}{|\x+\t|^3}\d\t
+O(q(\min_i|x_i|)^{-1}|\Delta(\x)|^{-1/4}|\x|^{-3}).\]
\end{lemma}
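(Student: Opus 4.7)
The plan is to proceed in close analogy with Lemma~\ref{sumint1}, with $\sigma_\infty$ in place of $\rho_\infty$ and the exponents adjusted to reflect the slower decay of the oscillatory integrals defining $\sigma_\infty$. The central ingredient is the gradient bound
\[\nabla\sigma_\infty(\u)\ll (\min_i|u_i|)^{-1}\,|\Delta(\u)|^{-1/4},\]
which plays the role here that \eqref{ccc1} played in the proof of Lemma~\ref{sumint1}.

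To establish this gradient bound, I write
\[\sigma_\infty(\u)=\int_{-\infty}^{\infty}\prod_{i=1}^{4}I(\theta u_i)\,\d\theta,
\qquad I(t)=\int_{-1}^{1}e(-ty^2)\,\d y,\]
and differentiate under the integral sign. The derivative with respect to (say) $u_1$ pulls down the factor $-2\pi i\theta\tilde I(\theta u_1)$, where $\tilde I(t)=\int_{-1}^{1}y^2 e(-ty^2)\,\d y$. Standard stationary phase bounds give $I(t)\ll\min\{1,|t|^{-1/2}\}$, while a single integration by parts in the variable $u=y^2$ yields the sharper $\tilde I(t)\ll\min\{1,|t|^{-1}\}$. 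The gradient bound then reduces to estimating
\[\int_0^{\infty}\theta\,\min\{1,(\theta|u_1|)^{-1}\}
\prod_{i\ge 2}\min\{1,(\theta|u_i|)^{-1/2}\}\,\d\theta,\]
which I carry out by ordering the $|u_i|$ and partitioning $(0,\infty)$ into the ranges delimited by their reciprocals. Each piece is estimated directly, with the inequality $(\min_i|u_i|)^6\le |u_1|^3|u_2u_3u_4|$ being precisely what secures the exponent $-1/4$ on $|\Delta(\u)|$; a mild logarithmic factor arising in one sub-range is absorbed by a small positive power of the ratio of the relevant thresholds.

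Granted the gradient bound, the remainder of the argument mimics Lemma~\ref{sumint1} step for step. Under the hypotheses $\min_i|x_i|\ge 2q$ and $\t\in[0,q]^4$ one has $|x_i+t_i|\asymp|x_i|$ for each $i$, so $|\x+\t|\asymp|\x|$, $\min_i|x_i+t_i|\asymp\min_i|x_i|$ and $|\Delta(\x+\t)|\asymp|\Delta(\x)|$. The Mean Value Theorem combined with the gradient bound gives
\[\sigma_\infty(\x+\t)-\sigma_\infty(\x)\ll q\,(\min_i|x_i|)^{-1}|\Delta(\x)|^{-1/4},\]
while Lemma~\ref{SIB} together with the elementary expansion $|\x+\t|^{-3}-|\x|^{-3}\ll q|\x|^{-4}$ supplies the smaller contribution
\[\sigma_\infty(\x)\bigl(|\x+\t|^{-3}-|\x|^{-3}\bigr)\ll q|\Delta(\x)|^{-1/4}|\x|^{-4},\]
which is dominated by the preceding estimate since $|\x|^{-1}\le(\min_i|x_i|)^{-1}$. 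Integrating over the cube $[0,q]^4$ of volume $q^4$ and dividing through by $q^4$ yields the claimed averaged identity.

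The only non-routine step is the gradient bound. Once that is in hand, what remains is essentially a replay of the proof of Lemma~\ref{sumint1}; the main delicacy lies in the case analysis for the oscillatory integral, where all four decay factors must be exploited simultaneously in order to obtain the precise exponent $-1/4$ on $|\Delta(\u)|$.
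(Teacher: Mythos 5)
Your overall strategy is the same as the paper's: prove the gradient bound $\nabla\sigma_\infty(\u)\ll(\min_i|u_i|)^{-1}|\Delta(\u)|^{-1/4}$ and then feed it into the Mean Value Theorem argument, exactly mirroring Lemma~\ref{sumint1}; the second half of your argument (the decomposition, the bound $|\x+\t|\asymp|\x|$, Lemma~\ref{SIB} for the second piece, and integration over $[0,q]^4$) coincides with the paper's proof. Where you diverge is in the derivation of the gradient bound: you retain the full $\theta$-dependence of the differentiated factor, bounding $\partial_{u_1}I(\theta u_1)\ll|\theta|\min\{1,(\theta|u_1|)^{-1}\}$ via $\tilde I(t)\ll\min\{1,|t|^{-1}\}$, and then face a four-factor $\theta$-integral which you estimate by a case analysis on the ordering of the $|u_i|$, absorbing a logarithm along the way. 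The paper instead observes that
\[
\frac{\partial}{\partial x_1}I(-\theta x_1)=\frac{1}{2x_1}\int_{-1}^1 y\,\frac{\partial}{\partial y}e(-\theta x_1 y^2)\,\d y\ll|x_1|^{-1},
\]
a bound \emph{uniform in $\theta$}, so the remaining $\theta$-integral has only three decaying factors and evaluates cleanly to $|x_2x_3x_4|^{-1/3}$; the exponent $-1/4$ on $|\Delta|$ is then produced by a one-line algebraic inequality with no case analysis and no logarithms. Both derivations are correct and yield the same gradient bound, but the paper's trade-off (a slightly weaker pointwise bound in exchange for $\theta$-uniformity) avoids the bookkeeping you are obliged to carry out. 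You should also be aware that you need to verify the several orderings of $|u_1|$ relative to the other $|u_i|$ in your case analysis, not just the one you singled out, since a logarithmic factor appears in more than one of them; the absorption argument you sketch does work in each case, but this is the part of your write-up that would need to be fleshed out.
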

\begin{proof}
  The proof follows the same lines as that of Lemma \ref{sumint1}.
  We begin by showing that
  \beql{ccc}
     \nabla\sigma_\infty(\x)\ll(\min_i|x_i|)^{-1}|\Delta(\x)|^{-1/4}.
   \eeq
Without loss of generality it will suffice to examine the partial
derivative with respect to 
$x_1$  
in   proving this.
Our definition of the singular integral yields
\[\sigma_\infty(\x)=\int_{-\infty}^{\infty}\prod_{i=1}^4
I(-\theta x_i) \d \theta,\]
where we write temporarily
\[I(\psi)=\int_{-1}^1 e(\psi y^2) \d y.\]
By the standard second derivative test \cite[Lemma  4.4]{titch} we see that
\[I(\psi)\ll \min\{1, |\psi|^{-1/2}\}.\]
Moreover
\[\frac{\partial }{\partial x_1}I(-\theta x_1)=
\frac{1}{2x_1}\int_{-1}^1 y\frac{\partial}{\partial y}e(-\theta x_1 y^2)\d y.\]
The integral on the right is uniformly bounded, as one sees on integrating by
parts. Thus
\[\frac{\partial }{\partial x_1}I(-\theta x_1)\ll |x_1|^{-1},\]
whence
\begin{align*}
\frac{\partial }{\partial x_1} \sigma_\infty(\x)
&\ll  |x_1|^{-1}
\int_{-\infty}^\infty \min\{1,|\theta|^{-3/2} |x_2x_3x_4|^{-1/2}\}
\d \theta\\
&\ll  |x_1|^{-1}|x_2x_3x_4|^{-1/3}\\
&\ll  
(\min_i|x_i|)^{-1}
|\Delta(\x)|^{-1/4},
\end{align*}
as required.

We now use the decomposition
\begin{align*}
\frac{\sigma_\infty(\x+\t)}{|\x+\t|^3}&-
  \frac{\sigma_\infty(\x)}{|\x|^3}\\
&=
\frac{\sigma_\infty(\x+\t)-\sigma_{\infty}(\x)}{|\x+\t|^3}+
\sigma_\infty(\x)\{|\x+\t|^{-3}-|\x|^{-3}\}.
\end{align*}
If $|\t|\le q$ and $\min_i|x_i|\ge 2q$ we may show, as in the proof 
of (\ref{extra}), that
\begin{eqnarray*}
\frac{\sigma_\infty(\x+\t)-\sigma_{\infty}(\x)}{|\x+\t|^3}&\ll&
 |\x|^{-3}q\sup_{0\le\xi\le 1}\left|\frac{\partial}{\partial\xi}
 \sigma_\infty(\x+\xi\t)\right|\\
&\ll& q(\min_i|x_i|)^{-1}|\Delta(\x)|^{-1/4}|\x|^{-3},
\end{eqnarray*}
via (\ref{ccc}).  Similarly, also as in the proof of (\ref{extra}), we
will have
\[|\x+\t|^{-3}-|\x|^{-3}\ll q|\x|^{-4},\]
so that Lemma \ref{SIB} yields
\[\sigma_\infty(\x)\{|\x+\t|^{-3}-|\x|^{-3}\}\ll
q|\Delta(\x)|^{-1/4}|\x|^{-4}.\]
We therefore have
\[\frac{\sigma_\infty(\x+\t)}{|\x+\t|^3}=
\frac{\sigma_\infty(\x)}{|\x|^3}+
O(q(\min_i|x_i|)^{-1}|\Delta(\x)|^{-1/4}|\x|^{-3}),\]
and the lemma follows.
\end{proof}

We are now ready to tackle the $\x$-summation in Lemma \ref{M2E1}.
 \begin{lemma}\label{M2E2}
  We have
\[  \sum_{\substack{\x\in \ZZ^4\cap T_0\\
    \x\equiv\bar k\a \bmod{q}}}\frac{\sigma_\infty(\x)}{|\x|^3}=
q^{-4}J_2(B;k)+O(qkB^{-3\eta/2}),\]
where
\[J_2(B;k)
=\int_{T_0(k)}\frac{\sigma_\infty(\y) }{|\y|^3}\d\y.\]
\end{lemma}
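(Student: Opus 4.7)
The plan is to mimic the proof of Lemma~\ref{M1E2}, with Lemma~\ref{sumint} replacing Lemma~\ref{sumint1} and with cubes of side $q$ replacing unit cubes on account of the congruence condition $\x\equiv\bar k\a\pmod q$. Throughout we abbreviate $m(\x)=\min_i|x_i|$ and use the upper bound $\sigma_\infty(\y)/|\y|^3\ll|\Delta(\y)|^{-1/4}|\y|^{-3}$ from Lemma~\ref{SIB}.

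First I would let $X$ be the set of $\x\in\ZZ^4$ satisfying $\x\equiv\bar k\a\pmod q$ for which the cube $\x+(0,q]^4$ is contained in $T_0(k)\cap\{\y:\min_i|y_i|\ge 2q\}$, and set $Y=\bigsqcup_{\x\in X}(\x+(0,q]^4)\subseteq T_0(k)$. Lemma~\ref{sumint} applies to each $\x\in X$ and gives
\[\frac{\sigma_\infty(\x)}{|\x|^3}=q^{-4}\int_{\x+[0,q]^4}\frac{\sigma_\infty(\y)}{|\y|^3}\d\y+O\bigl(q\,m(\x)^{-1}|\Delta(\x)|^{-1/4}|\x|^{-3}\bigr).\]
Summing over $\x\in X$ yields $q^{-4}\int_Y \sigma_\infty(\y)|\y|^{-3}\d\y$ plus the total Lemma~\ref{sumint} error $E_0$. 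One then replaces $\int_Y$ by $J_2(B;k)=\int_{T_0(k)}$ at the cost of integrals of $\sigma_\infty(\y)/|\y|^3$ over the two outer shells $\{R-q\le|\y|\le R+q\}$ with $R\in\{B^{2\eta}/k,B^{1/6}/k\}$ and over the axis shell $\{\min_i|y_i|\le 3q\}\cap T_0$; and one adds back those $\x\in\ZZ^4\cap T_0$ with $\x\equiv\bar k\a\pmod q$ that lie outside $X$ (near $\partial T_0$ or with $m(\x)<2q$).

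For each of these five error pieces I would parametrise the four coordinates as $(\min,a,b,\max)=(m,a,b,M)$ and bound $|\Delta|^{-1/4}\le m^{-1/4}a^{-1/4}b^{-1/4}M^{-1/4}$. The continuous integrals are then controlled via $\int_{|\y|=r}|\Delta(\y)|^{-1/4}\d\sigma\ll r^2$, while the discrete sums are controlled via the Riemann-type bound $\sum_{a\equiv c\,(q),\,m<|a|\le M}|a|^{-1/4}\ll M^{3/4}/q$ for each middle coordinate, with $m$ restricted to $O(1)$ residues modulo $q$ and $M$ ranging over a residue class of density $1/q$. In every case the $M$-sum is dominated by its lower endpoint $M\sim B^{2\eta}/k$, producing the gain $(B^{2\eta}/k)^{-3/4}=k^{3/4}B^{-3\eta/2}$ that accounts for the crucial factor $B^{-3\eta/2}$ in the target error.

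The main obstacle is the bookkeeping: one has to combine, in each of the five pieces, the density $q^{-4}$ of the congruence class, an extra $q^{-1}$ from each middle-coordinate Riemann sum, the overall factor $q$ from Lemma~\ref{sumint}, and the gain $k^{3/4}B^{-3\eta/2}$ from the lower cutoff $|\x|\ge B^{2\eta}/k$, and verify that every piece is comfortably $\ll qkB^{-3\eta/2}$. A uniform check shows each contribution is in fact $O\bigl(k^{3/4}B^{-3\eta/2}/q^{3}\bigr)$ or better; since $q,k\ge 1$ this is comfortably $\ll qkB^{-3\eta/2}$, and summing the five pieces yields the bound claimed in the lemma.
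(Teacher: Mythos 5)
Your proposal follows the same strategy as the paper: cover $T_0(k)$ by disjoint $q$-cubes based at lattice points in the congruence class, apply Lemma~\ref{sumint} on each cube, compare with $J_2(B;k)$, and control the resulting boundary and axis shells via the estimate $\sigma_\infty(\y)|\y|^{-3}\ll|\Delta(\y)|^{-1/4}|\y|^{-3}$ from Lemma~\ref{SIB}. The only substantive difference is that the paper simply \emph{drops} the congruence condition $\x\equiv\bar k\a\pmod q$ in all the error sums, giving cruder but adequate bounds of the shape $(qkB^{-2\eta})^{3/4}$, $qkB^{-2\eta}$ and $qkB^{-1/6}$, each of which is $\ll qkB^{-3\eta/2}$ since $\eta<\tfrac1{100}$. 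You retain the congruence to extract extra negative powers of $q$ from the coordinate sums; this is a genuine sharpening but unnecessary for the stated error term. Two small imprecisions in your summary (which do not affect the conclusion): the claimed uniform bound $O\bigl(k^{3/4}B^{-3\eta/2}/q^3\bigr)$ is not quite what the inner-shell piece $T_1$ gives if you keep the congruence (there the natural bound is more like $kB^{-2\eta}/q^3$, which exceeds $k^{3/4}B^{-3\eta/2}/q^3$ once $k>B^{2\eta}$, though it is still $\ll qkB^{-3\eta/2}$); and for the outer shell near $|\y|\sim B^{1/6}/k$ the dominant scale is the \emph{upper} endpoint, yielding the even smaller factor $B^{-1/6}$, so the claim that ``the $M$-sum is dominated by its lower endpoint $M\sim B^{2\eta}/k$'' should be restricted to the pieces $E_0$, $T_1$ and the axis shell $T_3$.
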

\begin{proof}
We define
\[X=\{\x\in \ZZ^4: B^{2\eta}/k+2q\leq |\x|\leq B^{1/6}/k-2q,\,
\min|x_i|\ge 2q,\, \x\equiv\bar k\a\bmod{q}\}\]
and
\[Y=\bigcup_{\x\in X}(\x+(0,q]^4).\] 
The reader should note that these could be empty if $k$ and $q$
are large enough.
The sets $\x+(0,q]^4$ forming $Y$ are disjoint, and both $X$ and $Y$ 
lie inside the set $T_0$ defined in (\ref{T0D}).
Moreover $T_0\setminus Y$ is a subset of 
$T_1\cup T_2\cup T_3$, where
\[T_1
=\{\t\in T_0: B^{2\eta}/k\leq |\t|\leq B^{2/\eta}/k+3q\},\]
 \[T_2
 =\{\t\in T_0: B^{1/6}/k-3q\leq |\t|\leq B^{1/6}/k\},\]
and
\[T_3
= \{\t\in T_0: \min|t_i|\leq 3q\}.\]
It then follows from Lemma \ref{sumint} that
\beql{IqaE}
\sum_{\substack{\x\in \ZZ^4\cap T_0\\ \x\equiv\bar k\a \bmod{q}}}
\frac{\sigma_\infty(\x)}{|\x|^3}=q^{-4}J_2(B;k)+O\left(\sum_{i=0}^3 E_i\right),
\eeq
where
\[E_0=q\sum_{\substack{\x\in \ZZ^4\cap T_0\\ \min|x_i|\geq 2q }}
(\min_i|x_i|)^{-1}|\Delta(\x)|^{-1/4}|\x|^{-3},\]
and
\[E_i=\sum_{\substack{\x\in \ZZ^4\cap T_i }}
|\Delta(\x)|^{-1/4}|\x|^{-3}+
q^{-4}\int_{T_i}|\Delta(\y)|^{-1/4}|\y|^{-3}\d\y\]
for $i=1,2,3$. 
Note that we have dropped the condition
$\x\equiv\bar k\a \bmod{q}$ in these error terms. We now find that 
\[E_0\ll q
\hspace{-0.1cm}
\sum_{\substack{2q \leq x_1\le x_2,x_3\le x_4\\ x_4\ge B^{2\eta}/k}}
    \hspace{-0.1cm}
x_1^{-5/4}(x_2x_3)^{-1/4}x_4^{-13/4}\ll
q^{3/4}\hspace{-0.1cm}
\sum_{x_4\ge B^{2\eta}/k}
\hspace{-0.1cm} x_4^{-7/4}\ll (qkB^{-2\eta})^{3/4},\] 
for example.  This holds whether $k\le B^{2\eta}$ or not.
Similar calculations show that
\[E_1\ll
\sum_{B^{2\eta}/k\le x_4\le B^{2\eta}/k+3q}x_4^{-1}
+\int_{B^{2\eta}/k}^{B^{2\eta}/k+3q}y_4^{-1}\d y_4
\ll qkB^{-2\eta},\]
and
\[E_2\ll qkB^{-1/6}.\]
For the sum in $E_3$ we have
\begin{align*}
    \sum_{\x\in T_3} 
|\Delta(\x)|^{-1/4}|\x|^{-3}&\ll\sum_{\substack{1\le x_1\le 3q\\ 
 1\le x_2,x_3\le x_4\\ x_4\ge B^{2\eta}/k}}(x_1x_2x_3)^{-1/4}x_4^{-13/4}\\
&\ll q^{3/4}\sum_{x_4\ge B^{2\eta}/k}x_4^{-7/4}\\
&\ll (qkB^{-2\eta})^{3/4}, 
\end{align*}
and similarly for the integral.  Thus (\ref{IqaE}) becomes
\[\sum_{\substack{\x\in \ZZ^4\cap T_0\\ \x\equiv\bar k\a \bmod{q}}}
\frac{\sigma_\infty(\x)}{|\x|^3}=q^{-4}J_2(B;k)+O(qkB^{-3\eta/2}),\]
as required.
\end{proof}

Combining Lemma \ref{M2E2} with Lemma \ref{M2E1} we see that
\beql{C21}
M_2(B)=\sum_{q\leq B^{\eta/8}}q^{-8}\psi(q)
\sum_{\substack{k\leq B^{1/6}\\ (k,q)=1}} \frac{\mu(k)}{k^4}J_2(B;k)
+O_\eta(1),
\eeq
with
\[\psi(q)
=\sum_{\substack{\a,\b \bmod {q}\\ (q,\a)=1}} c_q(F(\a;\b)).\]
We therefore need information about the function $\psi$.
\begin{lemma}\label{pJ}
The function $\psi$ is multiplicative, with
\[  \psi(p^f)=
\begin{cases}
\phi(p^f)p^{6f}(1-p^{-4}), &\text{ if
 $ 2\mid f$},\\ 
  0, & \text{ if $2\nmid f$},  
  \end{cases}
  \]
for every positive integer $f$.
\end{lemma}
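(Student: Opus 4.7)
I would prove the two assertions of the lemma—multiplicativity and the prime-power formula—separately.

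For multiplicativity, if $q = q_1 q_2$ with $(q_1, q_2) = 1$, the Chinese Remainder Theorem identifies residues $\a, \b \bmod q$ with pairs $(\a_1, \a_2), (\b_1, \b_2)$ where $\a_i, \b_i \in (\ZZ/q_i\ZZ)^4$. Combining the multiplicativity $c_q(n) = c_{q_1}(n) c_{q_2}(n)$, the fact that $F(\a; \b) \bmod q_i$ depends only on $(\a_i, \b_i)$, and the decoupling $(q, \a) = 1 \iff (q_i, \a_i) = 1$ for both $i$, one reads off $\psi(q_1 q_2) = \psi(q_1)\psi(q_2)$.

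For the prime-power evaluation, I would use inclusion--exclusion on the joint coprimality condition. Let $\tilde\psi(p^f)$ denote the unrestricted sum $\sum_{\a,\b \bmod p^f} c_{p^f}(F(\a;\b))$ and $\tilde\psi'(p^f)$ the subsum in which $p \mid a_i$ for every $i$, so that $\psi(p^f) = \tilde\psi(p^f) - \tilde\psi'(p^f)$. Expanding the Ramanujan sum as $c_{p^f}(n) = \sumstar_{a \bmod p^f} e_{p^f}(an)$ and using that $F$ diagonalises as $\sum a_i b_i^2$, the inner $a_i$-sum equals $p^f$ times the indicator of $p^f \mid a b_i^2$; since $(a, p) = 1$ this forces $p^{\lceil f/2 \rceil} \mid b_i$, and one obtains
\[\tilde\psi(p^f) = \phi(p^f)\, p^{4f + 4\lfloor f/2 \rfloor}.\]
For $\tilde\psi'(p^f)$, substituting $a_i = p a_i'$ yields the factor $p F(\a'; \b)$ inside the Ramanujan sum. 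When $f = 1$, a direct computation gives $\tilde\psi'(p) = p^4 \phi(p)$ and hence $\psi(p) = 0$. When $f \geq 2$, the identity $c_{p^f}(pn) = p\, c_{p^{f-1}}(n)$ applies, and one checks that $F(\a'; \b) \bmod p^{f-1}$ depends only on $\b \bmod p^{f-1}$ (writing $\b = \b_0 + p^{f-1}\b_1$, the cross term from squaring is already divisible by $p^{f-1}$). Summing the freed $\b_1 \bmod p$ contributes $p^4$, leading to $\tilde\psi'(p^f) = p^5\, \tilde\psi(p^{f-1})$.

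Combining these with $\phi(p^f) = p\,\phi(p^{f-1})$ for $f \geq 2$ gives
\[\psi(p^f) = \phi(p^{f-1})\, p^{4f+1}\bigl(p^{4\lfloor f/2\rfloor} - p^{4\lfloor (f-1)/2\rfloor}\bigr).\]
The bracketed quantity vanishes for odd $f$, since then $\lfloor f/2 \rfloor = \lfloor (f-1)/2 \rfloor$, and equals $p^{2f-4}(p^4 - 1)$ for even $f$, tidying up to $\phi(p^f)\, p^{6f}(1 - p^{-4})$. The main technical subtlety is the step $\tilde\psi'(p^f) = p^5 \tilde\psi(p^{f-1})$: the Ramanujan-sum identity $c_{p^f}(pn) = p\, c_{p^{f-1}}(n)$ breaks at $f = 1$ and must be treated as a separate case, and one must verify the stated independence of $F(\a'; \b) \bmod p^{f-1}$ on the top digit of $\b$; everything else is bookkeeping.
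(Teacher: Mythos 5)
Your proof is correct, and it follows the same overall skeleton as the paper's: multiplicativity via CRT, then inclusion--exclusion at prime powers on the complementary condition $p \mid \a$, followed by direct evaluation of complete exponential sums. The one place where the two proofs genuinely diverge is in how they dispose of the Ramanujan-sum variable. The paper's opening move is the change of variables $c\a \mapsto \a$: since $(c, p^f) = 1$ and $p \nmid \a$, this is a bijection on the $\a$-range, and it collapses $\psi(p^f)$ immediately to $\phi(p^f)\sum_{p\nmid\a}e_{p^f}(F(\a;\b))$. After that, the whole argument concerns the plain exponential sum $\psi_1(p^f) = \sum_{\a,\b}e_{p^f}(F(\a;\b))$, no Ramanujan-sum identities are needed, and the case $f=1$ requires no special treatment (it amounts to $\psi_1(1)=1$). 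You instead carry the Ramanujan sums all the way through: in $\tilde\psi(p^f)$ the variable drops out for free once the $\a$-sum produces indicators, but in $\tilde\psi'(p^f)$ you have to invoke the identity $c_{p^f}(pn) = p\,c_{p^{f-1}}(n)$, check that it holds only for $f \ge 2$, and handle $f=1$ as a separate case. Both routes are perfectly valid and your recursion $\tilde\psi'(p^f)=p^5\tilde\psi(p^{f-1})$ matches the paper's $p^4\phi(p^f)\psi_1(p^{f-1})$ term after unwinding $\tilde\psi(p^{f-1})=\phi(p^{f-1})\psi_1(p^{f-1})$; the paper's substitution is simply the slicker way to make the Ramanujan variable vanish at the outset and thereby avoid the auxiliary lemma and the case split.
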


\begin{proof}
  The function $\psi(q)$ is clearly multiplicative, and for prime powers we
have
\[\psi(p^f)=\sum_{\substack{c \bmod {p^f}\\ (c,p)=1}}
\sum_{\substack{\a,\b \bmod {p^f}\\ p\nmid\a}}e_{p^f}(cF(\a;\b))=
\phi(p^f)\sum_{\substack{\a,\b \bmod{p^f}\\ p\nmid\a}}e_{p^f}(F(\a;\b)),\]
on replacing $c\a$ by $\a$.  It follows that
\[\psi(p^f)=\phi(p^f)\sum_{\a,\b \bmod {p^f}}e_{p^f}(F(\a;\b))-
\phi(p^f)\sum_{\substack{\a,\b \bmod {p^f}\\ p\mid\a}}e_{p^f}(F(\a;\b)).\]
Hence if we write
\[\psi_1(p^f)=\sum_{\a,\b \bmod {p^f}}e_{p^f}(F(\a;\b))\]
we will have $\psi(p^f)=\phi(p^f)\psi_1(p^f)-p^4\phi(p^f)\psi_1(p^{f-1})$.
However, on performing the summation over $\a$ we find that
\[\psi_1(p^f)=p^{4f}\#\{\b \bmod {p^f}:\,p^f\mid(b_1^2,\ldots,b_4^2)\}=
p^{4f}(p^{[f/2]})^4.\]
The required formula for $\psi(p^f)$ then follows.
\end{proof}

We also have the following evaluation of $J_2(B;k)$.
\begin{lemma}\label{JE2}
  We have
  \[J_2(B;k)=3\tau_{\infty}\log (B^{1/6-2\eta}).\]
\end{lemma}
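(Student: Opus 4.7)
The plan is to exploit two elementary symmetries of $\sigma_\infty$: (i) its invariance under permutation of the four coordinates, which is manifest from the definition \eqref{siD} since $[-1,1]^4$ is permutation-symmetric and $F(\y;\ma{z})$ treats the coordinates symmetrically; and (ii) the homogeneity $\sigma_\infty(r\y)=r^{-1}\sigma_\infty(\y)$ for $r>0$, obtained by substituting $\theta\mapsto\theta/r$ in \eqref{siD}. Throughout, Fubini applies freely, since Lemma \ref{SIB} gives the bound $|\sigma_\infty(\y)|\ll|\Delta(\y)|^{-1/4}$, which is locally integrable on $\RR^4$.

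First, partition $T_0(k)$ up to a set of measure zero into the four pieces $A_i=\{\y\in T_0(k):|y_i|\geq|y_j|\text{ for all }j\}$. By the permutation symmetry, $J_2(B;k)=4\int_{A_4}\sigma_\infty(\y)|\y|^{-3}\,d\y$. On $A_4$ introduce coordinates $y_4=\epsilon r$ with $\epsilon\in\{\pm 1\}$ and $r>0$, together with $y_j=r t_j$ for $j=1,2,3$ with $|t_j|\leq 1$. Then $|\y|=r$, $d\y=r^3\,dr\,d\t$, and by homogeneity $\sigma_\infty(\y)=r^{-1}\sigma_\infty(t_1,t_2,t_3,\epsilon)$. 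This gives
\[
J_2(B;k)=4\left(\int_{B^{2\eta}/k}^{B^{1/6}/k}\frac{dr}{r}\right)S,\qquad
S=\sum_{\epsilon=\pm 1}\int_{[-1,1]^3}\sigma_\infty(t_1,t_2,t_3,\epsilon)\,d\t.
\]
The radial integral evaluates to $\log(B^{1/6-2\eta})$, independently of $k$, which is the only point where the range defining $T_0(k)$ enters.

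To evaluate $S$, apply exactly the same decomposition to the identity $\tau_\infty=\int_{[-1,1]^4}\sigma_\infty(\x)\,d\x$ provided by Lemma \ref{tinf}. Setting $R_4=\{\x\in[-1,1]^4:|x_4|\geq|x_j|\text{ for all }j\}$, symmetry yields $\tau_\infty=4\int_{R_4}\sigma_\infty(\x)\,d\x$, and the same substitution $x_4=\epsilon r$, $x_j=rt_j$ with $r\in(0,1]$ now gives $\int_{R_4}\sigma_\infty(\x)\,d\x=\int_0^1 r^2\,dr\cdot S=S/3$. Hence $S=\tfrac{3}{4}\tau_\infty$, and inserting this into the previous display yields $J_2(B;k)=3\tau_\infty\log(B^{1/6-2\eta})$, as claimed.

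The only obstacle is bookkeeping: one must verify that the decompositions $T_0(k)=\bigcup_iA_i$ and $[-1,1]^4=\bigcup_iR_i$ cover their targets up to sets of measure zero, and that the change of variables and interchange of integrals are legitimate despite $\sigma_\infty$ being possibly unbounded as coordinates approach $0$. Both points are handled by the absolute integrability guaranteed by Lemma \ref{SIB}, since $\int_{[-1,1]^3}|t_1t_2t_3|^{-1/4}\,d\t<\infty$.
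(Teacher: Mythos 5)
Your proof is correct and takes essentially the same approach as the paper: partition the region according to which coordinate is largest, use permutation symmetry and the degree $-1$ homogeneity $\sigma_\infty(r\y)=r^{-1}\sigma_\infty(\y)$ to reduce to a one--dimensional radial integral, and compute the constant by applying the identical decomposition to $\tau_\infty=\int_{[-1,1]^4}\sigma_\infty(\x)\,\d\x$ from Lemma~\ref{tinf}. The only cosmetic difference is that you keep the sign $\epsilon=\pm1$ explicit in the sum $S$, where the paper folds the factor $2$ directly into an overall constant $8$ and writes the angular integral with $\sigma_\infty(t_1,t_2,t_3,1)$.
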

\begin{proof}
The argument is completely analogous to that used for Lemma \ref{JE},
based on the fact that
$\sigma_{\infty}(\y)=|\y|^{-1}\sigma_{\infty}(t_1,t_2,t_3,1)$
if $|\y|=|y_4|$ and $t_i=y_i/|\y|$ for $i=1,2,3$.  We find that
\[J_2(B;k)=8\int_{B^{2\eta}/k}^{B^{1/6}/k}\frac{\d y_4}{y_4}
\int_{[-1,1]^3}\sigma_\infty(t_1,t_2,t_3,1)\d\t,\]
while Lemma \ref{tinf} yields
\[\tau_\infty=\int_{[-1,1]^4}\sigma_\infty(\y)\d\y=
8\int_0^1 y_4^2\d y_4
\int_{[-1,1]^3}\sigma_\infty(t_1,t_2,t_3,1)\d\t.\]
The lemma follows from these relations.
\end{proof}  

We now have everything in place to complete the proof of Lemma
\ref{M2P}.  According to Lemma \ref{JE2} we have
\begin{align*}
\sum_{\substack{k\leq B^{1/6}\\ (k,q)=1}}\frac{\mu(k)}{k^4}J_2(B;k)&=
3\tau_{\infty}\log (B^{1/6-2\eta})
\sum_{\substack{k\leq B^{1/6}\\ (k,q)=1}} \frac{\mu(k)}{k^4}\\
&=3\tau_{\infty}\log (B^{1/6-2\eta})
\sum_{\substack{k=1\\ (k,q)=1}}^{\infty}\frac{\mu(k)}{k^4}
+O(B^{-1/2}\log B)\\
&=\frac{3\tau_{\infty}\log (B^{1/6-2\eta})}{\zeta(4)}
\prod_{p\mid q}(1-p^{-4})^{-1}+O(B^{-1/2}\log B).
\end{align*}
We can now insert this into (\ref{C21}), using Lemma \ref{pJ} to
observe that $\psi(q)$ is supported on the squares, with
$\psi(r^2)\ll r^{14}$. This leads to the estimate
\[  M_2(B)=\frac{3\tau_{\infty}\log (B^{1/6-2\eta})}{\zeta(4)}
  \sum_{q=1}^\infty q^{-8}\psi(q)\prod_{p\mid q}(1-p^{-4})^{-1}
+O_\eta(1).\]
Finally we note that
\begin{align*}
  \sum_{q=1}^{\infty}q^{-8}\psi(q)\prod_{p\mid q}(1-p^{-4})^{-1}
  &=\prod_p\left(1+\frac{p^{-16}\psi(p^2)+p^{-32}\psi(p^4)+\ldots}
  {1-p^{-4}}\right)\\
&=\prod_p\left(1+(1-p^{-1})\{p^{-2}+p^{-4}+\ldots\}\right)\\
  &=\prod_p\left(\frac{1-p^{-3}}{1-p^{-2}}\right)\\
  &=\frac{\zeta(2)}{\zeta(3)}.
\end{align*}
We therefore conclude that
\[M_2(B)=\frac{\zeta(2)}{\zeta(3)\zeta(4)}
3\tau_{\infty}\log (B^{1/6-2\eta})+O_\eta(1).\]
Lemma \ref{M2P} then follows.

\subsection{Conclusion}\label{s:final}

It is now time to bring Lemmas \ref{M1B} and \ref{M2P} together in
Lemma \ref{together}. This yields
\[N(\Omega;B)=  \frac{B\log B}{4\zeta(3)\zeta(4)}\tau_{\infty} +
O(\eta^{1/2}B\log B)+O_{\eta}(B).\]
This is an asymptotic formula
which holds for any $\eta\in(0,\tfrac{1}{100})$. 
Suppose that the error terms are
$E_1+E_2$, in which $|E_1|\le c_1\eta^{1/2}B\log B$, and
$|E_2|\le c_2(\eta)B$.
We claim that the error terms may be replaced by $o(B\log B)$. To show
this, we suppose that some small $\ve>0$ is given, and we proceed to
show that there is a $B(\ve)$ such that $|E_1+E_2|\le \ve B\log B$
whenever $B\ge B(\ve)$.  Let $\eta=\{\ve/(2c_1)\}^{2}$. Then 
$|E_1|\le\tfrac12\ve B\log B$ for every $B$. With this value of $\eta$
we then set
\[B(\ve)=\exp\{2c_2(\eta)/\ve\},\]
so that $|E_2|\le\tfrac12\ve B\log B$ for all $B\ge B(\ve)$. This
proves our claim. It therefore follows that 
\[N(\Omega;B)\sim cB\log B,\]
as $B\to \infty$, with 
\[c=\frac{\tau_{\infty}}{4\zeta(3)\zeta(4)}.\]

In order to complete the proof of Theorem \ref{t:main} it remains to
check that our leading constant agrees  with the prediction  
by Peyre \cite{Pey95}.  
According to Schindler \cite[\S 3]{schindler}, the Peyre constant is equal to 
\beql{pc}
\frac{1}{4\zeta(2)\zeta(3)}\cdot \tau_\infty \prod_p
\lim_{t\to\infty} p^{-7t}n(p^t),
\eeq
where $\tau_\infty$ is given  by \eqref{eq:tau_infty} and 
where $n(p^t)$ is the number of $(\x,\y)\in (\ZZ/p^t\ZZ)^8$ such that
$F(\x;\y)\equiv 0\bmod{p^t}$. If $t\ge 1$ we have
\begin{align*}
n(p^t)&=\sum_{j=0}^t \sum_{\substack{\y\bmod{p^t}\\ (\y,p^t)=p^j  }} 
\#\{ \x\in (\ZZ/p^t\ZZ)^4: F(\x;\y)\equiv 0\bmod{p^t}\}\\
&=\sum_{j=0}^{[t/2]} \sum_{\substack{\u\bmod{p^{t-j}}\\ (\u,p)=1  }} 
\hspace{-0.1cm}
\#\{ \x\in (\ZZ/p^t\ZZ)^4: F(\x;\u)\equiv 0\bmod{p^{t-2j}}\}+O(p^{6t}).
\end{align*}
Since $p\nmid \u$ the number of $\x\in (\ZZ/p^{t-2j})^4$ such that 
$p^{t-2j}\mid F(\x;\u)$ is $p^{3(t-2j)}$. 
Thus 
\begin{align*}
n(p^t)
&=\sum_{j=0}^{[t/2]} \sum_{\substack{\u\bmod{p^{t-j}}\\ (\u,p)=1  }} 
p^{3t+2j}+O(p^{6t})\\
&=\sum_{j=0}^{[t/2]} \{p^{4(t-j)}-p^{4(t-j-1)}\}p^{3t+2j}+O(p^{6t})\\
&=\{1-p^{-4}\}\sum_{j=0}^{[t/2]}p^{7t-2j}+O(p^{6t})\\
&=\{1+p^{-2}\}p^{7t}+O(p^{6t}).
\end{align*}
It follows that $p^{-7t}n(p^t)$ tends to $1+p^{-2}$, so that (\ref{pc}) is
\[\frac{1}{4\zeta(2)\zeta(3)}\cdot \tau_\infty \frac{\zeta(2)}{\zeta(4)}.\]
Thus our leading constant $c$ in Theorem \ref{t:main} agrees with the
Peyre constant.

\end{document}